\documentclass[1 [leqno,11pt]{amsart}
\usepackage{amssymb, amsmath,latexsym,amsfonts,amsbsy, amsthm,mathtools,graphicx,CJKutf8,CJKnumb,CJKulem,color}
\usepackage{float}
\usepackage{hyperref}
\usepackage{environ}
\usepackage{tikz} 
\setlength{\oddsidemargin}{0mm}
\setlength{\evensidemargin}{0mm} \setlength{\topmargin}{0mm}
\setlength{\textheight}{220mm} \setlength{\textwidth}{155mm}

\numberwithin{equation}{section}

\allowdisplaybreaks

\let\al=\alpha

\let\f=\frac

\let\om=\omega

\def\R{\mathbb R}

\newcommand{\beq}{\begin{equation}}
\newcommand{\eeq}{\end{equation}}
\newcommand{\ben}{\begin{eqnarray}}
\newcommand{\een}{\end{eqnarray}}
\newcommand{\beno}{\begin{eqnarray*}}
\newcommand{\eeno}{\end{eqnarray*}}

\newtheorem{theorem}{Theorem}[section]

\newtheorem{lemma}[theorem]{Lemma}
\newtheorem{proposition}[theorem]{Proposition}

\newtheorem{remark}[theorem]{Remark}


 \NewEnviron{elaboration}{

\par

\begin{tikzpicture}

\node[rectangle,minimum width=0.9\textwidth] (m) {\begin{minipage}{0.85\textwidth}\BODY\end{minipage}};

\draw[dashed] (m.south west) rectangle (m.north east);

\end{tikzpicture}

}

\begin{document}
\begin{CJK*}{UTF8}{gkai}
\title[Enhanced dissipation and nonlinear  stability of the Taylor-Couette flow]{Enhanced dissipation and nonlinear  \\   asymptotic stability of the Taylor-Couette flow \\ for the 2D Navier-Stokes equations}

\author{Xinliang AN}
\address{Department of Mathematics, National University of Singapore, Singapore}
\email{matax@nus.edu.sg}

\author{Taoran HE}
\address{Department of Mathematics, National University of Singapore, Singapore}
\email{taoran\underline{~}he@u.nus.edu}

\author{Te LI}
\address{Department of Mathematics, National University of Singapore, Singapore}
\email{matlit@nus.edu.sg}

\date{\today}

\maketitle

\begin{abstract}In this paper, we study the nonlinear stability of a steady circular flow created between two rotating concentric cylinders. The dynamics of the viscous fluid are described by 2D Navier-Stokes equations. We adopt scaling variables. For the rescaled equations, we prove that the steady flow (Taylor-Couette flow) is asymptotically stable up to a large perturbation of initial data. Back to the original 2D Navier-Stokes equations, this implies an improved transition threshold for the Taylor-Couette flow. The improvement is due to enhanced dissipation and new observations and constructions of weighted $L^2$ norms, which capture a hidden structure between the viscosity constant $\nu$ and (different) rotating speeds and locations of two coaxial cylinders. In particular, we allow the location of the outer cylinder to tend to infinity, which renders the initial fluid kinetic energy not uniformly bounded. Due to enhanced-dissipation effect, we also establish a sharp resolvent estimate, desired space-time bounds and optimal decaying estimates, which lead to the proof of nonlinear asymptotic stability of 2D Taylor-Couette flow.

\end{abstract}

\tableofcontents

\section{\textbf{Introduction}}
In this paper, we consider the two-dimensional (2D) incompressible Navier-Stokes equations:
\begin{align}
\label{full nonlinear equation}\left\{
\begin{aligned}
&\partial_tv-\nu\Delta v+v\cdot \nabla v+\nabla p=0,\quad\text{div } v=0,\\
&v(0,x)=v_0(x)
\end{aligned}
\right.
\end{align}
with $x=(x_1,x_2)\in\Omega=\mathbb{R}^2$ being the space variables and $t\geq0$ being the time variable. Here the constant $\nu>0$ is called the kinematic viscosity. The unknowns to \eqref{full nonlinear equation} are the velocity field $v(t,x)=(v^1(t,x),v^2(t,x))$ and the pressure $p(t,x)\in\mathbb{R}$.

If taking the 2D curl to the above Navier-Stokes equations, then \eqref{full nonlinear equation} is transferred to
\begin{align}
\label{full nonlinear equation ns vor}
&\partial_t\omega-\nu\Delta\omega+v\cdot \nabla \omega=0.
\end{align}
Here $\omega=\text{curl }v=\partial_1v^2-\partial_2v^1$ is the vorticity field. For 2D case, it is a scalar. And \eqref{full nonlinear equation ns vor} is called the vorticity equation formulation. In this paper, we will use \eqref{full nonlinear equation ns vor} to understand the asymptotics of solutions to \eqref{full nonlinear equation}.

Given $\omega$ being a solution to \eqref{full nonlinear equation ns vor}, the velocity field $v:\mathbb{R}_{+}\times\mathbb{R}^2\to\mathbb{R}^2$ can be derived via solving the below elliptic system
\begin{align*}
\text{div }v=0, \quad\text{curl }v=\omega.
\end{align*}
 This leads to the 2D Biot-Savart law
\begin{align}
\label{BS law}v(t,x)=\f{1}{2\pi}\int_{\mathbb{R}^2}\f{(x-y)^{\perp}}{|x-y|^2}\omega(t,y)dy.
\end{align}
For notational simplicity, we write $v(t,\cdot)=K_{BS}\ast \omega(t,\cdot)$, where
$K_{BS}(x)=\f{1}{2\pi} \f{x^{\perp}}{|x|^2}$ is the Biot-Savart kernel.

\subsection{Taylor-Couette flow}
In this article, we focus on studying the nonlinear stability and long-time dynamics around the $circular\ flows$ with the vorticity $\omega(x)=\omega(|x|)$ being radial. In below, we denote $\tilde{r}$ to be $|x|$.

An important physical circular-flow solution to \eqref{full nonlinear equation ns vor} is called the \textbf{Taylor-Couette flow}. It describes a steady (circular) flow of viscous fluid bounded between two rotating infinitely long coaxial cylinders and has wide applications ranging from desalination to viscometric analysis. Now we derive its expression. 

Denote $\omega(x)=\omega(\tilde{r})$ and $\phi(x)=\phi(\tilde{r})$ to be the radial vorticity and the stream function, respectively. Then for 2D case we have
\begin{align}
\label{radical vor}
&\omega(\tilde{r})=\Delta\phi=\phi''(\tilde{r})+\f{1}{\tilde{r}}\phi'(\tilde{r}),\\
&v(x,y)=\left(
  \begin{array}{ccc}
   -\partial_2\phi \\
   \partial_1\phi \\
  \end{array}
\right)=\partial_{\tilde{r}}\phi e_{\tilde{\theta}}=\left(
  \begin{array}{ccc}
   -x_2 \\
   x_1 \\
  \end{array}
\right)\f{\phi'(\tilde{r})}{\tilde{r}}.
\end{align}
One can easily check $\nabla\cdot v=0$ and via $\partial_1=\cos\tilde{\theta}\partial_{\tilde{r}}-\f{1}{\tilde{r}}\sin\tilde{\theta}\partial_{\tilde{\theta}}$ , $\partial_2=\sin\tilde{\theta}\partial_{\tilde{r}}+\f{1}{\tilde{r}}\cos\tilde{\theta}\partial_{\tilde{\theta}}$, one also has
\begin{align*}
v\cdot\nabla\omega=-\sin\tilde{\theta}\phi'\partial_1\omega+\cos\tilde{\theta}\phi'\partial_2\omega=0.
\end{align*}
Hence, a radial vorticity $\omega(\tilde{r})$ in \eqref{radical vor} is a stationary solution to the below 2D incompressible Euler equation
\begin{align}
\label{full nonlinear equation euler vor}
&\partial_t\omega+v\cdot \nabla \omega=0.
\end{align}
In particular, if we choose $\omega=\textrm{const}$, then we have $\Delta\omega=0$ and $\omega$ is also a solution to the 2D incompressible Navier-Stokes equation \eqref{full nonlinear equation ns vor}. For this case, by \eqref{radical vor} the stream function $\phi(\tilde{r})$ reads
\begin{align}
\label{Taylor-Couette flow stream function}\phi''(\tilde{r})+\f{1}{\tilde{r}}\phi'(\tilde{r})=2A_1. 
\end{align}
where $A_1,A_2$ are constants. This implies $(\tilde{r}\phi')'=2A_1\tilde{r}$ and   $\f{\phi'}{\tilde{r}}=A_1+\f{A_2}{\tilde{r}^2}$.

Via \eqref{Taylor-Couette flow stream function} we now derive a solution $v(x,y)$ to \eqref{full nonlinear equation}. And with polar coordinate, we rename $v(x,y)$ and $\omega(r)$ to be $U(\tilde{r},\tilde{\theta})$ and $\Omega(\tilde{r})$, respectively. Now we have
\begin{align}
\label{Taylor-Couette flow}U(\tilde{r},\tilde{\theta})=\left(
  \begin{array}{ccc}
   U^1 \\
   U^2\\
  \end{array}
\right)=\left(
  \begin{array}{ccc}
   -\sin\tilde{\theta} \\
   \cos\tilde{\theta}\\
  \end{array}
\right)(A_1\tilde{r}+\f{A_2}{\tilde{r}})=v_{\tilde{\theta}}e_{\tilde{\theta}},\quad \Omega(\tilde{r})=2A_1,
\end{align}
where $A_1,A_2$ are constants, $e_{\tilde{\theta}}=\left(
  \begin{array}{ccc}
   -\sin\tilde{\theta} \\
   \cos\tilde{\theta}\\
  \end{array}
\right)$, and $v_{\tilde{\theta}}=A_1\tilde{r}+\f{A_2}{\tilde{r}}$ is the azimuthal velocity component.  Note that $U(\tilde{r},\tilde{\theta})$ in \eqref{Taylor-Couette flow} is a stationary solution to \eqref{full nonlinear equation} and it is called the \textbf{Taylor-Couette flow}.

\subsection{Connection and comparison to the Lamb-Oseen vortex}It is well known that the 2D Navier-Stokes equations \eqref{full nonlinear equation ns vor} have a family of self-similar solutions called the Lamb-Oseen vortices, which are of the following form
\ben
\label{Lamb-Oseen vortices}\om(t,x)=\f \al {\nu t}G\Big(\f x {\sqrt{\nu t}}\Big),\quad v(t,x)=\f \al {\sqrt{\nu t}}v^G\Big(\f x {\sqrt{\nu t}}\Big).
\een
Here the vorticity profile and the velocity profile are given by
\ben
\label{Lamb-Oseen vortices-rescal}G(\xi)=\f 1 {4\pi}e^{-|\xi|^2/4},\quad v^G(\xi)=\f 1 {2\pi}\f {\xi^\perp} {|\xi|^2}\Big(1-e^{-|\xi|^2/4}\Big).
\een
In particular we have $v^G=K_{BS}\ast G$ with $K_{BS}$ being the Biot-Savart kernel. And we further denote space
\begin{align}
    Y:=L^2(\mathbb{R}^2,G^{-1}d\xi).
\end{align}

An important property of flow obeying \eqref{full nonlinear equation ns vor} is that it preserves the mass in $L^1(\mathbb{R}^2)$, i.e.,
\begin{align}
\label{L1 intergable}\int_{\R^2}\om(t,x)dx=\int_{\R^2}\om(0,x)dx=\al\quad \textrm{for any}\ t>0.
 \end{align}
 The parameter $\al\in \R$ is called the circulation Reynolds number. People studying fluid mechanics are especially interested in rapidly rotating vortices, where the circulation $|\al|$ is much larger compared with the kinematic viscosity $\nu$. This is the regime most
relevant with applications to 2D turbulent flows. Recently, there are exciting progress on the linear and nonlinear stability of the Lamb-Oseen vortex. The results in this article are parallel to these developments. And here we review some important results about the Lamb-Oseen vortex.

In 2005, to study the long-time dynamics of the 2D Navier-Stokes equations, Gallay and Wayne \cite{GW} introduced the so called \textit{scaling variables} or \textit{similarity variables}:
\begin{align}
\label{similarity variables}\xi=\f{x}{\sqrt{\nu t}},\quad \tau=\log t.
\end{align}
And they work under the ansatz
\begin{align}
\label{rewritten form}\omega(x,t)=\f{1}{t}w(\f{x}{\sqrt{\nu t}},\log t),\quad v(x,t)=\sqrt{\f{\nu}{t}}u(\f{x}{\sqrt{\nu t}},\log t).
\end{align}
The rescaled vorticity $w(\xi,\tau)$ now satisfies the evolution equation
\begin{align}
\label{self-similar transform}\partial_\tau w-( \Delta_\xi+\f12\xi\cdot\nabla_\xi+1)w+u\cdot\nabla_\xi w=0.
\end{align}
When the initial vorticity is integrable, i.e., when $\alpha$ defined in \eqref{L1 intergable} is finite, Gallay and Wayne \cite{GW0,GW} proved that the long-time dynamical behaviours of the 2D Navier-Stokes equations can be described and approximated by the Lamb-Oseen vortex.
Let $G$ be defined  as in \eqref{Lamb-Oseen vortices-rescal}. They proved
\begin{proposition}[Gallay-Wayne \cite{GW}]\label{Prop 1.1} For any $w_0\in Y$, the rescaled vorticity equation \eqref{self-similar transform} admits a unique global mild solution $w\in C^0([0,\infty),Y)$ with $w(0)=w_0$. This solution satisfies $\|w(\tau)-\alpha G\|_{Y}\to0$ as $\tau\to+\infty$, where $\alpha=\int_{\mathbb{R}^2} w_0(\xi)d\xi$.
\end{proposition}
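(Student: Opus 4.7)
The plan is to follow a standard semigroup/mild-solution strategy for the rescaled equation \eqref{self-similar transform}. First I would recast the problem in Duhamel form
\[ w(\tau) = e^{\tau\cL}w_0 - \int_0^\tau e^{(\tau-s)\cL}\,\na_\xi\cdot\bigl(u(s)w(s)\bigr)\,ds, \]
where $\cL := \D_\xi + \tfrac{1}{2}\xi\cdot\na_\xi + 1$ and I have used $\na\cdot u = 0$ to rewrite $u\cdot\na w = \na\cdot(uw)$. The operator $\cL$ is unitarily equivalent (via the weight $G^{1/2}$) to a Hermite-type Schr\"odinger operator, so the semigroup $e^{\tau\cL}$ admits an explicit Mehler-type integral kernel; equivalently one obtains it by conjugating the 2D heat semigroup with the rescaling $\xi\mapsto e^{\tau/2}\xi$. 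From this representation I read off the key linear estimates on $Y$: uniform boundedness $\|e^{\tau\cL}\|_{Y\to Y}\lesssim 1$, the parabolic smoothing $\|\na e^{\tau\cL}f\|_Y\lesssim (1-e^{-\tau})^{-1/2}\|f\|_Y$, and the sharp decay
\[ \|e^{\tau\cL}f\|_{Y}\lesssim e^{-\tau/2}\|f\|_{Y} \quad\text{whenever } \int_{\R^2}f\,d\xi=0, \]
reflecting the discrete spectrum $\{-n/2:n\ge 0\}$ of $\cL$ on $Y$ with one-dimensional kernel spanned by $G$.

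Given these ingredients, local well-posedness in $C^0([0,T];Y)$ is obtained by a Banach fixed-point argument, in which the smoothing factor $(1-e^{-(\tau-s)})^{-1/2}$ absorbs the derivative on the nonlinearity while the Biot--Savart law $u=K_{BS}\ast w$ controls $u$ in terms of $w$ (for instance via $\|u\|_{L^\infty}\lesssim \|w\|_Y^{1/2}\|w\|_{L^1}^{1/2}$ and suitable Lorentz-space variants coming from Calder\'on--Zygmund theory applied to $K_{BS}$). Mass conservation $\int_{\R^2}w(\tau,\xi)\,d\xi=\al$ for all $\tau\ge 0$ is immediate from the divergence structure of both the linear and nonlinear terms. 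An a priori bound in $Y$ is then obtained from a Lyapunov-type energy estimate that exploits the coercivity built into $\cL$ together with the cancellation coming from $\na\cdot u = 0$; combined with the smoothing above, this rules out blow-up and upgrades the local solution to a global one $w\in C^0([0,\infty);Y)$.

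The convergence $\|w(\tau)-\al G\|_Y\to 0$ is the main obstacle, because $w_0$ is allowed to be arbitrarily large in $Y$ and one cannot conclude directly from the linear spectral gap. My approach is to study the $\om$-limit set $\cA(w_0)\subset Y$ of the trajectory $\{w(\tau)\}_{\tau\ge 0}$. Starting from the uniform $Y$-bound, parabolic smoothing by $\cL$ yields higher-regularity estimates in Gaussian-weighted Sobolev spaces which, thanks to the confining weight $G^{-1}$, embed compactly into $Y$; hence $\{w(\tau)\}_{\tau\ge 1}$ is precompact in $Y$ and $\cA(w_0)$ is a nonempty compact invariant set consisting of complete (eternal) solutions of \eqref{self-similar transform} with total mass $\al$. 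The final step is to identify $\cA(w_0)=\{\al G\}$: I would exhibit a strict Lyapunov functional along non-equilibrium trajectories (for instance a weighted $L^2$-distance to $\al G$, or a relative-entropy functional when $w$ has a definite sign) and combine it with a classification of stationary solutions of \eqref{self-similar transform} in $Y$, which together with the mass constraint $\int w_\infty\,d\xi=\al$ forces $w_\infty=\al G$. LaSalle's invariance principle then pins $\cA(w_0)$ at $\{\al G\}$ and yields the asserted convergence.
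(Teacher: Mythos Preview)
The paper does not give its own proof of Proposition~\ref{Prop 1.1}: this statement is quoted verbatim as a background result from Gallay--Wayne~\cite{GW}, with no argument supplied beyond the citation. So there is nothing in the present paper to compare your proposal against.

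That said, your outline is broadly in line with the strategy of~\cite{GW0,GW}: Duhamel formulation around the semigroup $e^{\tau\cL}$, local well-posedness via contraction in $Y$, a priori control in $Y$ combined with parabolic smoothing to obtain precompactness, and an $\omega$-limit/LaSalle argument to identify the limit as $\alpha G$. One point where you are vague and where Gallay--Wayne invest real work is the Lyapunov functional in the final step: the weighted $L^2$ distance to $\alpha G$ is \emph{not} a Lyapunov functional for the full nonlinear flow (the nonlinearity does not have the right sign in general), and the relative entropy is only defined for nonnegative solutions. In~\cite{GW} this is handled by first reducing to the case $w_0\ge 0$ (splitting into positive and negative parts and using that each part, after a shift, evolves by a similar equation) and then running the entropy argument; your sketch skips this reduction. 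If you want to turn your proposal into an actual proof you would need to address this, but for the purposes of the present paper the proposition is simply imported from the literature.
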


Later in 2008, Gallay and Rodrigues \cite{Ga0, GR} showed that for any finite value of circulation parameter $\alpha$, the equilibrium $\alpha G$ is asymptotically stable in $Y$.

\begin{proposition} [Gallay-Rodrigues \cite{Ga0, GR}]\label{Prop 1.2} There exists $\epsilon>0$ such that, for all $\alpha\in\mathbb{R}$ being finite and for all $w_0 \in \alpha G+ Y_0$ with $Y_0=\{w\in Y| \int_{\mathbb{R}^2}w(\xi)d\xi=0\}$, if requiring $\|w_0-\alpha G\|_{Y}\leq\epsilon$, then the unique solution to \eqref{self-similar transform} with initial data $w_0$ satisfies
\begin{align*}
\|w(\cdot,\tau)-\alpha G\|_{Y}\leq \min(1, 2 e^{-\f{\tau}{2}})\|w_0- \alpha G\|_{Y} \quad \textrm{for any} \ \tau\geq 0.
\end{align*}
\end{proposition}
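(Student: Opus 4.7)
The plan is to write $w = \alpha G + f$ with $f(\tau)\in Y_0$, linearize \eqref{self-similar transform} around the steady state $\alpha G$, extract a $1/2$-spectral gap for the linearized operator on $Y_0$, and then close by a nonlinear bootstrap in the Gaussian-weighted space $Y$. Since $\alpha G$ is radial, $v^{\alpha G}\cdot\nabla_\xi(\alpha G)=0$, and a direct check gives $\mathcal{L}(\alpha G)=0$ for $\mathcal{L}:= \Delta_\xi+\tfrac12\xi\cdot\nabla_\xi+1$, so $\alpha G$ is a stationary solution to \eqref{self-similar transform}. The perturbation $f := w - \alpha G$ obeys
\beno
\partial_\tau f = \mathcal{L} f - \alpha\Lambda f - u^f\cdot\nabla_\xi f, \qquad \Lambda f := v^G\cdot\nabla_\xi f + u^f\cdot\nabla_\xi G,
\eeno
with $u^f := K_{BS}*f$. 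The zero-mean condition $\int f(\tau,\xi)\,d\xi=0$ is propagated in time because each term on the right is the integral of a divergence.

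\textbf{Linear spectral gap.} Equip $Y$ with $\langle f,g\rangle_Y := \int f\bar g\, G^{-1}d\xi$. Under the unitary conjugation $f\leftrightarrow G^{1/2}h$, $-\mathcal{L}$ becomes a shifted two-dimensional harmonic oscillator with spectrum $\{n/2 : n\geq 0\}$; its kernel in $Y$ is spanned by $G$, so on $Y_0$ one obtains $-\langle \mathcal{L}f, f\rangle_Y \geq \tfrac12 \|f\|_Y^2$. Using that both $v^G$ and $u^f$ are divergence-free and tangent to level sets of $G$, a weighted integration by parts yields $\langle \Lambda f, f\rangle_Y = 0$, i.e. $\Lambda$ is skew-adjoint on $Y$. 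Consequently $L_\alpha := \mathcal{L} - \alpha\Lambda$ inherits the $1/2$-spectral gap on $Y_0$ uniformly in $\alpha\in\R$.

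\textbf{Nonlinear bootstrap.} The resulting energy identity
\beno
\tfrac12 \tfrac{d}{d\tau}\|f\|_Y^2 \leq -\tfrac12\|f\|_Y^2 - \langle u^f\cdot\nabla_\xi f, f\rangle_Y
\eeno
is closed using the weighted Biot-Savart bound $\|u^f\|_{L^\infty}\lesssim \|f\|_Y$ (valid on $Y_0$ since mean-zero removes the logarithmic obstruction) together with a weighted Sobolev interpolation that absorbs $\|\nabla_\xi f\|_{L^2(G^{-1})}$ into the dissipative part of $-\langle \mathcal{L}f,f\rangle_Y$, yielding $\tfrac{d}{d\tau}\|f\|_Y^2 \leq -(1 - C\|f\|_Y)\|f\|_Y^2$. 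Choosing $\epsilon$ so that $C\epsilon$ is small, a standard continuation argument gives both the monotonicity bound $\|f(\tau)\|_Y\leq \|f_0\|_Y$ and, by integration, exponential decay $\|f(\tau)\|_Y\leq e^{-(1-C\epsilon)\tau/2}\|f_0\|_Y$. The sharp form with multiplier $2$ and rate $1/2$ follows by combining this with a Duhamel argument based on the linear semigroup bound $\|e^{\tau L_\alpha}\|_{Y_0\to Y_0}\leq Me^{-\tau/2}$ and the natural parabolic smoothing estimates for $e^{\tau L_\alpha}\nabla_\xi$.

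\textbf{Main obstacle.} The chief difficulty is maintaining the $1/2$-spectral gap for $L_\alpha$ uniformly for large $|\alpha|$, where $L_\alpha$ is far from self-adjoint and standard perturbative arguments break. Although $\Lambda$ is formally skew-adjoint in $Y$, verifying this rigorously requires delicate weighted integration-by-parts identities for the Biot-Savart convolution paired against the Gaussian weight $G^{-1}$. The clean resolution is to expand $f = \sum_{n\in\Z} f_n(r)e^{in\theta}$ into angular Fourier modes, on which $\Lambda$ decouples into a countable family of one-dimensional skew-symmetric couplings between radial profiles, and then apply a hypocoercivity / Gearhart-Pr\"uss type argument preserving the spectral gap mode-by-mode and independently of $\alpha$.
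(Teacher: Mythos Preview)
The paper does not prove Proposition~\ref{Prop 1.2}; it is quoted as a known result of Gallay--Rodrigues \cite{Ga0,GR}, so there is no ``paper's own proof'' to compare against. Your outline is essentially the Gallay--Rodrigues strategy: write $w=\alpha G+f$, use the $1/2$-spectral gap of $\mathcal{L}$ on $Y_0$, observe that $\Lambda$ is skew-adjoint in $Y$, and close a nonlinear energy/Duhamel argument.

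Two remarks on the write-up. First, your ``Main obstacle'' paragraph is internally inconsistent: once you have established that $\Lambda$ is skew-adjoint in $Y$ (which you did correctly in the linear step), the energy identity $\tfrac{d}{d\tau}\|f\|_Y^2 = 2\Re\langle L_\alpha f,f\rangle_Y \le -\|f\|_Y^2$ on $Y_0$ immediately gives the semigroup bound $\|e^{\tau L_\alpha}\|_{Y_0\to Y_0}\le e^{-\tau/2}$ with constant $1$, uniformly in $\alpha$. There is no need for Fourier decomposition in angle or Gearhart--Pr\"uss here; those tools (as in \cite{LWZ-O,Ga}) are relevant for the \emph{enhanced} rate $|\alpha|^{1/3}$ in Proposition~\ref{Prop 1.3}, not for the $\alpha$-independent rate $1/2$ in Proposition~\ref{Prop 1.2}. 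So your identified obstacle is already resolved by your own earlier step.

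Second, the nonlinear bound $\|u^f\|_{L^\infty}\lesssim\|f\|_Y$ is not a direct consequence of $f\in Y_0$: membership in $Y$ gives Gaussian decay in an $L^2$ sense but no local integrability beyond $L^2$, so the Biot--Savart $L^\infty$ estimate (which needs $f\in L^p$ for some $p>2$ near the singularity of the kernel) does not follow immediately. In the actual argument one either uses the short-time parabolic smoothing of $e^{\tau\mathcal{L}}$ to gain the missing integrability before running Duhamel, or estimates the trilinear term $\langle u^f\cdot\nabla f,f\rangle_Y=-\tfrac14\int |f|^2(u^f\cdot\xi)G^{-1}d\xi$ directly using the decay $|u^f(\xi)|\lesssim |\xi|^{-2}$ at infinity (available thanks to zero mean) combined with smoothing near the origin. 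Your sketch gestures at this (``parabolic smoothing estimates for $e^{\tau L_\alpha}\nabla_\xi$'') but the step as written is a genuine gap.
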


A feature associated with \eqref{full nonlinear equation ns vor} that Proposition \ref{Prop 1.2} did not address is the enhanced dissipation effect when the circulation Reynolds numbers $|\alpha|$ is large. Gallay proposed a conjecture on the optimal resolvent estimate for the Oseen vortex. And this conjecture was solved by Li, Wei and Zhang in \cite{LWZ-O}. Let $G$ and $v^G$ be defined  as in \eqref{Lamb-Oseen vortices-rescal}. Set $L$ to be $\Delta_\xi+\f12\xi\cdot\nabla_\xi+1$
and operator $\Lambda$ to satisfy
\begin{align*}
\Lambda w= v^G\cdot \nabla w+u\cdot\nabla G \quad  \textrm{with} \ u=K_{BS}\ast w.
\end{align*}
Note that $\Lambda$ is a nonlocal linear operator. Then the following statement holds

\begin{proposition}[Li-Wei-Zhang \cite{LWZ-O}]\label{Prop 1.3-0} Denote $L_{\bot}$ and $\Lambda_{\bot}$ to be $L$ and $\Lambda$'s restriction to the orthogonal complement of ker $\Lambda$ in Y. Define the below pseudospectral  bound
\begin{align*}
\Psi(\alpha):=(\sup_{\lambda\in\mathbb{R}}\|(L_{\bot}-\alpha\Lambda_{\bot}-i\lambda)^{-1}\|_{Y\to Y})^{-1}.
\end{align*}
Then as $|\alpha|\to+\infty$, there exists $C>0$ independent of $\alpha$ satisfying
\begin{align*}
 C^{-1}|\alpha|^{\f13}\leq \Psi(\alpha)\leq C|\alpha|^{\f13}.
\end{align*}
\end{proposition}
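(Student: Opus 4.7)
The plan is to establish $\Psi(\al)\asymp|\al|^{1/3}$ by working in polar coordinates and exploiting an angular Fourier decomposition $w(\xi)=\sum_{n\in\Z}w_n(\tilde r)e^{in\tilde\theta}$. Since the Oseen profile is radial, $\ker\Lambda$ coincides with the $n=0$ sector, and the orthogonal complement splits as $Y_\perp=\bigoplus_{n\ne 0}Y_n$, invariant under both $L$ and $\Lambda$. On each $Y_n$ the resolvent equation for $L-\al\Lambda-i\lambda$ reduces to
\[
\bigl(L_n - in\al\,\Omega_G(\tilde r) - i\lambda\bigr) w_n + \al\,\mathcal N_n w_n = f_n,
\]
where $\Omega_G(\tilde r)=\frac{1}{2\pi\tilde r^{2}}(1-e^{-\tilde r^{2}/4})$ is the angular velocity induced by $G$, $L_n$ is the radial Hermite-type operator arising from $L$, and $\mathcal N_n w_n=(K_{BS}\ast(w_n e^{in\tilde\theta}))\cdot\nabla G$ is the nonlocal stream-function reaction. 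The task is then to produce a uniform-in-$(n\ne 0,\lambda)$ $L^2$ resolvent bound of size $|n\al|^{-1/3}$.

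For the upper bound on the resolvent ($\Psi(\al)\gtrsim|\al|^{1/3}$), my strategy is a Gearhart--Pr\"uss-type argument combined with a well-chosen multiplier. The basic energy identity, obtained by pairing the resolvent equation with $w_n/G$ in $L^2(G^{-1}d\xi)$, gives diffusive control $\|\nabla w_n\|_{L^2(G^{-1})}^{2}\lesssim \|w_n\|_Y\|f_n\|_Y$. A second identity, obtained by testing against a multiplier built from a function of $\Omega_G(\tilde r)$, captures the enhanced oscillation driven by $n\al\Omega_G'$. Balancing the diffusive scale $\ell^{-2}$ against the streamwise oscillation $|n\al|\,|\Omega_G'(\tilde r_0)|\,\ell$ pins the critical length $\ell\sim|n\al|^{-1/3}$; combining the two estimates with a suitably tuned small parameter and summing in $n$ yields the desired $|\al|^{1/3}$ gain.

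For the optimality direction ($\Psi(\al)\lesssim|\al|^{1/3}$), I would build an explicit quasi-mode. Fix $n=1$, pick $\tilde r_0>0$ with $\Omega_G'(\tilde r_0)\ne 0$, set $\lambda=n\al\Omega_G(\tilde r_0)$, and rescale $\tilde r=\tilde r_0+|\al|^{-1/3}s$. Taylor expanding $\Omega_G$ around $\tilde r_0$ and freezing the curvature, Gaussian weight, and nonlocal terms reduces the leading equation to the complex Airy problem
\[
-\pa_s^{2}\Phi + i n\,\mathrm{sign}(\al)\,\Omega_G'(\tilde r_0)\,s\,\Phi = 0.
\]
A decaying Airy solution, localized by a smooth cutoff $\chi$ and multiplied by $e^{in\tilde\theta}$, then furnishes a test function $w$ whose residual $(L-\al\Lambda-i\lambda)w$ has $Y$-norm of size $|\al|^{-1/3}\|w\|_Y$, forcing the resolvent norm at this $\lambda$ to be at least $c|\al|^{1/3}$.

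The main obstacle is the nonlocal reaction $\al\,\mathcal N_n w_n$ in the upper-bound step. Because of the prefactor $\al$ it is a priori of the same order as the leading oscillatory term $in\al\Omega_G$ and cannot be absorbed by a naive perturbation argument. The resolution is to exploit two structural features: the Biot--Savart convolution $K_{BS}\ast(w_n e^{in\tilde\theta})$ gains one derivative over $w_n$, and $\nabla G$ carries the full Gaussian weight, placing $\mathcal N_n w_n$ in a compactly-embedded and better-localized subspace of $Y$. Weighted Hardy-type inequalities on each nonzero angular sector, combined with orthogonality to the radial kernel, then allow $\al\mathcal N_n$ to be absorbed into the basic diffusive estimate at a relative cost $O(|\al|^{-1/3})$, which closes the enhanced-dissipation estimate and yields the claimed $|\al|^{1/3}$ pseudospectral bound.
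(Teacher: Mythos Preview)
This proposition is quoted from \cite{LWZ-O} and the present paper does \emph{not} supply its own proof; it is listed as background in the introduction, and the paper's original resolvent work in Section~\ref{2-resolvnet} treats the Taylor--Couette operator rather than the Oseen vortex operator. So there is no ``paper's own proof'' to compare against.

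That said, your outline is broadly the right architecture and is close in spirit to what Li--Wei--Zhang actually do: angular Fourier decomposition, a multiplier/energy argument on each nonzero sector giving the $|\alpha|^{1/3}$ gain, and an Airy-type quasimode for optimality. The part of your sketch that is genuinely incomplete is the treatment of the nonlocal reaction $\alpha\,\mathcal N_n w_n$. Your proposed resolution---that Biot--Savart smoothing plus the Gaussian weight on $\nabla G$ lets you absorb $\alpha\mathcal N_n$ at relative cost $O(|\alpha|^{-1/3})$---is too optimistic as stated: the prefactor is $\alpha$, not $|\alpha|^{2/3}$, so a single derivative of smoothing does not by itself produce the required smallness. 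In \cite{LWZ-O} this term is handled not by a perturbative absorption but by a more careful coercivity argument that couples the nonlocal piece to the local advection through the stream function equation (exploiting a sign/structure relation between $\Omega_G'$ and $G'$), together with weighted estimates adapted to the Gaussian. Without that structural step your upper-bound argument would not close. The quasimode construction for the lower bound is essentially correct; the nonlocal term there really is lower order because the test function is concentrated on scale $|\alpha|^{-1/3}$ while $\mathcal N_n$ outputs something spread by the Green's function.
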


Based on the linear resolvent estimate in \cite{LWZ-O}, Gallay \cite{Ga} proved an improved stability result  which allows the size of perturbation (size of the basin of attraction) of Oseen's vortex to be large when $|\alpha|\gg1$. Also, the non-zero frequency of vorticity obeys a faster decaying rate. Rewrite $w(\xi,\tau)$ as $w(\theta,r,\tau)$ with polar coordinate and let space $Y_1$ be $\{w\in Y| \int_{\mathbb{R}^2}\xi_iw(\xi)d\xi=0, i=1,2\}$. It holds
\begin{proposition}[Gallay \cite{Ga}]\label{Prop 1.3} There exist positive constants $C_1, C_2$, $\kappa$ such that, for all $\alpha\in\mathbb{R}$ and all initial data $w_0\in \alpha G + Y_1$ satisfying
\begin{align*}
\|w_0-\alpha G\|_{Y}\leq \f{C_1(1+|\alpha|)^{\f16}}{\log(2+|\alpha|)},
\end{align*}
we have that the unique solution to \eqref{self-similar transform} in $Y$ (guaranteed by Proposition \ref{Prop 1.1}) obeys, for any $\tau\geq0$,
\begin{align*}
&\|w(\cdot,\tau)-\alpha G\|_{Y}\leq C_2e^{-\tau}\|w_0-\alpha G\|_{Y},\\
&\|(1-P_r)w(\cdot,\tau)-\alpha G\|_{Y}\leq C_2\|w_0-\alpha G\|_{Y}\exp(-\f{\kappa(1+|\alpha|)^{\f13}\tau}{\log(2+|\alpha|)}),
\end{align*}
where 
\begin{align*}
P_rw(\theta,r,\tau)=\int_0^{2\pi}w(\theta,r,\tau)d\theta.
\end{align*}
\end{proposition}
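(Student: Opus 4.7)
The plan is to decompose the perturbation $\tilde w(\tau) = w(\tau) - \alpha G$ into its radial part $\tilde w_r = P_r \tilde w$ and its non-radial part $\tilde w_{nr} = (1-P_r)\tilde w$, and exploit two different damping mechanisms on these two pieces. Since the moments $\int \xi_i w\, d\xi$ are preserved by \eqref{self-similar transform}, the assumption $w_0 \in \alpha G + Y_1$ is propagated, so $\tilde w(\tau) \in Y_1$ for all $\tau \geq 0$. Because $\xi_i G$ are eigenfunctions of $L$ with eigenvalue $-1/2$ and $G$ with eigenvalue $0$, the spectrum of $L$ on $Y_1$ starts at $-1$, which is the source of the $e^{-\tau}$ decay rate in the first inequality. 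On the radial sector, $\Lambda \tilde w_r = 0$ (the azimuthal Oseen velocity $v^G$ annihilates radial functions, and the velocity induced by a radial vorticity is azimuthal and annihilates the radial $G$), so the evolution of $\tilde w_r$ is driven purely by $L$ plus the nonlinear forcing $P_r(\tilde u \cdot \nabla \tilde w)$.

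For the non-radial part I would convert the pseudospectral bound $\Psi(\alpha) \geq C^{-1}|\alpha|^{1/3}$ of Proposition \ref{Prop 1.3-0} into a semigroup estimate through a Gearhart--Pr\"uss type argument, giving
\begin{align*}
\|e^{\tau(L - \alpha \Lambda)}\|_{Y_{nr} \to Y_{nr}} \leq C \exp\!\Big(-\frac{\kappa (1+|\alpha|)^{1/3}}{\log(2+|\alpha|)}\,\tau\Big),
\end{align*}
the logarithm being the typical price paid by Gearhart--Pr\"uss in converting resolvent to semigroup bounds. On the non-radial sector $\Lambda$ has trivial kernel, so the restriction is unambiguous.

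The nonlinear analysis then proceeds by writing Duhamel formulas for $\tilde w_r$ against the $e^{-\tau}$ semigroup of $L|_{Y_1}$ and for $\tilde w_{nr}$ against the enhanced semigroup above, and setting up a bootstrap on the two quantities
\begin{align*}
N_1(\tau) = e^{\tau}\|\tilde w(\tau)\|_Y, \qquad N_2(\tau) = \exp\!\Big(\frac{\kappa(1+|\alpha|)^{1/3}\tau}{\log(2+|\alpha|)}\Big)\|\tilde w_{nr}(\tau)\|_Y.
\end{align*}
The bilinear term $\tilde u \cdot \nabla \tilde w$ is estimated in $Y$ through a weighted Biot--Savart bound, using a fraction of the dissipation produced by $L$ to absorb $\|\nabla \tilde w\|$. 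The chosen smallness threshold $(1+|\alpha|)^{1/6}/\log(2+|\alpha|)$ is exactly the size at which the quadratic nonlinear output into the non-radial sector, scaled against the enhanced-dissipation gain $(1+|\alpha|)^{-1/3}$, remains $O(1)$ up to logarithms, which closes the bootstrap.

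The main obstacle I expect is the cross-interaction between the (potentially large) radial and non-radial components. The radial piece may reach size $(1+|\alpha|)^{1/6}$, and its transport by $\tilde u_{nr}$, as well as the feedback of $\tilde u_r$ on $\tilde w_{nr}$, must not wreck the enhanced decay of $\tilde w_{nr}$. The key observation I would rely on is that interactions of the form $\tilde u_{nr} \cdot \nabla \tilde w_{nr}$ with zero angular-Fourier output pour into the radial channel, which is protected by the gap $-1$ of $L$ on $Y_1$ and absorbs the quadratic loss; the true delicate term is the radial-on-non-radial transport, whose resonant part must be bounded using the dissipative norm of $L$ rather than the $\alpha$-gain, at the cost of the logarithmic correction that appears in the final decay rate.
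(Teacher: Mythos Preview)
This proposition is not proved in the present paper. It is stated in the introduction as a result of Gallay \cite{Ga}, quoted alongside Propositions \ref{Prop 1.1}, \ref{Prop 1.2}, and \ref{Prop 1.3-0} to provide background and motivation for the authors' own work on the Taylor--Couette flow. There is therefore no proof in this paper against which to compare your attempt.

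That said, your outline is a reasonable sketch of the strategy Gallay actually uses in \cite{Ga}: split into radial and non-radial parts, use the spectral gap of $L$ on $Y_1$ for the former, convert the pseudospectral bound of \cite{LWZ-O} into an enhanced semigroup decay on the latter (with the $\log(2+|\alpha|)$ loss coming from that conversion), and close a bootstrap on the two weighted quantities you define. The interaction structure you identify---non-radial $\times$ non-radial feeding the radial channel, radial transport acting on the non-radial part---is indeed the crux of the argument in \cite{Ga}. If you want a detailed comparison, you should consult Gallay's paper directly rather than the present one.
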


\subsection{Main results}In this paper, we study fluid dynamics around the Taylor-Couette flow. We are in particular interested in the regime with a high Reynolds number ($\textrm{Re}=\f{1}{\nu}$).

The Taylor-Couette flow is the steady flow created between two rotating concentric cylinders. In this paper, we consider the 2D Taylor-Couette flow, and aim to prove its nonlinear asymptotic stability under the 2D incompressible Navier-Stokes equations \eqref{full nonlinear equation ns vor}. For the Taylor-Couette flow, if the inner cylinder with radius $r_1$ is rotating at constant angular velocity $\omega_1$ and the outer cylinder with radius $r_2$ is rotating at constant angular velocity $\omega_2$, then the azimuthal velocity component is given by
\begin{align}
\label{TC flow in physics}v_{\theta}=A_1r+\f{A_2}{r}=r\omega_{phy},\quad A_1=\omega_1\f{\mu-\eta^2}{1-\eta^2},\quad A_2=\omega_1r_1^2\f{1-\mu}{1-\eta^2},
\end{align}
where
\begin{align*}
\mu=\f{\omega_2}{\omega_1},\quad \eta=\f{r_1}{r_2},\quad \omega_{phy}:=A_1+\f{A_2}{r^2}.
\end{align*}
One can verify
\begin{align*}
\omega_1=\omega_{phy}(r_1),\quad \omega_2=\omega_{phy}(r_2).
\end{align*}
Then we can directly compute fluid vorticity from the velocity field as below
\begin{align*}
\Omega(r)=&\partial_1U^2-\partial_2U^1=\partial_1(x_1\omega_{phy})-\partial_2(-x_2\omega_{phy})\\
=&2\omega_{phy}+x_1\partial_1\omega_{phy}+x_2\partial_2\omega_{phy}=2\omega_{phy}+r\omega_{phy}'=2A_1.
\end{align*}
 Although $\omega_{phy}$ is singular at $r=0$, the above equality indicates that $\Omega(r)$ is regular at $r=0$.

Before presenting our main theorems, let's review some related results on the mathematical study of the Taylor-Couette flow or more general circular flows:
\begin{itemize}
\item In \cite{Z1}, for 2D incompressible Euler equations, Zillinger established linear inviscid damping with decay rates around the Taylor-Couette flow in an annular region. Note that this restriction on the annual region also avoid the issues of fluid dynamics at the origin and at infinity.

  \item In \cite{ZZ}, for 2D Euler equations, Zelati and Zillinger established linear inviscid damping and proved linear stability for a class of mildly degenerate flows. This class includes $U(r)\sim \f{1}{r}+r$, which is different from shear flows with strict monotonicity \cite{WZZ-E}.

\item Recently, Gallay and $\check{S}$ver$\acute{a}$k \cite{GV} employed variational principles (related to Arnold's stability criteria) to study orbital stability of steady circular solutions to both Euler's equations and the Navier-Stokes equation at low viscosity. They proved the stability for the algebraic vortex $\omega(x)=(1+|x|^2)^{-\kappa}$ with $\kappa>1$ and for the Lamb-Oseen vortex $\omega(x)=e^{-\f{|x|^2}{4}}$. Note that the vorticity of the Taylor-Couette flow does not decay when $|x|\to +\infty$. And it has not been addressed in \cite{GV}.
\end{itemize}

We proceed to state our main results. In this paper, we work with scaling variables:
\begin{align*}
\xi=\f{x}{\sqrt{\nu t}},\quad \tau=\log t.
\end{align*}
In \cite{GW}, Gallay and Wayne rewrite solutions to \eqref{full nonlinear equation ns vor} under the ansatz
\begin{align*}
\omega(x,t)=\f{1}{t}w(\f{x}{\sqrt{\nu t}},\log t),\quad v(x,t)=\sqrt{\f{\nu}{t}}u(\f{x}{\sqrt{\nu t}},\log t).
\end{align*}
Then the rescaled vorticity $w(\xi,\tau)$ satisfies the following evolution equation
\begin{align*}
\partial_\tau w-( \Delta_\xi+\f12\xi\cdot\nabla_\xi+1)w+u\cdot\nabla_\xi w=0.
\end{align*}
The rescaled velocity $u$ can be expressed via the Biot-Savart law,  namely $u(\cdot, \tau)=K_{BS} \ast w(\cdot,\tau)$. If we write $\xi$ in polar coordinates
\begin{align*}
\xi_1=r\cos\theta,\quad\xi_2=r\sin\theta\quad \textrm{with}\  r\in[0,\infty)\quad \textrm{and}\ \theta\in\mathbb{T}.
\end{align*}
Then we can take Fourier transform with respect to $\theta$. Denoting $w_k(\tau, r)=\f{1}{2\pi}\int_{\mathbb{T}} w(\tau, r, \theta) e^{-ik\theta} d \theta,$ we can write
\begin{align*}
w(\tau,\xi)=w(\tau,r,\theta)=\sum\limits_{k\in\mathbb{Z}} \omega_k(\tau,r)e^{ik\theta}.
\end{align*}

Recall that the fluid velocity $U$ and vorticity $\Omega$ of the Taylor-Couette flow \eqref{Taylor-Couette flow} are given by
\begin{align*}
U(\tilde{r},\tilde{\theta})=\left(
  \begin{array}{ccc}
   U^1 \\
   U^2\\
  \end{array}
\right)=\left(
  \begin{array}{ccc}
   -\sin\tilde{\theta} \\
   \cos\tilde{\theta}\\
  \end{array}
\right)(A_1\tilde{r}+\f{A_2}{\tilde{r}})=v_{\tilde{\theta}}e_{\tilde{\theta}},\quad \Omega(r)=2A_1.
\end{align*}
Here $A_1,A_2$ are constants, $e_{\tilde{\theta}}=\left(
  \begin{array}{ccc}
   -\sin\tilde{\theta} \\
   \cos\tilde{\theta}\\
  \end{array}
\right)$ and $v_{\tilde{\theta}}$ is the azimuthal velocity component. Via the self-similar transformation, the steady circular solutions \eqref{Taylor-Couette flow} to the original \eqref{full nonlinear equation euler vor} equation are concerted to
\begin{align}
\label{TC flow after self-similar transformation}V=\sqrt{\f{t}{\nu}}U=\left(
  \begin{array}{ccc}
   -\xi_2 \\
   \xi_1\\
  \end{array}
\right)(A_1e^{\tau}+\f{A_2}{\nu|\xi|^2}),\quad W=t\Omega=2A_1e^{\tau}.
\end{align}
For 2D Navier-Stokes equations, we work with scaling variables and its equivalent form \eqref{self-similar transform}. Thus the original problem is transformed into analyzing the stability of \eqref{TC flow after self-similar transformation} with
equation \eqref{self-similar transform}. For initial data, via Fourier series we have $w(0, r, \theta)=w_0(0, r)+\sum_{k\in\mathbb{Z}\backslash \{0\}}w_k(0, r)e^{ik\theta}$. For simplicity, we also employ $w(0), w_0(0), w_k(0)$ to replace $w(0, r, \theta), w_0(0, r), w_k(0, r)$, respectively.

For future use, we define
\begin{align}
\label{weighted space}\|w\|_{\mathcal{M}}^2:=\int_0^{\infty}re^{\f{r^2}{4}}|w(r)|^2dr,\quad\quad
\|w\|_{X}:=(\int_0^{\infty}\f{|w|^2}{r^2}dr)^{\f12}.
\end{align}
And we introduce an energy norm  $ \mathcal{E}(\tau)$ as below
\begin{align*}
 \mathcal{E}(\tau)=|\f{A_2}{\nu}|^{\f16}\|\f{\omega_0(\tau)-2A_1e^{\tau}}{r}\|_{\mathcal{M}}+\sum_{k\in\mathbb{Z}/\{0\}}(\|\omega_k(\tau)\|_{\mathcal{M}}+|k|^{\f16}|\f{A_2}{\nu}|^{\f16}\|\f{\omega_k(\tau)}{r}\|_{\mathcal{M}}).
  \end{align*} 
\noindent For initial data, via Fourier series we have $w(0, r, \theta)=w_0(0, r)+\sum_{k\in\mathbb{Z}\backslash \{0\}}w_k(0, r)e^{ik\theta}$. For simplicity, we also adapt $w(0), w_0(0), w_k(0)$ to denote $w(0, r, \theta), w_0(0, r), w_k(0, r)$, respectively. In our paper, we are interested in the regime when $\nu\leq |A_2|$. In particular, we allow $0<\nu \ll 1$. Our main result of this paper can be stated as follows:
\begin{theorem}[Main Theorem]\label{Main result-physical space} For any $|A_2|\geq\nu$, there exist constants $c_0,c,C>0$ independent of $A_1,A_2,\nu$ such that if the initial data $w(0)=w_0(0)+\sum_{k\in\mathbb{Z}\backslash \{0\}}w_k(0)e^{ik\theta}$ satisfy
\begin{align*}
 \mathcal{E}(0)=|\f{A_2}{\nu}|^{\f16}\|\f{\omega_0(0)-2A_1}{r}\|_{\mathcal{M}}+\sum_{k\in\mathbb{Z}/\{0\}}(\|\omega_k(0)\|_{\mathcal{M}}+|k|^{\f16}|\f{A_2}{\nu}|^{\f16}\|\f{\omega_k(0)}{r}\|_{\mathcal{M}})\leq c_0|\f{A_2}{\nu}|^\f13,
  \end{align*}
  then the solution $w$ of the full nonlinear vorticity formulation \eqref{self-similar transform} is global in time and the following stability estimate holds
  \begin{align*}
 &|\f{A_2}{\nu}|^{\f16}\|\f{e^{-\tau}\omega_0(\tau)-2A_1}{r}\|_{\mathcal{M}}\leq Ce^{-\tau} \mathcal{E}(0), \footnotemark[1]  
  \end{align*}
  \footnotetext[1]{Recall $\omega=2A_1 e^{\tau}$ in \eqref{TC flow after self-similar transformation}. In scaling variables, we expect  $e^{-\tau}\omega_0(\tau)-2A_1$ term obeys decaying estimate.}
 and
\begin{align*}
\sum_{k\in\mathbb{Z}/\{0\}}(\|\omega_k(\tau)\|_{\mathcal{M}}+|k|^{\f16}|\f{A_2}{\nu}|^{\f16}\|\f{\omega_k(\tau)}{r}\|_{\mathcal{M}})\leq C e^{-c|\f{A_2}{\nu}|^{\f13}\tau}\mathcal{E}(0).\end{align*}
Hence, it holds
\begin{align*}
  \mathcal{E}(\tau)\leq C  \mathcal{E}(0).
\end{align*}
\end{theorem}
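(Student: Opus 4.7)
The plan is to write $w(\tau,r,\theta) = 2A_1e^{\tau} + \tilde w(\tau,r,\theta)$ and derive the evolution equation for the perturbation $\tilde w$. Expanding in Fourier modes in $\theta$, I would separate the zero mode $\tilde w_0$ (which by axisymmetry does not feel the Taylor-Couette advection linearly) from the non-zero modes $\tilde w_k$ ($k\neq 0$), since the two obey qualitatively different decay laws. The background azimuthal velocity associated with $V$ in \eqref{TC flow after self-similar transformation} contributes a term $(A_1e^{\tau}+\tfrac{A_2}{\nu|\xi|^2})\,ik\tilde w_k$ to the $k$-th equation; the $A_1$ part is a pure rotation and can be absorbed, while the $\tfrac{A_2}{\nu r^2}$ part is the key shear-type term responsible for enhanced dissipation. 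Schematically, one gets
\begin{equation*}
\partial_\tau \tilde w_k - (\Delta_\xi+\tfrac12\xi\cdot\nabla_\xi+1)\tilde w_k + ik\tfrac{A_2}{\nu r^2}\tilde w_k = \mathcal{N}_k(\tilde w,\tilde w),
\end{equation*}
with $\mathcal N_k$ the Fourier projection of the nonlinear advection $\tilde u\cdot\nabla\tilde w$.

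\textbf{Linear analysis and resolvent estimates.} The second step is to derive semigroup bounds for the linearized operator $L_k := -(\Delta_\xi+\tfrac12\xi\cdot\nabla_\xi+1) + ik\tfrac{A_2}{\nu r^2}$ on the weighted space $\mathcal{M}$. In analogy with Proposition \ref{Prop 1.3-0}, I would establish a sharp resolvent bound of the form
\begin{equation*}
\sup_{\lambda\in\mathbb R}\bigl\|(L_k-i\lambda)^{-1}\bigr\|_{\mathcal M\to\mathcal M}\;\lesssim\;|k|^{-1/3}\bigl|\tfrac{A_2}{\nu}\bigr|^{-1/3},
\end{equation*}
using a Gearhart-Pr\"uss/hypocoercivity scheme adapted to the radial weight $1/r^2$ and the Gaussian measure. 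The $X$-norm $\|\tilde w_k/r\|_{\mathcal M}$ appearing with prefactor $|k|^{1/6}|A_2/\nu|^{1/6}$ in $\mathcal E$ is exactly the coercivity gain associated with this resolvent bound (it comes from testing against $\bar{\tilde w}_k$ to pick up the term $k\tfrac{A_2}{\nu}\|\tilde w_k/r\|_{\mathcal M}^2$, cf.\ the standard hypocoercivity calculation). By a Gearhart--Pr\"uss type argument this upgrades to a semigroup decay
\begin{equation*}
\|e^{-\tau L_k}\|_{\mathcal M\to\mathcal M}\lesssim e^{-c|k|^{1/3}|A_2/\nu|^{1/3}\tau}.
\end{equation*}
For the zero mode, I would use the spectral gap of $\Delta_\xi+\tfrac12\xi\cdot\nabla_\xi+1$ on the subspace $\{\tilde w_0:\int\tilde w_0=0\}$, together with the divergence structure in $r$, to obtain $\|\tilde w_0(\tau)\|\lesssim e^{-\tau}\|\tilde w_0(0)\|$ in the appropriate weighted norm.

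\textbf{Nonlinear bootstrap.} The third step is a continuity/bootstrap argument: assume $\mathcal E(\tau)\leq 2C\mathcal E(0)$ on a maximal interval $[0,T^*)$ and close the estimate. For each $k$, Duhamel gives
\begin{equation*}
\tilde w_k(\tau) = e^{-\tau L_k}\tilde w_k(0) + \int_0^\tau e^{-(\tau-s)L_k}\mathcal N_k(\tilde w,\tilde w)(s)\,ds,
\end{equation*}
and one plugs in the semigroup bounds above. The nonlinear interactions split into (i) $(k=0)\times(k'\neq 0)$ and symmetric, (ii) nonzero$\times$nonzero giving zero mode (mean-zero, so in the spectral gap subspace), and (iii) nonzero$\times$nonzero staying nonzero. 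Terms of types (i) and (iii) enjoy the enhanced-dissipation decay; type (ii) yields only $e^{-\tau}$ decay but is \emph{quadratic} in nonzero modes, hence quadratic in $e^{-c|A_2/\nu|^{1/3}\tau}$, more than enough. The Biot--Savart estimates $\|\tilde u_k/r\|_{L^\infty}\lesssim \|\tilde w_k/r\|_{\mathcal M}$-type bounds, together with the weights $|k|^{1/6}|A_2/\nu|^{1/6}\|\tilde w_k/r\|_{\mathcal M}$ built into $\mathcal E$, convert each nonlinear bilinear estimate into a factor of $\mathcal E^2$ multiplied by $|A_2/\nu|^{-1/3}$, which is precisely what cancels the $c_0|A_2/\nu|^{1/3}$ allowed smallness of the initial data. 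This is what permits the size of the perturbation to grow with $|A_2/\nu|$.

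\textbf{Main obstacle.} The principal difficulty, which dictates the choice of the weighted norms $\|\cdot\|_{\mathcal M}$ and $\|\cdot/r\|_{\mathcal M}$ with their $|A_2/\nu|^{1/6}$ prefactors, is the following: the background vorticity $2A_1e^{\tau}$ is not integrable, so the naive $L^1$/mass framework of Gallay--Wayne cannot be used, and the advection by $A_2/(\nu r^2)$ is singular at the origin. I expect the hardest step to be proving that the weighted $L^2$ resolvent bound with the $1/r^2$ weight is in fact sharp at the power $|A_2/\nu|^{1/3}$ (the correct Kelvin-type exponent for a shear with inverse-square profile), and simultaneously that the nonlinear transport by $\tilde u_0$ in the non-zero $k$ equations, which \emph{a priori} does not decay better than $e^{-\tau}$, can be absorbed into the enhanced-dissipation semigroup. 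Handling this term requires using the improved weighted norm $|A_2/\nu|^{1/6}\|\tfrac{e^{-\tau}\tilde w_0}{r}\|_{\mathcal M}$ on the zero mode, together with a Hardy-type inequality, to gain the needed factor of $|A_2/\nu|^{-1/3}$ and close the bootstrap.
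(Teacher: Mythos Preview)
Your overall architecture (perturbation equation, resolvent bound $\sim |kA_2/\nu|^{1/3}$, Gearhart--Pr\"uss semigroup decay, bootstrap on $\mathcal E$) matches the paper. But two concrete ingredients are missing, and one methodological choice differs in a way that matters.

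\textbf{Duhamel versus space-time.} The paper does \emph{not} estimate the inhomogeneous part via the Duhamel integral you wrote. Instead it splits $w_k=w_k^l+w_k^n$ with $w_k^n(0)=0$, multiplies by $e^{c_2|kB|^{1/3}\tau}$, takes the Fourier transform in $\tau$, and reduces directly to the resolvent equation $(i\lambda+L_k-c_2|kB|^{1/3})\hat w_k^n=-F_1+\partial_r F_2$. Plancherel then turns the resolvent bounds into $L^2_\tau L^2_r$ space-time estimates with no loss. A straight Duhamel with only the operator-norm semigroup bound would, at minimum, force you to put the nonlinearity in $L^1_\tau$ (or fight the $t^{-1/2}$ smoothing singularity from $\partial_r e^{-tL_k}$), and is exactly where Gallay's argument in Proposition~\ref{Prop 1.3} picks up the $\log|\alpha|$ loss you are trying to avoid.

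\textbf{The $H^{-1}$ resolvent estimate.} The nonlinearity is written in divergence form $f_1-\partial_r f_2$. To treat $\partial_r f_2$ after Fourier-in-time, the paper proves a second tier of resolvent bounds, $\|w\|_{H^1}+|\beta_k|^{1/6}\|w\|_{L^2}\lesssim\|F-c_2|\beta_k|^{1/3}w\|_{H^{-1}}$ (and the weighted analogue), with the shift built in. You do not mention this; without it the $\partial_r f_2$ piece cannot be estimated at the right order in $|kB|$.

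\textbf{Why the threshold is $|B|^{1/3}$ and not $|B|^{1/6}$.} You correctly put both $\|\omega_k\|_{\mathcal M}$ and $|kB|^{1/6}\|\omega_k/r\|_{\mathcal M}$ into $\mathcal E$, but the mechanism is sharper than ``coercivity gain from testing''. The paper runs the full space-time estimate twice, once in $L^2$ and once in $X=L^2(r^{-2}dr)$, and observes that the term $\|rw_k\|_{L^2_\tau X}=\|w_k\|_{L^2_\tau L^2}$ appears in the $X$-estimate with coefficient $1$, while the \emph{same} quantity appears in the $L^2$-estimate with coefficient $|kB|^{1/6}$. This forces the coupled energy $E_k\sim M(L^2)+|kB|^{1/6}M(X)$, whose crucial term $|kB|^{1/3}\|w_k/r\|_{L^2_\tau L^2}$ (rather than $|kB|^{1/6}\|w_k\|_{L^2_\tau L^2}$) is what closes the bilinear estimates at $|B|^{-1/3}\mathcal E^2$. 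If you only use the $L^2$ space-time bound you recover the $|B|^{1/6}$ threshold, not $|B|^{1/3}$; the improvement is entirely due to this $L^2$--$X$ matching, and your proposal does not yet isolate it.
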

 In our work, we keep tracking the relation between the enhanced dissipation effect and the coefficients $\nu$ and $A_1,A_2$. We are especially interested in the case with rapidly moving velocity, where the parameter $|A_2|$ is much larger than the kinematic viscosity $\nu$.
\begin{remark}
Compared with Proposition \ref{Prop 1.3} stated above by \cite{Ga}, here we allow larger perturbation. In \cite{Ga} for the Oseen's vortex, the size of the basin for attraction is of $|\alpha|^{\f16}$(where $\alpha\sim\nu^{-1})$, while here for Taylor-Couette flow, $|\f{A_2}{\nu}|$ is analogous to $|\alpha|$, and the size of perturbation is permitted to be of size $|\f{A_2}{\nu}|^{\f13}$. Moreover, to control the nonlinear terms in a sharper way, we construct and employ a designed weighted energy space $X$ with $\|w\|_X=(\int_0^{\infty}\f{|w|^2}{r^2}dr)^{\f12}$. This can be fulfilled due to a newly found connection about order of $|\f{kA_2}{\nu}|$ between two space-time bounds. Tentative space-time estimates leads us to consider the below two sums 
\begin{align}
\label{1-e}&\|e^{c|\f{kA_2}{\nu}|^{\f13}\tau}w_k\|_{L^{\infty}L^2}+|\f{kA_2}{\nu}|^{\f16}\boxed{\|e^{c|\f{kA_2}{\nu}|^{\f13}\tau}w_k\|_{L^2L^2}}+\|e^{c|\f{kA_2}{\nu}|^{\f13}\tau}(\f{|k|}{r}+r)w_k\|_{L^2L^2},\\
\label{2-e}&\|e^{c|\f{kA_2}{\nu}|^{\f13}\tau}w_k\|_{L^{\infty}X}+|\f{kA_2}{\nu}|^{\f16}\|e^{c|\f{kA_2}{\nu}|^{\f13}\tau}w_k\|_{L^2X}+\boxed{\|e^{c|\f{kA_2}{\nu}|^{\f13}\tau}rw_k\|_{L^2X}}.
\end{align}
And we expect \eqref{1-e} and $\eqref{2-e}\cdot |\f{kA_2}{\nu}|^{\f16}$ are of the same order. Note that in \eqref{2-e} the coefficient of $\|e^{c|\f{kA_2}{\nu}|^{\f13}\tau}rw_k\|_{L^2X}=\|e^{c|\f{kA_2}{\nu}|^{\f13}\tau}w_k\|_{L^2L^2}$ is $1$, and in \eqref{1-e} the coefficient in front of  $\|e^{c|\f{kA_2}{\nu}|^{\f13}\tau}w_k\|_{L^2L^2}$ is $|\f{kA_2}{\nu}|^{\f16}$. This observation motivates us to couple $L^2$ and $X$ space together to construct combined energy $E_k(|k|\geq1)$ of order $\eqref{1-e}+|\f{kA_2}{\nu}|^{\f16}\eqref{2-e}$ as below
\begin{align}
\label{basic energy}E_k=&\|e^{c|\f{kA_2}{\nu}|^{\f13}\tau}w_k\|_{L^{\infty}L^2}+|\f{kA_2}{\nu}|^{\f16}\Big(\|e^{c|\f{kA_2}{\nu}|^{\f13}\tau}w_k\|_{L^2L^2}+\|e^{c|\f{kA_2}{\nu}|^{\f13}\tau}w\|_{L^{\infty}X}\\
\nonumber &+|k|\|e^{c|\f{kA_2}{\nu}|^{\f13}\tau}\f{w_k}{r^2}\|_{L^2L^2}+|k|^{\f12}\|e^{c|\f{kA_2}{\nu}|^{\f13}\tau}\f{w_k}{r^{\f32}}\|_{L^2L^{\infty}}\Big)
+|\f{kA_2}{\nu}|^{\f13}\|e^{c|\f{kA_2}{\nu}|^{\f13}\tau}w\|_{L^2X}.
\end{align}
It turns out that $E_k$ is the correct energy, which enables us to prove the desired result with improvements. If we only adapt the natural $L^2$ energy \eqref{1-e} as in \cite{Ga}, we can prove a stability result allowing perturbation of initial data is less then $(\f{A_2}{\nu})^{\f16}$, while with $X$ space and $E_k$ energy, we allow the perturbation to be less than $|\f{A_2}{\nu}|^{\f13}$. By establishing sharp resolvent estimates and employing Gearhart-Pr$\ddot{u}$ss  type theorem in \cite{Wei}, we also avoid the loss of $\log|\alpha|$ in Proposition \ref{Prop 1.3}.

\end{remark}

\begin{remark}To prove the nonlinear result Theorem \ref{Main result-physical space}, we also obtain a sharp pseudospectral bound $(\f{A_2}{\nu})^{\f13}$ for the linearized operator around the Taylor-Couette flow in Section \ref{2-resolvnet}. This sharp bound is consistent with the estimates for the Lamb-Oseen vortices operator in \cite{LWZ-O} by Li-Wei-Zhang. The obtained pseudospectral bound then implies a sharp enhanced dissipation decaying rate $e^{-(\f{A_2}{\nu})^{\f13}\tau}$ for the vorticity equation. Not only we give resolvent estimates from $L^2$ space to $L^2$ space, we also establish the counterpart from $L^2$ space to $H^{-1}$ space and extend the optimal resolvent estimate to the weighted $L^2$ space. These estimates together infer desired space-time bounds for nonlinear terms in Section \ref{2-resolvnet}.

\end{remark}

Translated back to the original equations \eqref{full nonlinear equation ns vor}, initial data at $\tau=0$ is  corresponding to data at $t=1$. The above main theorem implies the below improved transition threshold result:
\begin{theorem}\label{improved transition threshold}(Transition Threshold) For any $|A_2|\geq\nu$, there exist constants $C_1,C_2>0$ independent of $A_1,A_2,\nu$ such that if the initial data at $t=1$ satisfies
\begin{align*}
\|\omega(1)-2A_1\|_{L^1}\leq C_1|A_2|^{\f13}\nu^{\f23},
  \end{align*}
  then the solution $\omega(t)$ to the nonlinear vorticity formulation \eqref{full nonlinear equation ns vor} is global in time and obeys the following stability estimate
 \begin{align*}
\|\omega(t)-2A_1\|_{L^1}\leq C_2|A_2|^{\f13}\nu^{\f23}.
  \end{align*}
\end{theorem}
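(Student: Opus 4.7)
The plan is to recover Theorem \ref{improved transition threshold} as a direct corollary of Theorem \ref{Main result-physical space} via the self-similar change of variables. Under the ansatz $\omega(x,t)=t^{-1}w(\xi,\tau)$ with $\xi=x/\sqrt{\nu t}$ and $\tau=\log t$, the stationary Taylor--Couette vorticity $2A_1$ is sent to $2A_1 e^{\tau}$, so
\[
\omega(x,t)-2A_1=t^{-1}\bigl(w(\xi,\tau)-2A_1 e^{\tau}\bigr).
\]
Integrating with $dx=\nu t\,d\xi$ gives the key identity
\[
\|\omega(t)-2A_1\|_{L^1_x}=\nu\,\|w(\tau)-2A_1 e^{\tau}\|_{L^1_\xi}
\]
valid for every $\tau\ge 0$. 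Since $|A_2|^{1/3}\nu^{2/3}=\nu\cdot|A_2/\nu|^{1/3}$, the threshold in the original variables is exactly $\nu$ times the rescaled threshold $|A_2/\nu|^{1/3}$ appearing in Theorem \ref{Main result-physical space}, which is the source of the scaling in the statement.

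Next I would establish the continuous embedding
\[
\|w-2A_1 e^{\tau}\|_{L^1_\xi}\le C\,\mathcal{E}(\tau),
\]
which turns the conclusion $\mathcal{E}(\tau)\le C\mathcal{E}(0)$ into an $L^1_\xi$ bound. Passing to polar coordinates and Fourier expanding in $\theta$, one has $\int_0^{2\pi}|w|\,d\theta\lesssim (\sum_k|w_k|^2)^{1/2}\le\sum_k|w_k|$. For the zero mode, writing
\[
\int_0^\infty r|w_0-2A_1 e^{\tau}|\,dr=\int_0^\infty r^2\cdot\frac{|w_0-2A_1 e^{\tau}|}{r}\,dr
\]
and applying Cauchy--Schwarz against the Gaussian weight $re^{r^2/4}$ defining $\|\cdot\|_{\mathcal{M}}$ yields $\|w_0-2A_1 e^{\tau}\|_{L^1(rdr)}\lesssim\|(w_0-2A_1 e^{\tau})/r\|_{\mathcal{M}}$. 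An analogous bound gives $\|w_k\|_{L^1(rdr)}\lesssim\|w_k\|_{\mathcal{M}}$ for $k\ne 0$. Summing in $k$ and absorbing the harmless factor $|A_2/\nu|^{-1/6}\le 1$ coming from the zero-mode normalization in $\mathcal{E}(\tau)$ delivers the embedding.

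Assembling the pieces, one chooses $C_1$ small enough that the hypothesis, together with the assumption that the initial data lies in the admissible class on which the Gaussian-weighted seminorms are finite, forces $\mathcal{E}(0)\le c_0|A_2/\nu|^{1/3}$. Theorem \ref{Main result-physical space} then provides global existence with $\mathcal{E}(\tau)\le C\mathcal{E}(0)$ for all $\tau\ge 0$, and the scaling identity composed with the embedding yields
\[
\|\omega(t)-2A_1\|_{L^1_x}=\nu\,\|w(\tau)-2A_1 e^{\tau}\|_{L^1_\xi}\le C\nu\,\mathcal{E}(0)\le C_2|A_2|^{1/3}\nu^{2/3}.
\]

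The main obstacle will be the hypothesis-side translation, since a bare $L^1$ bound does not by itself control $\mathcal{E}(0)$, which carries the weight $e^{r^2/4}$ together with the factor $1/r$ on the zero mode. I expect to handle this either by specifying a concrete initial-data class on which $\mathcal{E}(0)$ and $\|w(0)-2A_1\|_{L^1_\xi}$ are equivalent up to the scaling factor $|A_2/\nu|^{1/3}$, or by first using the parabolic smoothing of $L=\Delta_\xi+\tfrac12\xi\cdot\nabla_\xi+1$ on a short interval $[0,\tau_0]$ to upgrade an $L^1_\xi$ bound at $\tau=0$ to a finite $\mathcal{E}(\tau_0)$ bound, and then invoking Theorem \ref{Main result-physical space} starting from $\tau=\tau_0$ instead of $\tau=0$.
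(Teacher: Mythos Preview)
Your proposal is correct and follows the paper's own argument essentially line by line: the scaling identity $\|\omega(t)-2A_1\|_{L^1_x}=\nu\|w(\tau)-2A_1e^\tau\|_{L^1_\xi}$, the Cauchy--Schwarz embedding $\|w(\tau)-2A_1e^\tau\|_{L^1_\xi}\lesssim\mathcal{E}(\tau)$ against the Gaussian weight defining $\|\cdot\|_{\mathcal{M}}$, and the invocation of Theorem~\ref{Main result-physical space} to propagate $\mathcal{E}(\tau)\le C\mathcal{E}(0)$. The hypothesis-side gap you flag is real and is not resolved in the paper either: the paper's proof simply writes ``if the initial perturbation satisfies $\int|\tilde\omega(1,x)|\,dx\lesssim\nu\mathcal{E}(0)\le Cc_0|A_2|^{1/3}\nu^{2/3}$'' and proceeds, effectively assuming the stronger $\mathcal{E}(0)$ smallness rather than deriving it from the $L^1$ bound alone.
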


\begin{remark}
In this theorem, $|A_2|^{\f12}\nu^{\f23}$ of order $\nu^{\f23}$ is the transition threshold. Recall on page 6 of \cite{Ga} Gallay proved (in $L^1$ norm) a transition threshold of order $\nu^{\f56}$ for the flow near the Lamb-Oseen vortices. By scaling consideration, we expect consistent transition thresholds for the Lamb-Oseen vortices and for the Taylor-Couette flow. Our improvement comes from the desired estimates in weighted norms obtained in Theorem \ref{Main result-physical space}. In particular, we overcome the difficulty for the cases when $r_1\to 0$ and $r_2\to +\infty$.

\end{remark}

\subsection{Derivation of equations}\label{1.4-Derivation of equations}

 For notational simplicity, in below we write $A_1$ and $\f{A_2}{\nu}$ as $A$ and $B$ respectively. And we also use polar coordinates
\begin{align*}
\xi_1=r\cos\theta,\quad\xi_2=r\sin\theta\quad \textrm{with} \ r\in[0,\infty) \quad  \textrm{and} \ \theta\in\mathbb{T}.
\end{align*}
Define $\tilde{w}=w-W$, $\tilde{u}=u-V$ with $W$ and $V$ given in \eqref{TC flow after self-similar transformation}. Via \eqref{self-similar transform} and $\tilde{u}\cdot\nabla_\xi W=0$, we have
\begin{align}
\label{pertubation of TC flow in ss trans}\partial_\tau\tilde{w}-( \Delta_\xi+\f12\xi\cdot\nabla_\xi+1)\tilde{w}+(\tilde{u}+V)\cdot\nabla_\xi\tilde{w}=0.
\end{align}
Since $\partial_{\xi_1}=\cos\theta\partial_{r}-\f{1}{r}\sin\theta\partial_{\theta}$ and $\partial_{\xi_2}=\sin\theta\partial_{r}+\f{1}{r}\cos\theta\partial_{\theta}$, we deduce
\begin{align*}
-\xi_2\partial_{\xi_1}+\xi_1\partial_{\xi_2}&=\partial_{\theta},\quad \xi_1\partial_{\xi_1}+\xi_2\partial_{\xi_2}=r\partial_{r},\quad\Delta_{\xi}=\partial_{r}^2+\f{1}{r}\partial_{r}+\f{1}{r^2}\partial_{\theta}^2.
\end{align*}
Thus, equation \eqref{pertubation of TC flow in ss trans} becomes
\begin{align}
\label{pertubation of TC flow in ss trans in polar}\partial_\tau\tilde{w}-[(\partial_{r}^2+\f{1}{r}\partial_{r}+\f{1}{r^2}\partial_{\theta}^2)+\f12r\partial_{r}+1]\tilde{w}+(Ae^{\tau}+\f{B}{r^2})\partial_{\theta}\tilde{w}+\tilde{u}\cdot\nabla_\xi\tilde{w}=0.
\end{align}
Recall we have $\tilde{u}=\left(
  \begin{array}{ccc}
   -\partial_{\xi_2}\varphi \\
   \partial_{\xi_1}\varphi \\
  \end{array}\right)$ with $\varphi$ being the stream function satisfying
  \begin{align*}
  \Delta_{\xi}\varphi=(\partial_{r}^2+\f{1}{r}\partial_{r}+\f{1}{r^2}\partial_{\theta}^2)\varphi=\tilde{w}. \end{align*}
  Then it follows 
\begin{align*}
\tilde{u}\cdot\nabla_\xi\tilde{w}=&-\partial_{\xi_2}\varphi\partial_{\xi_1}\tilde{w}+\partial_{\xi_1}\varphi\partial_{\xi_2}\tilde{w}\\
=&-(\sin\theta\partial_{r}\varphi+\f{1}{r}\cos\theta\partial_{\theta}\varphi)(\cos\theta\partial_{r}\tilde{w}-\f{1}{r}\sin\theta\partial_{\theta}\tilde{w})\\
&+(\cos\theta\partial_{r}\varphi-\f{1}{r}\sin\theta\partial_{\theta}\varphi)(\sin\theta\partial_{r}\tilde{w}+\f{1}{r}\cos\theta\partial_{\theta}\tilde{w})\\
=&\f{1}{r}(\partial_{r}\varphi\partial_{\theta}\tilde{w}-\partial_{\theta}\varphi\partial_{r}\tilde{w}).
\end{align*}
Together with
\begin{align*}
\partial_{r}\varphi\partial_{\theta}\tilde{w}-\partial_{\theta}\varphi\partial_{r}\tilde{w}=\partial_{r}(\varphi\partial_{\theta}\tilde{w})-\partial_{\theta}(\varphi\partial_{r}\tilde{w}), \end{align*}
we derive the below (nonlinear) perturbation equation \eqref{pertubation of TC flow in ss trans in polar} for $\tilde{w}$
\begin{align}
\label{pertubation of NS vor}
\partial_\tau\tilde{w}&-[ (\partial_{r}^2+\f{1}{r}\partial_{r}+\f{1}{r^2}\partial_{\theta}^2)+\f12r\partial_{r}+1]\tilde{w}+(Ae^{\tau}+\f{B}{r^2})\partial_{\theta}\tilde{w}\\
\nonumber&+\f{1}{r}[\partial_{r}(\varphi\partial_{\theta}\tilde{w})-\partial_{\theta}(\varphi\partial_{r}\tilde{w})]=0.
\end{align}
Take the Fourier transform in $\theta$ direction. Denote the Fourier coefficient  of $\tilde{w}$ and $\varphi$ to be $\hat{w}_k$ and $\hat{\varphi}_k$, respectively. Then equation \eqref{pertubation of NS vor} becomes
\begin{align}\label{pertubation of NS vor-fourier}\partial_\tau\hat{w}_k&-[(\partial_{r}^2+\f{1}{r}\partial_{r}-\f{k^2}{r^2})+\f12r\partial_{r}+1]\hat{w}_k+(Ae^{\tau}+\f{B}{r^2})ik\hat{w}_k\\
\nonumber&+\f{1}{r}[\sum_{l\in\mathbb{Z}}il\partial_{r}(\hat{w}_l\hat{\varphi}_{k-l})-ik\sum_{l\in\mathbb{Z}}\hat{\varphi}_{k-l}\partial_{r}\hat{w}_{l}]=0
\end{align}
with $\hat{\varphi}_k$ satisfying $(\partial_r^2+\f{1}{r}\partial_r-\f{k^2}{r^2})\hat{\varphi}_k=\hat{w}_k$.

Denote $\breve{w}_k:=e^{ikAe^{\tau}}\hat{w}_k$ and $\breve{\varphi}_k:=e^{ikAe^{\tau}}\hat{\varphi}_k$. We obtain an equivalent form for \eqref{pertubation of NS vor-fourier}
\begin{align*}
\partial_\tau\breve{w}_k&-[(\partial_{r}^2+\f{1}{r}\partial_r-\f{k^2}{r^2})+\f12r\partial_r+1]\breve{w}_k+\f{ikB}{r^2}\breve{w}_k\\
&+\f{1}{r}[\sum_{l\in\mathbb{Z}}il\partial_r(\breve{w}_l\breve{\varphi}_{k-l})-ik\sum_{l\in\mathbb{Z}}\breve{\varphi}_{k-l}\partial_r\breve{w}_{l}]=0
\end{align*}
with $\breve{\varphi}_k$  satisfying $(\partial_r^2+\f{1}{r}\partial_r-\f{k^2}{r^2})\breve{\varphi}_k=\breve{w}_k$.

\noindent Let us further define
$w_k:=f^{-1}\breve{w}_k$ with  $f=\f{e^{-\f{r^2}{8}}}{r^{\f12}}$.
This $f^{-1}$ weight is designed to make the first derivative $\f{1}{r}\partial_r+\f12r\partial_r$ vanish. With $w_k$ we deduce
\begin{align}
\label{scaling nonlinear}\partial_\tau w_k&- [\partial_r^2-(\f{k^2-\f14}{r^2}+\f{r^2}{16}-\f12)]w_k+\f{ikB}{r^2}w_k\\
\nonumber&+\f{f^{-1}}{r}[\sum_{l\in\mathbb{Z}}il\partial_r(\breve{w}_l\breve{\varphi}_{k-l})-ik\sum_{l\in\mathbb{Z}}\breve{\varphi}_{k-l}\partial_r\breve{w}_{l}]=0.
\end{align}

Next we give the explicit forms of nonlinear terms. We first write
\begin{align*}
&\f{f^{-1}}{r}[\sum_{l\in\mathbb{Z}}il\partial_r(\breve{w}_l\breve{\varphi}_{k-l})-ik\sum_{l\in\mathbb{Z}}\breve{\varphi}_{k-l}\partial_r\breve{w}_{l}]\\
=&\f{f^{-1}}{r}[\sum_{l\in\mathbb{Z}}il\partial_r(\breve{w}_l\breve{\varphi}_{k-l})-ik\sum_{l\in\mathbb{Z}}\partial_r(\breve{\varphi}_{k-l}\breve{w}_{l})+ik\sum_{l\in\mathbb{Z}}\breve{w}_{l}\partial_r\breve{\varphi}_{k-l}]\\
=&\f{f^{-1}}{r}[ik\sum_{l\in\mathbb{Z}}\breve{w}_{l}\partial_r\breve{\varphi}_{k-l}-\sum_{l\in\mathbb{Z}}i(k-l)\partial_r(\breve{w}_l\breve{\varphi}_{k-l})],
\end{align*}
Observing and utilizing below basic equalites
\begin{align*}
&\f{f^{-1}}{r}\partial_r(\breve{w}_l\breve{\varphi}_{k-l})=\partial_r(\f{f^{-1}}{\tilde{r}}\breve{w}_l\breve{\varphi}_{k-l})-\partial_r(\f{f^{-1}}{\tilde{r}})\breve{w}_l\breve{\varphi}_{k-l},\quad\partial_rf=-(\f12\f{1}{r}+\f14r)f,\\
&\partial_r(\f{f^{-1}}{r})=-\f{f'}{f^2}\f{1}{r}-\f{f^{-1}}{r^2}=(\f12\f{1}{r}+\f14r)f^{-1}\f{1}{r}-\f{f^{-1}}{r^2}=(\f14-\f12\f{1}{r^2})f^{-1},
\end{align*}
we obtain
\begin{align*}
&\f{f^{-1}}{r}[\sum_{l\in\mathbb{Z}}il\partial_r(\breve{w}_l\breve{\varphi}_{k-l})-ik\sum_{l\in\mathbb{Z}}\breve{\varphi}_{k-l}\partial_r\breve{w}_{l}]\\
=&\f{f^{-1}}{r}[ik\sum_{l\in\mathbb{Z}}\breve{w}_{l}\partial_r\breve{\varphi}_{k-l}-\sum_{l\in\mathbb{Z}}i(k-l)\partial_r(\breve{w}_l\breve{\varphi}_{k-l})]\\
=&[ik\sum_{l\in\mathbb{Z}}w_{l}\f{\partial_r\breve{\varphi}_{k-l}}{r}-\sum_{l\in\mathbb{Z}}i(k-l)(\partial_r(\f{w_l\breve{\varphi}_{k-l}}{r})-(\f14-\f12\f{1}{r^2})w_l\breve{\varphi}_{k-l})]\\
=&[ik\sum_{l\in\mathbb{Z}}w_{l}\f{\partial_r\breve{\varphi}_{k-l}}{r}+\sum_{l\in\mathbb{Z}}i(k-l)(\f14-\f12\f{1}{r^2})w_l\breve{\varphi}_{k-l}-\sum_{l\in\mathbb{Z}}i(k-l)\partial_r(\f{w_l\breve{\varphi}_{k-l}}{r})].
\end{align*}
We denote 
\begin{align*}
&f_1:=ik\sum_{l\in\mathbb{Z}}w_{l}\f{\partial_r\breve{\varphi}_{k-l}}{r}+\sum_{l\in\mathbb{Z}}i(k-l)(\f14-\f12\f{1}{r^2})w_l\breve{\varphi}_{k-l},\\
&f_2:=\sum_{l\in\mathbb{Z}}i(k-l)\f{w_l\breve{\varphi}_{k-l}}{r}.
\end{align*}
Then the nonlinear perturbation equation \eqref{scaling nonlinear} can be written as
\begin{align}
\label{main equation}\left\{
\begin{aligned}
&\partial_{\tau}w_k+L_kw_k+f_1-\partial_{r}f_2=0,\\
&w_k(0)=w_k|_{\tau=0},\quad w_k|_{\partial \Gamma}=0 
\end{aligned}
\right.
\end{align}
with $\partial\Gamma$ being the boundaries of the viscous fluid. Here
\begin{align*}
L_k=- [\partial_r^2-(\f{k^2-\f14}{r^2}+\f{r^2}{16}-\f12)]+i\f{kB}{r^2}.
\end{align*}

\subsection{Strategy and structure of the paper}
In Section \ref{2-resolvnet}, we establish the resolvent estimate for the linearized equation
\begin{align*}
&- [\partial_r^2-(\f{k^2-\f14}{r^2}+\f{r^2}{16}-\f12)]w+i\beta_k(\f{1}{r^2}-\lambda)w=F,\quad w|_{r=0}=w|_{r=\infty}=0
\end{align*}
 in both $L^2$ space and weighted $L^2$ space $X$. Here $X$ is defined in \eqref{weighted space}.
The resolvent estimate is inspired by \cite{LWZ-O}. A key difference to \cite{LWZ-O} is that, for here, not only we give the resolvent estimate from $L^2$ to $L^2$, we also establish the resolvent estimate from $L^2$ to $H^{-1}$. And we further generalize and extend these resolvent estimates to constructed weighted spaces as well. Our estimates in weighted spaces enable us to sharpen previous results and get the desired nonlinear theorem. In later sections to derive a sharp decaying estimate $e^{-c|kB|^{\f13}\tau}$ for the nonlinear problem, we also need to shift the linear operator to the left by $c_2|kB|^{\f13}$. This means that we establish bounds for $\|F-c_2|\beta_k|^{\f13}w\|_{L^2}$, $\|F-c_2|\beta_k|^{\f13}w\|_{H^{-1}}$,  $\|\f{F-c_2|\beta_k|^{\f13}w}{r}\|_{L^2}$ and $\|\f{F-c_2|\beta_k|^{\f13}w}{r}\|_{H^{-1}}$. Besides dodging a potential  $\log|B|$ loss, we also avoid proving the resolvent estimate between $\|w\|_{H^{-1}}$ and $\|F\|_{H^{-1}}$.

In Section \ref{3-space time}, we derive space-time estimates for the linearized Navier-Stokes equations. In the aim of applying these estimates to the nonlinear problem, we first study the following inhomogeneous equation \eqref{main equation} for $w_k$:
\begin{align*}
&\partial_{\tau}w_k+L_kw_k+f_1-\partial_{r}f_2=0,\quad w_k(0)=w_k|_{\tau=0}.
\end{align*}
We decompose $w_k$ into two parts with $w_k=w_k^l+w_k^{n}$. Here $w_k^l$ satisfies the below homogeneous linear equation \eqref{homogeneous linear equation} with initial data $w_k(0)$
\begin{align*}
&\partial_{\tau}w_k^l+L_kw_k^l=0,\quad w_k^l(0)=w_k(0),
\end{align*}
and $w_k^{n}$ verifies the inhomogeneous linear equation \eqref{nonzero mode nonlinear} with zero initial data
\begin{align*}
\partial_{\tau}w_k^{n}+L_kw_k^{n}+f_1-\partial_{r}f_2=0,\quad w_k^{n}(0)=0.
\end{align*}
We obtain the sharp bound for $w_k^l$ by using Gearhart-Pr$\ddot{u}$ss type theorem in \cite{Wei} to avoid $\log|B|$, since the linearized operator $L_k$ is accretive in both $L^2$ space and weighted $L^2$ space. Through Fourier transform and applying  proofs in Proposition \ref{resolvent estimate-1} and Proposition \ref{resolvent estimate-r1-1}, we derive a sharp semigroup bound for $w_k^n$ as well. Putting together, our space-time estimates take the below forms:
\begin{align*}
M(L^2):=&\|e^{c|kB|^{\f13}\tau}w_k\|_{L^{\infty}L^2}+|kB|^{\f16}\boxed{\|e^{c|kB|^{\f13}\tau}w_k\|_{L^2L^2}}\\
\nonumber&+\|e^{c|kB|^{\f13}\tau}\partial_{r}w_k\|_{L^2L^2}+\|e^{c|kB|^{\f13}\tau}(\f{|k|}{r}+r)w_k\|_{L^2L^2}\\
\nonumber\leq & C\Big(|kB|^{-\f16}\|e^{c|kB|^{\f13}\tau}f_1\|_{L^2L^2}+\|e^{c|kB|^{\f13}\tau}f_2\|_{L^2L^2}+\|w_k(0)\|_{L^2}\Big)
\end{align*}
and
\begin{align*}
M(X):=&\|e^{c|kB|^{\f13}\tau}w_k\|_{L^{\infty}X}+|kB|^{\f16}\|e^{c|kB|^{\f13}\tau}w_k\|_{L^2X}\\
\nonumber&+\|e^{c|kB|^{\f13}\tau}\partial_{r}w_k\|_{L^2X}+|k|\|e^{c|kB|^{\f13}\tau}\f{w_k}{r}\|_{L^2X}+\boxed{\|e^{c|kB|^{\f13}\tau}rw_k\|_{L^2X}}\\
\nonumber\leq&C\Big(|kB|^{-\f16}(\|e^{c|kB|^{\f13}\tau}f_1\|_{L^2X}+\|e^{c|kB|^{\f13}\tau}\f{f_2}{r}\|_{L^2X})+\|e^{c|kB|^{\f13}\tau}f_2\|_{L^2X}+\|w_k(0)\|_{X}\Big).
\end{align*}
The terms singled out in $M(L^2)$ and $M(X)$ will help us to design the correct ``energy norm'' $E_k$ used in Section \ref{4-Nonlinear stability}.

In Section \ref{4-Nonlinear stability}, we control the nonlinear terms in a sharpen way and employ a designed weighted energy space. Recall the expression of $M(L^2)$ and $M(X)$. In $M(X)$,  the coefficient of $\|e^{c|kB|^{\f13}\tau}rw_k\|_{L^2X}=\|e^{c|kB|^{\f13}\tau}w_k\|_{L^2L^2}$ is $1$, and in $M(L^2)$ the coefficient of $\|e^{c|kB|^{\f13}\tau}w_k\|_{L^2L^2}$ is $|kB|^{\f16}$. This suggests that a desired energy norm should be of order $M(L^2)+|kB|^{\f16}M(X)$. And it motivates us to couple $L^2$ and $X$ spaces together to construct the combined energy $E_k(k\in\mathbb{Z}/\{0\})$ as below
\begin{align*}
E_k:=&\|e^{c|kB|^{\f13}\tau}w_k\|_{L^{\infty}L^2}+|kB|^{\f16}\Big(\|e^{c|kB|^{\f13}\tau}w_k\|_{L^2L^2}+\|e^{c|kB|^{\f13}\tau}\f{w_k}{r}\|_{L^{\infty}L^2}\\
\nonumber &+|k|\|e^{c|kB|^{\f13}\tau}\f{w_k}{r^2}\|_{L^2L^2}+|k|^{\f12}\|e^{c|kB|^{\f13}\tau}\f{w_k}{r^{\f32}}\|_{L^2L^{\infty}}\Big)
+|kB|^{\f13}\|e^{c|kB|^{\f13}\tau}\f{w_k}{r}\|_{L^2L^2}.
\end{align*}
With these $E_k$, we prove our main conclusion-Theorem \ref{Nonlinear stability}.

Note that the last term $|kB|^{\f13}\|e^{c|kB|^{\f13}\tau}\f{w_k}{r}\|_{L^2L^2}$ in $E_k$ plays a crucial role when estimating nonlinear terms. It is because in $E_k$ we utilize $|kB|^{\f13}\|e^{c|kB|^{\f13}\tau}\f{w_k}{r}\|_{L^2L^2}$ rather than $|kB|^{\f16}\|e^{c|kB|^{\f13}\tau}w_k\|_{L^2L^2}$ to control nonlinear terms, that we allow larger $|B|^\f13$ (rather than $|B|^\f16$) perturbation in initial data. Furthermore, our argument is acompanied by including a  
$|kB|^{\f16}|k|^{\f12}\|e^{c|kB|^{\f13}\tau}\f{w_k}{r^{\f32}}\|_{L^2L^{\infty}}$ term in $E_k$, which helps to control weights in $k$.

\subsection{Other related results}
Here we list some other related works about enhanced dissipation and nonlinear asymptotic stability in 2D.

For 2D Navier-Stokes, enhanced dissipation effect was proved by  Beck-Wayne in \cite{BW} for a passive scalar advected by the Kolmogorov flow. The case of $u=(u(y), 0)$ with a finite number of critical points was treated by Bedrossian-Zelati in \cite{BZ}.  The case of the Kolmogorov flow was analyzed by Lin-Xu in \cite{LX}, Ibrahim-Maekawa-Masmoudi in \cite{IMM} and Wei-Zhang-Zhao in \cite{WZZ-NS}, and they obtained a time-scale of order $O(\nu^{-\f12})$. In \cite{ZDE}, Zelati-Delgadino-Elgindi explored its connection to mixing effect. In \cite{CWZ-2D-ided}, for the linearized Navier-Stokes system around monotone shear flows with non-slip condition, Chen-Wei-Zhang further developed estimates allowing the couple of the enhanced dissipation and the inviscid-damping effects.

For the 2D Boussinesq equations, in \cite{DWZ} Deng-Wu-Zhang proved enhanced dissipation via studying a non-self-adjoint operator. We also want to refer to a result achieving to establish nonlinear inviscid damping for 2D Euler equations by Ionescu-Jia \cite{IJ-NonE}. There they proved nonlinear asymptotic stability of monotonic shear flows in the channel.

\subsection{Acknowledgments}
The authors would like to thank Professor Zhifei Zhang for valueable discussions and correspondences.
\bigskip

\section{\textbf{Resolvent estimates in $L^2$ space and in weighted $L^2$ space}}	\label{2-resolvnet}
In order to establish decays of the linearized equation \eqref{main equation}, the first step is to study the resolvent equation under the following boundary condition in both $L^2$ space and weighted $L^2$ space. More precisely, we consider
\begin{align}\label{vorticity eqn}
&- [\partial_r^2-(\f{k^2-\f14}{r^2}+\f{r^2}{16}-\f12)]w+i\beta_k(\f{1}{r^2}-\lambda)w=F \quad \textrm{with} \ w|_{r=0,\infty}=0,
\end{align}
where $\lambda\in\mathbb{R}$. The domain is defined as
\begin{align*}
D_k=\{w\in H_{loc}^2(\mathbb{R}_{+},dr)\cap L^2(\mathbb{R}_{+},dr):- [\partial_r^2-(\f{k^2-\f14}{r^2}+\f{r^2}{16}-\f12)]w+i\f{\beta_k}{r^2}w\in L^2(\mathbb{R}_{+},dr) \}.
\end{align*}
Note that for any $|k|\geq1$, it holds
\begin{align*}
D_k&=\{w\in L^2(\mathbb{R}_{+},dr):\partial_r^2w,\f{w}{r^2},r^2w\in L^2(\mathbb{R}_{+},dr) \}.
\end{align*}

We start with deriving the coercive estimates for $\Re \langle F, w \rangle$ and $\Re \langle F, \f{w}{r^2}\rangle$, which are the real part of $\langle F, w \rangle$ and $\langle F, \f{w}{r^2}\rangle$, respectively.

\subsection{Coercive estimates of the real part}
\begin{lemma}\label{trivial w' lemma}For any $|k|\geq1$ and $w\in D_k$, it holds
  \begin{align}
\label{trivial w'}&\Re\langle F,w\rangle\gtrsim\|w'\|_{L^2}^2+\langle(\f{k^2}{r^2}+r^2)w,w\rangle.
\end{align}
\end{lemma}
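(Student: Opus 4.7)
The plan is to derive \eqref{trivial w'} by the standard energy identity: multiply \eqref{vorticity eqn} by $\bar w$, integrate over $r\in(0,\infty)$, and take the real part. The imaginary-coefficient term $i\beta_k(r^{-2}-\lambda)|w|^2$ drops out automatically since it is purely imaginary. The boundary condition $w|_{r=0,\infty}=0$ (which follows from $w\in D_k$ together with $r^2w, r^{-2}w \in L^2$ for $|k|\ge 1$) permits integration by parts on the $-\partial_r^2 w$ term, producing $\|w'\|_{L^2}^2$. What remains is the identity
\begin{align*}
\Re\langle F,w\rangle \;=\; \|w'\|_{L^2}^2 + \int_0^{\infty}\!\left(\f{k^2-\tfrac14}{r^2} + \f{r^2}{16} - \f12\right)|w|^2\,dr .
\end{align*}

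The coercive estimate then reduces to showing that the pointwise weight $\f{k^2-1/4}{r^2}+\f{r^2}{16}-\f12$ controls a positive multiple of $\f{k^2}{r^2}+r^2$ when $|k|\ge 1$, possibly with a small assistance from $\|w'\|_{L^2}^2$. First I would exploit $|k|\ge 1$ to replace $k^2-\tfrac14$ by $\tfrac34 k^2$, so that $\f{k^2-1/4}{r^2}\ge \tfrac34\,\f{k^2}{r^2}$, and similarly write $\f{r^2}{16}=\f{1}{32}r^2+\f{1}{32}r^2$ to split off a multiple of $r^2$. Then I would absorb the harmful term $-\tfrac12|w|^2$ by splitting the integral at $r=1$: on $\{r\le 1\}$ I bound $\tfrac12|w|^2\le \tfrac12 r^{-2}|w|^2$, which is absorbed by the remaining piece of $\f{k^2-1/4}{r^2}|w|^2$ (using $k^2\ge 1$), and on $\{r\ge 1\}$ I bound $\tfrac12|w|^2\le \tfrac12 r^2|w|^2$, absorbed by the leftover piece of $\f{r^2}{16}|w|^2$.

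If the constants from this crude split are too tight, the fallback is to spend a small fraction of $\|w'\|_{L^2}^2$: by the one-dimensional Hardy inequality on the half-line with $w(0)=0$, $\|w'\|_{L^2}^2\ge \tfrac14\|w/r\|_{L^2}^2$, giving additional control of $\int|w|^2/r^2\,dr$ that can absorb $-\tfrac12\int|w|^2$ after bounding $|w|^2\le (|w|^2/r^2 + r^2|w|^2)/2$. Either way the final conclusion is that there exists an absolute constant $c>0$, uniform in $|k|\ge 1$ and $\lambda\in\mathbb{R}$, such that
\begin{align*}
\Re\langle F,w\rangle \;\ge\; c\Bigl(\|w'\|_{L^2}^2 + \langle(\tfrac{k^2}{r^2}+r^2)w,w\rangle\Bigr),
\end{align*}
which is the desired inequality.

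The main (minor) obstacle is calibrating the constants so that the negative contribution $-\tfrac12|w|^2$ is strictly dominated; everything else is an application of Cauchy--Schwarz in disguise. No delicate spectral information about the harmonic-oscillator operator is needed because $|k|\ge 1$ already provides a uniform gap through the centrifugal term.
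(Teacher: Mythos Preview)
Your argument is fine for $|k|\ge 2$ and essentially matches the paper there. The gap is at $|k|=1$, and it is not a matter of ``calibrating constants'': neither your crude split nor the Hardy fallback can close.

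Concretely, for $|k|=1$ the weight in your energy identity is $\tfrac{3/4}{r^2}+\tfrac{r^2}{16}-\tfrac12$. Your split at $r=1$ fails on $\{r\ge 1\}$ because $\tfrac12>\tfrac{1}{16}$. The Hardy fallback is also borderline-insufficient: the sharp one-dimensional Hardy constant is $\tfrac14$, so spending \emph{all} of $\|w'\|_{L^2}^2$ only upgrades the weight to
\[
\frac{1}{r^2}+\frac{r^2}{16}-\frac12,
\]
whose pointwise minimum over $r>0$ is exactly $0$ (at $r=2$). You therefore obtain $\Re\langle F,w\rangle\ge 0$ with no room left either for a positive multiple of $\langle(\tfrac{1}{r^2}+r^2)w,w\rangle$ or for recovering any $\|w'\|_{L^2}^2$ on the right. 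Spending only a fraction $\varepsilon<1$ of $\|w'\|_{L^2}^2$ makes the effective weight strictly negative near its minimum.

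What the paper does for $|k|=1$ is exactly the ``delicate spectral information'' you hoped to avoid: it uses the explicit ground-state factorization
\[
-\Big[\partial_r^2-\big(\tfrac{3/4}{r^2}+\tfrac{r^2}{16}-\tfrac12\big)\Big]w=-h^{-1}\partial_r\big[h^2\partial_r(h^{-1}w)\big]+\tfrac12\,w,\qquad h=r^{3/2}e^{-r^2/8},
\]
which gives the sharp bound $\Re\langle F,w\rangle\ge \tfrac12\|w\|_{L^2}^2$. This extra $\tfrac12\|w\|_{L^2}^2$ is precisely the margin Hardy cannot supply; the paper then adds it to the plain identity $\Re\langle F,w\rangle\ge\langle(\tfrac{1}{r^2}+\tfrac{r^2}{16}-\tfrac12)w,w\rangle$ to get control of $\langle(\tfrac{1}{r^2}+r^2)w,w\rangle$, and recovers $\|w'\|_{L^2}^2$ afterwards from the direct integration-by-parts identity. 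So for $|k|=1$ you really do need the factorization (or an equivalent spectral-gap argument), not just Hardy.
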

\begin{proof}
We prove this lemma by considering the following two scenarios:
\begin{itemize}
  \item \textbf{When $|k|=1$}, one can check
\begin{align*}
-[\partial_r^2-(\f{k^2-\f14}{r^2}+\f{r^2}{16}-\f12)]w=-h^{-1}\partial_r[h^2\partial_r(h^{-1}w)]+\f12w,
\end{align*}
where $h=r^{\f32}e^{-\f{r^2}{8}}$. Via integration by parts we obtain
\begin{align*}
&\Re\langle F,w\rangle=\Re\langle -[\partial_r^2-(\f{k^2-\f14}{r^2}+\f{r^2}{16}-\f12)]w,w\rangle=\|h\partial_r(h^{-1}w)\|_{L^2}^2+\f12\|w\|_{L^2}^2,
\end{align*}
which implies
  \begin{align}
\label{PP1}&\Re\langle F,w\rangle\geq\f12\|w\|_{L^2}^2.
\end{align}
We also have
 \begin{align*}
&\Re\langle F,w\rangle=\|r^{-\f12}\partial_r(r^{\f12}w)\|_{L^2}^2+\langle(\f{1}{r^2}+\f{r^2}{16}-\f12)w,w\rangle\geq \langle(\f{1}{r^2}+\f{r^2}{16}-\f12)w,w\rangle.
\end{align*}
Adding these two inequalities, we deduce
\begin{align}
\label{Le1}2\Re\langle F,w\rangle\geq\langle(\f{1}{r^2}+\f{r^2}{16})w,w\rangle\gtrsim\langle(\f{1}{r^2}+r^2)w,w\rangle.
\end{align}
Applying integration by parts one also obtains
  \begin{align*}
\Re\langle F,w\rangle=&\|w'\|_{L^2}^2+\langle(\f{k^2-\f14}{r^2}+\f{r^2}{16}-\f12)w,w\rangle \\
\geq& \|w'\|_{L^2}^2-\langle(\f{1}{4r^2}+\f12)w,w\rangle.
\end{align*}
Thus we get
 \begin{align*}
 \|w'\|_{L^2}^2\leq\langle(\f{1}{4r^2}+\f12)w,w\rangle+\Re\langle F,w\rangle\lesssim\Re\langle F,w\rangle.
 \end{align*}
 Combining with \eqref{Le1}, we arrive at
  \begin{align*}
 \Re\langle F,w\rangle\gtrsim\|w'\|_{L^2}^2+\langle(\f{1}{r^2}+r^2)w,w\rangle.
 \end{align*}
  \item \textbf{When $|k|\geq2$}, observing
\begin{align*}
\f{k^2-\f14}{r^2}+\f{r^2}{16}-\f12\geq\f23\f{k^2}{r^2}+\f{r^2}{16}-\f12\gtrsim\f{k^2}{r^2}+r^2,
\end{align*}
and via employing integration by parts again, we conclude
  \begin{align*}
&\Re\langle F,w\rangle=\|w'\|_{L^2}^2+\langle(\f{k^2-\f14}{r^2}+\f{r^2}{16}-\f12)w,w\rangle\gtrsim\|w'\|_{L^2}^2+\langle(\f{k^2}{r^2}+r^2)w,w\rangle.
\end{align*}
\end{itemize}

This completes the proof of Lemma \ref{trivial w' lemma}.
\end{proof}

With Lemma \ref{trivial w' lemma}, we then can establish
\begin{lemma}\label{trivial w'-h1-2.2}For any $|k|\geq1$ and $w\in D_k$, it holds
  \begin{align}
  \label{trivial w'-h1-0}&|k|^{\f12}\|w\|_{H^1}+|k|\|w\|_{L^2}\lesssim\|F\|_{L^2},\\
\label{trivial w'-h1}&\|w\|_{H^1}+|k|^{\f12}\|w\|_{L^2}\lesssim\|F\|_{H^{-1}}.
\end{align}
\end{lemma}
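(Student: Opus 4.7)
\medskip

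\noindent\textbf{Proof plan for Lemma \ref{trivial w'-h1-2.2}.}
The plan is to reduce both estimates to Lemma \ref{trivial w' lemma} by pairing the equation \eqref{vorticity eqn} against $w$ and taking real parts: since the multiplicative potential $i\beta_k(\f{1}{r^2}-\lambda)$ is purely imaginary, it vanishes in $\Re\langle F,w\rangle$ and we obtain
\begin{align*}
\Re\langle F,w\rangle \;\gtrsim\; \|w'\|_{L^2}^2 + \Big\langle\Big(\tfrac{k^2}{r^2}+r^2\Big)w,\,w\Big\rangle \;=\; \|w'\|_{L^2}^2+\|kw/r\|_{L^2}^2+\|rw\|_{L^2}^2.
\end{align*}
The key observation that lets us extract a power of $|k|$ is the pointwise factorization $|w|^2=\tfrac{|w|}{r}\cdot r|w|$, which by Cauchy--Schwarz gives $\|w\|_{L^2}^2\le \|w/r\|_{L^2}\|rw\|_{L^2}$, and hence by AM-GM
\begin{align*}
|k|\,\|w\|_{L^2}^2 \;\le\; \|kw/r\|_{L^2}\,\|rw\|_{L^2}\;\le\;\tfrac12\big(\|kw/r\|_{L^2}^2+\|rw\|_{L^2}^2\big).
\end{align*}

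For the first estimate, I would upper-bound the left-hand side by Cauchy--Schwarz $\Re\langle F,w\rangle\le\|F\|_{L^2}\|w\|_{L^2}$, so that Lemma \ref{trivial w' lemma} together with the displayed inequality yields
\begin{align*}
\|w'\|_{L^2}^2 + |k|\,\|w\|_{L^2}^2 \;\lesssim\; \|F\|_{L^2}\,\|w\|_{L^2}.
\end{align*}
Dropping the $\|w'\|_{L^2}^2$ term and dividing gives $|k|\,\|w\|_{L^2}\lesssim \|F\|_{L^2}$; feeding this back into the inequality bounds $\|w'\|_{L^2}^2\lesssim \|F\|_{L^2}^2/|k|$, which is exactly $|k|^{1/2}\|w'\|_{L^2}\lesssim\|F\|_{L^2}$. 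Since $|k|\ge 1$, the $\|w\|_{L^2}$ contribution to $\|w\|_{H^1}$ is already absorbed in $|k|\,\|w\|_{L^2}$, yielding \eqref{trivial w'-h1-0}.

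For the second estimate, I would instead use the duality bound $\Re\langle F,w\rangle\le\|F\|_{H^{-1}}\|w\|_{H^1}$, which after applying Lemma \ref{trivial w' lemma} and the same Cauchy--Schwarz trick gives
\begin{align*}
\|w'\|_{L^2}^2 + |k|\,\|w\|_{L^2}^2 \;\lesssim\; \|F\|_{H^{-1}}\,\|w\|_{H^1}.
\end{align*}
Using $|k|\ge 1$ to absorb $\|w\|_{L^2}^2$ into $|k|\,\|w\|_{L^2}^2$, the left-hand side controls $\|w\|_{H^1}^2$, so division gives $\|w\|_{H^1}\lesssim \|F\|_{H^{-1}}$; plugging this back yields $|k|^{1/2}\|w\|_{L^2}\lesssim\|F\|_{H^{-1}}$ as well.

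\medskip
I do not expect a serious obstacle here: the imaginary potential drops out on taking real parts, and the $|k|$-weight is produced entirely by the elementary Cauchy--Schwarz identity $\|w\|_{L^2}^2\le\|w/r\|_{L^2}\|rw\|_{L^2}$ that couples the $\tfrac{k^2}{r^2}$ and $r^2$ terms already controlled by Lemma \ref{trivial w' lemma}. The only mild care needed is to verify that the integration-by-parts used in the $\langle F,w\rangle$ pairing is justified for $w\in D_k$, which follows from the decay/regularity built into the domain $D_k$ (as $w,\partial_r^2 w,w/r^2,r^2 w\in L^2$ and $w|_{r=0,\infty}=0$).
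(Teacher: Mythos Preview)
Your proposal is correct and follows essentially the same route as the paper. The paper's proof starts directly from the inequality $\|w'\|_{L^2}^2+|k|\|w\|_{L^2}^2\lesssim\Re\langle F,w\rangle$ (obtained from Lemma~\ref{trivial w' lemma} via the pointwise bound $\tfrac{k^2}{r^2}+r^2\ge 2|k|$, which is equivalent to your Cauchy--Schwarz/AM--GM step), and then proceeds by the same bootstrap: bound $\Re\langle F,w\rangle$ by $\|F\|_{L^2}\|w\|_{L^2}$ or $\|F\|_{H^{-1}}\|w\|_{H^1}$, extract the $L^2$ bound first, and feed it back to get the $H^1$ bound.
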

\begin{proof}By Lemma \ref{trivial w' lemma}, one obtains
\begin{align*}
\|w'\|_{L^2}^2+|k|\|w\|_{L^2}^2\lesssim\Re\langle F,w\rangle,
\end{align*}
which implies
\begin{align*}
|k|\|w\|_{L^2}\lesssim\|F\|_{L^2}.
\end{align*}
Therefore, we deduce
\begin{align*}
\|w\|_{H^1}^2\lesssim\|w'\|_{L^2}^2+|k|\|w\|_{L^2}^2\lesssim\Re\langle F,w\rangle\lesssim|k|^{-1}\|F\|_{L^2}^2.
\end{align*}
This completes the proof of \eqref{trivial w'-h1-0}.

To prove (\ref{trivial w'-h1}), we apply Lemma \ref{trivial w' lemma} again
\begin{align*}
\|w\|_{H^1}^2\lesssim\|w'\|_{L^2}^2+|k|\|w\|_{L^2}^2\lesssim\Re\langle F,w\rangle.
\end{align*}
This renders
\begin{align*}
\|w\|_{H^1}\lesssim\|F\|_{H^{-1}}.
\end{align*}
Thus it follows
\begin{align*}
|k|\|w\|_{L^2}^2\lesssim\Re\langle F,w\rangle\lesssim\|F\|_{H^{-1}}^2.
\end{align*}
This completes the proof of \eqref{trivial w'-h1}.
\end{proof}

\begin{lemma}\label{trivial w'/r}For any $|k|\geq1$ and $w\in D_k$, it holds
  \begin{align}
\label{trivial w'/r1-0}&\Re\langle F,\f{w}{r^2}\rangle=\|r^{-1}h\partial_r(h^{-1}w)\|_{L^2}^2+(k^2-1)\|\f{w}{r^2}\|_{L^2}^2,
\end{align}
where $h=r^{\f32}e^{-\f{r^2}{8}}$. Moreover, we have the following estimate
  \begin{align}
\label{trivial w'/r1}&\Re\langle F,\f{w}{r^2}\rangle\gtrsim\|\f{w'}{r}\|_{L^2}+k^2\|\f{w}{r^2}\|_{L^2}^2+|k|\|\f{w}{r}\|_{L^2}^2+\|w\|_{L^2}^2.
\end{align}
\end{lemma}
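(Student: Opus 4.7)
The plan is to prove both claims by a factorization that generalizes the one used in Lemma \ref{trivial w' lemma} for $|k|=1$, followed by a careful integration-by-parts/Hardy argument. First I would observe that, with $h(r)=r^{3/2}e^{-r^2/8}$, the real part of the operator on the left-hand side of \eqref{vorticity eqn} admits the factorization
\begin{equation*}
-\Bigl[\partial_r^2-\Bigl(\tfrac{k^2-1/4}{r^2}+\tfrac{r^2}{16}-\tfrac12\Bigr)\Bigr]w = -h^{-1}\partial_r\bigl[h^2\,\partial_r(h^{-1}w)\bigr]+\tfrac12\,w+\tfrac{k^2-1}{r^2}\,w,
\end{equation*}
which reduces to the $|k|=1$ identity already exploited in Lemma \ref{trivial w' lemma} and is checked directly using $h'/h=\tfrac{3}{2r}-\tfrac{r}{4}$. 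Pair both sides of the resolvent equation with $w/r^2$ and take real parts: the skew term $i\beta_k(1/r^2-\lambda)w$ produces $i\beta_k\int(1/r^2-\lambda)|w|^2/r^2\,dr\in i\mathbb{R}$ and therefore drops.

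For the divergence-form piece, set $u=h^{-1}w$ and integrate by parts once, yielding
\begin{equation*}
\Re\bigl\langle -h^{-1}\partial_r[h^2 u'],\,hu/r^2\bigr\rangle =\|r^{-1}hu'\|_{L^2}^2-\Re\int h^2 u'\overline{u}\cdot\tfrac{2}{r^3}\,dr.
\end{equation*}
The remaining integral equals $-\int h^2\,\partial_r|u|^2/r^3\,dr$; a second integration by parts combined with the key algebraic identity $\partial_r(h^2/r^3)=-h^2/(2r^2)$ turns it into exactly $-\tfrac12\|w/r\|_{L^2}^2$. This negative term cancels the $+\tfrac12\|w/r\|_{L^2}^2$ coming from pairing the potential $\tfrac12w$ with $w/r^2$, while the $(k^2-1)/r^2$ correction contributes $(k^2-1)\|w/r^2\|_{L^2}^2$. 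Collecting everything gives the identity \eqref{trivial w'/r1-0}.

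For the lower bound \eqref{trivial w'/r1}, I would expand $h\partial_r(h^{-1}w)=w'-\bigl(\tfrac{3}{2r}-\tfrac{r}{4}\bigr)w$ and compute the three resulting integrals explicitly (using $\partial_r(\tfrac{3}{2r^3}-\tfrac{1}{4r})=-\tfrac{9}{2r^4}+\tfrac{1}{4r^2}$) to get the exact expression
\begin{equation*}
\Re\langle F,w/r^2\rangle=\|w'/r\|_{L^2}^2+(k^2-\tfrac{13}{4})\|w/r^2\|_{L^2}^2-\tfrac12\|w/r\|_{L^2}^2+\tfrac{1}{16}\|w\|_{L^2}^2.
\end{equation*}
Then I would absorb the negative $-\tfrac12\|w/r\|_{L^2}^2$ by Cauchy–Schwarz $\|w/r\|_{L^2}^2\le\|w/r^2\|_{L^2}\|w\|_{L^2}$ combined with Young's inequality weighted so that the $k^2\|w/r^2\|_{L^2}^2$ and $\tfrac{1}{16}\|w\|_{L^2}^2$ pieces retain positive shares. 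The mixed weight $|k|\|w/r\|_{L^2}^2$ is then obtained for free via $|k|\|w/r\|_{L^2}^2\le\tfrac{k^2}{2}\|w/r^2\|_{L^2}^2+\tfrac12\|w\|_{L^2}^2$.

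The main obstacle I anticipate is the small-$|k|$ regime, especially $|k|=1,2$, where $k^2-\tfrac{13}{4}$ is negative or too small for the Young absorption to close. In those cases I would revert to the identity \eqref{trivial w'/r1-0} directly, use the full positivity of $\|r^{-1}h\partial_r(h^{-1}w)\|_{L^2}^2$ as a Dirichlet-type energy, and invoke the domain constraint $w/r^2\in L^2$ (which forces $w$ to vanish sufficiently fast at the origin) to run a weighted Hardy inequality with weight $h^2/r^2$ that recovers the missing positive contributions of $\|w\|_{L^2}^2$ and $|k|\|w/r\|_{L^2}^2$. Balancing the Hardy constants against the explicit coefficients produced above is the delicate bookkeeping that I expect to demand the most care.
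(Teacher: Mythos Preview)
Your derivation of the identity \eqref{trivial w'/r1-0} is correct and essentially equivalent to the paper's: the paper writes the factorization with the weight $r^{-2}$ already built in,
\[
-\bigl[\partial_r^2-(\tfrac{k^2-1/4}{r^2}+\tfrac{r^2}{16}-\tfrac12)\bigr]w=-r^2h^{-1}\partial_r\bigl[r^{-2}h^2\partial_r(h^{-1}w)\bigr]-\tfrac{2}{r}w'+\tfrac{k^2+2}{r^2}w,
\]
and then uses $-\Re\langle w',2w/r^3\rangle=-3\|w/r^2\|_{L^2}^2$, arriving at the same identity. Your route via $\partial_r(h^2/r^3)=-h^2/(2r^2)$ is a neat shortcut.

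The gap is at $|k|=1$. For $|k|\ge2$ your exact expression combined with \eqref{trivial w'/r1-0} closes (the $(k^2-1)\|w/r^2\|_{L^2}^2$ term supplies the missing coercivity). But for $|k|=1$ the identity gives only $\Re\langle F,w/r^2\rangle=\|r^{-1}h\partial_r(h^{-1}w)\|_{L^2}^2$, with \emph{zero} $\|w/r^2\|_{L^2}^2$ contribution, while your exact expression reads $\|w'/r\|_{L^2}^2-\tfrac{9}{4}\|w/r^2\|_{L^2}^2-\tfrac12\|w/r\|_{L^2}^2+\tfrac{1}{16}\|w\|_{L^2}^2$. The sharp weighted Hardy constant for $\int r^{-2}|w'|^2\ge C\int r^{-4}|w|^2$ is exactly $C=9/4$, so there is no slack to absorb the remaining $-\tfrac12\|w/r\|_{L^2}^2$. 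Worse, the Hardy inequality you propose, $\int (h^2/r^2)|u'|^2\ge c\int (h^2/r^4)|u|^2$ with $u=h^{-1}w$, has best constant $c=0$: in Muckenhoupt form one needs $\sup_{t>0}\bigl(\int_t^\infty s^{-1}e^{-s^2/4}ds\bigr)\bigl(\int_0^t s^{-1}e^{s^2/4}ds\bigr)<\infty$, and both factors behave like $\log(1/t)$ as $t\to0$. So the Dirichlet energy $\|r^{-1}h\partial_r(h^{-1}w)\|_{L^2}^2$ cannot control $\|w/r^2\|_{L^2}^2$ (or even $\|w/r\|_{L^2}^2$) on its own, and the argument does not close.

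The paper's remedy is a genuinely different factorization: it introduces a second auxiliary function $g$ solving the ODE $g''/g+\tfrac{2}{r}g'/g=-2/r^2$ (constructed in the appendix), writes
\[
-w''=-g^{-1}\partial_r\bigl[r^2g^2\partial_r(r^{-2}g^{-1}w)\bigr]-\tfrac{2}{r}w'-g^{-1}\bigl[g''+(\tfrac{2}{r}g)'\bigr]w,
\]
and pairs with $w/r^2$. The ODE is chosen precisely so that after the same integration by parts one obtains
\[
\Re\langle F,w/r^2\rangle=\|rg\,\partial_r(r^{-2}g^{-1}w)\|_{L^2}^2+\bigl\langle(\tfrac{7}{4r^2}+\tfrac{r^2}{16}-\tfrac12)w,\tfrac{w}{r^2}\bigr\rangle,
\]
and since $\tfrac{7}{4r^2}+\tfrac{r^2}{16}-\tfrac12\ge c(\tfrac{1}{r^2}+r^2)$ pointwise, coercivity follows immediately. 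The key idea you are missing is this second, ODE-tailored integrating factor; a Hardy inequality alone cannot manufacture the needed positive coefficient on $\|w/r^2\|_{L^2}^2$.
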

\begin{proof}We start with proving \eqref{trivial w'/r1-0}. Denote $h=r^{\f32}e^{-\f{r^2}{8}}$. One can check
\begin{align*}
-[\partial_r^2-(\f{k^2-\f14}{r^2}+\f{r^2}{16}-\f12)]w=-r^2h^{-1}\partial_r[r^{-2}h^2\partial_r(h^{-1}w)]-\f{2}{r}w'+\f{k^2+2}{r^2}w.
\end{align*}
The identities below follow from integration by parts
\begin{align*}
\Re\langle F,\f{w}{r^2}\rangle=&\Re\langle -[\partial_r^2-(\f{k^2-\f14}{r^2}+\f{r^2}{16}-\f12)]w,\f{w}{r^2}\rangle\\=&\|r^{-1}h\partial_r(h^{-1}w)\|_{L^2}^2-\langle w',\f{2w}{r^3}\rangle+ \langle\f{k^2+2}{r^2}w,\f{w}{r^2}\rangle \\
=&\|r^{-1}h\partial_r(h^{-1}w)\|_{L^2}^2+(k^2-1)\|\f{w}{r^2}\|_{L^2}^2,
\end{align*}
where in the last equality we use
\begin{align}\label{basic equality-1}
-\Re\langle w',\f{2w}{r^3}\rangle=-\int_0^{\infty}\f{1}{r^3}d|w|^2=-\int_0^{\infty}\f{3|w|^2}{r^4}dr=-3\|\f{w}{r^2}\|_{L^2}^2.
\end{align}

Now we continue to prove \eqref{trivial w'/r1}. We first observe the following equality through integration by parts
\begin{align*}
&\langle -[\partial_r^2-(\f{k^2-\f14}{r^2}+\f{r^2}{16}-\f12)]w,\f{w}{r^2}\rangle=\|\f{w'}{r}\|_{L^2}^2-\langle w',\f{2w}{r^3}\rangle+ \langle(\f{k^2-\f14}{r^2}+\f{r^2}{16}-\f12)w,\f{w}{r^2}\rangle.
\end{align*}
Together with \eqref{basic equality-1} this gives
\begin{align}
\label{lemma-3-111}&\Re\langle F,\f{w}{r^2}\rangle=\|\f{w'}{r}\|_{L^2}^2+ \langle(\f{k^2-\f{13}{4}}{r^2}+\f{r^2}{16}-\f12)w,\f{w}{r^2}\rangle.
\end{align}

Then we treat $|k|\geq 2$ and $|k|=1$ separately.
\begin{itemize}
  \item \textbf{When $|k|\geq2$}, applying \eqref{trivial w'/r1-0}, we deduce
\begin{align}
\label{lemma-3-1}\Re\langle F,\f{w}{r^2}\rangle\geq(k^2-1)\|\f{w}{r^2}\|_{L^2}^2\gtrsim k^2\|\f{w}{r^2}\|_{L^2}^2.
\end{align}

 Combining with \eqref{lemma-3-1}, we hence obtain
\begin{align*}
&\Re\langle F,\f{w}{r^2}\rangle\gtrsim\|\f{w'}{r}\|_{L^2}^2+ \langle(\f{k^2}{r^2}+\f{r^2}{16}+|k|)w,\f{w}{r^2}\rangle.
\end{align*}

  \item \textbf{When $|k|=1$}, we write
\begin{align*}
-w''=-g^{-1}\partial_r(r^2g^2\partial_r(r^{-2}g^{-1}w))-\f{2}{r}w'-g^{-1}[g''+(\f{2}{r}g)']w
\end{align*}
for any function $g$. Let $g$ be a real function satisfying
\begin{align}\label{dif eqn for g}
\f{g''}{g}+\f{2}{r}\f{g'}{g}=-\f{2}{r^2}.
\end{align} The existence and explicit form of $g$ are guaranteed by Lemma \ref{Appendix real function g} in the Appendix. With \eqref{basic equality-1} and \eqref{dif eqn for g}, we deduce
\begin{align*}
\Re\langle F,\f{w}{r^2}\rangle=&\|rg\partial_r(r^{-2}g^{-1}w)\|_{L^2}^2-3\|\f{w}{r^2}\|_{L^2}^2-\langle g^{-1}[g''+(\f{2}{r}g)']w,\f{w}{r^2}\rangle\\
&+\langle (\f{k^2-\f14}{r^2}+\f{r^2}{16}-\f12)w,\f{w}{r^2}\rangle\\
=&\|rg\partial_r(r^{-2}g^{-1}w)\|_{L^2}^2-\langle g^{-1}(g''+\f{2}{r}g')w,\f{w}{r^2}\rangle+\langle (\f{k^2-\f54}{r^2}+\f{r^2}{16}-\f12)w,\f{w}{r^2}\rangle \\
=&\|rg\partial_r(r^{-2}g^{-1}w)\|_{L^2}^2+\langle (\f{7}{4r^2}+\f{r^2}{16}-\f12)w,\f{w}{r^2}\rangle\gtrsim\langle(\f{1}{r^2}+r^2)w,\f{w}{r^2}\rangle.
\end{align*}
Together with \eqref{lemma-3-111}, we prove
\begin{align*}
\Re\langle F,\f{w}{r^2}\rangle\gtrsim\|\f{w'}{r}\|_{L^2}^2+\langle(\f{1}{r^2}+r^2)w,\f{w}{r^2}\rangle.
\end{align*}
\end{itemize}

This completes the proof of Lemma \ref{trivial w'/r}.
\end{proof}

In a similar fashion as proving Lemma \ref{trivial w'/r}, we also obtain

\begin{lemma}\label{trivial w'/r-h1-2.4}For any $|k|\geq1$ and $w\in D_k$, it holds
  \begin{align}
  \label{trivial w'/r-h1-0}&|k|^{\f12}\|\f{w}{r}\|_{H^1}+|k|\|\f{w}{r}\|_{L^2}\lesssim\|\f{F}{r}\|_{L^2},\\
\label{trivial w'/r-h1}&\|\f{w}{r}\|_{H^1}+|k|^{\f12}\|\f{w}{r}\|_{L^2}\lesssim\|\f{F}{r}\|_{H^{-1}}.
\end{align}
\end{lemma}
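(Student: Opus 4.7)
The plan is to mirror the proof of Lemma~\ref{trivial w'-h1-2.2}, but with the weighted coercive estimate of Lemma~\ref{trivial w'/r} playing the role previously played by Lemma~\ref{trivial w' lemma}. The decisive algebraic observation is that
\[
\Re\langle F,\f{w}{r^2}\rangle=\Re\langle \f{F}{r},\f{w}{r}\rangle,
\]
so by combining \eqref{trivial w'/r1} with the pointwise identity $(w/r)'=w'/r-w/r^2$ (which gives $\|(w/r)'\|_{L^2}^2\lesssim \|w'/r\|_{L^2}^2+\|w/r^2\|_{L^2}^2$), the content of Lemma~\ref{trivial w'/r} can be repackaged as the master inequality
\[
\Big\|\f{w}{r}\Big\|_{H^1}^2+|k|\,\Big\|\f{w}{r}\Big\|_{L^2}^2\;\lesssim\;\Re\langle \f{F}{r},\f{w}{r}\rangle.
\]

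To obtain \eqref{trivial w'/r-h1-0}, I would apply Cauchy-Schwarz to the right-hand side to get $\Re\langle F/r,w/r\rangle\leq \|F/r\|_{L^2}\|w/r\|_{L^2}$. Isolating the $|k|\|w/r\|_{L^2}^2$ term on the left yields $|k|\|w/r\|_{L^2}\lesssim \|F/r\|_{L^2}$, and plugging this back into the $H^1$ portion gives $|k|\|w/r\|_{H^1}^2\lesssim \|F/r\|_{L^2}^2$, which is exactly \eqref{trivial w'/r-h1-0}. For \eqref{trivial w'/r-h1}, I would instead use the duality $\Re\langle F/r,w/r\rangle\leq \|F/r\|_{H^{-1}}\|w/r\|_{H^1}$; reading off the $H^1$ piece of the master inequality produces $\|w/r\|_{H^1}\lesssim \|F/r\|_{H^{-1}}$, and substituting this bound back into the $|k|\|w/r\|_{L^2}^2$ term delivers $|k|^{1/2}\|w/r\|_{L^2}\lesssim \|F/r\|_{H^{-1}}$.

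The only real technical point I expect to be nontrivial is the legitimacy of the $H^{-1}$-$H^1$ duality pairing on the half-line $\mathbb{R}_+$: one has to interpret $H^{-1}$ as the dual of an appropriate space of $H^1$ functions vanishing suitably at $r=0$ and $r=\infty$. Because $w\in D_k$ with $|k|\geq 1$ forces (via Lemma~\ref{trivial w' lemma}) $w/r\in L^2$ with integrable $(k^2/r^2+r^2)$-weight, $w/r$ sits in the correct test-function class and the pairing is legitimate by density. This is exactly the reason the author advertises that the argument proceeds "in a similar fashion" to Lemma~\ref{trivial w'/r}, with no new analytic ingredients required beyond the coercive bound already established.
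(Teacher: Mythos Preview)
Your proposal is correct and follows essentially the same approach as the paper's own proof: both start from the coercive estimate of Lemma~\ref{trivial w'/r}, rewrite $\Re\langle F,w/r^2\rangle=\Re\langle F/r,w/r\rangle$, bound the right-hand side by Cauchy--Schwarz (for the $L^2$ case) or by $H^{-1}$--$H^1$ duality (for the $H^{-1}$ case), and then feed the resulting bound back into the coercive inequality. Your explicit remark that $\|(w/r)'\|_{L^2}^2\lesssim \|w'/r\|_{L^2}^2+\|w/r^2\|_{L^2}^2$ is exactly what the paper is tacitly using when it writes $\|w/r\|_{H^1}^2\lesssim \|w'/r\|_{L^2}^2+|k|\,\|w/r\|_{L^2}^2$.
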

\begin{proof}Applying Lemma \ref{trivial w'/r}, one has
\begin{align*}
\|\f{w'}{r}\|_{L^2}^2+|k|\|\f{w}{r}\|_{L^2}^2\lesssim\Re\langle \f{F}{r},\f{w}{r}\rangle,
\end{align*}
which yields
\begin{align*}
|k|\|\f{w}{r}\|_{L^2}\lesssim\|\f{F}{r}\|_{L^2}.
\end{align*}
Therefore, we deduce
\begin{align*}
\|\f{w}{r}\|_{H^1}^2\lesssim\|\f{w'}{r}\|_{L^2}^2+|k|\|\f{w}{r}\|_{L^2}^2\lesssim\Re\langle \f{F}{r},\f{w}{r}\rangle\lesssim|k|^{-1}\|\f{F}{r}\|_{L^2}^2.
\end{align*}
This completes the proof of \eqref{trivial w'/r-h1-0}.

Using Lemma \ref{trivial w'/r} again, we obtain
\begin{align*}
\|\f{w}{r}\|_{H^1}^2\lesssim\|\f{w'}{r}\|_{L^2}^2+|k|\|\f{w}{r}\|_{L^2}^2\lesssim\Re\langle \f{F}{r},\f{w}{r}\rangle,
\end{align*}
which gives
\begin{align*}
\|\f{w}{r}\|_{H^1}\lesssim\|\f{F}{r}\|_{H^{-1}}.
\end{align*}
Therefore, we can conclude
\begin{align*}
|k|\|\f{w}{r}\|_{L^2}^2\lesssim\Re\langle \f{F}{r},\f{w}{r}\rangle\lesssim\|\f{F}{r}\|_{H^{-1}}^2.
\end{align*}
This completes the proof of \eqref{trivial w'/r-h1}.
\end{proof}

\subsection{Resolvent estimates in $L^2$ space} We first derive the resolvent estimates from $L^2$ to $L^2$.
\begin{proposition}\label{resolvent estimate}For any $|k|\geq1$, $\lambda\in\mathbb{R}$ and $w\in D_k$, there exists a constant $C>0$ independent of $k,\beta_k,\lambda$, such that the following estimate holds
\begin{align*}
|\beta_k|^{\f16}\|w'\|_{L^2}+|\beta_k|^{\f13}\|w\|_{L^2}\leq C \|F\|_{L^2}.
\end{align*}
\end{proposition}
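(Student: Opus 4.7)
\bigskip

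\noindent\textbf{Proof plan.} The statement is an Airy-type resolvent estimate: the factor $|\beta_k|^{1/3}$ is the standard enhanced dissipation gain coming from the linear vanishing of the multiplier $\frac{1}{r^2}-\lambda$. My plan follows the strategy of Li--Wei--Zhang \cite{LWZ-O} for the Oseen vortex, with the case $\lambda\leq 0$ treated as a soft interpolation and $\lambda>0$ requiring careful localization around the critical point $r_0=\lambda^{-1/2}$. First I would record the two baseline identities obtained by testing \eqref{vorticity eqn} against $\bar w$: the real part (Lemma \ref{trivial w' lemma}) gives
$$\|w'\|_{L^2}^2+\langle(\tfrac{k^2}{r^2}+r^2)w,w\rangle\lesssim \Re\langle F,w\rangle\leq \|F\|_{L^2}\|w\|_{L^2},$$
and the imaginary part gives $\beta_k\int(\tfrac{1}{r^2}-\lambda)|w|^2\,dr=\Im\langle F,w\rangle$. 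Note that once $\|w\|_{L^2}\lesssim |\beta_k|^{-1/3}\|F\|_{L^2}$ is established, the $\|w'\|_{L^2}$ bound follows automatically from the real-part estimate $\|w'\|^2\lesssim\|F\|\|w\|$. So the whole proof reduces to controlling $\|w\|_{L^2}$.

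\smallskip

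\noindent When $\lambda\leq 0$ the symbol $\frac{1}{r^2}-\lambda\geq\frac{1}{r^2}$ has a definite sign, so the imaginary part directly yields $|\beta_k|\|w/r\|_{L^2}^2\lesssim \|F\|\|w\|$; combined with $\|rw\|_{L^2}^2\lesssim\|F\|\|w\|$ from the real part and Cauchy--Schwarz
$$\|w\|_{L^2}^2=\int \tfrac{|w|}{r}\cdot r|w|\,dr\leq \|w/r\|_{L^2}\|rw\|_{L^2},$$
one even gets $\|w\|_{L^2}\lesssim|\beta_k|^{-1/2}\|F\|_{L^2}$, stronger than needed. When $\lambda>0$ the critical point $r_0=\lambda^{-1/2}$ exists and $\frac{1}{r^2}-\lambda$ vanishes linearly there with slope $-2/r_0^3$. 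I would introduce the Airy length scale $\delta\sim|\beta_k|^{-1/3}r_0^{3/2}$ and split $\R_+=\Omega_{\mathrm{in}}\cup\Omega_{\mathrm{out}}$ with $\Omega_{\mathrm{in}}=\{|r-r_0|\leq\delta\}$. On $\Omega_{\mathrm{in}}$ the Gagliardo--Nirenberg bound (using $w(0)=0$), $\|w\|_{L^\infty}^2\leq 2\|w'\|_{L^2}\|w\|_{L^2}$, gives $\|w\|_{L^2(\Omega_{\mathrm{in}})}^2\leq 4\delta\|w'\|_{L^2}\|w\|_{L^2}\lesssim \delta\|F\|_{L^2}^{1/2}\|w\|_{L^2}^{3/2}$. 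On $\Omega_{\mathrm{out}}$ the symbol satisfies $|\tfrac{1}{r^2}-\lambda|\gtrsim \delta/r_0^3$.

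\smallskip

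\noindent The hard part is that the integral $\int(\tfrac{1}{r^2}-\lambda)|w|^2$ is signed on $\Omega_{\mathrm{out}}$ (positive for $r<r_0-\delta$, negative for $r>r_0+\delta$), so the imaginary part of $\langle F,w\rangle$ does not directly control $\|w\|_{L^2(\Omega_{\mathrm{out}})}^2$. To circumvent this I would introduce an auxiliary real multiplier $p(r)$ designed so that $p(r)\cdot(\tfrac{1}{r^2}-\lambda)$ is of one sign and coercive on $\Omega_{\mathrm{out}}$---for instance a smooth signed function with $p\approx \mathrm{sgn}(r_0-r)$ away from $r_0$ and $p$ bounded, following the LWZ construction---then test \eqref{vorticity eqn} against $p\bar w$ and take imaginary parts. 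The commutator terms $\Im\langle -w'',pw\rangle$ and $\Im\langle V_k w,pw\rangle$ are absorbed by the real-part bounds on $\|w'\|,\|rw\|,\|w/r\|$ via Cauchy--Schwarz, leaving $|\beta_k|\delta r_0^{-3}\|w\|_{L^2(\Omega_{\mathrm{out}})}^2\lesssim \|F\|\|w\|+(\text{lower order})$. Combining the two pieces
$$\|w\|_{L^2}^2\lesssim \frac{r_0^3}{|\beta_k|\delta}\|F\|\|w\|\;+\;\delta\|F\|^{1/2}\|w\|^{3/2},$$
and optimizing the choice of $\delta$, one arrives at $\|w\|_{L^2}\lesssim|\beta_k|^{-1/3}\|F\|_{L^2}$ (the $r_0$-dependence being absorbed once one also uses $\|rw\|_{L^2}$ or $\|w/r\|_{L^2}$ estimates coming from Lemma \ref{trivial w'/r}). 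The main technical obstacle, therefore, is the precise design of the multiplier $p$ and the bookkeeping of the $r_0$-dependent constants so that the Airy scaling emerges cleanly and uniformly in $k,\beta_k,\lambda$; this is essentially the classical Airy resolvent estimate $\|(-\partial_x^2+i\alpha x)^{-1}\|_{L^2\to L^2}\lesssim |\alpha|^{-2/3}$ adapted to the $r$-dependent coefficient $\tfrac{1}{r^2}-\lambda$.
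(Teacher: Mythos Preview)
Your plan is essentially the paper's strategy, with one cosmetic and one substantive difference.

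\medskip

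\noindent\textbf{Cosmetic.} The paper uses \emph{sharp} cutoffs $\chi_{(0,r_-)}-\chi_{(r_+,\infty)}$ as the multiplier, choosing $r_\pm\in(r_0\mp\delta,r_0\mp\delta/2)$ by pigeonhole so that the resulting boundary terms $|w'(r_\pm)w(r_\pm)|$ are controlled by $\delta^{-1/2}\|w'\|_{L^2}\|w\|_{L^\infty}$, rather than a smooth $p$ with a commutator. Both mechanisms produce an extra term of size $\sim\delta^{-1}\|w'\|\|w\|$, and the balancing choice $\delta=(r_0^3/|\beta_k|)^{1/3}$ is identical. (Incidentally your Airy scale $|\beta_k|^{-1/3}r_0^{3/2}$ should read $|\beta_k|^{-1/3}r_0$: the slope of $r^{-2}-\lambda$ at $r_0$ is $-2/r_0^3$, so $\delta^3=r_0^3/|\beta_k|$.)

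\medskip

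\noindent\textbf{Substantive.} Your plan covers only the regime where $r_0$ is \emph{small}, namely $|\beta_k|\geq r_0^6$. After the Airy optimization one gets $\|w\|\lesssim r_0^2|\beta_k|^{-2/3}\|F\|$, and this is $\leq|\beta_k|^{-1/3}\|F\|$ precisely when $r_0^2\leq|\beta_k|^{1/3}$. When $r_0$ is large the critical-layer argument by itself is insufficient, and the paper treats the complementary ranges separately: if $|\beta_k|\leq r_0^4$ one uses the pointwise bound $r^2+|\beta_k|(r^{-2}-r_0^{-2})\geq 2|\beta_k|^{1/2}-|\beta_k|r_0^{-2}\geq|\beta_k|^{1/2}$ with no localization at all; if $r_0^4\leq|\beta_k|\leq r_0^6$ one uses the real-part bound $\|rw\|^2\lesssim\|F\|\|w\|$ to control $\{r>r_0/2\}$ directly, and the imaginary part only on $(0,r_0/2)$ where the symbol is uniformly $\gtrsim r_0^{-2}$---so no critical layer enters. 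There is also the trivial case $|\beta_k|\leq k^2$, dispatched by Lemma~\ref{trivial w' lemma} alone. Your parenthetical that the $r_0$-dependence is ``absorbed'' by $\|rw\|$ or $\|w/r\|$ is pointing in the right direction, but the actual argument requires this explicit case split; without it the $r_0$-factors do not cancel uniformly.
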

\begin{proof}
Let us first prove $|\beta_k|^{\f13}\|w\|_{L^2}\leq C \|F\|_{L^2}$. If $\lambda\leq0$, with Lemma \ref{trivial w' lemma} we have
  \begin{align*}
&\Re\langle F,w\rangle\gtrsim\langle(\f{k^2}{r^2}+r^2)w,w\rangle.
\end{align*}
Together with
\begin{align*}
&|\Im\langle F,w\rangle|\geq|\beta_k|\langle\f{1}{r^2}w,w\rangle,
\end{align*}
this gives
  \begin{align*}
&|\langle F,w\rangle|\gtrsim\langle(r^2+\f{|\beta_k|}{r^2})w,w\rangle\geq|\beta_k|^{\f12}\|w\|_{L^2}^2.
\end{align*}
Recall that $\beta_k=kB$ and $|B|\geq 1$. Thus $|\beta_k|=|kB|\geq 1$ for $|k|\geq 1$, and we have
 \begin{align}
\label{k.1}&|\langle F,w\rangle|\gtrsim|\beta_k|^{\f12}\|w\|_{L^2}^2 \geq |\beta_k|^{\f13}\|w\|_{L^2}^2.
\end{align}

If $\lambda>0$ and $|\beta_k|\leq k^2$, utilizing Lemma \ref{trivial w' lemma} again, one obtains
  \begin{align}
\label{k.2}|\langle F,w\rangle|\geq&\Re\langle F,w\rangle\gtrsim\langle(\f{k^2}{r^2}+r^2)w,w\rangle\geq|k|\|w\|_{L^2}^2\geq|\beta_k|^{\f12}\|w\|_{L^2}^2\geq |\beta_k|^{\f13}\|w\|_{L^2}^2.
\end{align}

It remains to deal with the scenario $\lambda>0$ and $|\beta_k|\geq k^2$. Denote $r_0:=\f{1}{\sqrt{\lambda}}$. The discussion can be divided into the below three cases:
\begin{enumerate}
\item \textbf{Case of $|\beta_k|\leq r_0^4$.}
With Lemma \ref{trivial w' lemma} we have
  \begin{align*}
&\Re\langle F,w\rangle\gtrsim\langle(\f{k^2}{r^2}+r^2)w,w\rangle.
\end{align*}
Combining with
\begin{align*}
&|\Im\langle F,w\rangle|\geq|\beta_k|\langle(\f{1}{r^2}-\f{1}{r_0^2})w,w\rangle,
\end{align*}
we deduce
  \begin{align*}
&|\langle F,w\rangle|\gtrsim\langle[(\f{k^2}{r^2}+r^2)+|\beta_k|(\f{1}{r^2}-\f{1}{r_0^2})]w,w\rangle\geq\langle[r^2+|\beta_k|(\f{1}{r^2}-\f{1}{r_0^2})]w,w\rangle.
\end{align*}
Since
\begin{align*}
r^2+|\beta_k|(\f{1}{r^2}-\f{1}{r_0^2})\geq 2|\beta_k|^{\f12}-\f{|\beta_k|}{r_0^2}\geq|\beta_k|^{\f12},
\end{align*}
we then obtain
\begin{align}
\label{k.3} |\langle F,w\rangle|\gtrsim |\beta_k|^{\f12}\|w\|_{L^2}^2\geq  |\beta_k|^{\f13}\|w\|_{L^2}^2.
\end{align} 

     \item \textbf{Case of $r_0^4\leq |\beta_k|\leq r_0^6$.} The condition $|\beta_k|\geq 1$ implies that $r_0\geq 1$. Using Lemma \ref{trivial w' lemma}, we derive the estimate for $\|w\|_{L^2(\f{r_0}{2},\infty)}^2$
  \begin{align*}
&\Re\langle F,w\rangle\gtrsim\langle(\f{k^2}{r^2}+r^2)w,w\rangle\geq\|rw\|_{L^2}^2\geq \f{r_0^2}{4}\|w\|_{L^2(\f{r_0}{2},\infty)}^2.
\end{align*}

The estimate for $\|w\|_{L^2(0,\f{r_0}{2})}^2$ is more subtle. First, we choose $r_{-}\in(\f{r_0}{2}-\f{1}{2r_0},\f{r_0}{2})$ such that the following inequality holds 
\begin{align}
\label{r-{-}}|w'(r_{-})|^2\leq 2r_0\|w'\|_{L^2}^2.
\end{align}
Notice the imaginary part of $\langle F,w\chi_{(0,r_{-})}\rangle$ obeys
 \begin{align*}
\Im\langle F,w\chi_{(0,r_{-})}\rangle=&\Im\langle- [\partial_r^2w-(\f{k^2-\f14}{r^2}+\f{r^2}{16}-\f12)w]+i\beta_k(\f{1}{r^2}-\lambda)w,w\chi_{(0,r_{-})}\rangle\\
=&\Im\langle-\partial_r^2w+i\beta_k(\f{1}{r^2}-\lambda)w,w\chi_{(0,r_{-})}\rangle.
\end{align*}
We hence prove
\begin{align*}
|\beta_k|\langle(\f{1}{r^2}-\lambda)w,w\chi_{(0,r_{-})}\rangle\leq\|F\|_{L^2}\|w\|_{L^2}+|w'(r_{-})w(r_{-})|.
\end{align*}
Using $\lambda=\f{1}{r_0^2}$ and $r_{-}\in(\f{r_0}{2}-\f{1}{2r_0},\f{r_0}{2})$, we obtain
\begin{align*}
&\|F\|_{L^2}\|w\|_{L^2}+|w'(r_{-})w(r_{-})|\geq |\beta_k|\langle(\f{1}{r^2}-\f{1}{r_0^2})w,w\chi_{(0,r_{-})}\rangle\\\geq&|\beta_k|\langle(\f{4}{r_0^2}-\f{1}{r_0^2})w,w\chi_{(0,r_{-})}\rangle\geq\f{|\beta_k|}{r_0^2}\|w\|_{L^2(0,r_{-})}^2.
\end{align*}

Therefore, $\|w\|_{L^2}$ can be bounded as below
\begin{align*}
\|w\|_{L^2}^2=&\|w\|_{L^2(0,r_{-})}^2+\|w\|_{L^2(r_{-},\f{r_0}{2})}^2+\|w\|_{L^2(\f{r_0}{2},\infty)}^2\\
\lesssim&\f{r_0^2}{|\beta_k|}(\|F\|_{L^2}\|w\|_{L^2}+|w'(r_{-})w(r_{-})|)+\f{1}{r_0}\|w\|_{L^{\infty}}^2+\f{1}{r_0^2}\|F\|_{L^2}\|w\|_{L^2}.
\end{align*}
Together with \eqref{r-{-}}, Lemma \ref{Appendix A1-1} and Lemma \ref{trivial w' lemma}, we have
\begin{align*}
\|w\|_{L^2}^2\lesssim&\f{r_0^2}{|\beta_k|}(\|F\|_{L^2}\|w\|_{L^2}+r_0^{\f12}\|w'\|_{L^2}\|w\|_{L^{\infty}})+\f{1}{r_0}\|w\|_{L^{\infty}}^2+\f{1}{r_0^2}\|F\|_{L^2}\|w\|_{L^2}\\
\lesssim&\f{r_0^2}{|\beta_k|}(\|F\|_{L^2}\|w\|_{L^2}+r_0^{\f12}\|w'\|_{L^2}^{\f32}\|w\|_{L^2}^{\f12})+\f{1}{r_0}\|w'\|_{L^2}\|w\|_{L^2}+\f{1}{r_0^2}\|F\|_{L^2}\|w\|_{L^2}\\
\lesssim&\f{r_0^2}{|\beta_k|}(\|F\|_{L^2}\|w\|_{L^2}+r_0^{\f12}\|F\|_{L^2}^{\f34}\|w\|_{L^2}^{\f54})+\f{1}{r_0}\|F\|_{L^2}^{\f12}\|w\|_{L^2}^{\f32}+\f{1}{r_0^2}\|F\|_{L^2}\|w\|_{L^2}.
\end{align*}
This yields
\begin{align*}
\|F\|_{L^2}\|w\|_{L^2}\gtrsim\min\{\f{|\beta_k|}{r_0^2},(\f{|\beta_k|}{r_0^{\f52}})^{\f43},r_0^2\}\|w\|_{L^2}^2.
\end{align*}
With $r_0^4\leq |\beta_k|\leq r_0^6$, i.e. $\f{|\beta_k|^{\f43}}{r_0^{\f{10}{3}}}\geq\f{|\beta_k|}{r_0^2}\geq r_0^2$, we conclude
\begin{align}
\label{k.4}\|F\|_{L^2}\|w\|_{L^2}\gtrsim r_0^2\|w\|_{L^2}^2\geq|\beta_k|^{\f13}\|w\|_{L^2}^2.
\end{align}

\item \textbf{Case of $|\beta_k|\geq r_0^6$.} 
We choose $r_{-}\in(r_0-\delta,r_0-\f{\delta}{2})$ and $r_{+}\in(r_0+\f{\delta}{2},r_0+\delta)$ such that the following inequality holds
\begin{align}
\label{r-{-},r-{+}}|w'(r_{-})|^2+|w'(r_{+})|^2\leq \f{4}{\delta}\|w'\|_{L^2}^2.
\end{align}
Here $\delta\in(0, r_0)$ is a constant which will be determined later.

   The following equality is direct
 \begin{align*}
&\Im\langle F,w(\chi_{(0,r_{-})}-\chi_{(r_{+},\infty)})\rangle\\
=&\Im\langle- [\partial_r^2w-(\f{k^2-\f14}{r^2}+\f{r^2}{16}-\f12)w]+i\beta_k(\f{1}{r^2}-\lambda)w,w(\chi_{(0,r_{-})}-\chi_{(r_{+},\infty)})\rangle\\
=&\Im\langle-\partial_r^2w+i\beta_k(\f{1}{r^2}-\lambda)w,w(\chi_{(0,r_{-})}-\chi_{(r_{+},\infty)})\rangle.
\end{align*}
This implies
\begin{align*}
&|\beta_k|(\int_0^{r_{-}}(\f{1}{r^2}-\f{1}{r_0^2})|w(r)|^2dr+\int_{r_{+}}^{\infty}(\f{1}{r_0^2}-\f{1}{r^2})|w(r)|^2dr)\\
\leq&\|F\|_{L^2}\|w\|_{L^2}+|w'(r_{-})w(r_{-})|+|w'(r_{+})w(r_{+})|.
\end{align*}
 With $r_{-}\in(r_0-\delta,r_0-\f{\delta}{2})$ and $r_{+}\in(r_0+\f{\delta}{2},r_0+\delta)$, it holds
\begin{align*}
&\f{1}{r^2}-\f{1}{r_0^2}=\f{(r_0-r)(r_0+r)}{r^2r_0^2}\geq\f{r_0-r}{r^2r_0}\gtrsim\f{\delta}{r^2r_0}\gtrsim\f{\delta}{r_0^3} \quad\textrm{for any} \ r\in(0,r_{-}).
\end{align*}
In a similar fashion, one obtains
\begin{align*}
&\f{1}{r_0^2}-\f{1}{r^2}=\f{(r-r_0)(r+r_0)}{r^2r_0^2}\geq\f{r-r_0}{r^2r_0}\gtrsim\f{\delta}{r^2r_0}\gtrsim\f{\delta}{r_0^3} \quad \textrm{for any} \ r\in(r_{+},2r_0),\\
&\f{1}{r_0^2}-\f{1}{r^2}=\f{(r-r_0)(r+r_0)}{r^2r_0^2}\approx \f{1}{r_0^2}\gtrsim\f{\delta}{r_0^3}\quad \textrm{for any} \ r\geq2r_0.
\end{align*}
Combining these above inequalities, we get
\begin{align*}
&\|F\|_{L^2}\|w\|_{L^2}+|w'(r_{-})w(r_{-})|+|w'(r_{+})w(r_{+})|\\
\geq&|\beta_k|(\int_0^{r_{-}}(\f{1}{r^2}-\f{1}{r_0^2})|w(r)|^2dr+\int_{r_{+}}^{\infty}(\f{1}{r_0^2}-\f{1}{r^2})|w(r)|^2dr)\\
\gtrsim&\f{|\beta_k|\delta}{r_0^3}\|w\|_{L^2\big((0,r_{-})\cup(r_{+},\infty)\big)}^2.
\end{align*}
Therefore, $\|w\|_{L^2}$ obeys the estimate
\begin{align*}
\|w\|_{L^2}^2=&\|w\|_{L^2((0,r_{-})\cup(r_{+},\infty))}^2+\|w\|_{L^2(r_{-},r_{+})}^2\\
\lesssim&\f{r_0^3}{|\beta_k|\delta}\Big(\|F\|_{L^2}\|w\|_{L^2}+|w'(r_{-})w(r_{-})|+|w'(r_{+})w(r_{+})|\Big)+\delta\|w\|_{L^{\infty}}^2.
\end{align*}
Together with \eqref{r-{-},r-{+}}, Lemma \ref{Appendix A1-1} and Lemma \ref{trivial w' lemma}, we deduce
\begin{align*}
\|w\|_{L^2}^2\lesssim&\f{r_0^3}{|\beta_k|\delta}(\|F\|_{L^2}\|w\|_{L^2}+\f{\|w'\|_{L^2}\|w\|_{L^{\infty}}}{\delta^{\f12}})+\delta\|w\|_{L^{\infty}}^2\\
\lesssim&\f{r_0^3}{|\beta_k|\delta}(\|F\|_{L^2}\|w\|_{L^2}+\f{\|w'\|_{L^2}^{\f32}\|w\|_{L^2}^{\f12}}{\delta^{\f12}})+\delta\|w'\|_{L^2}\|w\|_{L^2}\\
\lesssim&\f{r_0^3}{|\beta_k|\delta}(\|F\|_{L^2}\|w\|_{L^2}+\f{\|F\|_{L^2}^{\f34}\|w\|_{L^2}^{\f54}}{\delta^{\f12}})+\delta\|F\|_{L^2}^{\f12}\|w\|_{L^2}^{\f32}.
\end{align*}
It then follows 
\begin{align*}
\|F\|_{L^2}\|w\|_{L^2}\gtrsim\min\{\f{|\beta_k|\delta}{r_0^3},(\f{|\beta_k|\delta^{\f32}}{r_0^3})^{\f43},\delta^{-2}\}\|w\|_{L^2}^2.
\end{align*}
Take $\delta=(\f{r_0^3}{|\beta_k|})^{\f13}$. The condition $|\beta_k|\geq 1$ yields
\begin{align*}
\delta=(\f{r_0^3}{|\beta_k|})^{\f13}\leq r_0.
\end{align*}
With the basic equality $\f{|\beta_k|\delta}{r_0^3}=(\f{|\beta_k|\delta^{\f32}}{r_0^3})^{\f43}=\delta^{-2}$, we obtain
\begin{align}
\label{k.5}\|F\|_{L^2}\|w\|_{L^2}\gtrsim (\f{|\beta_k|}{r_0^3})^{\f23}\|w\|_{L^2}^2=\f{|\beta_k|^{\f23}}{r_0^2}\|w\|_{L^2}^2\geq\f{|\beta_k|^{\f23}}{|\beta_k|^{\f13}}\|w\|_{L^2}^2=|\beta_k|^{\f13}\|w\|_{L^2}^2.
\end{align}

\end{enumerate}

Combining \eqref{k.1}, \eqref{k.2}, \eqref{k.3}, \eqref{k.4} and \eqref{k.5}, we now establish the following resolvent estimate from $L^2$ to $L^2$
\begin{align}
\label{k.6-}|\beta_k|^{\f13}\|w\|_{L^2}\lesssim\|F\|_{L^2}.
\end{align}
By Lemma \ref{trivial w' lemma} we have
  \begin{align*}
&\Re\langle F,w\rangle\gtrsim\|w'\|_{L^2}^2,
\end{align*}
which along with \eqref{k.6-} implies the resolvent estimate from $H^1$ to $L^2$
  \begin{align*}
\|w'\|_{L^2}^2\lesssim\|F\|_{L^2}\|w\|_{L^2}\lesssim|\beta_k|^{-\f13}\|F\|_{L^2}^2.
\end{align*}
This completes the proof of Proposition \ref{resolvent estimate}.
\end{proof}

Now we move to establish the resolvent estimates from $L^2$ to $H^{-1}$.

\begin{proposition}\label{resolvent estimate-L2}For any $|k|\geq1$, $\lambda\in\mathbb{R}$ and $w\in D_k$, there exist  constants $C,c_2>0$ independent of $k,\beta_k,\lambda$, such that the following estimate holds
\begin{align*}
\|w\|_{H^1}+|\beta_k|^{\f16}\|w\|_{L^2}\leq C \|F-c_2|\beta_k|^{\f13}w\|_{H^{-1}}.
\end{align*}
\end{proposition}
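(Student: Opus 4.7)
The plan is to derive the shifted $L^2\to H^{-1}$ resolvent bound by running a duality argument against the adjoint equation together with Proposition \ref{resolvent estimate}, and then to recover the $H^1$ component from the coercivity already established in Lemma \ref{trivial w' lemma}.

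\textbf{Step 1: Shifted adjoint resolvent estimate.} The formal adjoint of $L$ is obtained by sending $\beta_k\mapsto -\beta_k$, so Proposition \ref{resolvent estimate} applies verbatim: for $L^*v=\psi$ one has $|\beta_k|^{1/6}\|v'\|_{L^2}+|\beta_k|^{1/3}\|v\|_{L^2}\le C\|\psi\|_{L^2}$. Given $\phi\in L^2$, let $v$ solve the shifted adjoint problem $L_s^*v:=L^*v-c_2|\beta_k|^{1/3}v=\phi$, i.e. $L^*v=\phi+c_2|\beta_k|^{1/3}v$. Plugging into the adjoint resolvent estimate gives
\begin{equation*}
|\beta_k|^{1/6}\|v'\|_{L^2}+|\beta_k|^{1/3}\|v\|_{L^2}\le C\|\phi\|_{L^2}+Cc_2|\beta_k|^{1/3}\|v\|_{L^2}.
\end{equation*}
Choosing $c_2\le 1/(2C)$ (a universal constant), the last term is absorbed into the left-hand side, and using $|\beta_k|\ge 1$ yields $\|v\|_{H^1}\le C|\beta_k|^{-1/6}\|\phi\|_{L^2}$.

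\textbf{Step 2: $L^2$ bound on $w$ via duality.} Write $\tilde F:=F-c_2|\beta_k|^{1/3}w$, so that $L_sw=\tilde F$. For any $\phi\in L^2$, integration by parts (using the boundary condition built into $D_k$ and the decay of $v$ given by $v\in H^1$) produces
\begin{equation*}
\langle w,\phi\rangle=\langle w,L_s^*v\rangle=\langle L_sw,v\rangle=\langle\tilde F,v\rangle,
\end{equation*}
where the final pairing is the $H^{-1}$-$H^1$ duality. Combining with Step 1,
\begin{equation*}
|\langle w,\phi\rangle|\le\|\tilde F\|_{H^{-1}}\|v\|_{H^1}\le C|\beta_k|^{-1/6}\|\tilde F\|_{H^{-1}}\|\phi\|_{L^2}.
\end{equation*}
Taking the supremum over $\phi$ gives $|\beta_k|^{1/6}\|w\|_{L^2}\le C\|\tilde F\|_{H^{-1}}$.

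\textbf{Step 3: $H^1$ bound via coercivity.} Apply Lemma \ref{trivial w' lemma} to $Lw=F=\tilde F+c_2|\beta_k|^{1/3}w$:
\begin{equation*}
\|w\|_{H^1}^2\lesssim\|w'\|_{L^2}^2+\langle(k^2/r^2+r^2)w,w\rangle\lesssim\Re\langle F,w\rangle=\Re\langle\tilde F,w\rangle+c_2|\beta_k|^{1/3}\|w\|_{L^2}^2,
\end{equation*}
where the first $\lesssim$ uses $k^2/r^2+r^2\ge 2|k|\ge 2$ for $|k|\ge 1$. The first term on the right obeys $|\langle\tilde F,w\rangle|\le\|\tilde F\|_{H^{-1}}\|w\|_{H^1}$, while Step 2 controls the second: $c_2|\beta_k|^{1/3}\|w\|_{L^2}^2\le C\|\tilde F\|_{H^{-1}}^2$. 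Young's inequality then gives $\|w\|_{H^1}\le C\|\tilde F\|_{H^{-1}}$, which combined with Step 2 finishes the proposition.

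\textbf{Main obstacle.} The delicate point is that the absorption in Step 1 requires $c_2$ to be strictly smaller than the reciprocal of the (universal) constant in Proposition \ref{resolvent estimate}, yet $c_2$ must remain independent of $k,\beta_k,\lambda$ — so one must track that all implicit constants in Proposition \ref{resolvent estimate} and in Lemma \ref{trivial w' lemma} are truly universal, and fix $c_2$ once and for all based on them. A minor secondary concern is solvability of $L_s^*v=\phi$, but the a priori bound from Step 1 together with standard Fredholm theory for this relatively compact perturbation of the elliptic operator $-\partial_r^2+V$ provides existence on the natural domain.
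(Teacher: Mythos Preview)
Your duality argument is correct and takes a genuinely different route from the paper. The paper's proof re-runs the full case analysis of Proposition \ref{resolvent estimate} (the cases $\lambda\le 0$, $|\beta_k|\le k^2$, and the three subcases $|\beta_k|\le r_0^4$, $r_0^4\le|\beta_k|\le r_0^6$, $|\beta_k|\ge r_0^6$), but now pairing against $w\rho(r)$ with carefully constructed $C^1$ cutoffs $\rho$ so that the multipliers lie in $H^1$ and the $H^{-1}$ norm of $\tilde F$ can be invoked; in each case the authors choose $c_2$ small to absorb the shift term and arrive at $|\beta_k|^{1/3}\|w\|_{L^2}^2\lesssim\|\tilde F\|_{H^{-1}}^2$. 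Your approach bypasses this entire repetition: you observe that Proposition \ref{resolvent estimate} for the adjoint $L^*$ (obtained by $\beta_k\mapsto-\beta_k$) already yields $\|v\|_{H^1}\lesssim|\beta_k|^{-1/6}\|\phi\|_{L^2}$ for the shifted adjoint problem, and then one pairing $\langle w,\phi\rangle=\langle\tilde F,v\rangle$ delivers the $L^2$ bound directly. This is substantially shorter and makes transparent that the $H^{-1}\to L^2$ estimate is dual to the $L^2\to H^1$ estimate already proved. The paper's approach, on the other hand, is more self-contained (no appeal to Fredholm solvability of the adjoint problem) and its explicit $C^1$-cutoff machinery is reused later in Proposition \ref{resolvent estimate-L2-r} for the weighted space $X$, where the adjoint structure is less clean due to the $r^{-2}$ weight; so the paper's investment pays off there, whereas your duality trick would need adaptation.
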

\begin{proof}From Lemma \ref{trivial w' lemma} we obtain
  \begin{align*}
&\Re\langle F,w\rangle=\Re\langle F-c_2|\beta_k|^{\f13}w,w\rangle+c_2|\beta_k|^{\f13}\|w\|_{L^2}^2\gtrsim\|w'\|_{L^2}^2+\langle(\f{k^2}{r^2}+r^2)w,w\rangle,
\end{align*}
with $c_2>0$ to be determined later. This gives
  \begin{align*}
\|w\|_{H^1}^2\lesssim\|F-c_2|\beta_k|^{\f13}w\|_{H^{-1}}\|w\|_{H^1}+c_2|\beta_k|^{\f13}\|w\|_{L^2}^2.
\end{align*}
Thus we have
  \begin{align}
\label{kkk.0}\|w\|_{H^1}\lesssim\|F-c_2|\beta_k|^{\f13}w\|_{H^{-1}}+\sqrt{c_2}|\beta_k|^{\f16}\|w\|_{L^2}.
\end{align}

To prove $|\beta_k|^{\f16}\|w\|_{L^2}\leq C \|F-c_2|\beta_k|^{\f13}w\|_{H^{-1}}$, we proceed in the same fashion as for proving Proposition \ref{resolvent estimate}. If $\lambda\leq 0$, using Lemma \ref{trivial w' lemma}, we have
  \begin{align*}
&\Re\langle F-c_2|\beta_k|^{\f13}w,w\rangle+c_2|\beta_k|^{\f13}\|w\|_{L^2}^2\gtrsim\langle(\f{k^2}{r^2}+r^2)w,w\rangle,
\end{align*}
Combining with
\begin{align*}
&|\Im\langle F-c_2|\beta_k|^{\f13}w,w\rangle|\geq|\beta_k|\langle\f{1}{r^2}w,w\rangle,
\end{align*}
and \eqref{kkk.0}, we get
  \begin{align*}
&\|F-c_2|\beta_k|^{\f13}w\|_{H^{-1}}^2+c_2|\beta_k|^{\f13}\|w\|_{L^2}^2\gtrsim\langle(r^2+\f{|\beta_k|}{r^2})w,w\rangle\geq|\beta_k|^{\f12}\|w\|_{L^2}^2.
\end{align*}
It follows that
  \begin{align*}
&C(\|F-c_2|\beta_k|^{\f13}w\|_{H^{-1}}^2+c_2|\beta_k|^{\f13}\|w\|_{L^2}^2)\geq|\beta_k|^{\f12}\|w\|_{L^2}^2.
\end{align*}
Recall $|\beta_k|=|kB|\geq1$. Choose $c_2>0$ sufficiently small such that $Cc_2\leq\f12$, we can conclude
  \begin{align}
\label{kkk.1}&\|F-c_2|\beta_k|^{\f13}w\|_{H^{-1}}^2\gtrsim|\beta_k|^{\f12}\|w\|_{L^2}^2.
\end{align}

If $\lambda>0$ and $|\beta_k|\leq k^2$, by Lemma \ref{trivial w' lemma} and \eqref{kkk.0} we obtain
     \begin{align*}
&\|F-c_2|\beta_k|^{\f13}w\|_{H^{-1}}^2+c_2|\beta_k|^{\f13}\|w\|_{L^2}^2 \\\geq&\Re\langle F-c_2|\beta_k|^{\f13}w,w\rangle+c_2|\beta_k|^{\f13}\|w\|_{L^2}^2 \\\gtrsim&\langle(\f{k^2}{r^2}+r^2)w,w\rangle\geq|k|\|w\|_{L^2}^2\geq|\beta_k|^{\f12}\|w\|_{L^2}^2.
\end{align*}
Recall $|\beta_k|\geq1$. Choose $c_2>0$ being sufficiently small such that $Cc_2\leq\f12$, we can conclude
  \begin{align}
\label{kkk.2}&\|F-c_2|\beta_k|^{\f13}w\|_{H^{-1}}^2\gtrsim|\beta_k|^{\f12}\|w\|_{L^2}^2.
\end{align}

The remaining the scenario is when $\lambda>0$ and $|\beta_k|\geq k^2$. As the proof for Proposition \ref{resolvent estimate}, we consider the following three cases. Recall $r_0=\f{1}{\sqrt{\lambda}}$.

\begin{enumerate}
  \item \textbf{Case of $|\beta_k|\leq r_0^4$.} Utilizing Lemma \ref{trivial w' lemma} again, it holds
  \begin{align*}
\Re\langle F-c_2|\beta_k|^{\f13}w,w\rangle+c_2|\beta_k|^{\f13}\|w\|_{L^2}^2\gtrsim\langle(\f{k^2}{r^2}+r^2)w,w\rangle.
\end{align*}
Together with
\begin{align*}
&|\Im\langle F-c_2|\beta_k|^{\f13}w,w\rangle|\geq|\beta_k|\langle(\f{1}{r^2}-\f{1}{r_0^2})w,w\rangle,
\end{align*}
this implies
  \begin{align*}
&\|F-c_2|\beta_k|^{\f13}w\|_{H^{-1}}\|w\|_{H^1}+c_2|\beta_k|^{\f13}\|w\|_{L^2}^2\\
\gtrsim&\langle[(\f{k^2}{r^2}+r^2)+|\beta_k|(\f{1}{r^2}-\f{1}{r_0^2})]w,w\rangle\geq\langle[r^2+|\beta_k|(\f{1}{r^2}-\f{1}{r_0^2})]w,w\rangle.
\end{align*}
Since
\begin{align*}
r^2+|\beta_k|(\f{1}{r^2}-\f{1}{r_0^2})\geq 2|\beta_k|^{\f12}-\f{|\beta_k|}{r_0^2}\geq|\beta_k|^{\f12},
\end{align*}
we then deduce
 \begin{align*}
&C(\|F-c_2|\beta_k|^{\f13}w\|_{H^{-1}}^2+c_2|\beta_k|^{\f13}\|w\|_{L^2}^2)\gtrsim|\langle F,w\rangle|\gtrsim \langle g(r)w,w\rangle\geq|\beta_k|^{\f12}\|w\|_{L^2}^2.
\end{align*}
Recall $|\beta_k|\geq1$. Choose $c_2>0$ sufficiently small such that $Cc_2\leq\f12$, we can conclude
  \begin{align}
\label{kkk.3}&\|F-c_2|\beta_k|^{\f13}w\|_{H^{-1}}^2\gtrsim|\beta_k|^{\f12}\|w\|_{L^2}^2.
\end{align}
     \item \textbf{Case of $r_0^4\leq |\beta_k|\leq r_0^6$.} Recall $|\beta_k|\geq 1$, then we have $r_0\geq 1$. With Lemma \ref{trivial w' lemma}, we obtain the below estimate for $\|w\|_{L^2(\f{r_0}{2},\infty)}^2$ 
  \begin{align*}
&\Re\langle F-c_2|\beta_k|^{\f13}w,w\rangle+c_2|\beta_k|^{\f13}\|w\|_{L^2}^2\gtrsim\langle(\f{k^2}{r^2}+r^2)w,w\rangle\geq\|rw\|_{L^2}^2\geq \f{r_0^2}{4}\|w\|_{L^2(\f{r_0}{2},\infty)}^2.
\end{align*}
As in Proposition \ref{resolvent estimate}, the estimate for $\|w\|_{L^2(0,\f{r_0}{2})}^2$ is more subtle. First, we choose $r_{-}\in(\f{r_0}{2}-\f{1}{r_0},\f{r_0}{2})$ so that the following inequality holds
\begin{align}
\label{r-{-}3}|w'(r_{-})|^2\leq r_0\|w'\|_{L^2}^2.
\end{align}
Being slightly different from the proof of Proposition \ref{resolvent estimate}, we require the multiplier for $H^{-1}$ estimate to be of $C^1$ regularity. We introduce a cutoff function $\rho(r)$ as follows:
\begin{align*}
\rho(r)=\left\{
\begin{aligned}
&1,\quad r\in(0,\f{r_0}{2}-\f{2}{r_0}),\\
&\sin\big(\f{\pi}{2}\f{1}{r_{-}-(\f{r_0}{2}-\f{2}{r_0})}(r_{-}-r)\big),\quad r\in(\f{r_0}{2}-\f{2}{r_0},r_{-}),\\
&0,\quad r\geq r_{-}.
\end{aligned}
\right.
\end{align*}
Then we consider the following basic equality
 \begin{align*}
\Im\langle F-c_2|\beta_k|^{\f13}w,w\rho(r)\rangle=&\Im\langle- [\partial_r^2w-(\f{k^2-\f14}{r^2}+\f{r^2}{16}-\f12)w]+i\beta_k(\f{1}{r^2}-\lambda)w,w\rho(r)\rangle\\
=&\Im\langle-\partial_r^2w+i\beta_k(\f{1}{r^2}-\lambda)w,w\rho(r)\rangle.
\end{align*}
This gives
\begin{align*}
|\beta_k|\langle(\f{1}{r^2}-\lambda)w,w\rho(r)\rangle\lesssim\|F-c_2|\beta_k|^{\f13}w\|_{H^{-1}}\|\rho w\|_{H^1}+r_0\|w\|_{L^2}\|w'\|_{L^2}.
\end{align*}
Thus, with $\lambda=\f{1}{r_0^2}$ and $r_{-}\in(\f{r_0}{2}-\f{1}{r_0},\f{r_0}{2})$ we obtain
\begin{align*}
&\|F-c_2|\beta_k|^{\f13}w\|_{H^{-1}}\|\rho w\|_{H^1}+r_0\|w\|_{L^2}\|w'\|_{L^2}\\
\gtrsim&|\beta_k|\langle(\f{4}{r_0^2}-\f{1}{r_0^2})w,w\rho(r)\rangle\geq\f{|\beta_k|}{r_0^2}\|w\|_{L^2(0,\f{r_0}{2}-\f{2}{r_0})}^2.
\end{align*}
Since
\begin{align*}
\|\rho w\|_{H^1}\lesssim\|w\|_{H^1}+r_0\|w\|_{L^2},
\end{align*}
then $\|w\|_{L^2}$ can be bounded by
\begin{align*}
\|w\|_{L^2}^2=&\|w\|_{L^2(0,\f{r_0}{2}-\f{2}{r_0})}^2+\|w\|_{L^2(\f{r_0}{2}-\f{2}{r_0},\f{r_0}{2})}^2+\|w\|_{L^2(\f{r_0}{2},\infty)}^2\\
\lesssim&\f{r_0^2}{|\beta_k|}\|F-c_2|\beta_k|^{\f13}w\|_{H^{-1}}(\|w\|_{H^1}+r_0\|w\|_{L^2})+\f{1}{r_0}\|w\|_{L^{\infty}}^2\\
&+\f{1}{r_0^2}(\|F-c_2|\beta_k|^{\f13}w\|_{H^{-1}}^2+c_2|\beta_k|^{\f13}\|w\|_{L^2}^2).
\end{align*}
 Together with \eqref{r-{-}3}, Lemma \ref{Appendix A1-1} and  \eqref{kkk.0} this gives
\begin{align*}
\|w\|_{L^2}^2\lesssim&\f{r_0^2}{|\beta_k|}(\|F-c_2|\beta_k|^{\f13}w\|_{H^{-1}}\|w\|_{H^1}+r_0\|F-c_2|\beta_k|^{\f13}w\|_{H^{-1}}\|w\|_{L^2})+\f{1}{r_0}\|w\|_{L^{\infty}}^2\\
&+\f{1}{r_0^2}(\|F-c_2|\beta_k|^{\f13}w\|_{H^{-1}}^2+c_2|\beta_k|^{\f13}\|w\|_{L^2}^2)\\
\lesssim&\f{r_0^2}{|\beta_k|}(\|F-c_2|\beta_k|^{\f13}w\|_{H^{-1}}^2+c_2|\beta_k|^{\f13}\|w\|_{L^2}^2+r_0\|F-c_2|\beta_k|^{\f13}w\|_{H^{-1}}\|w\|_{L^2})\\
&+\f{1}{r_0}\|w'\|_{L^2}\|w\|_{L^2}+\f{1}{r_0^2}(\|F-c_2|\beta_k|^{\f13}w\|_{H^{-1}}^2+c_2|\beta_k|^{\f13}\|w\|_{L^2}^2)\\
\leq&C\Big(\f{r_0^2}{|\beta_k|}(\|F-c_2|\beta_k|^{\f13}w\|_{H^{-1}}^2+c_2|\beta_k|^{\f13}\|w\|_{L^2}^2+r_0\|F-c_2|\beta_k|^{\f13}w\|_{H^{-1}}\|w\|_{L^2})\\
&+\f{1}{r_0}(\|F-c_2|\beta_k|^{\f13}w\|_{H^{-1}}+\sqrt{c_2}|\beta_k|^{\f16}\|w\|_{L^2})\|w\|_{L^2}\\
&+\f{1}{r_0^2}(\|F-c_2|\beta_k|^{\f13}w\|_{H^{-1}}^2+c_2|\beta_k|^{\f13}\|w\|_{L^2}^2)\Big).
\end{align*}
Thus we obtain
\begin{align*}
C(c_2\f{r_0^2}{|\beta_k|^{\f23}}+\sqrt{c_2}\f{|\beta_k|^{\f16}}{r_0}+c_2\f{|\beta_k|^{\f13}}{r_0^2})\leq C(c_2|\beta_k|^{-\f16}+\sqrt{c_2}+c_2),
\end{align*}
Recall $|\beta_k|\geq1$. Choose $c_2>0$ satisfying
\begin{align*}
C(c_2|\beta_k|^{-\f16}+\sqrt{c_2}+c_2)\leq\f12,
\end{align*}
we can conclude
\begin{align*}
\|F-c_2|\beta_k|^{\f13}w\|_{H^{-1}}^2\gtrsim\min\{\f{|\beta_k|}{r_0^2},(\f{|\beta_k|}{r_0^3})^2,r_0^2\}\|w\|_{L^2}^2.
\end{align*}
Noting that $r_0^4\leq |\beta_k|\leq r_0^6$, i.e. $\f{|\beta_k|^{\f43}}{r_0^{\f{10}{3}}}\geq\f{|\beta_k|}{r_0^2}\geq r_0^2$, we hence arrive at
\begin{align}
\label{kkk.4}\|F-c_2|\beta_k|^{\f13}w\|_{H^{-1}}^2\gtrsim r_0^2\|w\|_{L^2}^2\geq|\beta_k|^{\f13}\|w\|_{L^2}^2.
\end{align}

     \item  \textbf{Case of $ |\beta_k|\geq r_0^6$.} Similarly, as in the previous case, we choose $r_{-}\in(r_0-\delta,r_0-\f{\delta}{2})$ and $r_{+}\in(r_0+\f{\delta}{2},r_0+\delta)$ such that the following inequality holds
\begin{align}
\label{r-{-},r-{+}3}|w'(r_{-})|^2+|w'(r_{+})|^2\leq \f{4}{\delta}\|w'\|_{L^2}^2,
\end{align}
Here $\delta>0$ is a constant which will be determined later. As in Proposition \ref{resolvent estimate}, we define a $C^1$ cutoff function $\rho$ with domain $(0, \infty)$ as follows
\begin{align*}
\rho(r)=\left\{
\begin{aligned}
&1,\quad r\in(0,r_0-2\delta),\\
&\sin\big(\f{\pi}{2}\f{1}{r_{-}-(r_0-2\delta)}(r_{-}-r)\big),\quad r\in(r_0-2\delta,r_{-}),\\
&0,\quad r\in (r_{-},r_{+}),\\
&\sin\big(\f{\pi}{2}\f{1}{r_0+2\delta-r_{+}}(r_{+}-r)\big),\quad r\in(r_{+},r_0+2\delta),\\
&-1,\quad r\geq r_0+2\delta.
\end{aligned}
\right.
\end{align*}
Observing
 \begin{align*}
\Im\langle F-c_2|\beta_k|^{\f13}w,w\rho(r)\rangle=&\Im\langle- [\partial_r^2w-(\f{k^2-\f14}{r^2}+\f{r^2}{16}-\f12)w]+i\beta_k(\f{1}{r^2}-\lambda)w,w\rho(r)\rangle\\
=&\Im\langle-\partial_r^2w+i\beta_k(\f{1}{r^2}-\lambda)w,w\rho(r)\rangle,
\end{align*}
we obtain
\begin{align*}
&|\beta_k|(\int_0^{r_0-2\delta}(\f{1}{r^2}-\f{1}{r_0^2})|w(r)|^2dr+\int_{r_0+2\delta}^{\infty}(\f{1}{r_0^2}-\f{1}{r^2})|w(r)|^2dr)\\
\lesssim&\|F-c_2|\beta_k|^{\f13}w\|_{H^{-1}}\|\rho w\|_{H^1}+\delta^{-1}\|w\|_{L^2}\|w'\|_{L^2}.
\end{align*}
With $r_{-}\in(r_0-\delta,r_0-\f{\delta}{2})$ and $r_{+}\in(r_0+\f{\delta}{2},r_0+\delta)$, it holds
\begin{align*}
&\f{1}{r^2}-\f{1}{r_0^2}=\f{(r_0-r)(r_0+r)}{r^2r_0^2}\geq\f{r_0-r}{r^2r_0}\gtrsim\f{\delta}{r^2r_0}\gtrsim\f{\delta}{r_0^3} \quad \textrm{for any} \ r\in(0,r_{-}).
\end{align*}
Likewise, we have
\begin{align*}
&\f{1}{r_0^2}-\f{1}{r^2}=\f{(r-r_0)(r+r_0)}{r^2r_0^2}\geq\f{r-r_0}{r^2r_0}\gtrsim\f{\delta}{r^2r_0}\gtrsim\f{\delta}{r_0^3},\quad \textrm{for any} \ r\in(r_{+},2r_0),\\
&\f{1}{r_0^2}-\f{1}{r^2}=\f{(r-r_0)(r+r_0)}{r^2r_0^2}\approx\f{1}{r_0^2}\gtrsim\f{\delta}{r_0^3},\quad \textrm{for any} \ r\geq2r_0.
\end{align*}
Plugging in all these above inequalities, we deduce
\begin{align*}
&\|F-c_2|\beta_k|^{\f13}w\|_{H^{-1}}\|\rho w\|_{H^1}+\delta^{-1}\|w\|_{L^2}\|w'\|_{L^2}\\
\geq&|\beta_k|(\int_0^{r_0-2\delta}(\f{1}{r^2}-\f{1}{r_0^2})|w(r)|^2dr+\int_{r_0+2\delta}^{\infty}(\f{1}{r_0^2}-\f{1}{r^2})|w(r)|^2dr)\\
\gtrsim&\f{|\beta_k|\delta}{r_0^3}\|w\|_{L^2\big((0,r_0-2\delta)\cup(r_0+2\delta,\infty)\big)}^2.
\end{align*}
Since
\begin{align*}
\|\rho w\|_{H^1}\lesssim\|w\|_{H^1}+\delta^{-1}\|w\|_{L^2},
\end{align*}
then $\|w\|_{L^2}$ can be bounded as follows
\begin{align*}
\|w\|_{L^2}^2=&\|w\|_{L^2\big((0,r_0-2\delta)\cup(r_0+2\delta,\infty)\big)}^2+\|w\|_{L^2(r_0-2\delta,r_0+2\delta)}^2\\
\lesssim&\f{r_0^3}{|\beta_k|\delta}(\|F-c_2|\beta_k|^{\f13}w\|_{H^{-1}}\|w\|_{H^1}+\delta^{-1}\|F-c_2|\beta_k|^{\f13}w\|_{H^{-1}}\|w\|_{L^2}\\
&+\delta^{-1}\|w'\|_{L^2}\|w\|_{L^2})+\delta\|w\|_{L^{\infty}}^2.
\end{align*}
Employing \eqref{r-{-},r-{+}3}, Lemma \ref{Appendix A1-1} and \eqref{kkk.0} we obtain
\begin{align*}
\|w\|_{L^2}^2\lesssim&\f{r_0^3}{|\beta_k|\delta}\Big(\|F-c_2|\beta_k|^{\f13}w\|_{H^{-1}}^2+c_2|\beta_k|^{\f13}\|w\|_{L^2}^2+\delta^{-1}\|F-c_2|\beta_k|^{\f13}w\|_{H^{-1}}\|w\|_{L^2}\\
&\delta^{-1}(\|F-c_2|\beta_k|^{\f13}w\|_{H^{-1}}+\sqrt{c_2}|\beta_k|^{\f16}\|w\|_{L^2})\|w\|_{L^2}\Big)+\delta\|w\|_{L^{\infty}}^2\\
\lesssim&\f{r_0^3}{|\beta_k|\delta}\Big(\|F-c_2|\beta_k|^{\f13}w\|_{H^{-1}}^2+c_2|\beta_k|^{\f13}\|w\|_{L^2}^2+\delta^{-1}\|F-c_2|\beta_k|^{\f13}w\|_{H^{-1}}\|w\|_{L^2}\\
&\delta^{-1}(\|F-c_2|\beta_k|^{\f13}w\|_{H^{-1}}+\sqrt{c_2}|\beta_k|^{\f16}\|w\|_{L^2})\|w\|_{L^2}\Big)+\delta\|w'\|_{L^2}\|w\|_{L^2}\\
\leq&C\Big[\f{r_0^3}{|\beta_k|\delta}\Big(\|F-c_2|\beta_k|^{\f13}w\|_{H^{-1}}^2+c_2|\beta_k|^{\f13}\|w\|_{L^2}^2+\delta^{-1}\|F-c_2|\beta_k|^{\f13}w\|_{H^{-1}}\|w\|_{L^2}\\
&\delta^{-1}(\|F-c_2|\beta_k|^{\f13}w\|_{H^{-1}}+\sqrt{c_2}|\beta_k|^{\f16}\|w\|_{L^2})\|w\|_{L^2}\Big)\\
&+\delta(\|F-c_2|\beta_k|^{\f13}w\|_{H^{-1}}+\sqrt{c_2}|\beta_k|^{\f16}\|w\|_{L^2})\|w\|_{L^2}\Big].
\end{align*}
Take $\delta=(\f{r_0^3}{|\beta_k|})^{\f13}$, then the condition $|\beta_k|\geq \max\{r_0^6,1\}$ yields
\begin{align*}
\delta=(\f{r_0^3}{|\beta_k|})^{\f13}\leq  r_0.
\end{align*}
Notice that $|\beta_k|\geq r_0^6$, we deduce
\begin{align*}
&C(c_2\f{r_0^3}{|\beta_k|^{\f23}\delta}+\sqrt{c_2}\f{r_0^3}{|\beta_k|^{\f56}\delta^2}+\sqrt{c_2}\delta|\beta_k|^{\f16})=C(c_2\f{r_0^2}{|\beta_k|^{\f13}}+2\sqrt{c_2}\f{r_0}{|\beta_k|^{\f16}})\leq C(c_2+2\sqrt{c_2}),
\end{align*}
Choose $c_2>0$ such that
\begin{align*}
C(c_2+2\sqrt{c_2})\leq\f12,
\end{align*}
we have
\begin{align*}
\|F-c_2|\beta_k|^{\f13}w\|_{H^{-1}}^2\gtrsim\min\{\f{|\beta_k|\delta}{r_0^3},(\f{|\beta_k|\delta^2}{r_0^3})^2,\delta^{-2}\}\|w\|_{L^2}^2.
\end{align*}
With the basic equality $\f{|\beta_k|\delta}{r_0^3}=(\f{|\beta_k|\delta^{\f32}}{r_0^3})^{\f43}=\delta^{-2}$, we conclude
\begin{align}
\label{kkk.5}\|F-c_2|\beta_k|^{\f13}w\|_{H^{-1}}^2\gtrsim (\f{|\beta_k|}{r_0^3})^{\f23}\|w\|_{L^2}^2=\f{|\beta_k|^{\f23}}{r_0^2}\|w\|_{L^2}^2\geq\f{|\beta_k|^{\f23}}{|\beta_k|^{\f13}}\|w\|_{L^2}^2=|\beta_k|^{\f13}\|w\|_{L^2}^2.
\end{align}

\end{enumerate}

Combining \eqref{kkk.1}, \eqref{kkk.2}, \eqref{kkk.3}, \eqref{kkk.4} and \eqref{kkk.5}, we therefore establish the following resolvent estimate from $L^2$ to $H^{-1}$
\begin{align*}
|\beta_k|^{\f16}\|w\|_{L^2}\lesssim\|F-c_2|\beta_k|^{\f13}w\|_{H^{-1}}.
\end{align*}
Applying \eqref{kkk.0}, we have
  \begin{align*}
\|w\|_{H^1}\lesssim\|F-c_2|\beta_k|^{\f13}w\|_{H^{-1}}+\sqrt{c_2}|\beta_k|^{\f16}\|w\|_{L^2}\lesssim\|F-c_2|\beta_k|^{\f13}w\|_{H^{-1}}.
\end{align*}
This completes the proof of Proposition \ref{resolvent estimate-L2}.
\end{proof}

The following proposition can be derived directly from Proposition \ref{resolvent estimate} and Proposition \ref{resolvent estimate-L2} if we choose $c_2>0$ being small enough.

\begin{proposition}\label{resolvent estimate-1}For any $|k|\geq1$, $\lambda\in\mathbb{R}$ and $w\in D_k$, there exist constants $C,c_2>0$ independent of $k,\beta_k,\lambda$, such that the following estimates hold
\begin{align*}
|\beta_k|^{\f16}\|w'\|_{L^2}+|\beta_k|^{\f13}\|w\|_{L^2}\leq C \|F-c_2|\beta_k|^{\f13}w\|_{L^2},
\end{align*}
and
\begin{align*}
\|w\|_{H^1}+|\beta_k|^{\f16}\|w\|_{L^2}\leq C \|F-c_2|\beta_k|^{\f13}w\|_{H^{-1}}.
\end{align*}
\end{proposition}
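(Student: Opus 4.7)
The plan is to derive the stated bounds by a short absorption argument starting from Proposition \ref{resolvent estimate} (the unshifted $L^2\to L^2$ bound) and from Proposition \ref{resolvent estimate-L2} (the shifted $L^2\to H^{-1}$ bound), choosing the shift parameter $c_2$ small enough so that the extra term created by the shift can be absorbed into the left-hand side. The second inequality is essentially a restatement of Proposition \ref{resolvent estimate-L2}, so only the first inequality requires actual work.

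For the first inequality, I would apply Proposition \ref{resolvent estimate} to the given $w\in D_k$ to obtain
\begin{equation*}
|\beta_k|^{\f16}\|w'\|_{L^2}+|\beta_k|^{\f13}\|w\|_{L^2}\leq C_0\|F\|_{L^2}.
\end{equation*}
Writing $F=(F-c_2|\beta_k|^{\f13}w)+c_2|\beta_k|^{\f13}w$ and using the triangle inequality gives
\begin{equation*}
\|F\|_{L^2}\leq \|F-c_2|\beta_k|^{\f13}w\|_{L^2}+c_2|\beta_k|^{\f13}\|w\|_{L^2},
\end{equation*}
so that
\begin{equation*}
|\beta_k|^{\f16}\|w'\|_{L^2}+(1-C_0c_2)|\beta_k|^{\f13}\|w\|_{L^2}\leq C_0\|F-c_2|\beta_k|^{\f13}w\|_{L^2}.
\end{equation*}
Selecting $c_2>0$ with $C_0c_2\leq \tfrac12$ allows me to absorb the $|\beta_k|^{\f13}\|w\|_{L^2}$ term on the right into the left-hand side, yielding the claimed bound with a new constant $C=2C_0$.

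For the second inequality, I would invoke Proposition \ref{resolvent estimate-L2} directly, noting that its statement is exactly the second inequality after possibly shrinking $c_2$ further so that a single value of $c_2$ works for both inequalities simultaneously (take the minimum of the two admissible thresholds). No additional absorption is needed here because the shift has already been built into Proposition \ref{resolvent estimate-L2}.

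There is essentially no obstacle; the only mild care needed is to keep a single common constant $c_2$ small enough to satisfy both the absorption condition $C_0c_2\leq\tfrac12$ from the first step and the smallness condition imposed inside the proof of Proposition \ref{resolvent estimate-L2}. The constant $c_2$ and the final constant $C$ remain independent of $k$, $\beta_k$, and $\lambda$ since all the constants inherited from Propositions \ref{resolvent estimate} and \ref{resolvent estimate-L2} have that property.
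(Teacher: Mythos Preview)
Your proposal is correct and matches the paper's approach exactly: the paper's proof is a one-line remark that the proposition follows directly from Propositions~\ref{resolvent estimate} and~\ref{resolvent estimate-L2} by taking $c_2>0$ small enough, and your absorption argument is precisely the content of that remark spelled out in detail. The only point worth noting is that Proposition~\ref{resolvent estimate-L2} is proved under smallness conditions on $c_2$ (rather than for one fixed value), so shrinking $c_2$ to the minimum of the two thresholds is indeed legitimate, as you observe.
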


\subsection{Resolvent estimates in the weighted $L^2$ space $X$} To prove sharp decaying estimates for the nonlinear problem, we design and employ a new energy space.  Here we introduce a weighted space $L^2$ space as follows
\begin{align*}
\|w\|_{X}=(\int_0^{\infty}\f{|w|^2}{r^2}dr)^{\f12}.
\end{align*}
As we proceed in the last subsection, we start with deriving the resolvent estimates from $L^2(\f{1}{r^2})$ to $L^2(\f{1}{r^2})$.

\begin{proposition}\label{resolvent estimate-r1}For any $|k|\geq1$, $\lambda\in\mathbb{R}$ and $w\in D_k$, there exists a constant $C>0$ independent of $k,\beta_k,\lambda$, such that the following inequality holds
\begin{align*}
|\beta_k|^{\f16}\|\f{w'}{r}\|_{L^2}+|\beta_k|^{\f13}\|\f{w}{r}\|_{L^2}\leq C \|\f{F}{r}\|_{L^2}.
\end{align*}
\end{proposition}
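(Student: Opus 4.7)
The plan is to mirror the strategy of Proposition \ref{resolvent estimate}, but testing the equation \eqref{vorticity eqn} against $w/r^2$ (so that $\langle F,w/r^2\rangle = \langle F/r, w/r\rangle$ is controlled by $\|F/r\|_{L^2}\|w/r\|_{L^2}$) and using Lemma \ref{trivial w'/r} in place of Lemma \ref{trivial w' lemma}. The imaginary part of the pairing gives
\begin{align*}
\Im\langle F, \tfrac{w}{r^2}\rangle = \beta_k\,\langle (\tfrac{1}{r^2}-\lambda)w, \tfrac{w}{r^2}\rangle,
\end{align*}
while Lemma \ref{trivial w'/r} gives the coercive real-part bound
\begin{align*}
\Re\langle F,\tfrac{w}{r^2}\rangle \gtrsim \|\tfrac{w'}{r}\|_{L^2}^2 + k^2\|\tfrac{w}{r^2}\|_{L^2}^2 + |k|\|\tfrac{w}{r}\|_{L^2}^2 + \|w\|_{L^2}^2.
\end{align*}
Once the main bound $|\beta_k|^{1/3}\|w/r\|_{L^2}\lesssim\|F/r\|_{L^2}$ is established, the derivative bound $|\beta_k|^{1/6}\|w'/r\|_{L^2}\lesssim\|F/r\|_{L^2}$ follows at once by feeding it back into the coercive inequality.

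First I would handle the easy cases: if $\lambda\leq 0$ or $\lambda>0$ with $|\beta_k|\leq k^2$, then combining the real and imaginary parts of $\langle F,w/r^2\rangle$ directly produces a weight of order $r^2 + |\beta_k|/r^2 \geq |\beta_k|^{1/2}$ (in the first case) or a factor $k^2/r^2$ controlling $|k|\|w/r\|_{L^2}^2 \geq |\beta_k|^{1/2}\|w/r\|_{L^2}^2$ (in the second). Since $|\beta_k|=|kB|\geq 1$, $|\beta_k|^{1/2}\geq |\beta_k|^{1/3}$, which closes these cases just as in \eqref{k.1}--\eqref{k.2}.

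For the remaining regime $\lambda>0$, $|\beta_k|>k^2$, set $r_0=1/\sqrt{\lambda}$ and split into three sub-cases exactly as in Proposition \ref{resolvent estimate}:
\begin{itemize}
\item[(i)] $|\beta_k|\leq r_0^4$. Here $r^2 + |\beta_k|(1/r^2-1/r_0^2)\geq 2|\beta_k|^{1/2}-|\beta_k|/r_0^2 \geq |\beta_k|^{1/2}$ pointwise, and pairing with $w/r^2$ gives the claim directly.
\item[(ii)] $r_0^4\leq|\beta_k|\leq r_0^6$. Away from the critical circle, for $r>r_0/2$ the coercive estimate yields $\|w\|_{L^2}^2\geq (r_0^2/4)\|w\|_{L^2(r_0/2,\infty)}^2$, so $\|w/r\|_{L^2(r_0/2,\infty)}^2\lesssim r_0^{-4}\|w\|_{L^2(r_0/2,\infty)}^2$. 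For $r<r_0/2$ I would pick $r_{-}\in(r_0/2-1/(2r_0),r_0/2)$ with $|w'(r_{-})|^2\lesssim r_0\|w'\|_{L^2}^2$, multiply the equation by $(w/r^2)\chi_{(0,r_{-})}$, take the imaginary part, and exploit $1/r^2-1/r_0^2\gtrsim 1/r_0^2$ on $(0,r_{-})$ to obtain $\|w/r\|_{L^2(0,r_-)}^2\lesssim (r_0^2/|\beta_k|)(\|F/r\|_{L^2}\|w/r\|_{L^2}+|w'(r_-)w(r_-)|/r_-^2)$. Combining with the interpolation from Lemma \ref{Appendix A1-1}, Lemma \ref{trivial w'/r} and the trace bound for $r_-$ produces the desired $|\beta_k|^{1/3}$ gain after balancing.
\item[(iii)] $|\beta_k|\geq r_0^6$. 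Choose $r_\pm$ in $(r_0\mp\delta,r_0\mp\delta/2)$ with $|w'(r_\pm)|^2\lesssim \delta^{-1}\|w'\|_{L^2}^2$, test with $(w/r^2)(\chi_{(0,r_-)}-\chi_{(r_+,\infty)})$, and use $|1/r^2-1/r_0^2|\gtrsim\delta/r_0^3$ off $(r_-,r_+)$. Inside $(r_-,r_+)$, interpolation and the $\|w\|_{L^2}^2$ term in Lemma \ref{trivial w'/r} give $\|w/r\|_{L^2(r_-,r_+)}^2\lesssim \delta r_0^{-2}\|w/r\|_{L^\infty}^2\lesssim \delta r_0^{-2}\|w'/r\|_{L^2}\|w/r\|_{L^2}$. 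Choosing $\delta=(r_0^3/|\beta_k|)^{1/3}$ (which satisfies $\delta\leq r_0$ thanks to $|\beta_k|\geq r_0^6\geq 1$) balances the three contributions and delivers the bound $\|F/r\|_{L^2}\|w/r\|_{L^2}\gtrsim(|\beta_k|/r_0^3)^{2/3}\|w/r\|_{L^2}^2 \geq |\beta_k|^{1/3}\|w/r\|_{L^2}^2$.
\end{itemize}

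The main obstacle, as in Proposition \ref{resolvent estimate}, is case (iii): extracting the sharp $|\beta_k|^{1/3}$ gain requires the delicate choice of the cutoff radii $r_\pm$ so that the boundary terms $|w'(r_\pm)w(r_\pm)|/r_\pm^2$ (which now carry an extra $1/r_\pm^2$ weight because of the multiplier $w/r^2$) can be absorbed by $\|w'/r\|_{L^2}$ and $\|w/r\|_{L^\infty}$ via Lemma \ref{Appendix A1-1}. The extra factor $1/r_\pm^2\approx 1/r_0^2$ is actually helpful since it matches the weight $\delta/r_0^3$ coming from $|1/r^2-1/r_0^2|$, so the same optimization $\delta=(r_0^3/|\beta_k|)^{1/3}$ as in Proposition \ref{resolvent estimate} goes through.
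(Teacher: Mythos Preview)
Your approach is the paper's, and the case decomposition matches. There is one genuine oversight: your identity $\Im\langle F, w/r^2\rangle = \beta_k\langle(\tfrac{1}{r^2}-\lambda)w, w/r^2\rangle$ is false. Unlike in the unweighted case, integrating $\langle -w'', w/r^2\rangle$ by parts leaves a nonzero imaginary contribution, so in fact
\[
\Im\Big\langle F,\ \frac{w}{r^2}\Big\rangle \;=\; -\,2\,\Im\Big\langle \frac{w'}{r},\ \frac{w}{r^2}\Big\rangle \;+\; \beta_k\,\Big\langle\big(\tfrac{1}{r^2}-\lambda\big)w,\ \frac{w}{r^2}\Big\rangle,
\]
and the same cross term appears when you insert a cutoff. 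The paper records this explicitly in its Case~(1). The term is harmless: by Lemma~\ref{trivial w'/r} one has $2\|w'/r\|_{L^2}\|w/r^2\|_{L^2}\lesssim \Re\langle F, w/r^2\rangle \le \|F/r\|_{L^2}\|w/r\|_{L^2}$, so it is absorbed into the right-hand side in every sub-case. Once you account for it, the rest of your sketch goes through.

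Two smaller corrections in the same spirit. First, the pigeonhole step should select $r_\pm$ so that $|w'(r_\pm)|^2/r_\pm^2$ (not $|w'(r_\pm)|^2$) is controlled by $\|w'/r\|_{L^2}^2$, since the boundary terms now carry the weight $1/r_\pm^2$. Second, the thin-layer interpolation should be applied to $w/r$: Lemma~\ref{Appendix A1-1} gives $\|w/r\|_{L^\infty}^2 \le 2\|(w/r)'\|_{L^2}\|w/r\|_{L^2}$, which produces an extra $\|w/r^2\|_{L^2}$ contribution that is again absorbed via Lemma~\ref{trivial w'/r}. In particular, in your case~(iii) the bound on the thin layer is simply $\|w/r\|_{L^2(r_-,r_+)}^2\lesssim \delta\,\|w/r\|_{L^\infty}^2$; the factor $r_0^{-2}$ you wrote there is spurious and should be dropped.
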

\begin{proof}
We first prove $|\beta_k|^{\f13}\|\f{w}{r}\|_{L^2}\leq C \|\f{F}{r}\|_{L^2}$. The discussion is analogous to proofs of Proposition \ref{resolvent estimate} and Proposition \ref{resolvent estimate-L2}. If $\lambda\leq 0$, using Lemma \ref{trivial w'/r}, we have
  \begin{align*}
&\Re\langle F,\f{w}{r^2}\rangle\gtrsim\langle(\f{k^2}{r^2}+r^2)w,\f{w}{r^2}\rangle.
\end{align*}
This together with
\begin{align*}
&|\Im\langle F,\f{w}{r^2}\rangle|\geq|\beta_k|\langle\f{1}{r^2}w,\f{w}{r^2}\rangle,
\end{align*}
yields
  \begin{align}
&\label{r0.1}\|\f{F}{r}\|_{L^2}\|\f{w}{r}\|_{L^2}\gtrsim\langle(r^2+\f{|\beta_k|}{r^2})w,\f{w}{r^2}\rangle\geq|\beta_k|^{\f12}\|\f{w}{r}\|_{L^2}^2.
\end{align}

If $\lambda>0$ and $|\beta_k|\leq k^2$. With Lemma \ref{trivial w'/r}, we have
\begin{align}
\label{r1.1}&\|\f{F}{r}\|_{L^2}\gtrsim|k|\|\f{w}{r}\|_{L^2}\geq|\beta_k|^{\f12}\|\f{w}{r}\|_{L^2}.
\end{align}

If $\lambda>0$ and $|\beta_k|\geq k^2$, we consider the following three cases. Recall $r_0=\f{1}{\sqrt{\lambda}}$.
\begin{enumerate}
  \item \textbf{Case of $|\beta_k|\leq r_0^4$.} Via integration by parts we get
\begin{align*}
&\Im\langle F,\f{w}{r^2}\rangle=-2\Im\langle \f{w'}{r},\f{w}{r^2}\rangle+\beta_k\langle(\f{1}{r^2}-\f{1}{r_0^2})w,\f{w}{r^2}\rangle.
\end{align*}
Together with Lemma \ref{trivial w'/r} this gives
\begin{align*}
&|\beta_k|\langle(\f{1}{r^2}-\f{1}{r_0^2})w,\f{w}{r^2}\rangle\leq|\Im\langle F,\f{w}{r^2}\rangle|+2\|\f{w'}{r}\|_{L^2}\|\f{w}{r^2}\|_{L^2}
\lesssim\|\f{F}{r}\|_{L^2}\|\f{w}{r}\|_{L^2}.
\end{align*}
Employing Lemma \ref{trivial w'/r} again, we have
  \begin{align*}
&\|\f{F}{r}\|_{L^2}\|\f{w}{r}\|_{L^2}\gtrsim\langle[r^2+|\beta_k|(\f{1}{r^2}-\f{1}{r_0^2})]w,\f{w}{r^2}\rangle.
\end{align*}
Since
\begin{align*}
r^2+|\beta_k|(\f{1}{r^2}-\f{1}{r_0^2})\geq 2|\beta_k|^{\f12}-\f{|\beta_k|}{r_0^2}\geq|\beta_k|^{\f12},
\end{align*}
we conclude
  \begin{align}
\label{r1.2}&\|\f{F}{r}\|_{L^2}\|\f{w}{r}\|_{L^2}\gtrsim |\beta_k|^{\f12}\|\f{w}{r}\|_{L^2}^2\geq  |\beta_k|^{\f13}\|\f{w}{r}\|_{L^2}^2.
\end{align}
     \item \textbf{Case of $r_0^4\leq |\beta_k|\leq r_0^6$.} Since $|\beta_k|\geq 1$, it renders $r_0\geq 1$. Utilizing Lemma \ref{trivial w'/r}, we obtain the estimate of $\|\f{w}{r}\|_{L^2(\f{r_0}{2},\infty)}^2$ as follows
  \begin{align*}
&\|\f{F}{r}\|_{L^2}\|\f{w}{r}\|_{L^2}\gtrsim\|r\f{w}{r}\|_{L^2}^2\geq \f{r_0^2}{4}\|\f{w}{r}\|_{L^2(\f{r_0}{2},\infty)}^2.
\end{align*}
We continue to estimate $\|\f{w}{r}\|_{L^2(0,\f{r_0}{2})}^2$. Choose $r_{-}\in(\f{r_0}{2}-\f{1}{2r_0},\f{r_0}{2})$ such that the following inequality holds
\begin{align}
\label{r1-r-{-}}\f{|w'(r_{-})|^2}{r_{-}^2}\leq 2r_0\|\f{w'}{r}\|_{L^2}^2.
\end{align}
Observing the basic equality
 \begin{align*}
\Im\langle F,\f{w\chi_{(0,r_{-})}}{r^2}\rangle=&\Im\langle- [\partial_r^2w-(\f{k^2-\f14}{r^2}+\f{r^2}{16}-\f12)w]+i\beta_k(\f{1}{r^2}-\lambda)w,\f{w\chi_{(0,r_{-})}}{r^2}\rangle\\
=&\Im\langle-\partial_r^2w+i\beta_k(\f{1}{r^2}-\lambda)w,\f{w\chi_{(0,r_{-})}}{r^2}\rangle,
\end{align*}
we deduce
\begin{align*}
|\beta_k|\langle(\f{1}{r^2}-\lambda)w,\f{w\chi_{(0,r_{-})}}{r^2}\rangle\leq\|\f{F}{r}\|_{L^2}\|\f{w}{r}\|_{L^2}+\f{|w'(r_{-})w(r_{-})|}{|r_{-}|^2}.
\end{align*}
It follows from the fact that $\lambda=\f{1}{r_0^2}$ and $r_{-}\in(\f{r_0}{2}-\f{1}{r_0},\f{2r_0}{2})$
\begin{align*}
\|\f{F}{r}\|_{L^2}\|\f{w}{r}\|_{L^2}+\f{|w'(r_{-})w(r_{-})|}{|r_{-}|^2}\geq|\beta_k|\langle(\f{4}{r_0^2}-\f{1}{r_0^2})w,\f{w\chi_{(0,r_{-})}}{r^2}\rangle\geq\f{|\beta_k|}{r_0^2}\|\f{w}{r}\|_{L^2(0,r_{-})}^2.
\end{align*}
Therefore, $\|\f{w}{r}\|_{L^2}$ can be bounded by
\begin{align*}
\|\f{w}{r}\|_{L^2}^2=&\|\f{w}{r}\|_{L^2(0,r_{-})}^2+\|\f{w}{r}\|_{L^2(r_{-},\f{r_0}{2})}^2+\|\f{w}{r}\|_{L^2(\f{r_0}{2},\infty)}^2\\
\lesssim&\f{r_0^2}{|\beta_k|}(\|\f{F}{r}\|_{L^2}\|\f{w}{r}\|_{L^2}+\f{|w'(r_{-})w(r_{-})|}{|r_{-}|^2})+\f{1}{r_0}\|\f{w}{r}\|_{L^{\infty}}^2+\f{1}{r_0^2}\|\f{F}{r}\|_{L^2}\|\f{w}{r}\|_{L^2}.
\end{align*}
Together with \eqref{r1-r-{-}}, Lemma \ref{Appendix A1-1}, this yields
\begin{align*}
\|\f{w}{r}\|_{L^2}^2\lesssim&\f{r_0^2}{|\beta_k|}(\|\f{F}{r}\|_{L^2}\|\f{w}{r}\|_{L^2}+r_0^{\f12}\|\f{w'}{r}\|_{L^2}\|\f{w}{r}\|_{L^{\infty}})+\f{1}{r_0}\|\f{w}{r}\|_{L^{\infty}}^2+\f{1}{r_0^2}\|\f{F}{r}\|_{L^2}\|\f{w}{r}\|_{L^2}\\
\lesssim&\f{r_0^2}{|\beta_k|}(\|\f{F}{r}\|_{L^2}\|\f{w}{r}\|_{L^2}+r_0^{\f12}\|\f{w'}{r}\|_{L^2}\|(\f{w}{r})'\|_{L^2}^{\f12}\|\f{w}{r}\|_{L^2}^{\f12})+\f{1}{r_0}\|(\f{w}{r})'\|_{L^2}\|\f{w}{r}\|_{L^2}\\
&+\f{1}{r_0^2}\|\f{F}{r}\|_{L^2}\|\f{w}{r}\|_{L^2}.
\end{align*}
By Lemma \ref{trivial w'/r} we obtain
\begin{align*}
\|\f{w'}{r}\|_{L^2}^2+\|(\f{w}{r})'\|_{L^2}^2\lesssim\|\f{F}{r}\|_{L^2}\|\f{w}{r}\|_{L^2}, \end{align*}
which implies
\begin{align*}
\|\f{w}{r}\|_{L^2}^2\lesssim&\f{r_0^2}{|\beta_k|}(\|\f{F}{r}\|_{L^2}\|\f{w}{r}\|_{L^2}+r_0^{\f12}\|\f{w'}{r}\|_{L^2}\|(\f{w}{r})'\|_{L^2}^{\f12}\|\f{w}{r}\|_{L^2}^{\f12})+\f{1}{r_0}\|(\f{w}{r})'\|_{L^2}\|\f{w}{r}\|_{L^2}\\
&+\f{1}{r_0^2}\|\f{F}{r}\|_{L^2}\|\f{w}{r}\|_{L^2}\\
\lesssim&\f{r_0^2}{|\beta_k|}(\|\f{F}{r}\|_{L^2}\|\f{w}{r}\|_{L^2}+r_0^{\f12}\|\f{F}{r}\|_{L^2}^{\f34}\|\f{w}{r}\|_{L^2}^{\f54})+\f{1}{r_0}\|\f{F}{r}\|_{L^2}^{\f12}\|\f{w}{r}\|_{L^2}^{\f32}\\
&+\f{1}{r_0^2}\|\f{F}{r}\|_{L^2}\|\f{w}{r}\|_{L^2}.
\end{align*}
Then we conclude
\begin{align*}
\|\f{F}{r}\|_{L^2}\|\f{w}{r}\|_{L^2}\gtrsim\min\{\f{|\beta_k|}{r_0^2},(\f{|\beta_k|}{r_0^{\f52}})^{\f43},r_0^2\}\|\f{w}{r}\|_{L^2}^2.
\end{align*}
With $r_0^4\leq |\beta_k|\leq r_0^6(\Rightarrow\f{|\beta_k|^{\f43}}{r_0^{\f{10}{3}}}\geq\f{|\beta_k|}{r_0^2}\geq r_0^2)$, we deduce
\begin{align}
\label{r1.3}\|\f{F}{r}\|_{L^2}\|\f{w}{r}\|_{L^2}\gtrsim r_0^2\|\f{w}{r}\|_{L^2}^2\geq|\beta_k|^{\f13}\|\f{w}{r}\|_{L^2}^2.
\end{align}

     \item \textbf{Case of $|\beta_k|\geq r_0^6$.} Choose $r_{-}\in(r_0-\delta,r_0-\f{\delta}{2})$ and $r_{+}\in(r_0+\f{\delta}{2},r_0+\delta)$ such that it holds
\begin{align}
\label{r-1-r-{-},r-{+}}\f{|w'(r_{-})|^2}{r_{-}^2}+\f{|w'(r_{+})|^2}{r_{+}^2}\leq \f{4}{\delta}\|\f{w'}{r}\|_{L^2}^2.
\end{align}
Here $\delta\in(0, r_0)$ is a constant to be determined later. Noting
 \begin{align*}
&\Im\langle F,\f{w(\chi_{(0,r_{-})}-\chi_{(r_{+},\infty)})}{r^2}\rangle\\
=&\Im\langle- [\partial_r^2w-(\f{k^2-\f14}{r^2}+\f{r^2}{16}-\f12)w]+i\beta_k(\f{1}{r^2}-\lambda)w,\f{w(\chi_{(0,r_{-})}-\chi_{(r_{+},\infty)})}{r^2}\rangle\\
=&\Im\langle-\partial_r^2w+i\beta_k(\f{1}{r^2}-\lambda)w,\f{w(\chi_{(0,r_{-})}-\chi_{(r_{+},\infty)})}{r^2}\rangle,
\end{align*}
we obtain
\begin{align*}
&|\beta_k|(\int_0^{r_{-}}(\f{1}{r^2}-\f{1}{r_0^2})|\f{w(r)}{r}|^2dr+\int_{r_{+}}^{\infty}(\f{1}{r_0^2}-\f{1}{r^2})|\f{w(r)}{r}|^2dr)\\
\leq&\|\f{F}{r}\|_{L^2}\|\f{w}{r}\|_{L^2}+\f{|w'(r_{-})w(r_{-})|}{r_{-}^2}+\f{|w'(r_{+})w(r_{+})|}{r_{+}^2}.
\end{align*}
With $r_{-}\in(r_0-\delta,r_0-\f{\delta}{2})$ and $r_{+}\in(r_0+\f{\delta}{2},r_0+\delta)$, it holds
\begin{align*}
&\f{1}{r^2}-\f{1}{r_0^2}=\f{(r_0-r)(r_0+r)}{r^2r_0^2}\geq\f{r_0-r}{r^2r_0}\gtrsim\f{\delta}{r^2r_0}\gtrsim\f{\delta}{r_0^3},\quad \textrm{for any} \ r\in(0,r_{-}).
\end{align*}
Similarly, it can be verified that
\begin{align*}
&\f{1}{r_0^2}-\f{1}{r^2}=\f{(r-r_0)(r+r_0)}{r^2r_0^2}\geq\f{r-r_0}{r^2r_0}\gtrsim\f{\delta}{r^2r_0}\gtrsim\f{\delta}{r_0^3},\quad \textrm{for any} \ r\in(r_{+},2r_0),\\
&\f{1}{r_0^2}-\f{1}{r^2}=\f{(r-r_0)(r+r_0)}{r^2r_0^2}\approx\f{1}{r_0^2}\gtrsim\f{\delta}{r_0^3},\quad \textrm{for any} \ r\geq2r_0.
\end{align*}
Combining all inequalities above, we deduce
\begin{align*}
&\|\f{F}{r}\|_{L^2}\|\f{w}{r}\|_{L^2}+\f{|w'(r_{-})w(r_{-})|}{r_{-}^2}+\f{|w'(r_{+})w(r_{+})|}{r_{+}^2}\\
\gtrsim&\f{|\beta_k|\delta}{r_0^3}\|\f{w}{r}\|_{L^2((0,r_{-})\cup(r_{+},\infty))}^2.
\end{align*}
Hence $\|\f{w}{r}\|_{L^2}$ can be bounded by
\begin{align*}
\|\f{w}{r}\|_{L^2}^2=&\|\f{w}{r}\|_{L^2((0,r_{-})\cup(r_{+},\infty))}^2+\|\f{w}{r}\|_{L^2(r_{-},r_{+})}^2\\
\lesssim&\f{r_0^3}{|\beta_k|\delta}(\|\f{F}{r}\|_{L^2}\|\f{w}{r}\|_{L^2}+\f{|w'(r_{-})w(r_{-})|}{r_{-}^2}+\f{|w'(r_{+})w(r_{+})|}{r_{+}^2})+\delta\|\f{w}{r}\|_{L^{\infty}}^2.
\end{align*}
Together with \eqref{r-1-r-{-},r-{+}}, Lemma \ref{Appendix A1-1} and Lemma \ref{trivial w'/r}, this gives
\begin{align*}
\|\f{w}{r}\|_{L^2}^2\lesssim&\f{r_0^3}{|\beta_k|\delta}(\|\f{F}{r}\|_{L^2}\|\f{w}{r}\|_{L^2}+\delta^{-\f12}\|\f{w'}{r}\|_{L^2}\|\f{w}{r}\|_{L^{\infty}})+\delta\|\f{w}{r}\|_{L^{\infty}}^2\\
\lesssim&\f{r_0^3}{|\beta_k|\delta}(\|\f{F}{r}\|_{L^2}\|\f{w}{r}\|_{L^2}+\f{\|\f{F}{r}\|_{L^2}^{\f34}\|\f{w}{r}\|_{L^2}^{\f54}}{\delta^{\f12}})+\delta\|\f{F}{r}\|_{L^2}^{\f12}\|\f{w}{r}\|_{L^2}^{\f32}.
\end{align*}
Therefore, we obtain
\begin{align*}
\|\f{F}{r}\|_{L^2}\|\f{w}{r}\|_{L^2}\gtrsim\min\{\f{|\beta_k|\delta}{r_0^3},(\f{|\beta_k|\delta^{\f32}}{r_0^3})^{\f43},\delta^{-2}\}\|\f{w}{r}\|_{L^2}^2.
\end{align*}
Taking $\delta=(\f{r_0^3}{|\beta_k|})^{\f13}$, and noting $|\beta_k|\geq 1$, we have
\begin{align*}
\delta=(\f{r_0^3}{|\beta_k|})^{\f13}\leq r_0.
\end{align*}
With the basic equality $\f{|\beta_k|\delta}{r_0^3}=(\f{|\beta_k|\delta^{\f32}}{r_0^3})^{\f43}=\delta^{-2}$, we arrive at
\begin{align}
\label{r1.4}\|\f{F}{r}\|_{L^2}\|\f{w}{r}\|_{L^2}\gtrsim |\beta_k|^{\f13}\|\f{w}{r}\|_{L^2}^2.
\end{align}

\end{enumerate}
Plugging in \eqref{r0.1}, \eqref{r1.1}, \eqref{r1.2}, \eqref{r1.3} and \eqref{r1.4}, we therefore establish the following resolvent estimate from $L^2(\f{1}{r^2})$ to $L^2(\f{1}{r^2})$
\begin{align}
\label{r1.5-}|\beta_k|^{\f13}\|\f{w}{r}\|_{L^2}\lesssim\|\f{F}{r}\|_{L^2}.
\end{align}

  In addition, by Lemma \ref{trivial w'/r} and \eqref{r1.5-} we deduce
  \begin{align*}
\|\f{w'}{r}\|_{L^2}^2\lesssim\|\f{F}{r}\|_{L^2}\|\f{w}{r}\|_{L^2}\lesssim|\beta_k|^{-\f13}\|\f{F}{r}\|_{L^2}^2.
\end{align*}
This completes the proof of Proposition \ref{resolvent estimate-r1}.
\end{proof}

Finally, we are ready to derive the resolvent estimates from $L^2(\f{1}{r^2})$ to $H^{-1}(\f{1}{r^2})$.
\begin{proposition}\label{resolvent estimate-L2-r}For any $|k|\geq1$, $\lambda\in\mathbb{R}$ and $w\in D_k$, there exist  constants $C,c_2>0$ independent with $k,\beta_k,\lambda$, such that the following inequality holds
\begin{align*}
\|\f{w}{r}\|_{H^1}+|\beta_k|^{\f16}\|\f{w}{r}\|_{L^2}\leq C \|\f{F}{r}-c_2|\beta_k|^{\f13}\f{w}{r}\|_{H^{-1}}.
\end{align*}
\end{proposition}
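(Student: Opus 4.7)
The plan is to mirror the strategy of Proposition \ref{resolvent estimate-L2}, but replace the role of Lemma \ref{trivial w' lemma} and Lemma \ref{trivial w'-h1-2.2} by their weighted counterparts Lemma \ref{trivial w'/r} and Lemma \ref{trivial w'/r-h1-2.4}. The multiplier $\frac{w}{r^2}$ (in place of $w$) turns the energy identity into one controlling $\|w/r\|_{H^1}$ and weighted lower-order pieces. First I would derive the baseline estimate
\[
\|\tfrac{w}{r}\|_{H^1}\lesssim \|\tfrac{F}{r}-c_2|\beta_k|^{\f13}\tfrac{w}{r}\|_{H^{-1}}+\sqrt{c_2}|\beta_k|^{\f16}\|\tfrac{w}{r}\|_{L^2},
\]
by pairing the shifted equation with $w/r^2$, using Lemma \ref{trivial w'/r} to control $\|w'/r\|_{L^2}^2+k^2\|w/r^2\|_{L^2}^2+\|w\|_{L^2}^2$ from the real part, and then choosing $c_2$ small. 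This reduces everything to proving $|\beta_k|^{\f16}\|w/r\|_{L^2}\leq C\|F/r-c_2|\beta_k|^{\f13}w/r\|_{H^{-1}}$.

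Next I would split into the same cases used for Proposition \ref{resolvent estimate-L2}. When $\lambda\leq 0$, the imaginary part gives $|\beta_k|\langle \frac{w}{r^2},\frac{w}{r^2}\rangle$ and combined with the real part bound $\langle(r^2+k^2/r^2)w,w/r^2\rangle$ from Lemma \ref{trivial w'/r} one obtains an $|\beta_k|^{\f12}\|w/r\|_{L^2}^2$ lower bound. When $\lambda>0$ and $|\beta_k|\leq k^2$, the $k^2\|w/r^2\|_{L^2}^2$ coercivity already beats $|\beta_k|^{\f12}\|w/r\|_{L^2}^2$. The nontrivial regime is $\lambda>0$, $|\beta_k|\geq k^2$, where I write $r_0=1/\sqrt\lambda$ and treat the three sub-cases $|\beta_k|\leq r_0^4$, $r_0^4\leq |\beta_k|\leq r_0^6$, $|\beta_k|\geq r_0^6$ exactly as in Proposition \ref{resolvent estimate-L2}, but with every norm weighted by $1/r$ and every test/cutoff function paired through $w\rho/r^2$.

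The main obstacle will be the two critical-layer sub-cases. Here I must pick $r_\pm$ near $r_0$ at which $|w'(r_\pm)|^2/r_\pm^2$ is controlled by a slab average of $\|w'/r\|_{L^2}^2$, and use the $C^1$ cutoff $\rho$ of the paper paired with $w\rho/r^2$. The imaginary part gives the lower bound $\frac{|\beta_k|\delta}{r_0^3}\|w/r\|_{L^2((0,r_-)\cup(r_+,\infty))}^2$, while the central slab is handled by the $L^\infty$ embedding together with the Gagliardo--Nirenberg interpolation $\|w/r\|_{L^\infty}^2\lesssim \|(w/r)'\|_{L^2}\|w/r\|_{L^2}$; here the key point is that $(w/r)'$ and $w'/r$ are both controlled by $\|(F/r-c_2|\beta_k|^{\f13}w/r)\|_{H^{-1}}\|w/r\|_{H^1}+c_2|\beta_k|^{\f13}\|w/r\|_{L^2}^2$ via Lemma \ref{trivial w'/r}. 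Bounding $\|\rho w/r\|_{H^1}\lesssim \|w/r\|_{H^1}+\delta^{-1}\|w/r\|_{L^2}$ and inserting the baseline $H^1$ estimate produces an inequality with coefficients $c_2 r_0^2/|\beta_k|^{1/3}+\sqrt{c_2}\,r_0/|\beta_k|^{1/6}+c_2$ in front of $\|w/r\|_{L^2}^2$; taking $c_2$ small enough (independently of $k,\beta_k,\lambda$) absorbs all of these. The optimal slab width is again $\delta=(r_0^3/|\beta_k|)^{1/3}$, which balances $\frac{|\beta_k|\delta}{r_0^3}=\delta^{-2}$ and yields the sharp lower bound $|\beta_k|^{\f13}\|w/r\|_{L^2}^2$.

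Finally, assembling the five sub-cases gives $|\beta_k|^{\f16}\|w/r\|_{L^2}\lesssim \|F/r-c_2|\beta_k|^{\f13}w/r\|_{H^{-1}}$, and feeding this back into the baseline estimate eliminates the $\sqrt{c_2}|\beta_k|^{\f16}\|w/r\|_{L^2}$ term and produces the full conclusion. I expect no genuinely new analytic ingredient beyond those already in Proposition \ref{resolvent estimate-L2} and Proposition \ref{resolvent estimate-r1}; the only subtlety is bookkeeping the weight $1/r$ everywhere (in particular keeping track that $\|(w/r)'\|_{L^2}$ and $\|w'/r\|_{L^2}$ differ by an $\|w/r^2\|_{L^2}$ term that is already absorbed by the coercive estimate of Lemma \ref{trivial w'/r}).
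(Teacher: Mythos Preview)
Your proposal is correct and follows essentially the same approach as the paper: the baseline estimate \eqref{kkk.0-r}, the identical five-way case split ($\lambda\le0$; $\lambda>0$ with $|\beta_k|\le k^2$; and the three sub-cases $|\beta_k|\le r_0^4$, $r_0^4\le|\beta_k|\le r_0^6$, $|\beta_k|\ge r_0^6$), the $C^1$ cutoff $\rho$ paired with $w\rho/r^2$, and the choice $\delta=(r_0^3/|\beta_k|)^{1/3}$ all match the paper exactly. The only point you leave slightly implicit is that pairing $-w''$ with $w\rho/r^2$ produces, after integration by parts, an extra boundary-type term $\Im\langle 2w',w/r^3\rangle$ (and its cutoff analogue $\|\tfrac{w'}{r}\|_{L^2}\|\tfrac{w}{r^2}\|_{L^2}$), but as you note this is absorbed by $\|\tfrac{w}{r}\|_{H^1}^2$ via Lemma~\ref{trivial w'/r}, which is precisely how the paper handles it.
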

\begin{proof}By Lemma \ref{trivial w'/r} we obtain
  \begin{align*}
&\Re\langle F,\f{w}{r^2}\rangle=\Re\langle F-c_2|\beta_k|^{\f13}w,\f{w}{r^2}\rangle+c_2|\beta_k|^{\f13}\|\f{w}{r}\|_{L^2}^2\gtrsim\|\f{w'}{r}\|_{L^2}^2+\langle(\f{k^2}{r^2}+r^2)w,\f{w}{r^2}\rangle\gtrsim\|\f{w}{r}\|_{H^1}^2,
\end{align*}
with $c_2>0$ to be determined later. It follows directly that
  \begin{align*}
\|\f{w}{r}\|_{H^1}^2\lesssim\|\f{F}{r}-c_2|\beta_k|^{\f13}\f{w}{r}\|_{H^{-1}}\|\f{w}{r}\|_{H^1}+c_2|\beta_k|^{\f13}\|\f{w}{r}\|_{L^2}^2.
\end{align*}
Thus we have
  \begin{align}
\label{kkk.0-r}\|\f{w}{r}\|_{H^1}\lesssim\|\f{F}{r}-c_2|\beta_k|^{\f13}\f{w}{r}\|_{H^{-1}}+\sqrt{c_2}|\beta_k|^{\f16}\|\f{w}{r}\|_{L^2}.
\end{align}

We proceed in the similar fashion as in Proposition \ref{resolvent estimate-L2} to show $|\beta_k|^{\f16}\|\f{w}{r}\|_{L^2}\leq C \|\f{F}{r}-c_2|\beta_k|^{\f13}\f{w}{r}\|_{H^{-1}}$. Likewise, we divide the proof into different cases. If $\lambda\leq0$, using Lemma \ref{trivial w'/r} we obtain
 \begin{align*}
&\Re\langle F-c_2|\beta_k|^{\f13}w,\f{w}{r^2}\rangle+c_2|\beta_k|^{\f13}\|\f{w}{r}\|_{L^2}^2\gtrsim\langle(\f{k^2}{r^2}+r^2)w,\f{w}{r^2}\rangle.
\end{align*}
This together with
\begin{align*}
&|\Im\langle F-c_2|\beta_k|^{\f13}w,\f{w}{r^2}\rangle|\geq|\beta_k|\langle\f{1}{r^2}w,\f{w}{r^2}\rangle,
\end{align*}
and \eqref{kkk.0-r} gives
  \begin{align*}
&\|\f{F-c_2|\beta_k|^{\f13}w}{r}\|_{H^{-1}}^2+c_2|\beta_k|^{\f13}\|\f{w}{r}\|_{L^2}^2\gtrsim\langle(r^2+\f{|\beta_k|}{r^2})w,\f{w}{r^2}\rangle\geq|\beta_k|^{\f12}\|\f{w}{r}\|_{L^2}^2.
\end{align*}
Then we can deduce
  \begin{align*}
&C(\|\f{F-c_2|\beta_k|^{\f13}w}{r}\|_{H^{-1}}^2+c_2|\beta_k|^{\f13}\|\f{w}{r}\|_{L^2}^2)\geq|\beta_k|^{\f12}\|\f{w}{r}\|_{L^2}^2.
\end{align*}
Notice $|\beta_k|\geq1$. Choose $c_2>0$ sufficiently small satisfying $Cc_2\leq\f12$, we conclude
  \begin{align}
\label{kkk.01-r}&\|\f{F-c_2|\beta_k|^{\f13}w}{r}\|_{H^{-1}}^2\gtrsim|\beta_k|^{\f12}\|\f{w}{r}\|_{L^2}^2.
\end{align}

If $\lambda>0$ and  $|\beta_k|\leq k^2$, it follows from Lemma \ref{trivial w'/r} and \eqref{kkk.0-r}
   \begin{align*}
\|\f{F-c_2|\beta_k|^{\f13}w}{r}\|_{H^{-1}}^2+c_2|\beta_k|^{\f13}\|\f{w}{r}\|_{L^2}^2\geq&\Re\langle F-c_2|\beta_k|^{\f13}w,\f{w}{r^2}\rangle+c_2|\beta_k|^{\f13}\|\f{w}{r}\|_{L^2}^2\\\gtrsim&\langle(\f{k^2}{r^2}+r^2)w,\f{w}{r^2}\rangle\geq|k|\|\f{w}{r}\|_{L^2}^2\geq|\beta_k|^{\f12}\|\f{w}{r}\|_{L^2}^2.
\end{align*}
Note $|\beta_k|\geq1$. Choose $c_2>0$ sufficiently small satisfying $Cc_2\leq\f12$, we conclude
  \begin{align}
\label{kkk.1-r}\|\f{F-c_2|\beta_k|^{\f13}w}{r}\|_{H^{-1}}^2\gtrsim|\beta_k|^{\f12}\|\f{w}{r}\|_{L^2}^2.
\end{align}

If $\lambda> 0$ and $|\beta_k|\geq k^2$, the discussion can be separated into three cases. Recall $r_0=\f{1}{\sqrt{\lambda}}$.
\begin{enumerate}
  \item \textbf{Case of $|\beta_k|\leq r_0^4$.} Recall that Lemma \ref{trivial w'/r} yields
  \begin{align}
\label{kkk.2-r-1}&\Re\langle F-c_2|\beta_k|^{\f13}w,\f{w}{r^2}\rangle+c_2|\beta_k|^{\f13}\|\f{w}{r}\|_{L^2}^2\gtrsim\langle(\f{k^2}{r^2}+r^2)w,\f{w}{r^2}\rangle.
\end{align}
Since
\begin{align*}
&\Im\langle F-c_2|\beta_k|^{\f13}w,\f{w}{r^2}\rangle=\Im\langle -w'',\f{w}{r^2}\rangle+\beta_k\langle(\f{1}{r^2}-\f{1}{r_0^2})w,\f{w}{r^2}\rangle\\
=&\Im\langle 2w',\f{w}{r^3}\rangle+\beta_k\langle(\f{1}{r^2}-\f{1}{r_0^2})w,\f{w}{r^2}\rangle,
\end{align*}
and note \eqref{kkk.0-r}, it holds
\begin{align*}
|\beta_k|\langle(\f{1}{r^2}-\f{1}{r_0^2})w,\f{w}{r^2}\rangle\leq& \|\f{F-c_2|\beta_k|^{\f13}w}{r}\|_{H^{-1}}\|\f{w}{r}\|_{H^1}+2\|\f{w'}{r}\|_{L^2}\|\f{w}{r^2}\|_{L^2}\\
\leq& \|\f{F-c_2|\beta_k|^{\f13}w}{r}\|_{H^{-1}}\|\f{w}{r}\|_{H^1}+2\|\f{w}{r}\|_{H^1}\|\f{w}{r}\|_{H^1}\\
\lesssim&\|\f{F-c_2|\beta_k|^{\f13}w}{r}\|_{H^{-1}}^2+c_2|\beta_k|^{\f13}\|\f{w}{r}\|_{L^2}^2.
\end{align*}
Combining with \eqref{kkk.2-r-1}, we obtain
  \begin{align*}
&\|\f{F-c_2|\beta_k|^{\f13}w}{r}\|_{H^{-1}}^2+c_2|\beta_k|^{\f13}\|\f{w}{r}\|_{L^2}^2\gtrsim\langle[r^2+|\beta_k|(\f{1}{r^2}-\f{1}{r_0^2})]w,\f{w}{r^2}\rangle.
\end{align*}
Observing
\begin{align*}
r^2+|\beta_k|(\f{1}{r^2}-\f{1}{r_0^2})\geq 2|\beta_k|^{\f12}-\f{|\beta_k|}{r_0^2}\geq|\beta_k|^{\f12},
\end{align*}
we deduce
  \begin{align*}
\|\f{F-c_2|\beta_k|^{\f13}w}{r}\|_{H^{-1}}^2+c_2|\beta_k|^{\f13}\|\f{w}{r}\|_{L^2}^2\gtrsim|\beta_k|^{\f12}\|\f{w}{r}\|_{L^2}^2.
\end{align*}
 Recall $|\beta_k|\geq1$. Choose $c_2>0$ sufficiently small so that $Cc_2\leq\f12$, we conclude
  \begin{align}
\label{kkk.2-r}&\|\f{F-c_2|\beta_k|^{\f13}w}{r}\|_{H^{-1}}^2\gtrsim|\beta_k|^{\f12}\|\f{w}{r}\|_{L^2}^2.
\end{align}
     \item \textbf{Case of $r_0^4\leq |\beta_k|\leq r_0^6$.} Then $r_0\geq1$. Utilizing Lemma \ref{trivial w'/r} and \eqref{kkk.0-r}, we obtain
  \begin{align*}
\|\f{F}{r}-c_2|\beta_k|^{\f13}\f{w}{r}\|_{H^{-1}}^2+c_2|\beta_k|^{\f13}\|\f{w}{r}\|_{L^2}^2\gtrsim&\Re\langle F-c_2|\beta_k|^{\f13}w,\f{w}{r^2}\rangle+c_2|\beta_k|^{\f13}\|\f{w}{r}\|_{L^2}^2\\
\gtrsim&\langle(\f{k^2}{r^2}+r^2)w,\f{w}{r^2}\rangle\geq\|r\f{w}{r}\|_{L^2}^2\geq \f{r_0^2}{4}\|\f{w}{r}\|_{L^2(\f{r_0}{2},\infty)}^2.
\end{align*}
We move to estimate $\|\f{w}{r}\|_{L^2(0,\f{r_0}{2})}^2$. Pick $r_{-}\in(\f{r_0}{2}-\f{1}{r_0},\f{r_0}{2})$ such that the following inequality holds
\begin{align*}
\f{|w'(r_{-})|^2}{r_{-}^2}\leq r_0\|\f{w'}{r}\|_{L^2}^2.
\end{align*}
We construct a cutoff function
\begin{align*}
\rho(r)=\left\{
\begin{aligned}
&1,\quad r\in(0,\f{r_0}{2}-\f{2}{r_0}),\\
&\sin\big(\f{\pi}{2}\f{1}{r_{-}-(\f{r_0}{2}-\f{2}{r_0})}(r_{-}-r)\big),\quad r\in(\f{r_0}{2}-\f{2}{r_0},r_{-}),\\
&0,\quad r\geq r_{-}.
\end{aligned}
\right.
\end{align*}
Starting from with the following basic equality
 \begin{align*}
\Im\langle F-c_2|\beta_k|^{\f13}w,\f{w\rho(r)}{r^2}\rangle=&\Im\langle- [\partial_r^2w-(\f{k^2-\f14}{r^2}+\f{r^2}{16}-\f12)w]+i\beta_k(\f{1}{r^2}-\lambda)w,\f{w\rho(r)}{r^2}\rangle\\
=&\Im\langle-\partial_r^2w+i\beta_k(\f{1}{r^2}-\lambda)w,\f{w\rho(r)}{r^2}\rangle,
\end{align*}
we obtain
\begin{align*}
&|\beta_k|\langle(\f{1}{r^2}-\lambda)w,\f{w\rho(r)}{r^2}\rangle\\
\lesssim&\|\f{F-c_2|\beta_k|^{\f13}w}{r}\|_{H^{-1}}\|\f{w\rho(r)}{r}\|_{H^1}+r_0\|\f{w'}{r}\|_{L^2}\|\f{w}{r}\|_{L^2}+\|\f{w'}{r}\|_{L^2}\|\f{w}{r^2}\|_{L^2}.
\end{align*}
Thus with $\lambda=\f{1}{r_0^2}$ and $r_{-}\in(\f{r_0}{2}-\f{1}{2r_0},\f{r_0}{2})$ we have
\begin{align*}
&\|\f{F-c_2|\beta_k|^{\f13}w}{r}\|_{H^{-1}}\|\f{w\rho(r)}{r}\|_{H^1}+r_0\|\f{w'}{r}\|_{L^2}\|\f{w}{r}\|_{L^2}+\|\f{w'}{r}\|_{L^2}\|\f{w}{r^2}\|_{L^2}\\
\gtrsim&|\beta_k|\langle(\f{4}{r_0^2}-\f{1}{r_0^2})w,\f{w\rho(r)}{r^2}\rangle\geq\f{|\beta_k|}{r_0^2}\|\f{w}{r}\|_{L^2(0,\f{r_0}{2}-\f{2}{r_0})}^2.
\end{align*}
Note
\begin{align*}
\|\f{\rho w}{r}\|_{H^1}\lesssim\|\f{w}{r}\|_{H^1}+r_0\|\f{w}{r}\|_{L^2}.
\end{align*}
Combining with \eqref{kkk.0-r}, we deduce
\begin{align*}
&\|\f{F-c_2|\beta_k|^{\f13}w}{r}\|_{H^{-1}}\|\f{w\rho(r)}{r}\|_{H^1}+r_0\|\f{w'}{r}\|_{L^2}\|\f{w}{r}\|_{L^2}+\|\f{w'}{r}\|_{L^2}\|\f{w}{r^2}\|_{L^2}\\
\lesssim&\|\f{F-c_2|\beta_k|^{\f13}w}{r}\|_{H^{-1}}(\|\f{w}{r}\|_{H^1}+r_0\|\f{w}{r}\|_{L^2})+r_0\|\f{w}{r}\|_{H^1}\|\f{w}{r}\|_{L^2}+\|\f{w}{r}\|_{H^1}^2\\
\lesssim&\|\f{F-c_2|\beta_k|^{\f13}w}{r}\|_{H^{-1}}(\|\f{F-c_2|\beta_k|^{\f13}w}{r}\|_{H^{-1}}+\sqrt{c_2}|\beta_k|^{\f16}\|\f{w}{r}\|_{L^2}+r_0\|\f{w}{r}\|_{L^2})\\
&+r_0(\|\f{F-c_2|\beta_k|^{\f13}w}{r}\|_{H^{-1}}+\sqrt{c_2}|\beta_k|^{\f16}\|\f{w}{r}\|_{L^2})\|\f{w}{r}\|_{L^2}\\
&+\|\f{F}{r}-c_2|\beta_k|^{\f13}\f{w}{r}\|_{H^{-1}}^2+c_2|\beta_k|^{\f13}\|\f{w}{r}\|_{L^2}^2.
\end{align*}
Therefore, with Lemma \ref{Appendix A1-1}, we now can bound $\|\f{w}{r}\|_{L^2}$ by
\begin{align*}
\|\f{w}{r}\|_{L^2}^2=&\|\f{w}{r}\|_{L^2(0,\f{r_0}{2}-\f{2}{r_0})}^2+\|\f{w}{r}\|_{L^2(\f{r_0}{2}-\f{2}{r_0},\f{r_0}{2})}^2+\|\f{w}{r}\|_{L^2(\f{r_0}{2},\infty)}^2\\
\lesssim&\f{r_0^2}{|\beta_k|}\Big[\|\f{F-c_2|\beta_k|^{\f13}w}{r}\|_{H^{-1}}(\|\f{F-c_2|\beta_k|^{\f13}w}{r}\|_{H^{-1}}+r_0\|\f{w}{r}\|_{L^2})\\
&+\sqrt{c_2}r_0|\beta_k|^{\f16}\|\f{w}{r}\|_{L^2}^2+c_2|\beta_k|^{\f13}\|\f{w}{r}\|_{L^2}^2\Big]+\f{1}{r_0}\|\f{w}{r}\|_{L^{\infty}}^2\\
&+\f{1}{r_0^2}(\|\f{F}{r}-c_2|\beta_k|^{\f13}\f{w}{r}\|_{H^{-1}}^2+c_2|\beta_k|^{\f13}\|\f{w}{r}\|_{L^2}^2)\\
\lesssim&\f{r_0^2}{|\beta_k|}\Big[\|\f{F-c_2|\beta_k|^{\f13}w}{r}\|_{H^{-1}}(\|\f{F-c_2|\beta_k|^{\f13}w}{r}\|_{H^{-1}}+r_0\|\f{w}{r}\|_{L^2})\\
&+\sqrt{c_2}r_0|\beta_k|^{\f16}\|\f{w}{r}\|_{L^2}^2+c_2|\beta_k|^{\f13}\|\f{w}{r}\|_{L^2}^2\Big]+\f{1}{r_0}\|(\f{w}{r})'\|_{L^2}\|\f{w}{r}\|_{L^2}\\
&+\f{1}{r_0^2}(\|\f{F}{r}-c_2|\beta_k|^{\f13}\f{w}{r}\|_{H^{-1}}^2+c_2|\beta_k|^{\f13}\|\f{w}{r}\|_{L^2}^2)\\
\leq&C\Big\{\f{r_0^2}{|\beta_k|}\Big[\|\f{F-c_2|\beta_k|^{\f13}w}{r}\|_{H^{-1}}(\|\f{F-c_2|\beta_k|^{\f13}w}{r}\|_{H^{-1}}+r_0\|\f{w}{r}\|_{L^2})\\
&+\sqrt{c_2}r_0|\beta_k|^{\f16}\|\f{w}{r}\|_{L^2}^2+c_2|\beta_k|^{\f13}\|\f{w}{r}\|_{L^2}^2\Big]\\
&+\f{1}{r_0}(\|\f{F-c_2|\beta_k|^{\f13}w}{r}\|_{H^{-1}}+\sqrt{c_2}|\beta_k|^{\f16}\|\f{w}{r}\|_{L^2})\|\f{w}{r}\|_{L^2}\\
&+\f{1}{r_0^2}(\|\f{F}{r}-c_2|\beta_k|^{\f13}\f{w}{r}\|_{H^{-1}}^2+c_2|\beta_k|^{\f13}\|\f{w}{r}\|_{L^2}^2)\Big\}.
\end{align*}
The assumption $r_0^4\leq |\beta_k|\leq r_0^6$ implies
\begin{align*}
&C(c_2\f{r_0^2}{|\beta_k|^{\f23}}+\sqrt{c_2}\f{r_0^3}{|\beta_k|^{\f56}}+\sqrt{c_2}\f{|\beta_k|^{\f16}}{r_0}+c_2\f{|\beta_k|^{\f13}}{r_0^2})\\
\leq& C(c_2|\beta_k|^{-\f16}+\sqrt{c_2}|\beta_k|^{-\f{1}{12}}+\sqrt{c_2}+c_2).
\end{align*}
Recalling $|\beta_k|\geq1$, we can choose $c_2>0$ small enough to obtain
\begin{align*}
C(c_2|\beta_k|^{-\f16}+\sqrt{c_2}|\beta_k|^{-\f{1}{12}}+\sqrt{c_2}+c_2)\leq\f12.
\end{align*}
Therefore, we deduce
\begin{align*}
\|\f{F-c_2|\beta_k|^{\f13}w}{r}\|_{H^{-1}}^2\gtrsim\min\{\f{|\beta_k|}{r_0^2},(\f{|\beta_k|}{r_0^3})^2,r_0^2\}\|\f{w}{r}\|_{L^2}^2.
\end{align*}
By $r_0^4\leq |\beta_k|\leq r_0^6$, i.e. $\f{|\beta_k|^2}{r_0^6}\geq\f{|\beta_k|}{r_0^2}\geq r_0^2$, we arrive at
\begin{align}
\label{kkk.3-r}\|\f{F-c_2|\beta_k|^{\f13}w}{r}\|_{H^{-1}}^2\gtrsim r_0^2\|\f{w}{r}\|_{L^2}^2\geq|\beta_k|^{\f13}\|\f{w}{r}\|_{L^2}^2.
\end{align}

     \item \textbf{Case of $|\beta_k|\geq r_0^6$.} We pick up $r_{-}\in(r_0-\delta,r_0-\f{\delta}{2})$ and $r_{+}\in(r_0+\f{\delta}{2},r_0+\delta)$ such that the following inequality holds
\begin{align*}
\f{|w'(r_{-})|^2}{r_{-}^2}+\f{|w'(r_{+})|^2}{r_{+}^2}\leq \f{4}{\delta}\|\f{w'}{r}\|_{L^2}^2,
\end{align*}
Here $\delta\in(0, r_0)$ is a constant to be determined later. As in Proposition \ref{resolvent estimate-L2}, the $H^{-1}$ estimate requires the cutoff function $\rho(r)$ is of $C^1$ regularity. Let us introduce
\begin{align*}
\rho(r)=\left\{
\begin{aligned}
&1,\quad r\in(0,r_0-2\delta),\\
&\sin\big(\f{\pi}{2}\f{1}{r_{-}-(r_0-2\delta)}(r_{-}-r)\big),\quad r\in(r_0-2\delta,r_{-}),\\
&0,\quad r\in (r_{-},r_{+}),\\
&\sin\big(\f{\pi}{2}\f{1}{r_0+2\delta-r_{+}}(r_{+}-r)\big),\quad r\in(r_{+},r_0+2\delta),\\
&-1,\quad r\geq r_0+2\delta.
\end{aligned}
\right.
\end{align*}
Write
 \begin{align*}
\Im\langle F-c_2|\beta_k|^{\f13}w,\f{w\rho(r)}{r^2}\rangle=&\Im\langle- [\partial_r^2w-(\f{k^2-\f14}{r^2}+\f{r^2}{16}-\f12)w]+i\beta_k(\f{1}{r^2}-\lambda)w,\f{w\rho(r)}{r^2}\rangle\\
=&\Im\langle-\partial_r^2w+i\beta_k(\f{1}{r^2}-\lambda)w,\f{w\rho(r)}{r^2}\rangle.
\end{align*}
This gives
\begin{align*}
&|\beta_k|(\int_0^{r_0-2\delta}(\f{1}{r^2}-\f{1}{r_0^2})\f{|w(r)|^2}{r^2}dr+\int_{r_0+2\delta}^{\infty}(\f{1}{r_0^2}-\f{1}{r^2})|w(r)|^2dr)\\
\lesssim&\|\f{F-c_2|\beta_k|^{\f13}w}{r}\|_{H^{-1}}\|\f{\rho w}{r}\|_{H^1}+\delta^{-1}\|\f{w'}{r}\|_{L^2}\|\f{w}{r}\|_{L^2}+\|\f{w'}{r}\|_{L^2}\|\f{w}{r^2}\|_{L^2}.
\end{align*}
With $r_{-}\in(r_0-\delta,r_0-\f{\delta}{2})$ and $r_{+}\in(r_0+\f{\delta}{2},r_0+\delta)$, we have
\begin{align*}
&\f{1}{r^2}-\f{1}{r_0^2}=\f{(r_0-r)(r_0+r)}{r^2r_0^2}\geq\f{r_0-r}{r^2r_0}\gtrsim\f{\delta}{r^2r_0}\gtrsim\f{\delta}{r_0^3},\quad \textrm{for any} \ r\in(0,r_{-}),
\end{align*}
Analogously, we obtain
\begin{align*}
&\f{1}{r_0^2}-\f{1}{r^2}=\f{(r-r_0)(r+r_0)}{r^2r_0^2}\geq\f{r-r_0}{r^2r_0}\gtrsim\f{\delta}{r^2r_0}\gtrsim\f{\delta}{r_0^3},\quad  \textrm{for any} \ r\in(r_{+},2r_0),\\
&\f{1}{r_0^2}-\f{1}{r^2}=\f{(r-r_0)(r+r_0)}{r^2r_0^2}\approx\f{1}{r_0^2}\gtrsim\f{\delta}{r_0^3},\quad  \textrm{for any} \ r\geq2r_0.
\end{align*}
Combining inequalities above, we deduce
\begin{align*}
&\|\f{F-c_2|\beta_k|^{\f13}w}{r}\|_{H^{-1}}\| \f{\rho w}{r}\|_{H^1}+\delta^{-1}\|\f{w'}{r}\|_{L^2}\|\f{w}{r}\|_{L^2}+\|\f{w'}{r}\|_{L^2}\|\f{w}{r^2}\|_{L^2}\\
\gtrsim&|\beta_k|(\int_0^{r_0-2\delta}(\f{1}{r^2}-\f{1}{r_0^2})|\f{w}{r}|^2dr+\int_{r_0+2\delta}^{\infty}(\f{1}{r_0^2}-\f{1}{r^2})|\f{w}{r}|^2dr)\\
\gtrsim&\f{|\beta_k|\delta}{r_0^3}\|\f{w}{r}\|_{L^2\big((0,r_0-2\delta)\cup(r_0+2\delta,\infty)\big)}^2.
\end{align*}
Noting
\begin{align*}
\|\f{\rho w}{r}\|_{H^1}\lesssim\|\f{w}{r}\|_{H^1}+\delta^{-1}\|\f{w}{r}\|_{L^2},
\end{align*}
together with \eqref{kkk.0-r}, we obtain
\begin{align*}
&\|\f{F-c_2|\beta_k|^{\f13}w}{r}\|_{H^{-1}}\|\f{\rho w}{r}\|_{H^1}+\delta^{-1}\|\f{w'}{r}\|_{L^2}\|\f{w}{r}\|_{L^2}+\|\f{w'}{r}\|_{L^2}\|\f{w}{r^2}\|_{L^2}\\
\lesssim&\|\f{F-c_2|\beta_k|^{\f13}w}{r}\|_{H^{-1}}(\|\f{w}{r}\|_{H^1}+\delta^{-1}\|\f{w}{r}\|_{L^2})+\delta^{-1}\|\f{w}{r}\|_{H^1}\|\f{w}{r}\|_{L^2}+\|\f{w}{r}\|_{H^1}^2\\
\lesssim&\|\f{F-c_2|\beta_k|^{\f13}w}{r}\|_{H^{-1}}(\|\f{F-c_2|\beta_k|^{\f13}w}{r}\|_{H^{-1}}+\sqrt{c_2}|\beta_k|^{\f16}\|\f{w}{r}\|_{L^2}+\delta^{-1}\|\f{w}{r}\|_{L^2})\\
&+\delta^{-1}(\|\f{F-c_2|\beta_k|^{\f13}w}{r}\|_{H^{-1}}+\sqrt{c_2}|\beta_k|^{\f16}\|\f{w}{r}\|_{L^2})\|\f{w}{r}\|_{L^2}\\
&+\|\f{F}{r}-c_2|\beta_k|^{\f13}\f{w}{r}\|_{H^{-1}}^2+c_2|\beta_k|^{\f13}\|\f{w}{r}\|_{L^2}^2.
\end{align*}
Therefore, with Lemma \ref{Appendix A1-1}, we can estimate $\|\f{w}{r}\|_{L^2}$ as below
\begin{align*}
\|\f{w}{r}\|_{L^2}^2=&\|\f{w}{r}\|_{L^2((0,r_0-2\delta)\cup(r_0+2\delta,\infty))}^2+\|\f{w}{r}\|_{L^2(r_0-2\delta,r_0+2\delta)}^2\\
\lesssim&\f{r_0^3}{|\beta_k|\delta}\Big[\|\f{F-c_2|\beta_k|^{\f13}w}{r}\|_{H^{-1}}(\|\f{F-c_2|\beta_k|^{\f13}w}{r}\|_{H^{-1}}+\sqrt{c_2}|\beta_k|^{\f16}\|\f{w}{r}\|_{L^2}+\delta^{-1}\|\f{w}{r}\|_{L^2})\\
&+\delta^{-1}(\|\f{F-c_2|\beta_k|^{\f13}w}{r}\|_{H^{-1}}+\sqrt{c_2}|\beta_k|^{\f16}\|\f{w}{r}\|_{L^2})\|\f{w}{r}\|_{L^2}\\
&+\|\f{F}{r}-c_2|\beta_k|^{\f13}\f{w}{r}\|_{H^{-1}}^2+c_2|\beta_k|^{\f13}\|\f{w}{r}\|_{L^2}^2\Big]+\delta\|\f{w}{r}\|_{L^{\infty}}^2\\
\lesssim&\f{r_0^3}{|\beta_k|\delta}\Big[\|\f{F-c_2|\beta_k|^{\f13}w}{r}\|_{H^{-1}}(\|\f{F-c_2|\beta_k|^{\f13}w}{r}\|_{H^{-1}}+\sqrt{c_2}|\beta_k|^{\f16}\|\f{w}{r}\|_{L^2}+\delta^{-1}\|\f{w}{r}\|_{L^2})\\
&+\delta^{-1}(\|\f{F-c_2|\beta_k|^{\f13}w}{r}\|_{H^{-1}}+\sqrt{c_2}|\beta_k|^{\f16}\|\f{w}{r}\|_{L^2})\|\f{w}{r}\|_{L^2}\\
&+\|\f{F}{r}-c_2|\beta_k|^{\f13}\f{w}{r}\|_{H^{-1}}^2+c_2|\beta_k|^{\f13}\|\f{w}{r}\|_{L^2}^2\Big]+\delta\|(\f{w}{r})'\|_{L^2}\|\f{w}{r}\|_{L^2}\\
\lesssim&\f{r_0^3}{|\beta_k|\delta}\Big[\|\f{F-c_2|\beta_k|^{\f13}w}{r}\|_{H^{-1}}(\|\f{F-c_2|\beta_k|^{\f13}w}{r}\|_{H^{-1}}+\delta^{-1}\|\f{w}{r}\|_{L^2})+\sqrt{c_2}\delta^{-1}|\beta_k|^{\f16}\|\f{w}{r}\|_{L^2}^2\\
&+c_2|\beta_k|^{\f13}\|\f{w}{r}\|_{L^2}^2\Big]+\delta(\|\f{F-c_2|\beta_k|^{\f13}w}{r}\|_{H^{-1}}+\sqrt{c_2}|\beta_k|^{\f16}\|\f{w}{r}\|_{L^2})\|\f{w}{r}\|_{L^2}\\
\leq&C\Big\{\f{r_0^3}{|\beta_k|\delta}\Big[\|\f{F-c_2|\beta_k|^{\f13}w}{r}\|_{H^{-1}}(\|\f{F-c_2|\beta_k|^{\f13}w}{r}\|_{H^{-1}}+\delta^{-1}\|\f{w}{r}\|_{L^2})+\sqrt{c_2}\delta^{-1}|\beta_k|^{\f16}\|\f{w}{r}\|_{L^2}^2\\
&+c_2|\beta_k|^{\f13}\|\f{w}{r}\|_{L^2}^2\Big]+\delta(\|\f{F-c_2|\beta_k|^{\f13}w}{r}\|_{H^{-1}}+\sqrt{c_2}|\beta_k|^{\f16}\|\f{w}{r}\|_{L^2})\|\f{w}{r}\|_{L^2}\Big\}.
\end{align*}
Take $\delta=(\f{r_0^3}{|\beta_k|})^{\f13}$, then the condition $|\beta_k|\geq \max\{r_0^6,1\}$ yields
\begin{align*}
\delta=(\f{r_0^3}{|\beta_k|})^{\f13}\leq r_0.
\end{align*}
Recall $|\beta_k|\geq 1$, we have
\begin{align*}
&C(c_2\f{r_0^3}{|\beta_k|^{\f23}\delta}+\sqrt{c_2}\f{r_0^3}{|\beta_k|^{\f56}\delta^2}+\sqrt{c_2}\delta|\beta_k|^{\f16})=C(c_2\f{r_0^2}{|\beta_k|^{\f13}}+2\sqrt{c_2}\f{r_0}{|\beta_k|^{\f16}})\leq C(c_2+2\sqrt{c_2}).
\end{align*}
If $c_2>0$ satisfies
\begin{align*}
C(c_2+2\sqrt{c_2})\leq\f12,
\end{align*}
we can conclude
\begin{align*}
\|\f{F-c_2|\beta_k|^{\f13}w}{r}\|_{H^{-1}}^2\gtrsim\min\{\f{|\beta_k|\delta}{r_0^3},(\f{|\beta_k|\delta^2}{r_0^3})^2,\delta^{-2}\}\|\f{w}{r}\|_{L^2}^2.
\end{align*}
With the basic equality $\f{|\beta_k|\delta}{r_0^3}=(\f{|\beta_k|\delta^2}{r_0^3})^2=\delta^{-2}$, we arrive at
\begin{align}
\label{kkk.4-r}\|\f{F-c_2|\beta_k|^{\f13}w}{r}\|_{H^{-1}}^2\gtrsim |\beta_k|^{\f13}\|\f{w}{r}\|_{L^2}^2.
\end{align}

\end{enumerate}
Combining \eqref{kkk.01-r}, \eqref{kkk.1-r}, \eqref{kkk.2-r}, \eqref{kkk.3-r} and \eqref{kkk.4-r}, we therefore establish the following resolvent estimate from $L^2(\f{1}{r^2})$ to $H^{-1}(\f{1}{r^2})$
\begin{align*}
|\beta_k|^{\f16}\|\f{w}{r}\|_{L^2}\lesssim\|\f{F-c_2|\beta_k|^{\f13}w}{r}\|_{H^{-1}}.
\end{align*}
Applying \eqref{kkk.0-r}, we obtain
  \begin{align*}
\|\f{w}{r}\|_{H^1}\lesssim\|\f{F}{r}-c_2|\beta_k|^{\f13}\f{w}{r}\|_{H^{-1}}+\sqrt{c_2}|\beta_k|^{\f16}\|\f{w}{r}\|_{L^2}\lesssim\|\f{F-c_2|\beta_k|^{\f13}w}{r}\|_{H^{-1}}.
\end{align*}

This completes the proof of Proposition \ref{resolvent estimate-L2-r}.
\end{proof}

The following proposition follows directly from Proposition \ref{resolvent estimate-r1} and Proposition \ref{resolvent estimate-L2-r} if we choose $c_2>0$ small enough.

\begin{proposition}\label{resolvent estimate-r1-1}For any $|k|\geq1$, $\lambda\in\mathbb{R}$ and $w\in D_k$, there exist constants $C,c_2>0$ which independent with $k,\beta_k,\lambda$, such that
\begin{align*}
|\beta_k|^{\f16}\|\f{w'}{r}\|_{L^2}+|\beta_k|^{\f13}\|\f{w}{r}\|_{L^2}\leq C \|\f{F-c_2|\beta_k|^{\f13}w}{r}\|_{L^2},
\end{align*}
and
\begin{align*}
\|\f{w}{r}\|_{H^1}+|\beta_k|^{\f16}\|\f{w}{r}\|_{L^2}\leq C \|\f{F-c_2|\beta_k|^{\f13}w}{r}\|_{H^{-1}}.
\end{align*}
\end{proposition}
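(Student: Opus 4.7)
The plan is to derive this proposition as a perturbative corollary of Proposition \ref{resolvent estimate-r1} and Proposition \ref{resolvent estimate-L2-r}. The second inequality in the statement is literally the conclusion of Proposition \ref{resolvent estimate-L2-r} (since the shifted right-hand side $\|\frac{F-c_2|\beta_k|^{1/3}w}{r}\|_{H^{-1}}$ already appears there), so nothing new is needed for it beyond choosing $c_2>0$ no larger than the $c_2$ supplied by that proposition. All the work concerns upgrading Proposition \ref{resolvent estimate-r1} from the unshifted right-hand side $\|F/r\|_{L^2}$ to the shifted version $\|\frac{F-c_2|\beta_k|^{1/3}w}{r}\|_{L^2}$.

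For the first inequality, I would apply Proposition \ref{resolvent estimate-r1} to write
\begin{align*}
|\beta_k|^{\f16}\|\tfrac{w'}{r}\|_{L^2}+|\beta_k|^{\f13}\|\tfrac{w}{r}\|_{L^2}\leq C\|\tfrac{F}{r}\|_{L^2},
\end{align*}
and then use the triangle inequality
\begin{align*}
\|\tfrac{F}{r}\|_{L^2}\leq \|\tfrac{F-c_2|\beta_k|^{1/3}w}{r}\|_{L^2}+c_2|\beta_k|^{\f13}\|\tfrac{w}{r}\|_{L^2}
\end{align*}
to obtain
\begin{align*}
|\beta_k|^{\f16}\|\tfrac{w'}{r}\|_{L^2}+|\beta_k|^{\f13}\|\tfrac{w}{r}\|_{L^2}\leq C\|\tfrac{F-c_2|\beta_k|^{1/3}w}{r}\|_{L^2}+Cc_2|\beta_k|^{\f13}\|\tfrac{w}{r}\|_{L^2}.
\end{align*}
Choosing $c_2>0$ small enough that $Cc_2\leq\tfrac12$, the last term can be absorbed into the left-hand side, which yields the first claimed estimate (with a new constant).

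There is no real obstacle here; the only thing to check is that the same $c_2$ works simultaneously for both inequalities. This is easy: take $c_2$ to be the minimum of the $c_2$ produced by Proposition \ref{resolvent estimate-L2-r} and $1/(2C)$ from the absorption argument above. Since both propositions are valid for all $|k|\geq 1$, all $\lambda\in\mathbb{R}$, and all $w\in D_k$ with constants independent of $k,\beta_k,\lambda$, the resulting $c_2$ and $C$ in this proposition are likewise independent of these parameters, completing the proof.
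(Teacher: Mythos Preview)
Your proposal is correct and matches the paper's approach exactly: the paper simply states that the proposition ``follows directly from Proposition \ref{resolvent estimate-r1} and Proposition \ref{resolvent estimate-L2-r} if we choose $c_2>0$ small enough,'' and what you have written is precisely the standard absorption argument that makes this one-line remark rigorous.
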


\subsection{Sharpness of the resolvent bound $|\beta_k|^{\f13}$}
In the last subsection, we prove that the resolvent bound $|\beta_k|^{\f13}$ in Proposition \ref{resolvent estimate} is sharp. We have 
\begin{lemma}There exist $\lambda\in\mathbb{R}$ and non-zero $w\in C_0^{\infty}(\mathbb{R}_{+},dr)$ such that it holds
\begin{align*}
\|F\|_{L^2}\leq C|\beta_1|^{\f13}\|w\|_{L^2}.
\end{align*}
\end{lemma}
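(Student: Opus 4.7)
The plan is to exhibit an explicit test function localized near a critical radius $r_0$ where $\frac{1}{r^2} - \lambda$ vanishes, chosen at exactly the scale where the three dominant contributions to $F$ (second derivative, quadratic potential, oscillatory imaginary part) all balance at size $|\beta_1|^{1/3}$.

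Concretely, fix a nonzero $\chi \in C_c^{\infty}((-1,1))$ and set
\[
r_0 := |\beta_1|^{1/6}, \qquad \delta := |\beta_1|^{-1/6}, \qquad \lambda := \frac{1}{r_0^2} = |\beta_1|^{-1/3},
\]
and define $w(r) := \chi\!\big(\tfrac{r - r_0}{\delta}\big)$. For $|\beta_1|$ large enough, $w$ is supported in a small interval around $r_0$ and lies in $C_0^{\infty}(\mathbb{R}_+, dr)$. A direct change of variables gives $\|w\|_{L^2}^2 \sim \delta$ and $\|\partial_r^2 w\|_{L^2} \lesssim \delta^{-3/2}$, so $\|\partial_r^2 w\|_{L^2} \lesssim \delta^{-2}\|w\|_{L^2} = |\beta_1|^{1/3}\|w\|_{L^2}$.

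Next I would estimate each remaining term of $F = -\partial_r^2 w + (\tfrac{3/4}{r^2} + \tfrac{r^2}{16} - \tfrac12)w + i\beta_1(\tfrac{1}{r^2} - \lambda)w$ on $\mathrm{supp}\, w$. On that support $r \sim r_0$, so $\tfrac{3}{4r^2} + \tfrac{r^2}{16} - \tfrac12 \lesssim r_0^2 = |\beta_1|^{1/3}$, yielding the potential contribution $\lesssim |\beta_1|^{1/3}\|w\|_{L^2}$. For the oscillatory piece, Taylor-expand
\[
\frac{1}{r^2} - \frac{1}{r_0^2} = -\frac{2(r - r_0)}{r_0^3} + O\!\left(\frac{(r-r_0)^2}{r_0^4}\right),
\]
so on $|r - r_0| \leq \delta$ one has $|\tfrac{1}{r^2} - \lambda| \lesssim \delta/r_0^3$, hence $\|\beta_1(\tfrac{1}{r^2}-\lambda)w\|_{L^2} \lesssim |\beta_1|\,\delta\, r_0^{-3}\|w\|_{L^2} = |\beta_1|^{1/3}\|w\|_{L^2}$, using $|\beta_1|\delta/r_0^3 = |\beta_1|\cdot|\beta_1|^{-1/6}\cdot|\beta_1|^{-1/2} = |\beta_1|^{1/3}$.

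Combining the three bounds yields $\|F\|_{L^2} \leq C|\beta_1|^{1/3}\|w\|_{L^2}$, which, paired with Proposition \ref{resolvent estimate}, shows that the exponent $\tfrac13$ cannot be improved. The one point requiring care is ensuring $|\beta_1|$ is large enough that $r_0 - \delta > 0$ and that the lower-order parts of the potential ($\tfrac{3}{4r^2}$ and $-\tfrac12$) are indeed dominated by $r_0^2$; for small $|\beta_1|$, however, we simply use $|\beta_1|\geq 1$ together with any fixed nonzero compactly supported $w$ and an appropriate $\lambda$, after which the inequality is trivial by absorbing constants. No deep obstacle arises: the whole argument is a matching-of-scales calculation, and the scaling $\delta = (r_0^3/|\beta_1|)^{1/3}$ with the choice $r_0 = |\beta_1|^{1/6}$ (equivalently, $r_0^4 = |\beta_1|^{2/3}$, putting us in exactly the critical case $|\beta_1|\sim r_0^6$ of Proposition \ref{resolvent estimate}) is the unique one for which all three terms are simultaneously of order $|\beta_1|^{1/3}$.
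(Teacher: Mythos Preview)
Your proposal is correct and follows essentially the same approach as the paper: localize near $r_0$ with $\lambda=1/r_0^2$, width $\delta=1/r_0$, and the relation $|\beta_1|=r_0^6$ (equivalently $r_0=|\beta_1|^{1/6}$, $\delta=|\beta_1|^{-1/6}$), then check that the second-derivative, potential, and imaginary-coefficient contributions are each $\lesssim |\beta_1|^{1/3}\|w\|_{L^2}$. The only cosmetic difference is that the paper uses the explicit piecewise quadratic $w(r)=(r-r_0)(r_0+\tfrac{1}{r_0}-r)$ on $[r_0,r_0+\tfrac{1}{r_0}]$ instead of a generic bump $\chi$; your choice has the minor advantage of actually producing $w\in C_0^\infty$ as stated, whereas the paper's test function is only piecewise $C^\infty$.
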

\begin{proof}Choose $\lambda=\f{1}{r_0^2}$ and $\beta_1=r_0^6$ for some $r_0\geq 1$. We pick up a function $\eta(r)$ as below
\begin{align*}
\eta(r)=\left\{
\begin{aligned}
&r(1-r),\quad0\leq r\leq1,\\
&0,\quad r\geq1.
\end{aligned}
\right.
\end{align*}
We construct
\begin{align*}
w(r)=\left\{
\begin{aligned}
&(r-r_0)(r_0+\f{1}{r_0}-r),\quad r_0\leq r\leq r_0+\f{1}{r_0},\\
&0,\quad r\in(0,r_0)\cup(r_0+\f{1}{r_0},\infty).
\end{aligned}
\right.
\end{align*}
One can verify
\begin{align*}
\|w\|_{L^2}=r_0^{-\f52}\|\eta\|_{L^2},\quad w''=-2.
\end{align*}
Then it holds
\begin{align*}
\|-[\partial_r^2-(\f34\f{1}{r^2}+\f{r^2}{16}-\f12)]w\|_{L^2}\lesssim (1+r_0^2)\|w\|_{L^2}\lesssim r_0^2\|w\|_{L^2}.
\end{align*}
In addition, we have
\begin{align*}
|\beta_1|\|(\f{1}{r_0^2}-\f{1}{r^2})w\|_{L^2}=|\beta_1|\|\f{(r-r_0)(r+r_0)}{r_0^2r^2}w\|_{L^2}\leq\f{|\beta_1|}{r_0}\|\f{r+r_0}{r_0^2r^2}w\|_{L^2}\approx\f{|\beta_1|}{r_0^4}\|w\|_{L^2}.
\end{align*}
Thus we deduce
\begin{align*}
\|-[\partial_r^2-(\f34\f{1}{r^2}+\f{r^2}{16}-\f12)]w+i\beta_1(\f{1}{r^2}-\f{1}{r_0^2})\|_{L^2}\lesssim (r_0^2+\f{|\beta_1|}{r_0^4})\|w\|_{L^2}.
\end{align*}
Together with $|\beta_1|=r_0^6$  this implies
\begin{align*}
\|-[\partial_r^2-(\f34\f{1}{r^2}+\f{r^2}{16}-\f12)]w+i\beta_1(\f{1}{r^2}-\f{1}{r_0^2})w\|_{L^2}\lesssim r_0^2\|w\|_{L^2}=|\beta_1|^{\f13}\|w\|_{L^2}.
\end{align*}
\end{proof}

\maketitle

\section{\textbf{Space-time estimate for the linearized Navier-Stokes equations}}\label{3-space time}

In this section, for the linearized 2D Navier-Stokes equation in the vorticity formulation \eqref{main equation}, we establish the space-time estimate. We consider
\begin{align*}
\left\{
\begin{aligned}
&\partial_{\tau}w_k+L_kw_k+f_1-\partial_{r}f_2=0,\\
&w_k(0)=w_k|_{\tau=0},\quad w_k|_{r=0,\infty}=0.
\end{aligned}
\right.
\end{align*}
Here 
$L_k=- [\partial_r^2-(\f{k^2-\f14}{r^2}+\f{r^2}{16}-\f12)]+i\f{kB}{r^2}$ and $f_1, f_2$ are nonlinear terms in Section \ref{1.4-Derivation of equations}.

We also introduce the space-time norm
	\begin{align*}
	\|g\|_{L^pL^2}=\big\|\|g\|_{L_r^2}\big\|_{L_t^p(0,\infty)},\quad \|g\|_{L^p X}=\big\|\|g\|_{X}\big\|_{L_t^p(0,\infty)},
	\end{align*}
	where
	\begin{align*}
	\|g\|_{L_r^2}=(\int_0^{\infty}|g(r)|^2dr)^{\f12},\quad \|g\|_{X}=(\int_0^{\infty}\f{|g(r)|^2}{r^2}dr)^{\f12}.
	\end{align*}

\subsection{Pseudospectral bound and semigroup bound}

\subsubsection{Pseudospectral bound.} Recall in \cite{Pa} an operator $L$ in a Hilbert space $H$ is called accretive if
\begin{align*}
\Re\langle Lf, f\rangle\geq0,\quad \textrm{for any}\  f\in D(L).
\end{align*}
The operator $L$ is called m-accretive if in addition all $\lambda>0$ belong to the resolvent set of $L$ (see \cite{Kato} for more details).  The \textbf{pseudospectral bound} of $L$ is defined to be
\begin{align}\label{pseudospectral bound}
\Psi(L)=\inf\{\|(L-i\lambda)f\|:f\in D(L),\lambda\in\mathbb{R},\|f\|=1\}. 
\end{align}
Recall the operator $L_k$ in \eqref{main equation} is given by
\begin{align*}
L_kw=&- [\partial_r^2-(\f{k^2-\f14}{r^2}+\f{r^2}{16}-\f12)]w+i\f{kB}{r^2}w.
\end{align*}
And we will use the weighted-$L^2$ space $X$. Its norm is expressed as $\|w\|_{X}=(\int_0^{\infty}\f{|w|^2}{r^2}dr)^{\f12}.$
Note that $-\partial_r^2$ is a operator with the compact resolvent.  Since $L_k$ is a relatively compact perturbation for $-\partial_r^2$ in the domain $D_k$, it is hence clear that the operators $L_k$ has the compact resolvent and only point spectrum. 

Employing Lemma \ref{trivial w' lemma} and Lemma \ref{trivial w'/r} we obtain
\begin{align*}
\Re\langle (L_k-i\lambda)w,w\rangle_{L^2}\gtrsim|k|\|w\|_{L^2}^2,\quad\Re\langle (L_k-i\lambda)w,w\rangle_{X}\gtrsim|k|\|w\|_{X}^2, \quad \textrm{for any} \
\lambda\in\mathbb{R}.
\end{align*}
This indicates that $L_k$ is accretive, and is also m-accretive. Recall in Proposition \ref{resolvent estimate} and Proposition \ref{resolvent estimate-r1} we establish the resolvent estimates 
\begin{align*}
&\|(L_k-i\lambda)w\|_{L^2}\geq C|kB|^{\f13}\|w\|_{L^2},\quad\|(L_k-i\lambda)w\|_{X}\geq C|kB|^{\f13}\|w\|_{X}.
\end{align*}
This implies the following lemma for the pseudospectral bounds of $L_k$:
\begin{lemma}\label{peudospectral bound}Let $\Psi$ be defined as in (\ref{pseudospectral bound}), it holds $\Psi(L_k(L^2\rightarrow L^2))\geq C|kB|^{\f13}$, $\Psi(L_k(X\rightarrow X))\geq C |kB|^{\f13}$.
\end{lemma}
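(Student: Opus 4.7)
The plan is to recognize that this lemma is essentially a bookkeeping consequence of Propositions~\ref{resolvent estimate} and~\ref{resolvent estimate-r1}, after matching up the parameter $\lambda$ appearing in the resolvent equation \eqref{vorticity eqn} with the spectral parameter in the definition \eqref{pseudospectral bound}.

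First I would rewrite the input $F$ of the resolvent equation in operator form. Recall that
\[
F = -[\partial_r^2 - (\tfrac{k^2-\frac{1}{4}}{r^2} + \tfrac{r^2}{16} - \tfrac{1}{2})]w + i\beta_k(\tfrac{1}{r^2} - \lambda)w = L_k w - i(kB\lambda)w,
\]
since $\beta_k = kB$ and $L_k$ already contains the term $i\frac{kB}{r^2}$. Setting $\tilde{\lambda} := kB\lambda$, the resolvent equation becomes exactly $(L_k - i\tilde{\lambda})w = F$. Because $|k|\ge 1$ and $|B|\ge 1$, the map $\lambda \mapsto \tilde{\lambda}$ is a bijection of $\mathbb{R}$ onto itself, so as $\lambda$ ranges over $\mathbb{R}$, so does the spectral parameter $\tilde{\lambda}$ appearing in the definition of $\Psi$.

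Next I would just invoke the resolvent estimates. Proposition~\ref{resolvent estimate} gives
\[
\|(L_k - i\tilde{\lambda})w\|_{L^2} = \|F\|_{L^2} \ge C|kB|^{\frac{1}{3}}\|w\|_{L^2}
\]
for every $w \in D_k$ and every $\tilde{\lambda}\in\mathbb{R}$. Taking the infimum over $\tilde\lambda$ and $w$ with $\|w\|_{L^2}=1$ yields $\Psi(L_k(L^2\to L^2))\ge C|kB|^{\frac{1}{3}}$. The same argument with Proposition~\ref{resolvent estimate-r1} replaces $L^2$ by $X$: dividing both sides of $(L_k - i\tilde{\lambda})w = F$ by $r$ in the $L^2$ norm gives $\|(L_k - i\tilde{\lambda})w\|_X \ge C|kB|^{\frac{1}{3}}\|w\|_X$, so $\Psi(L_k(X\to X))\ge C|kB|^{\frac{1}{3}}$.

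There is essentially no obstacle here, since all the hard work sits in the coercive estimates of Section~2 and the case analysis inside Propositions~\ref{resolvent estimate} and~\ref{resolvent estimate-r1}. The only subtle point is making sure the parametrization $\tilde{\lambda}=kB\lambda$ fully sweeps out $\mathbb{R}$, which relies on the standing hypothesis $|B|\ge 1$ together with $|k|\ge 1$ so that $kB\neq 0$; this is precisely the regime in which the lemma is stated. Accretivity of $L_k$ in both $L^2$ and $X$ (which the excerpt already verifies using Lemmas~\ref{trivial w' lemma} and~\ref{trivial w'/r}) guarantees that $D_k$ is the correct maximal domain so that the infimum in \eqref{pseudospectral bound} is indeed captured by the class of $w$ for which the resolvent estimates apply.
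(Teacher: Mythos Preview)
Your proposal is correct and follows the same approach as the paper: both simply observe that Propositions~\ref{resolvent estimate} and~\ref{resolvent estimate-r1} already say $\|(L_k-i\tilde\lambda)w\|\ge C|kB|^{1/3}\|w\|$ in $L^2$ and $X$ respectively, and then read off the pseudospectral bound from the definition~\eqref{pseudospectral bound}. Your extra care in matching the spectral parameter $\tilde\lambda=kB\lambda$ with the $\lambda$ of the resolvent equation is a welcome clarification that the paper leaves implicit.
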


\subsubsection{Semigroup bounds.} To prove decay estimates, we appeal to semigroup bounds. To obtain them from pseudospectral bounds, we employ the following Gearhart-Pr$\ddot{u}$ss type lemma established by Wei in \cite{Wei}. 
\begin{lemma}[Wei \cite{Wei}]\label{GP lemma}Let $L$ be a m-accretive operator in a Hilbert space $H$. Then we have
	\begin{align*}
	\|e^{-tL}\|_H\leq e^{-t\Psi+\f{\pi}{2}}, \quad \textrm{for any} \ t\geq0.
	\end{align*}
\end{lemma}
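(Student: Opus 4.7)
The plan is to combine the m-accretivity of $L$ (which supplies the contractive bound $\|e^{-tL}\|_H\leq 1$) with the pseudospectral bound $\|(L-i\lambda)^{-1}\|\leq\Psi^{-1}$ by way of a Plancherel-type identity, and then to convert the resulting time-averaged decay into a pointwise bound via monotonicity. The contraction alone already yields the claim in the short-time regime $t\leq\pi/(2\Psi)$, where $e^{-t\Psi+\pi/2}\geq 1$ automatically; only the regime $t>\pi/(2\Psi)$ requires genuine work, and there the pseudospectral bound enters.

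Fix $f\in H$, set $g(t)=e^{-tL}f$, and for a shift $s\in(0,\Psi)$ let $h(t)=e^{-st}g(t)$ for $t\geq 0$, extended by zero for $t<0$. Because $\|e^{-tL}\|\leq 1$, the Laplace representation
\begin{equation*}
\int_0^\infty e^{-(s+i\lambda)t}\,e^{-tL}f\,dt=(L+s+i\lambda)^{-1}f
\end{equation*}
converges absolutely for every $\lambda\in\mathbb{R}$, so the right-hand side is precisely the $H$-valued Fourier transform $\hat h(\lambda)$. Parseval's identity then produces
\begin{equation*}
2\pi\int_0^\infty e^{-2st}\|g(t)\|_H^2\,dt=\int_{-\infty}^\infty\|(L+s+i\lambda)^{-1}f\|_H^2\,d\lambda,
\end{equation*}
reducing the problem to bounding the resolvent integral on the right. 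Combining $\|(L-i\lambda)w\|\geq\Psi\|w\|$ with the triangle inequality gives the uniform estimate $\|(L+s+i\lambda)^{-1}\|\leq(\Psi-s)^{-1}$, while m-accretivity provides the competing uniform estimate $\|(L+s+i\lambda)^{-1}\|\leq s^{-1}$. Neither bound is individually integrable in $\lambda$, so one needs the refined, integrable Lorentzian-type control; the governing integral is $\int_{\mathbb{R}}d\lambda/(s^2+\lambda^2)=\pi/s$, and this is the analytic origin of the $\pi/2$ appearing in the exponent. Using the monotonicity $\tfrac{d}{dt}\|g(t)\|_H^2=-2\mathop{\mathrm{Re}}\langle Lg,g\rangle\leq 0$ to pass from the time-averaged bound to a pointwise one, and then optimizing by letting $s\nearrow\Psi$, should deliver $\|e^{-tL}f\|_H\leq e^{-t\Psi+\pi/2}\|f\|_H$.

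The main obstacle I expect is the extraction of the Lorentzian decay in $\lambda$ with the sharp constant: both uniform resolvent bounds $(\Psi-s)^{-1}$ and $s^{-1}$ are constants in $\lambda$, so the decay must be squeezed out of the imaginary part $\mathop{\mathrm{Im}}\langle Lw,w\rangle$ in the resolvent identity $(L+s+i\lambda)w=f$ played against the spectral shift $i\lambda$, and not merely from accretivity of the real part. An equivalent implementation of the same strategy that bypasses the direct Parseval bookkeeping is a contour deformation of the inverse Laplace representation of $e^{-tL}$, moving the vertical integration line $\mathop{\mathrm{Re}}\mu=s$ leftward by $\Psi$ and picking up the $\pi/2$ contribution from the connecting semicircular arcs. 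In either implementation the delicate step is matching the precise constant $\pi/2$ rather than merely obtaining some finite constant, since it is this sharpness that makes the bound quantitative enough to drive the later enhanced-dissipation estimates of Section \ref{3-space time}.
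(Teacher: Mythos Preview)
The paper does not prove this lemma: it is quoted verbatim as a result of Wei \cite{Wei} and used as a black box, with no argument supplied in the present manuscript. There is therefore no ``paper's own proof'' to compare your proposal against.

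That said, a brief remark on your sketch versus Wei's actual argument: your Parseval/Laplace-transform strategy is the classical route to qualitative Gearhart--Pr\"uss statements, and you correctly isolate the difficulty---neither uniform resolvent bound $(\Psi-s)^{-1}$ nor $s^{-1}$ is integrable in $\lambda$, so the Lorentzian decay and the constant $\pi/2$ must come from somewhere else. You do not actually close this gap; you only name it. Wei's proof in \cite{Wei} does not proceed via Parseval or contour deformation. Instead it runs a direct differential inequality: writing $g(t)=e^{-tL}f$ and exploiting m-accretivity of both $L$ and $L^*$, one obtains for each $\lambda\in\mathbb{R}$ the identity $\tfrac{d}{dt}\|g\|^2=-2\mathop{\mathrm{Re}}\langle Lg,g\rangle$ together with a companion estimate for $\|Lg\|^2$, and combines them so that the pseudospectral lower bound $\|(L-i\lambda)g\|\geq\Psi\|g\|$ (optimized over $\lambda$) converts into exponential decay with the explicit loss $\pi/2$ arising from an elementary ODE comparison. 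The advantage of Wei's route is that the sharp constant falls out of a short computation rather than a delicate contour estimate; the advantage of your proposed route, had it been completed, would be its closer kinship with the standard Gearhart--Pr\"uss machinery.
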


We are now ready to study the homogeneous linear equation
\begin{align}
\label{homogeneous linear equation}
&\partial_{\tau}w_k^l+L_kw_k^l=0,\quad w_k^l(0)=w_k(0).
\end{align}
Utilizing the language of semigroup theory, we can write
\begin{align*}
w_k^l(\tau)=e^{-\tau L_k}w_k(0).
\end{align*}
And we obtain
\begin{proposition}\label{linear exp decay}Let $w_k^l$ be a solution of \eqref{homogeneous linear equation} with $w_k(0)\in L^2$ and $w_k(0)\in X$. Then for any $|k|\geq1(k\in \mathbb{Z})$, there exist constants $C,c_3>0$ being independent of $B,k$, such that the following inequalities hold
	\begin{align}
	\label{linear exp decay-L2}&\|w_k^l(\tau)\|_{L^2}\leq Ce^{-c_3|kB|^{\f13}\tau}\|w_k(0)\|_{L^2}, \quad \textrm{for any} \ \tau\geq0,\\
	\label{linear exp decay-X}&\|w_k^l(\tau)\|_{X}\leq Ce^{-c_3|kB|^{\f13}\tau}\|w_k(0)\|_{X}, \quad \textrm{for any} \ \tau\geq0.
	\end{align}
	Moreover, for any $|k|\geq1(k\in \mathbb{Z})$ and any $c'\in(0,c_3)$, it holds
	\begin{align}
	\label{linear exp decay-L2L2}&|kB|^{\f13}\|e^{-c'|kB|^{\f13}\tau}w_k^l(\tau)\|_{L^2L^2}^2\leq C\|w_k(0)\|_{L^2}^2,\\
	\label{linear exp decay-L2X}&|kB|^{\f13}\|e^{-c'|kB|^{\f13}\tau}w_k^l(\tau)\|_{L^2X}^2\leq C\|w_k(0)\|_{X}^2.
	\end{align}
\end{proposition}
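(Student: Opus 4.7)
The plan is to deduce \eqref{linear exp decay-L2}--\eqref{linear exp decay-X} directly from the Gearhart--Pr\"uss type Lemma \ref{GP lemma}, and then to derive \eqref{linear exp decay-L2L2}--\eqref{linear exp decay-L2X} by integrating these pointwise decays against the exponential weight.

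First I would verify the two hypotheses of Lemma \ref{GP lemma} on the Hilbert space $L^2(\mathbb{R}_+,dr)$. The m-accretivity of $L_k$ has already been recorded in the paragraph preceding the statement, using the coercive lower bound of Lemma \ref{trivial w' lemma}; combined with the compact-resolvent observation there, this places all of $(0,\infty)$ in the resolvent set. The pseudospectral bound $\Psi(L_k\colon L^2\to L^2)\geq C|kB|^{1/3}$ is exactly Lemma \ref{peudospectral bound}, which in turn comes from Proposition \ref{resolvent estimate}. Lemma \ref{GP lemma} then yields $\|e^{-\tau L_k}\|_{L^2\to L^2}\leq e^{-\tau\Psi+\pi/2}\leq Ce^{-c_3|kB|^{1/3}\tau}$, which is \eqref{linear exp decay-L2}. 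For \eqref{linear exp decay-X} the same scheme applies on the Hilbert space $X=L^2(\mathbb{R}_+,r^{-2}dr)$: the coercivity reads $\Re\langle L_k w,w\rangle_X=\Re\langle L_k w,w/r^2\rangle_{L^2(dr)}\gtrsim|k|\,\|w\|_X^2$ by Lemma \ref{trivial w'/r} (the $ikB/r^2$ term is purely imaginary and drops out), and the pseudospectral bound $\Psi(L_k\colon X\to X)\geq C|kB|^{1/3}$ follows from Proposition \ref{resolvent estimate-r1} applied with the rescaled spectral parameter $\lambda'=\lambda/\beta_k$, so that the shift $-i\lambda$ is absorbed into the $i\beta_k(1/r^2-\lambda')$ factor.

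For \eqref{linear exp decay-L2L2} and \eqref{linear exp decay-L2X}, I would simply square the pointwise decays just established, multiply by $e^{-2c'|kB|^{1/3}\tau}$, and integrate in $\tau$. The combined exponent becomes $-2(c'+c_3)|kB|^{1/3}\tau$, so the time integral produces a factor $[2(c_3+c')|kB|^{1/3}]^{-1}$ that cancels the prefactor $|kB|^{1/3}$ and leaves a constant independent of $k,B$. The restriction $c'\in(0,c_3)$ is comfortably met here; it only becomes binding later, in Section \ref{4-Nonlinear stability}, where the opposite-sign weight $e^{+c|kB|^{1/3}\tau}$ is used and one needs $c<c_3$ for the semigroup decay to dominate.

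I do not anticipate a substantive obstacle: the sharp pseudospectral bound $|kB|^{1/3}$, which is the non-trivial ingredient, has already been proved in Section \ref{2-resolvnet}. The only point deserving care is confirming m-accretivity of $L_k$ on $X$---that is, that some positive real lies in its resolvent set---which follows from the coercive estimate of Lemma \ref{trivial w'/r} (giving closed range and trivial kernel of $L_k+\mu$ for $\mu>0$) together with the compact-resolvent remark above Lemma \ref{peudospectral bound}.
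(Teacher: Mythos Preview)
Your proposal is correct and follows essentially the same route as the paper: the semigroup bounds \eqref{linear exp decay-L2}--\eqref{linear exp decay-X} come directly from Lemma~\ref{peudospectral bound} and the Gearhart--Pr\"uss Lemma~\ref{GP lemma}, and the space-time bounds \eqref{linear exp decay-L2L2}--\eqref{linear exp decay-L2X} follow by squaring and integrating the pointwise decay. One small remark: the paper's own proof actually establishes the stronger estimate with the \emph{positive} weight $e^{+c'|kB|^{1/3}\tau}$ (the minus sign in the statement is a typo), which is precisely the version invoked in Lemma~\ref{nonzero frequency linear part}; you correctly anticipated this discrepancy and the role of the constraint $c'<c_3$.
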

\begin{proof}To derive the semigroup bounds \eqref{linear exp decay-L2} and \eqref{linear exp decay-X}, we can directly apply Lemma \ref{peudospectral bound} and Lemma \ref{GP lemma}. We proceed to use \eqref{linear exp decay-L2}. For any $c'\in(0,c_3)$ we deduce
	\begin{align*}
	2c'|kB|^{\f13}\|e^{c'|kB|^{\f13}\tau}w_k^l(\tau)\|_{L^2}^2\leq2Cc'|kB|^{\f13}e^{-2(c_3-c')|kB|^{\f13}\tau}\|w_k(0)\|_{L^2}^2.
	\end{align*}
	Integrating with respect to $\tau$ on both sides yields
	\begin{align*}
	&2c'|kB|^{\f13}\|e^{c'|kB|^{\f13}\tau}w_k^l\|_{L^2L^2}^2\leq\int_0^{\infty}2Cc'|kB|^{\f13}e^{-2(c_2-c')|kB|^{\f13}\tau}d\tau\|w_k(0)\|_{L^2}^2\lesssim\|w_k(0)\|_{L^2}^2.
	\end{align*}
	In a similar fashion, by \eqref{linear exp decay-L2}, for any $c'\in(0,c_3)$ we have
	\begin{align*}
	&2c'|kB|^{\f13}\|e^{c'|kB|^{\f13}\tau}w_k^l(\tau)\|_{X}^2\leq2Cc'|kB|^{\f13}e^{-2(c_2-c')|kB|^{\f13}\tau}\|w_k(0)\|_{X}^2,
	\end{align*}
	which implies
	\begin{align*}
	&2c'|kB|^{\f13}\|e^{c'|kB|^{\f13}\tau}w_k^l\|_{L^2X}^2\leq\int_0^{\infty}2Cc'|kB|^{\f13}e^{-2(c_2-c')|kB|^{\f13}\tau}d\tau\|w_k(0)\|_{X}^2\lesssim\|w_k(0)\|_{X}^2.
	\end{align*}
	This completes the proof of Proposition \ref{linear exp decay}.
\end{proof}

\subsection{Space-time estimates for non-zero frequency}\label{3.2-non-zero}

Recall the full nonlinear equation \eqref{main equation} as follows
\begin{align*}
\left\{
\begin{aligned}
&\partial_{\tau}w_k+L_kw_k+f_1-\partial_{r}f_2=0,\\
&w_k(0)=w_k|_{\tau=0},\quad w_k|_{r=0,\infty}=0,
\end{aligned}
\right.
\end{align*}
where $L_k=- [\partial_r^2-(\f{k^2-\f14}{r^2}+\f{r^2}{16}-\f12)]+i\f{kB}{r^2}$ and 
\begin{align*}
&f_1=ik\sum_{l\in\mathbb{Z}}w_{l}\f{\partial_r\breve{\varphi}_{k-l}}{r}+\sum_{l\in\mathbb{Z}}i(k-l)(\f14-\f{1}{2r^2})w_l\breve{\varphi}_{k-l},\quad f_2=\sum_{l\in\mathbb{Z}}i(k-l)\f{w_l\breve{\varphi}_{k-l}}{r},
\end{align*}
with $(\partial_{r}^2+\f{1}{r}\partial_{r}-\f{k^2}{r^2})\breve{\varphi}_k=fw_k$ , $f=\f{e^{-\f{r^2}{8}}}{r^{\f12}}$.

We decompose the solution $w_k$ into two parts: let $w_k=w_k^l+w_k^{n}$ so that $w_k^l$ satisfies the below homogeneous linear equation with initial data $w_k(0)$
\begin{align*}
&\partial_{\tau}w_k^l+L_kw_k^l=0,\quad w_k^l(0)=w_k(0),
\end{align*}
and $w_k^{n}$ satisfies the following inhomogeneous linear equation with zero initial data
\begin{align}
\label{nonzero mode nonlinear}
&\partial_{\tau}w_k^{n}+L_kw_k^{n}+f_1-\partial_{r}f_2=0,\quad w_k^{n}(0)=0.
\end{align}

We first establish the following space-time estimate for the linear part $w_k^l$ in $L^2$ space.

\begin{lemma}\label{nonzero frequency linear part}Let $w_k^l$ be a solution to \eqref{homogeneous linear equation} with initial data $w_k(0)\in L^2$, $c_3$ be the same as in Proposition \ref{linear exp decay} and $c'\in(0,c_3)$ . For any $|k|\geq1(k\in\mathbb{Z})$, it holds
	\begin{align}
	\label{nonzero linear part}&\|e^{c'|kB|^{\f13}\tau}w_k^l\|_{L^{\infty}L^2}^2+|kB|^{\f13}\|e^{c'|kB|^{\f13}\tau}w_k^l\|_{L^2L^2}^2\\
	\nonumber&+\|e^{c'|kB|^{\f13}\tau}\partial_{r}w_k^l\|_{L^2L^2}^2
	+\|e^{c'|kB|^{\f13}\tau}(\f{|k|}{r}+r)w_k^l\|_{L^2L^2}^2\lesssim\|w_k(0)\|_{L^2}^2.
	\end{align}
\end{lemma}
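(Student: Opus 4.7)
\smallskip

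The plan is to combine the semigroup decay already supplied by Proposition \ref{linear exp decay} with a standard weighted energy identity, using the coercivity in Lemma \ref{trivial w' lemma} to capture the derivative and the $(|k|/r+r)$--weighted terms. The $L^\infty L^2$ piece is immediate from \eqref{linear exp decay-L2}, and the $|kB|^{1/3}\|\cdot\|_{L^2L^2}^2$ piece is exactly \eqref{linear exp decay-L2L2}, so the only genuine work is to bound the derivative and the weighted norms.

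For the energy identity, I would take the $L^2$ inner product of \eqref{homogeneous linear equation} with $\overline{w_k^l}$ and extract the real part to obtain
\begin{align*}
\tfrac{1}{2}\tfrac{d}{d\tau}\|w_k^l\|_{L^2}^2 + \Re\langle L_k w_k^l, w_k^l\rangle = 0.
\end{align*}
Multiplying by the growing factor $e^{2c'|kB|^{1/3}\tau}$ and rearranging produces
\begin{align*}
\tfrac{d}{d\tau}\bigl(e^{2c'|kB|^{1/3}\tau}\|w_k^l\|_{L^2}^2\bigr) + 2e^{2c'|kB|^{1/3}\tau}\Re\langle L_k w_k^l, w_k^l\rangle = 2c'|kB|^{1/3}e^{2c'|kB|^{1/3}\tau}\|w_k^l\|_{L^2}^2.
\end{align*}
Integrating in $\tau\in[0,T]$ and invoking Lemma \ref{trivial w' lemma}, which gives the coercive lower bound $\Re\langle L_k w_k^l, w_k^l\rangle \gtrsim \|\partial_r w_k^l\|_{L^2}^2 + \|(|k|/r + r)w_k^l\|_{L^2}^2$ (noting that $\langle(k^2/r^2+r^2)w,w\rangle$ controls $\|(|k|/r + r)w\|_{L^2}^2$ up to a factor of $2$), yields
\begin{align*}
\int_0^T e^{2c'|kB|^{1/3}\tau}\bigl(\|\partial_r w_k^l\|_{L^2}^2 + \|(|k|/r + r)w_k^l\|_{L^2}^2\bigr)d\tau \lesssim \|w_k(0)\|_{L^2}^2 + |kB|^{1/3}\int_0^T e^{2c'|kB|^{1/3}\tau}\|w_k^l\|_{L^2}^2 d\tau.
\end{align*}

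The right-hand side is then absorbed via \eqref{linear exp decay-L2L2}, which provides exactly $|kB|^{1/3}\|e^{c'|kB|^{1/3}\tau}w_k^l\|_{L^2L^2}^2 \lesssim \|w_k(0)\|_{L^2}^2$; sending $T\to\infty$ closes the estimate. Collecting everything (the $L^\infty L^2$ bound from \eqref{linear exp decay-L2}, the $L^2L^2$ bound from \eqref{linear exp decay-L2L2}, and the derivative/weighted bounds from the energy identity) delivers \eqref{nonzero linear part}. I do not expect any serious obstacle: the only point requiring mild care is to make sure the prefactor $c'\in(0,c_3)$ chosen in Proposition \ref{linear exp decay} is the same $c'$ used in the energy identity so that the $|kB|^{1/3}\|e^{c'|kB|^{1/3}\tau}w_k^l\|_{L^2L^2}^2$ term on the right is legitimately controlled and can be absorbed on the left if necessary. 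Everything else is bookkeeping.
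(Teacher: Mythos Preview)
Your proposal is correct and follows essentially the same route as the paper: take the real part of the $L^2$ pairing of \eqref{homogeneous linear equation} with $w_k^l$, invoke the coercivity of Lemma \ref{trivial w' lemma}, multiply by $e^{2c'|kB|^{1/3}\tau}$, integrate in $\tau$, and then close using \eqref{linear exp decay-L2L2} from Proposition \ref{linear exp decay} to absorb the $|kB|^{1/3}\|e^{c'|kB|^{1/3}\tau}w_k^l\|_{L^2L^2}^2$ term on the right. The only cosmetic difference is that the paper also reads off the $L^\infty L^2$ bound from the integrated energy inequality itself rather than citing \eqref{linear exp decay-L2} separately, but either works.
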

\begin{proof}
	Employing integration by parts we obtain
	\begin{align*}
	&\Re\langle\partial_{\tau}w_k^l- [\partial_{r}^2-(\f{k^2-\f14}{r^2}+\f{r^2}{16}-\f12)]w_k^l+\f{ikB}{r^2}w_k^l,w_k^l\rangle\\
	=&\f12\partial_{\tau}\|w_k^l\|_{L^2}^2+\langle- [\partial_{r}^2-(\f{k^2-\f14}{r^2}+\f{r^2}{16}-\f12)]w_k^l,w_k^l\rangle=0.
	\end{align*}
	Together with Lemma \ref{trivial w' lemma}, this gives that there exists a constant $c_0>0$ such that
	\begin{align*}
	\partial_{\tau}\|w_k^l\|_{L^2}^2+c_0(\|\partial_{r}w_k^l\|_{L^2}^2+\|(\f{|k|}{r}+r)w_k^l\|_{L^2}^2)\leq0.
	\end{align*}
	Multiplying $e^{2c'|kB|^{\f13}\tau}$ on both sides, we deduce
	\begin{align*}
	&\partial_{\tau}\|e^{c'|kB|^{\f13}\tau}w_k^l\|_{L^2}^2+c_0(\|e^{c'|kB|^{\f13}\tau}\partial_{r}w_k^l\|_{L^2}^2+\|e^{c'|kB|^{\f13}\tau}(\f{|k|}{r}+r)w_k^l\|_{L^2}^2)\\
	\nonumber\leq &2c'|kB|^{\f13}\|e^{c'|kB|^{\f13}\tau}w_k^l\|_{L^2}^2.
	\end{align*}
	Thus we obtain the space-time estimate of $w_k^l$ as follows
	\begin{align*}
	&\|e^{c'|kB|^{\f13}\tau}w_k^l\|_{L^{\infty}L^2}^2+c_0\Big(\|e^{c'|kB|^{\f13}\tau}\partial_{r}w_k^l\|_{L^2L^2}^2+\|e^{c'|kB|^{\f13}\tau}(\f{|k|}{r}+r)w_k^l\|_{L^2L^2}^2\Big)\\
	\leq & 2c'|kB|^{\f13}\|e^{c'|kB|^{\f13}\tau}w_k^l\|_{L^2L^2}^2+\|w_k(0)\|_{L^2}^2.
	\end{align*}
	Combining with Proposition \ref{linear exp decay}, we arrive at
	\begin{align*}
	&\|e^{c'|kB|^{\f13}\tau}w_k^l\|_{L^{\infty}L^2}^2+|kB|^{\f13}\|e^{c'|kB|^{\f13}\tau}w_k^l\|_{L^2L^2}^2\\
	&+\|e^{c'|kB|^{\f13}\tau}\partial_{r}w_k^l\|_{L^2L^2}^2
	+\|e^{c'|kB|^{\f13}\tau}(\f{|k|}{r}+r)w_k^l\|_{L^2L^2}^2\lesssim\|w_k(0)\|_{L^2}^2.
	\end{align*}
	This completes the proof of Lemma \ref{nonzero frequency linear part}.
\end{proof}

Analogously, we derive the space-time estimate for the linear part $w_k^l$ in weighted $L^2$ space.

\begin{lemma}\label{nonzero frequency linear part-r}Let $w_k^l$ be a solution to \eqref{homogeneous linear equation} with initial data $w_k(0)\in X$, $c_3$ be the same as in Proposition \ref{linear exp decay} and $c'\in(0,c_3)$ . For any $|k|\geq1(k\in\mathbb{Z})$, it holds
	\begin{align}
	\label{nonzero linear part-r}&\|e^{c'|kB|^{\f13}\tau}w_k^l\|_{L^{\infty}X}^2+|kB|^{\f13}\|e^{c'|kB|^{\f13}\tau}w_k^l\|_{L^2X}^2\\
	\nonumber&+\|e^{c'|kB|^{\f13}\tau}\partial_{r}w_k^l\|_{L^2X}^2+\|e^{c'|kB|^{\f13}\tau}(\f{|k|}{r}+r)w_k^l\|_{L^2X}^2\lesssim\|w_k(0)\|_{X}^2.
	\end{align}
\end{lemma}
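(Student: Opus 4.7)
The plan is to mirror the proof of Lemma 3.3 verbatim, but with the test function $w_k^l/r^2$ in place of $w_k^l$, so that the natural inner product is the $X$-inner product. The two key ingredients will be the coercivity estimate of Lemma 2.3 (which plays the role that Lemma 2.1 played in the $L^2$ proof) and the weighted space-time bound \eqref{linear exp decay-L2X} from Proposition 3.1 (which supplies the $|kB|^{1/3}$-factor to absorb the derivative of the exponential weight).

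Concretely, I would test the homogeneous equation $\partial_\tau w_k^l+L_k w_k^l=0$ against $\overline{w_k^l}/r^2$ and take real parts. Since $ikB/r^2$ is purely imaginary as a multiplier and the $X$-norm is $\|\cdot\|_X^2=\langle\cdot,\cdot/r^2\rangle$, this yields the identity
\begin{equation*}
\tfrac12\partial_\tau\|w_k^l\|_X^2+\Re\bigl\langle -[\partial_r^2-(\tfrac{k^2-1/4}{r^2}+\tfrac{r^2}{16}-\tfrac12)]w_k^l,\tfrac{w_k^l}{r^2}\bigr\rangle=0.
\end{equation*}
By Lemma 2.3, the second term is bounded below by $c_0\bigl(\|\partial_r w_k^l/r\|_{L^2}^2+k^2\|w_k^l/r^2\|_{L^2}^2+|k|\|w_k^l/r\|_{L^2}^2+\|w_k^l\|_{L^2}^2\bigr)$ for some $c_0>0$. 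The first term equals $\|\partial_r w_k^l\|_X^2$, and the remaining three combine to give exactly $\|(|k|/r+r)w_k^l\|_X^2$ after writing $\int (k^2/r^2+2|k|+r^2)|w_k^l|^2/r^2\,dr$. Hence
\begin{equation*}
\partial_\tau\|w_k^l\|_X^2+c_0\bigl(\|\partial_r w_k^l\|_X^2+\|(|k|/r+r)w_k^l\|_X^2\bigr)\le 0.
\end{equation*}

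Multiplying by $e^{2c'|kB|^{1/3}\tau}$, the right-hand side picks up $2c'|kB|^{1/3}e^{2c'|kB|^{1/3}\tau}\|w_k^l\|_X^2$, and integrating in $\tau\in(0,\infty)$ gives
\begin{equation*}
\|e^{c'|kB|^{1/3}\tau}w_k^l\|_{L^\infty X}^2+c_0\bigl(\|e^{c'|kB|^{1/3}\tau}\partial_r w_k^l\|_{L^2 X}^2+\|e^{c'|kB|^{1/3}\tau}(|k|/r+r)w_k^l\|_{L^2 X}^2\bigr)
\end{equation*}
\begin{equation*}
\le 2c'|kB|^{1/3}\|e^{c'|kB|^{1/3}\tau}w_k^l\|_{L^2 X}^2+\|w_k(0)\|_X^2.
\end{equation*}
The first term on the right is controlled by $\|w_k(0)\|_X^2$ thanks to \eqref{linear exp decay-L2X} from Proposition 3.1 (which in turn rests on the $X$-pseudospectral bound in Lemma 3.1 and the Gearhart–Pr\"uss lemma). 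Adding $|kB|^{1/3}\|e^{c'|kB|^{1/3}\tau}w_k^l\|_{L^2 X}^2\lesssim\|w_k(0)\|_X^2$ once more to the left of the inequality yields \eqref{nonzero linear part-r}.

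I do not expect any substantive obstacle: the argument is structurally identical to Lemma 3.3, and all the analytic content has already been isolated, namely the coercive lower bound of Lemma 2.3 for the self-adjoint part of $L_k$ tested against $w/r^2$, and the enhanced-dissipation semigroup bound in $X$ from Proposition 3.1. The only care needed is bookkeeping: verifying that the three lower-order pieces produced by Lemma 2.3 really reassemble into $\|(|k|/r+r)w_k^l\|_X^2$, and checking that the boundary term at $r=0$ in the integration-by-parts vanishes under the domain condition $w_k^l|_{r=0,\infty}=0$ (which is ensured for $w_k^l\in D_k$ when $|k|\ge 1$, since then $w/r^2\in L^2$).
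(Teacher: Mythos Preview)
Your proposal is correct and follows essentially the same approach as the paper: test the homogeneous equation against $w_k^l/r^2$, invoke Lemma \ref{trivial w'/r} for coercivity, multiply by $e^{2c'|kB|^{1/3}\tau}$, integrate in $\tau$, and absorb the resulting $2c'|kB|^{1/3}\|e^{c'|kB|^{1/3}\tau}w_k^l\|_{L^2X}^2$ term via \eqref{linear exp decay-L2X} from Proposition \ref{linear exp decay}. Your bookkeeping check that the lower-order pieces from Lemma \ref{trivial w'/r} reassemble into $\|(|k|/r+r)w_k^l\|_X^2$ is correct and matches what the paper uses implicitly.
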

\begin{proof}
	Via integration by parts we have
	\begin{align*}
	&\Re\langle\partial_{\tau}w_k^l- [\partial_{r}^2-(\f{k^2-\f14}{r^2}+\f{r^2}{16}-\f12)]w_k^l+\f{ikB}{r^2}w_k^l,\f{w_k^l}{r^2}\rangle\\
	=&\f12\partial_{\tau}\|\f{w_k^l}{r}\|_{L^2}^2+\langle- [\partial_{r}^2-(\f{k^2-\f14}{r^2}+\f{r^2}{16}-\f12)]w_k^l,\f{w_k^l}{r^2}\rangle=0.
	\end{align*}
Together with Lemma \ref{trivial w'/r}, this yields that there exists a constant $c_0>0$ such that
	\begin{align*}
	\partial_{\tau}\|w_k^l\|_{X}^2+c_0\Big(\|\partial_{r}w_k^l\|_{X}^2+\|(\f{|k|}{r}+r)w_k^l\|_{X}^2\Big)\leq0.
	\end{align*}
	Multiplying $e^{2c'|kB|^{\f13}\tau}$ on both sides, the inequality above becomes
	\begin{align*}
	&\partial_{\tau}\|e^{c'|kB|^{\f13}\tau}w_k^l\|_{X}^2+c_0(\|e^{c'|kB|^{\f13}\tau}\partial_{r}w_k^l\|_{X}^2+\|e^{c'|kB|^{\f13}\tau}(\f{|k|}{r}+r)w_k^l\|_{X}^2)\\
	\nonumber\leq &2c'|kB|^{\f13}\|e^{c'|kB|^{\f13}\tau}w_k^l\|_{X}^2.
	\end{align*}
	This implies the space-time estimate of $w_k^l$ as below
	\begin{align*}
	&\|e^{c'|kB|^{\f13}\tau}w_k^l\|_{L^{\infty}X}^2+c_0(\|e^{c'|kB|^{\f13}\tau}\partial_{r}w_k^l\|_{L^2X}^2+\|e^{c'|kB|^{\f13}\tau}(\f{|k|}{r}+r)w_k^l\|_{L^2X}^2)\\
	\leq & 2c'|kB|^{\f13}\|e^{c'|kB|^{\f13}\tau}w_k^l\|_{L^2X}^2+\|w_k(0)\|_{X}^2.
	\end{align*}
	Plugging in Proposition \ref{linear exp decay}, we conclude
	\begin{align*}
	&\|e^{c'|kB|^{\f13}\tau}w_k^l\|_{L^{\infty}X}^2+|kB|^{\f13}\|e^{c'|kB|^{\f13}\tau}w_k^l\|_{L^2X}^2\\
	&+\|e^{c'|kB|^{\f13}\tau}\partial_{r}w_k^l\|_{L^2X}^2
	+\|e^{c'|kB|^{\f13}\tau}(\f{|k|}{r}+r)w_k^l\|_{L^2X}^2\lesssim\|w_k(0)\|_{X}^2.
	\end{align*}
	This completes the proof of Lemma \ref{nonzero frequency linear part-r}.
\end{proof}

Next we establish the space-time estimates for $w_k^{n}$, which satisfies the following inhomogeneous equation \eqref{nonzero mode nonlinear} with zero initial data
\begin{align}\label{zero initial data}
\partial_{\tau}w_k^{n}+L_kw_k^{n}+f_1-\partial_{r}f_2=0,\quad w_k^{n}(0)=0.
\end{align}
Multiply both sides by $e^{c_2|kB|^{\f13}\tau}$, with $c_2$ being the same constant as in Proposition \ref{resolvent estimate-1} and in Proposition \ref{resolvent estimate-r1-1}. The above inhomogeneous equation then becomes
\begin{align*}
&\partial_{\tau}(e^{c_2|kB|^{\f13}\tau}w_k^{n})+\Big(L_k-c_2|kB|^{\f13}\Big)(e^{c_2|kB|^{\f13}\tau}w_k^{n})+e^{c_2|kB|^{\f13}\tau}(f_1-\partial_{r}f_2)=0,\quad w_k^{n}(0)=0.
\end{align*}
Define $\tilde{w}_k^{n}=e^{c_2|kB|^{\f13}\tau}w_k^{n}$ and $\tilde{f}_j=e^{c_2|kB|^{\f13}\tau}f_j$ for $j=1,2$. We then derive the following space-time estimate for the nonlinear part $w_k^n$ in $L^2$ space.

\begin{lemma}\label{nonzero frequency nonlinear part2}Let $w_k^n$ be a solution to \eqref{nonzero mode nonlinear} with zero initial data. For any $|k|\geq1(k\in\mathbb{Z})$, it holds
	\begin{align*}
	&\|e^{c_2|kB|^{\f13}\tau}w_k^{n}\|_{L^{\infty}L^2}^2+|kB|^{\f13}\|e^{c_2|kB|^{\f13}\tau}w_k^{n}\|_{L^2L^2}^2+\|e^{c_2|kB|^{\f13}\tau}\partial_{r}w_k^n\|_{L^2L^2}^2\\
	&+\|e^{c_2|kB|^{\f13}\tau}(\f{|k|}{r}+r)w_k^n\|_{L^2L^2}^2\lesssim|kB|^{-\f13}\|e^{c_2|kB|^{\f13}\tau}f_1\|_{L^2L^2}^2+\|e^{c_2|kB|^{\f13}\tau}f_2\|_{L^2L^2}^2.
	\end{align*}
\end{lemma}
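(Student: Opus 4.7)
The plan is to combine a Fourier/resolvent argument based on Proposition \ref{resolvent estimate-1} with a standard energy estimate. After the change of unknowns $\tilde w_k^n = e^{c_2|kB|^{1/3}\tau}w_k^n$ and $\tilde f_j = e^{c_2|kB|^{1/3}\tau}f_j$, equation \eqref{nonzero mode nonlinear} reads
\begin{align*}
\partial_\tau \tilde w_k^n + \bigl(L_k - c_2|kB|^{1/3}\bigr)\tilde w_k^n + \tilde f_1 - \partial_r \tilde f_2 = 0, \qquad \tilde w_k^n(0)=0.
\end{align*}
The weighted norms on the left of the stated inequality are exactly the space-time norms of $\tilde w_k^n$ without any exponential factor, so it suffices to bound them by the corresponding norms of $\tilde f_1,\tilde f_2$.

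For the $L^2L^2$-type control, I extend $\tilde w_k^n$ by zero for $\tau\le 0$ (legitimate since $\tilde w_k^n(0)=0$) and take the Fourier transform in $\tau$; after a standard approximation by time-truncation one obtains, for each $\lambda\in\mathbb{R}$,
\begin{align*}
\bigl(L_k - c_2|kB|^{1/3} + i\lambda\bigr)\,\widehat{\tilde w}_k^n(\lambda,\cdot) = -\widehat{\tilde f}_1(\lambda,\cdot) + \partial_r \widehat{\tilde f}_2(\lambda,\cdot).
\end{align*}
By linearity I split $\widehat{\tilde w}_k^n = W_1+W_2$ corresponding to the two pieces of the right-hand side. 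For $W_1$ I apply the $L^2\to L^2$ bound of Proposition \ref{resolvent estimate-1}, getting $|kB|^{1/3}\|W_1\|_{L^2_r}\le C\|\widehat{\tilde f}_1\|_{L^2_r}$; for $W_2$ I apply the $H^{-1}\to L^2$ bound of the same proposition together with $\|\partial_r \widehat{\tilde f}_2\|_{H^{-1}}\le \|\widehat{\tilde f}_2\|_{L^2_r}$, yielding $|kB|^{1/6}\|W_2\|_{L^2_r}\le C\|\widehat{\tilde f}_2\|_{L^2_r}$. Combining, dividing by $|kB|^{1/6}$, squaring in $\lambda$, and invoking Plancherel in $\tau$ produces
\begin{align*}
|kB|^{1/3}\|\tilde w_k^n\|_{L^2L^2}^2 \le C\bigl(|kB|^{-1/3}\|\tilde f_1\|_{L^2L^2}^2 + \|\tilde f_2\|_{L^2L^2}^2\bigr).
\end{align*}

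For the remaining $L^\infty L^2$, $\|\partial_r\,\cdot\,\|_{L^2L^2}$ and $\|(\tfrac{|k|}{r}+r)\,\cdot\,\|_{L^2L^2}$ norms, I take the real $L^2_r$-pairing of the shifted equation with $\tilde w_k^n$. The skew-adjoint potential $i\tfrac{kB}{r^2}$ drops out of the real part, and Lemma \ref{trivial w' lemma} gives coercivity
\begin{align*}
\Re\langle L_k \tilde w_k^n,\tilde w_k^n\rangle \gtrsim \|\partial_r \tilde w_k^n\|_{L^2}^2 + \|(\tfrac{|k|}{r}+r)\tilde w_k^n\|_{L^2}^2.
\end{align*}
Moving the $\partial_r$ off of $\tilde f_2$ by integration by parts and applying Cauchy--Schwarz and Young, I absorb a fraction of $\|\partial_r\tilde w_k^n\|_{L^2}^2$ on the left to obtain
\begin{align*}
\partial_\tau\|\tilde w_k^n\|_{L^2}^2 + \tfrac{c_0}{2}\bigl(\|\partial_r\tilde w_k^n\|_{L^2}^2 + \|(\tfrac{|k|}{r}+r)\tilde w_k^n\|_{L^2}^2\bigr) \le C|kB|^{1/3}\|\tilde w_k^n\|_{L^2}^2 + C|kB|^{-1/3}\|\tilde f_1\|_{L^2}^2 + C\|\tilde f_2\|_{L^2}^2.
\end{align*}
Integrating in $\tau$, using $\tilde w_k^n(0)=0$, and feeding in the $L^2L^2$ control from the Fourier step to handle the $|kB|^{1/3}\|\tilde w_k^n\|_{L^2L^2}^2$ term on the right then closes the estimate and yields the claim after adding back the $|kB|^{1/3}\|\tilde w_k^n\|_{L^2L^2}^2$ bound.

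The main obstacle is precisely the non-accretivity of the shifted operator $L_k - c_2|kB|^{1/3}$: the growth term $c_2|kB|^{1/3}\|\tilde w_k^n\|_{L^2}^2$ on the right-hand side of the energy identity cannot be absorbed by the dissipative part $\|\partial_r\tilde w_k^n\|_{L^2}^2 + \|(\tfrac{|k|}{r}+r)\tilde w_k^n\|_{L^2}^2$ alone (the latter only provides $|k|\|\tilde w_k^n\|_{L^2}^2$), so a direct energy method cannot close. The sharp resolvent bound of Proposition \ref{resolvent estimate-1}, which is stated exactly for the shifted operator, is what supplies the missing $|kB|^{1/3}\|\tilde w_k^n\|_{L^2L^2}^2$ control via Plancherel; the two arguments interlock to give the stated inequality.
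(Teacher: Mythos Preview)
Your proposal is correct and follows essentially the same route as the paper: Fourier transform in $\tau$, split the transformed solution according to the two forcing pieces, apply the $L^2$ and $H^{-1}$ bounds of Proposition~\ref{resolvent estimate-1} respectively, use Plancherel to get the $L^2L^2$ control, and then run the energy identity with Lemma~\ref{trivial w' lemma} to recover the remaining norms. Your diagnosis of the obstacle---that the shift term $c_2|kB|^{1/3}\|\tilde w_k^n\|_{L^2}^2$ cannot be absorbed by coercivity and must be handled via the resolvent/Plancherel step---is exactly the point of the argument.
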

\begin{proof}
	We first take the Fourier transform in $\tau$ and define\footnote{Note that the zero initial condition in \eqref{zero initial data} ensures that the Fourier transform is well-defined for $\lambda\in (0,\infty)$.}
	\begin{align*}
	\hat{w}_k^n(\lambda,r)=\int_0^{\infty}\tilde{w}_k^n(\tau,r)e^{-i\tau\lambda}d\tau,\quad F_j(\lambda,r,k)=\int_0^{\infty}\tilde{f}_j(\tau,r,k)e^{-i\tau\lambda}d\tau \quad \textrm{for} \ j=1,2.
	\end{align*}
The inhomogeneous equation \eqref{nonzero mode nonlinear} is transferred into the following resolvent equation
	\begin{align*}
	(i\lambda+L_k-c_2|kB|^{\f13})\hat{w}_k^n(\lambda,r)=-F_1+\partial_{r}F_2.
	\end{align*}
	We further decompose $\hat{w}_k^n$ as $\hat{w}_k^n=\hat{w}_k^{n,1}+\hat{w}_k^{n,2}$, where $\hat{w}_k^{n,1}$ and $\hat{w}_k^{n,2}$ satisfy
	\begin{align*}
	\Big(i\lambda+L_k-c_2|kB|^{\f13}\Big)\hat{w}_k^{n,1}=-F_1 \quad \textrm{and} \quad
	\Big(i\lambda+L_k-c_2|kB|^{\f13}\Big)\hat{w}_k^{n,2}=\partial_{r}F_2.
	\end{align*}
	respectively.
	
	Applying Proposition \ref{resolvent estimate-1}, we obtain
	\begin{align*}
	&|kB|^{\f16}\|\partial_{r}\hat{w}_k^{n,1}\|_{L^2}+|kB|^{\f13}\|\hat{w}_k^{n,1}\|_{L^2}\leq C \|F_1\|_{L^2},
	\end{align*}
	and
	\begin{align*}
	&\|\partial_{r}\hat{w}_k^{n,2}\|_{L^2}+|kB|^{\f16}\|\hat{w}_k^{n,2}\|_{L^2}\leq C\|\partial_{r}F_2\|_{H^{-1}}\leq C\|F_2\|_{L^2}.
	\end{align*}
	Combining the above inequalities, we deduce
	\begin{align*}
	|kB|^{-\f16}\|\partial_{r}\hat{w}_k^{n}\|_{L^2}+\|\hat{w}_k^n\|_{L^2}\lesssim |kB|^{-\f13}\|F_1\|_{L^2}+|kB|^{-\f16}\|F_2\|_{L^2}.
	\end{align*}
		Therefore, using Plancherel's theorem, it holds
	\begin{align}
	\nonumber&|kB|^{-\f16}\|\partial_{r}\tilde{w}_k^{n}\|_{L^2L^2}+\|\tilde{w}_k^n\|_{L^2L^2}\\
		\nonumber\approx&|kB|^{-\f16}\big\|\|\partial_{r}\hat{w}_k^{n}\|_{L^2}\big\|_{L^2(\mathbb{R})}+\big\|\|\hat{w}_k^n\|_{L^2}\big\|_{L^2(\mathbb{R})}\\
\nonumber\lesssim& |kB|^{-\f13}\big\|\|F_1\|_{L^2}\big\|_{L^2(\mathbb{R})}+|kB|^{-\f16}\big\|\|F_2\|_{L^2}\big\|_{L^2(\mathbb{R})}\\
	\label{omega-n-L2}\approx&|kB|^{-\f13}\|\tilde{f}_1\|_{L^2L^2}+|kB|^{-\f16}\|\tilde{f}_2\|_{L^2L^2}.
	\end{align}
	Via integration by parts, we also have
	\begin{align*}
	&\Re\langle\partial_{\tau}w_k^n- [\partial_{r}^2-(\f{k^2-\f14}{r^2}+\f{r^2}{16}-\f12)]w_k^n+\f{i\beta_k}{r^2}w_k^n+f_1-\partial_{r}f_2,e^{2c_2|kB|^{\f13}\tau}w_k^n\rangle\\
	=&\f12\partial_{\tau}\|\tilde{w}_k^n\|_{L^2}^2+\langle- [\partial_{r}^2-(\f{k^2-\f14}{r^2}+\f{r^2}{16}-\f12)]\tilde{w}_k^n-c_2|kB|^{\f13}\tilde{w}_k^n+\tilde{f}_1-\partial_{r}\tilde{f}_2,\tilde{w}_k^n\rangle=0.
	\end{align*}
	Together with Lemma \ref{trivial w' lemma}, this yields that there exists a constant $c_0>0$ such that
	\begin{align*}
	&c_2|kB|^{\f13}\|\tilde{w}_k^n\|_{L^2}^2\\\geq&\f12\partial_{\tau}\|\tilde{w}_k^n\|_{L^2}^2+c_0(\|\partial_{r}\tilde{w}_k^n\|_{L^2}^2+\|(\f{|k|}{r}+r)\tilde{w}_k^n\|_{L^2}^2)+\Re\langle (\tilde{f}_1 -\partial_{r}\tilde{f}_2),\tilde{w}_k^n\rangle\\
	=&\f12\partial_{\tau}\|\tilde{w}_k^n\|_{L^2}^2+c_0(\|\partial_{r}\tilde{w}_k^n\|_{L^2}^2+\|(\f{|k|}{r}+r)\tilde{w}_k^n\|_{L^2}^2) +\Re\langle \tilde{f}_1 ,\tilde{w}_k^n\rangle+\Re\langle \tilde{f}_2,\partial_{r}\tilde{w}_k^n\rangle.
	\end{align*}
	By Cauchy-Schwarz inequality, we obtain
	\begin{align*}
	&\partial_{\tau}\|\tilde{w}_k^n\|_{L^2}^2+2c_0\Big(\|\partial_{r}\tilde{w}_k^n\|_{L^2}^2+\|(\f{|k|}{r}+r)\tilde{w}_k^n\|_{L^2}^2\Big) \\
	\lesssim&\Big(\|\tilde{f}_1\|_{L^2}+|kB|^{\f16}\|\tilde{f}_2\|_{L^2}\Big)\Big(\|\tilde{w}_k^n\|_{L^2}+|kB|^{-\f16}\|\partial_{r}\tilde{w}_k^n\|_{L^2}\Big)+|kB|^{\f16}\|\tilde{w}_k^n\|_{L^2}^2.
	\end{align*}
	Combining with \eqref{omega-n-L2}, we arrive at
	\begin{align*}
	&\|\tilde{w}_k^n\|_{L^{\infty}L^2}^2+\|\partial_{r}\tilde{w}_k^n\|_{L^2L^2}^2+\|(\f{|k|}{r}+r)\tilde{w}_k^n\|_{L^2L^2}^2+|kB|^{\f13}\|\tilde{w}_k^n\|_{L^2L^2}^2\\
	\lesssim&(\|\tilde{f}_1\|_{L^2L^2}+|kB|^{\f16}\|\tilde{f}_2\|_{L^2L^2})(\|\tilde{w}_k^n\|_{L^2L^2}+|kB|^{-\f16}\|\partial_{r}\tilde{w}_k^n\|_{L^2L^2})+|kB|^{\f13}\|\tilde{w}_k^n\|_{L^2L^2}^2\\
	\lesssim&|kB|^{-\f13}(\|\tilde{f}_1\|_{L^2L^2}+|kB|^{\f16}\|\tilde{f}_2\|_{L^2L^2})^2\lesssim|kB|^{-\f13}\|\tilde{f}_1\|_{L^2L^2}^2+\|\tilde{f}_2\|_{L^2L^2}^2.
	\end{align*}
	This completes the proof of Lemma \ref{nonzero frequency nonlinear part2}.
\end{proof}

We then proceed in the same fashion and derive the space-time estimate for the nonlinear part $w_k^n$ in weighted $L^2$ space.

\begin{lemma}\label{nonzero frequency nonlinear part2-r}Let $w_k^n$ be a solution to \eqref{nonzero mode nonlinear} with zero initial data. For any $|k|\geq1(k\in\mathbb{Z})$, it holds
	\begin{align*}
	&\|e^{c_2|kB|^{\f13}\tau}w_k^n\|_{L^{\infty}X}^2+|kB|^{\f13}\|e^{c_2|kB|^{\f13}\tau}w_k^n\|_{L^2X}^2\\&+\|e^{c_2|kB|^{\f13}\tau}\partial_{r}w_k^n\|_{L^2X}^2
	+\|e^{c_2|kB|^{\f13}\tau}(\f{|k|}{r}+r)w_k^n\|_{L^2X}^2\\
	\lesssim&|kB|^{-\f13}\Big(\|e^{c_2|kB|^{\f13}\tau}f_1\|_{L^2X}^2+\|e^{c_2|kB|^{\f13}\tau}\f{f_2}{r}\|_{L^2X}^2\Big)+\|e^{c_2|kB|^{\f13}\tau}f_2\|_{L^2X}^2,
	\end{align*}
	where $w_k^n$ satisfies the equation \eqref{nonzero mode nonlinear}.
\end{lemma}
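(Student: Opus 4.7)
The plan is to parallel the proof of Lemma \ref{nonzero frequency nonlinear part2}, replacing the $L^2$ resolvent estimates of Proposition \ref{resolvent estimate-1} by their weighted counterparts in Proposition \ref{resolvent estimate-r1-1}. Set $\tilde{w}_k^n := e^{c_2|kB|^{1/3}\tau}w_k^n$ and $\tilde{f}_j := e^{c_2|kB|^{1/3}\tau}f_j$, and take the Fourier transform in $\tau$ to get
\[
(i\lambda + L_k - c_2|kB|^{1/3})\hat{w}_k^n = -F_1 + \partial_r F_2.
\]
Since Proposition \ref{resolvent estimate-r1-1} measures the forcing after division by $r$, the right idea is to use the algebraic identity
\[
\partial_r F_2 = \tfrac{F_2}{r} + r\,\partial_r\!\bigl(\tfrac{F_2}{r}\bigr)
\]
and split $\hat{w}_k^n = \hat{w}_k^{n,a} + \hat{w}_k^{n,b}$ with respective forcings $-F_1 + F_2/r$ and $r\,\partial_r(F_2/r)$.

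For $\hat{w}_k^{n,a}$ the first inequality of Proposition \ref{resolvent estimate-r1-1}, applied to the shifted operator so that $c_2|kB|^{1/3}\hat{w}_k^{n,a}$ is absorbed on the left, yields
\[
|kB|^{1/6}\bigl\|(\hat{w}_k^{n,a})'/r\bigr\|_{L^2} + |kB|^{1/3}\bigl\|\hat{w}_k^{n,a}/r\bigr\|_{L^2} \lesssim \|F_1/r\|_{L^2} + \|F_2/r^2\|_{L^2}.
\]
For $\hat{w}_k^{n,b}$ the second inequality gives
\[
\bigl\|\hat{w}_k^{n,b}/r\bigr\|_{H^1} + |kB|^{1/6}\bigl\|\hat{w}_k^{n,b}/r\bigr\|_{L^2} \lesssim \bigl\|\partial_r(F_2/r)\bigr\|_{H^{-1}} \leq \|F_2/r\|_{L^2}.
\]
Rewriting $(\hat{w}_k^{n,b})'/r = (\hat{w}_k^{n,b}/r)' + \hat{w}_k^{n,b}/r^2$ and invoking Hardy on $H^1_0(\mathbb{R}_+)$ controls $\|(\hat{w}_k^{n,b})'/r\|_{L^2}$ by $\|\hat{w}_k^{n,b}/r\|_{H^1}$. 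Summing the two contributions and applying Plancherel gives the key Fourier-side bound
\[
|kB|^{-1/6}\|\partial_r\tilde{w}_k^n\|_{L^2X} + \|\tilde{w}_k^n\|_{L^2X} \lesssim |kB|^{-1/3}\bigl(\|\tilde{f}_1\|_{L^2X} + \|\tilde{f}_2/r\|_{L^2X}\bigr) + |kB|^{-1/6}\|\tilde{f}_2\|_{L^2X}.
\]

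To upgrade this to the $L^\infty X$ norm and the remaining derivative pieces I would pair the shifted equation against $\tilde{w}_k^n/r^2$ (the $X$-inner product) and use Lemma \ref{trivial w'/r} for coercivity of $L_k$ on this pairing. The forcing $\partial_r\tilde{f}_2$ is handled by integration by parts,
\[
\operatorname{Re}\langle \partial_r\tilde{f}_2,\tilde{w}_k^n/r^2\rangle = -\operatorname{Re}\langle \tilde{f}_2/r,(\tilde{w}_k^n)'/r\rangle + 2\operatorname{Re}\langle \tilde{f}_2/r,\tilde{w}_k^n/r^2\rangle,
\]
producing $\|\tilde{f}_1\|_X\|\tilde{w}_k^n\|_X + \|\tilde{f}_2\|_X\|\partial_r\tilde{w}_k^n\|_X + \|\tilde{f}_2/r\|_X\|\tilde{w}_k^n\|_X$ on the right after Cauchy–Schwarz. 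A Young-type absorption, exactly as in Lemma \ref{nonzero frequency nonlinear part2}, together with the Fourier-side estimate above to dispose of the $c_2|kB|^{1/3}\|\tilde{w}_k^n\|_{L^2X}^2$ shift term, closes the argument.

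The main obstacle is the decomposition of $\partial_r F_2$: splitting into the ``regular'' piece $F_2/r$ (handled by the $L^2\to L^2$ branch of Proposition \ref{resolvent estimate-r1-1} to gain the full $|kB|^{-1/3}$ on $\|\tilde{f}_2/r\|_{L^2X}$) and the derivative piece $r\,\partial_r(F_2/r)$ (handled by the $L^2\to H^{-1}$ branch, losing only $|kB|^{-1/6}$ on $\|\tilde{f}_2\|_{L^2X}$) is crucial. A naïve application of only the $L^2\to H^{-1}$ estimate to the whole $\partial_r F_2$ would leave a factor $|kB|^{-1/6}$ in front of $\|\tilde{f}_2/r\|_{L^2X}$ rather than $|kB|^{-1/3}$, which would be too weak to sustain the improved $|A_2/\nu|^{1/3}$ perturbation threshold driving the nonlinear theorem.
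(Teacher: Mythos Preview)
Your proposal is correct and follows essentially the same route as the paper's proof: the identical algebraic splitting $\partial_r F_2 = F_2/r + r\,\partial_r(F_2/r)$, the same two-piece decomposition of $\hat{w}_k^n$ handled respectively by the $L^2\to L^2$ and $L^2\to H^{-1}$ estimates of Proposition~\ref{resolvent estimate-r1-1}, Plancherel to obtain the $L^2X$ bound, and then the energy identity against $\tilde{w}_k^n/r^2$ with Lemma~\ref{trivial w'/r} supplying coercivity. Your explicit mention of Hardy's inequality to pass from $\|\hat{w}_k^{n,b}/r\|_{H^1}$ to $\|(\hat{w}_k^{n,b})'/r\|_{L^2}$ is a point the paper leaves implicit, and your Cauchy--Schwarz on the boundary term $\langle \tilde{f}_2/r,\tilde{w}_k^n/r^2\rangle$ distributes the $r$-weights slightly differently (you put the extra $1/r$ on $\tilde{f}_2$, the paper puts it on $\tilde{w}_k^n$ and absorbs via the $|k|/r$ coercive piece), but both choices close.
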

\begin{proof}We still take the Fourier transform with respective to $t$ and get
	\begin{align*}
	\hat{w}_k^n(\lambda,r)=\int_0^{\infty}\tilde{w}_k^n(\tau,r)e^{-i\tau\lambda}d\tau,\quad F_j(\lambda,r,k)=\int_0^{\infty}\tilde{f}_j(\tau,r,k)e^{-i\tau\lambda}d\tau \quad \textrm{with}\  j=1,2.
	\end{align*}
The inhomogeneous equation \eqref{nonzero mode nonlinear} then becomes 
	\begin{align*}
	(i\lambda+L_k-c_2|kB|^{\f13})\hat{w}_k^n(\lambda,r)=-F_1+\partial_{r}F_2=-F_1+r\partial_{r}(\f{F_2}{r})+\f{F_2}{r}.
	\end{align*}
	As in Lemma \ref{nonzero frequency nonlinear part2}, we decompose $\hat{w}_k^n=\hat{w}_k^{n,1}+\hat{w}_k^{n,2}$, where $\hat{w}_k^{n,1}$ and $\hat{w}_k^{n,2}$ satisfy
	\begin{align*}
	(i\lambda+L_k-c_2|kB|^{\f13})\hat{w}_k^{n,1}=-F_1+\f{F_2}{r} \quad \textrm{and} \quad
	(i\lambda+L_k-c_2|kB|^{\f13})\hat{w}_k^{n,2}=r\partial_{r}(\f{F_2}{r}),
	\end{align*}
	respectively.
	It follows from Proposition \ref{resolvent estimate-r1-1} that
	\begin{align*}
	&|kB|^{\f16}\|\partial_{r}\hat{w}_k^{n,1}\|_{X}+|kB|^{\f13}\|\hat{w}_k^{n,1}\|_{X}\leq C \|F_1-\f{F_2}{r}\|_{X},
	\end{align*}
	and
	\begin{align*}
	&\|\partial_{r}\hat{w}_k^{n,2}\|_{X}+|kB|^{\f16}\|\hat{w}_k^{n,2}\|_{X}\leq C\|\partial_{r}(\f{F_2}{r})\|_{H^{-1}}\leq C\|\f{F_2}{r}\|_{L^2}\leq C\|F_2\|_{X}.
	\end{align*}
	Hence we obtain
	\begin{align*}
	|kB|^{-\f16}\|\partial_{r}\hat{w}_k^{n}\|_{X}+\|\hat{w}_k^n\|_{X}\lesssim |kB|^{-\f13}\|F_1-\f{F_2}{r}\|_{X}+|kB|^{-\f16}\|F_2\|_{X}.
	\end{align*}
	Therefore, utilizing Plancherel's theorem we deduce
	\begin{align}
	\nonumber&|kB|^{-\f16}\|\partial_{r}\tilde{w}_k^{n}\|_{L^2X}+\|\tilde{w}_k^n\|_{L^2X}\\
	\nonumber	\approx&|kB|^{-\f16}\big\|\|\partial_{r}\hat{w}_k^{n}\|_{X}\big\|_{L^2(\mathbb{R})}+\big\|\|\hat{w}_k^n\|_{X}\big\|_{L^2(\mathbb{R})}\\
	\nonumber\lesssim& |kB|^{-\f13}\big\|\|F_1-\f{F_2}{r}\|_{X}\big\|_{L^2(\mathbb{R})}+|kB|^{-\f16}\big\|\|F_2\|_{X}\big\|_{L^2(\mathbb{R})}\\
	\label{omega-n-L2-r}\approx&|kB|^{-\f13}\|\tilde{f}_1-\f{\tilde{f}_2}{r}\|_{L^2X}+|kB|^{-\f16}\|\tilde{f}_2\|_{L^2X}.
	\end{align}
	In addition, integration by parts yields
	\begin{align*}
	&\Re\langle\partial_{\tau}w_k^n- [\partial_{r}^2-(\f{k^2-\f14}{r^2}+\f{r^2}{16}-\f12)]w_k^n+\f{i\beta_k}{r^2}w_k^n+f_1-\partial_{r}f_2,e^{2c_2|kB|^{\f13}\tau}\f{w_k^n}{r^2}\rangle\\
	=&\f12\partial_{\tau}\|\tilde{w}_k^n\|_{L^2}^2+\langle- [\partial_{r}^2-(\f{k^2-\f14}{r^2}+\f{r^2}{16}-\f12)]\tilde{w}_k^n-c_2|kB|^{\f13}\tilde{w}_k^n+\tilde{f}_1-\partial_{r}\tilde{f}_2,\f{\tilde{w}_k^n}{r^2}\rangle=0.
	\end{align*}
Together with Lemma \ref{trivial w'/r}, this implies that there exists a constant $c_0>0$ such that
	\begin{align*}
	&c_2|kB|^{\f13}\|\tilde{w}_k^n\|_{X}^2\\\geq&\f12\partial_{\tau}\|\tilde{w}_k^n\|_{X}^2+c_0(\|\partial_{r}\tilde{w}_k^n\|_{X}^2+\|(\f{|k|}{r}+r)\tilde{w}_k^n\|_{X}^2)+\Re\langle (\tilde{f}_1 -\partial_{r}\tilde{f}_2),\f{\tilde{w}_k^n}{r^2}\rangle\\
	=&\f12\partial_{\tau}\|\tilde{w}_k^n\|_{X}^2+c_0(\|\partial_{r}\tilde{w}_k^n\|_{X}^2+\|(\f{|k|}{r}+r)\tilde{w}_k^n\|_{X}^2) +\Re\langle \f{\tilde{f}_1}{r} ,\f{\tilde{w}_k^n}{r}\rangle-\Re\langle \f{\tilde{f}_2}{r},\f{\partial_{r}\tilde{w}_k^n}{r}-2\f{\tilde{w}_k^n}{r^2}\rangle.
	\end{align*}
	Thus we obtain
	\begin{align*}
	&\partial_{\tau}\|\tilde{w}_k^n\|_{X}^2+2c_0(\|\partial_{r}\tilde{w}_k^n\|_{X}^2+\|(\f{|k|}{r}+r)\tilde{w}_k^n\|_{X}^2) \\ \lesssim&\|\tilde{f}_1\|_{X}\|\tilde{w}_k^n\|_{X}+\|\tilde{f}_2\|_{X}(\|\partial_{r}\tilde{w}_k^n\|_{X}+\|\f{\tilde{w}_k^n}{r}\|_{X})+|kB|^{\f16}\|\tilde{w}_k^n\|_{X}^2.
	\end{align*}
	With \eqref{omega-n-L2-r}, we conclude
	\begin{align*}
	&\|\tilde{w}_k^n\|_{L^{\infty}X}^2+|kB|^{\f13}\|\tilde{w}_k^n\|_{L^2X}^2+\|\partial_{r}\tilde{w}_k^n\|_{L^2X}^2+\|(\f{|k|}{r}+r)\tilde{w}_k^n\|_{L^2X}^2\\
	\lesssim&|kB|^{-\f13}(\|\tilde{f}_1\|_{L^2X}+\|\f{\tilde{f}_2}{r}\|_{L^2X})^2+\|\tilde{f}_2\|_{L^2X}^2.
	\end{align*}
	This completes the proof of Lemma \ref{nonzero frequency nonlinear part2-r}.
\end{proof}

Now we can state the two key conclusions of this section. We call them the space-time estimates for equation (\refeq{main equation}).
\begin{proposition}\label{nonzero nonlinear-1}Assume $w_k$ is a solution to \eqref{main equation} with $w_k(0)\in L^2$ and $e^{c'|kB|^{\f13}\tau}f_1$, $e^{c'|kB|^{\f13}\tau}f_2\in L^2L^2$ for some $c'>0$, then there exist constants $C,c>0$ independent of $B,k$ so that
	\begin{align}
	\label{nonzero nonlinear}&\|e^{c|kB|^{\f13}\tau}w_k\|_{L^{\infty}L^2}+|kB|^{\f16}\|e^{c|kB|^{\f13}\tau}w_k\|_{L^2L^2}\\&+\|e^{c|kB|^{\f13}\tau}\partial_{r}w_k\|_{L^2L^2}
	\nonumber+\|e^{c|kB|^{\f13}\tau}(\f{|k|}{r}+r)w_k\|_{L^2L^2}\\
	\nonumber\leq & C\Big[|kB|^{-\f16}\|e^{c|kB|^{\f13}\tau}f_1\|_{L^2L^2}+\|e^{c|kB|^{\f13}\tau}f_2\|_{L^2L^2}+\|w_k(0)\|_{L^2}\Big].
	\end{align}
\end{proposition}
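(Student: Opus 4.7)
The plan is to reduce the estimate for $w_k$ to the two previously established pieces by linearity. Split the solution as $w_k=w_k^l+w_k^n$, where $w_k^l$ solves the homogeneous problem
\begin{align*}
\partial_\tau w_k^l+L_kw_k^l=0,\qquad w_k^l(0)=w_k(0),
\end{align*}
and $w_k^n$ solves the inhomogeneous problem with zero initial data
\begin{align*}
\partial_\tau w_k^n+L_kw_k^n+f_1-\partial_r f_2=0,\qquad w_k^n(0)=0.
\end{align*}
Both pieces have already been analyzed: Lemma \ref{nonzero frequency linear part} controls $w_k^l$ in terms of $\|w_k(0)\|_{L^2}$ with exponential weight $e^{c'|kB|^{1/3}\tau}$ for any $c'\in(0,c_3)$, while Lemma \ref{nonzero frequency nonlinear part2} controls $w_k^n$ in terms of the forcing $f_1,f_2$ with exponential weight $e^{c_2|kB|^{1/3}\tau}$, where $c_2>0$ is the constant extracted from the shifted resolvent estimates in Proposition \ref{resolvent estimate-1}.

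Next, I would fix $c:=\min(c',c_2)>0$, which is independent of $k$ and $B$. Since $e^{c|kB|^{1/3}\tau}\le e^{c'|kB|^{1/3}\tau}$ and $e^{c|kB|^{1/3}\tau}\le e^{c_2|kB|^{1/3}\tau}$ pointwise in $\tau\ge 0$, the left-hand side of \eqref{nonzero nonlinear} with weight $c$ is bounded by the sum of the corresponding expressions for $w_k^l$ and $w_k^n$ with their respective weights. Taking square roots of the estimates in Lemma \ref{nonzero frequency linear part} and Lemma \ref{nonzero frequency nonlinear part2}, and using $\sqrt{a+b}\le\sqrt{a}+\sqrt{b}$, I combine them via the triangle inequality applied termwise in each of the four norms on the left of \eqref{nonzero nonlinear}. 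The term $|kB|^{1/3}\|\cdots\|_{L^2L^2}^2$ on the linear side produces precisely $|kB|^{1/6}\|\cdots\|_{L^2L^2}$ after taking square roots, matching the factor in the claim, and likewise the forcing side yields $|kB|^{-1/6}\|e^{c|kB|^{1/3}\tau}f_1\|_{L^2L^2}+\|e^{c|kB|^{1/3}\tau}f_2\|_{L^2L^2}$.

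The argument is essentially bookkeeping: the nontrivial analytic content (accretivity, the sharp $|kB|^{1/3}$ pseudospectral bound, and the Fourier-in-$\tau$ inversion of the shifted resolvent) is already packaged into the two lemmas, so the only step requiring care is the compatibility of the exponential weights, handled by choosing $c=\min(c',c_2)$. There is no genuine obstacle here; in particular, no smallness of $c$ relative to $|kB|^{1/3}$ is needed, and the constants $C,c$ produced are uniform in $k$ and $B$ as required. This completes the plan.
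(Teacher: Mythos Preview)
Your proposal is correct and matches the paper's proof exactly: the paper's argument is the single sentence ``This result follows directly from our prepared Lemma \ref{nonzero frequency linear part} and Lemma \ref{nonzero frequency nonlinear part2} by taking $c=\min\{c',c_2\}$,'' and you have spelled out precisely this combination via the splitting $w_k=w_k^l+w_k^n$, the triangle inequality, and the monotonicity of the exponential weight in $c$.
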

\begin{proof}This result follows directly from our prepared Lemma \ref{nonzero frequency linear part} and Lemma \ref{nonzero frequency nonlinear part2} by taking $c=\min\{c',c_2\}$.
\end{proof}

\begin{proposition}\label{nonzero nonlinear-2}Assume $w_k$ is a solution to \eqref{main equation} with $w_k(0)\in X$ and $e^{c'|kB|^{\f13}\tau}f_1,e^{c'|kB|^{\f13}\tau}f_2, $\\$e^{c'|kB|^{\f13}\tau}\f{f_2}{r}\in L^2X$ for some $c'>0$, then there exist constants $C,c>0$ independent of $B,k$ so that
	\begin{align}
	\label{nonzero nonlinear2}&\|e^{c|kB|^{\f13}\tau}w_k\|_{L^{\infty}X}+|kB|^{\f16}\|e^{c|kB|^{\f13}\tau}w_k\|_{L^2X}\\&+\|e^{c|kB|^{\f13}\tau}\partial_{r}w_k\|_{L^2X}
	\nonumber+\|e^{c|kB|^{\f13}\tau}(\f{|k|}{r}+r)w_k\|_{L^2X}\\
	\nonumber\leq&C\Big[|kB|^{-\f16}(\|e^{c|kB|^{\f13}\tau}f_1\|_{L^2X}+\|e^{c|kB|^{\f13}\tau}\f{f_2}{r}\|_{L^2X})+\|e^{c|kB|^{\f13}\tau}f_2\|_{L^2X}+\|w_k(0)\|_{X}\Big].
	\end{align}
\end{proposition}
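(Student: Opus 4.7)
The plan is to mimic the proof of Proposition \ref{nonzero nonlinear-1} precisely, but now working in the weighted space $X$ throughout. The key observation is that all the hard analytic work has already been done: Lemma \ref{nonzero frequency linear part-r} establishes the desired exponentially weighted space-time bound in $X$ for the homogeneous piece, and Lemma \ref{nonzero frequency nonlinear part2-r} establishes the corresponding bound in $X$ for the inhomogeneous piece with zero initial data.

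First, I would decompose $w_k = w_k^l + w_k^n$, where $w_k^l$ solves the homogeneous equation \eqref{homogeneous linear equation} with initial data $w_k(0)$, and $w_k^n$ solves the inhomogeneous equation \eqref{nonzero mode nonlinear} with zero initial data and the same forcing $f_1 - \partial_r f_2$. This decomposition is well-defined by linearity, exactly as in Section \ref{3.2-non-zero}.

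Next, set $c := \min\{c', c_2\}$, where $c'$ is the constant appearing in \eqref{nonzero linear part-r} (inherited from Proposition \ref{linear exp decay}) and $c_2$ is the constant from Proposition \ref{resolvent estimate-r1-1} used in Lemma \ref{nonzero frequency nonlinear part2-r}. Applying Lemma \ref{nonzero frequency linear part-r} with this $c$ (legitimate because replacing $c'$ by any smaller positive constant only weakens the weight) bounds
\[
\|e^{c|kB|^{\f13}\tau}w_k^l\|_{L^{\infty}X}+|kB|^{\f16}\|e^{c|kB|^{\f13}\tau}w_k^l\|_{L^2X}+\|e^{c|kB|^{\f13}\tau}\partial_r w_k^l\|_{L^2 X}+\|e^{c|kB|^{\f13}\tau}(\tfrac{|k|}{r}+r)w_k^l\|_{L^2X}
\]
by $C\|w_k(0)\|_X$. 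Then Lemma \ref{nonzero frequency nonlinear part2-r} controls the corresponding norms of $w_k^n$ by
\[
C\Bigl(|kB|^{-\f16}\bigl(\|e^{c|kB|^{\f13}\tau}f_1\|_{L^2X}+\|e^{c|kB|^{\f13}\tau}\tfrac{f_2}{r}\|_{L^2X}\bigr)+\|e^{c|kB|^{\f13}\tau}f_2\|_{L^2X}\Bigr).
\]
Finally, the triangle inequality applied to $w_k = w_k^l + w_k^n$ in each of the four norms appearing on the left side of \eqref{nonzero nonlinear2} yields the conclusion.

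There is no genuine obstacle here, as the proposition is the natural $X$-space counterpart of Proposition \ref{nonzero nonlinear-1}, and all the heavy lifting (the pseudospectral bound, the Gearhart--Pr\"uss semigroup estimate, the Fourier-in-$\tau$ resolvent argument, and the energy identity in $X$) has been carried out in the preceding lemmas. The only minor point to verify is that the choice $c = \min\{c',c_2\}$ is simultaneously admissible in both lemmas, which is immediate since Lemma \ref{nonzero frequency linear part-r} holds for every $c' \in (0,c_3)$ and Lemma \ref{nonzero frequency nonlinear part2-r} holds at the specific constant $c_2$ but also for any smaller positive constant by monotonicity of the weight in time.
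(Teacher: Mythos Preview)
Your proposal is correct and follows exactly the same approach as the paper: decompose $w_k=w_k^l+w_k^n$, invoke Lemma \ref{nonzero frequency linear part-r} and Lemma \ref{nonzero frequency nonlinear part2-r}, and take $c=\min\{c',c_2\}$. The paper's own proof is in fact the single sentence ``This result follows directly from Lemma \ref{nonzero frequency linear part-r} and Lemma \ref{nonzero frequency nonlinear part2-r} by taking $c=\min\{c',c_2\}$,'' so your write-up is a faithful expansion of it.
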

\begin{proof}This result follows directly from Lemma \ref{nonzero frequency linear part-r} and Lemma \ref{nonzero frequency nonlinear part2-r} by taking $c=\min\{c',c_2\}$.
\end{proof}

\subsection{Space-time estimates for zero frequency}\label{3.3-zero}

Recall from \eqref{main equation} that $w_0$ satisfies 
\begin{align*}
&\partial_\tau w_0- [\partial_r^2+\f{1}{4r^2}-\f{r^2}{16}+\f12)]w_0-\sum_{l\in\mathbb{Z}/\{0\}}il(\f14-\f{1}{2r^2})w_l\breve{\varphi}_{-l}+\sum_{l\in\mathbb{Z}/\{0\}}il\partial_{r}(\f{w_l\breve{\varphi}_{-l}}{r})=0,
\end{align*}
where $\breve{\varphi}_k$ verifies $(\partial_{r}^2+\f{1}{r}\partial_{r}-\f{k^2}{r^2})\breve{\varphi}_k=fw_k$ with $f=\f{e^{-\f{r^2}{8}}}{r^{\f12}}$. For simplicity, we denote the nonlinear terms as
\begin{align*}
N=-\sum_{l\in\mathbb{Z}/\{0\}}il(\f14-\f{1}{2r^2})w_l\breve{\varphi}_{-l}+\sum_{l\in\mathbb{Z}/\{0\}}il\partial_{r}(\f{w_l\breve{\varphi}_{-l}}{r}).
\end{align*}
And we obtain 
\begin{proposition}
\label{zero nonlinear}Let $w_0$ be the solution to \eqref{main equation} with initial data $\f{w_0(0)}{r}\in L^2$, then $w_0(\tau)$ obeys
\begin{align*}
&\|\f{w_0}{r}\|_{L^{\infty}L^2}+\|\f{\partial_rw_0}{r}\|_{L^2L^2}+\|\f{w_0}{r^2}\|_{L^2L^2}+\|w_0\|_{L^2L^2}\\
\lesssim& \sum_{l\in\mathbb{Z}/\{0\}}|l|\|w_l\breve{\varphi}_{-l}\|_{L^2L^2}+ \sum_{l\in\mathbb{Z}/\{0\}}|l|\|\f{w_l\breve{\varphi}_{-l}}{r^2}\|_{L^2L^2}+\|\f{w_0(0)}{r}\|_{L^2}.
\end{align*}
\end{proposition}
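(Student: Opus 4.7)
The plan is to run a weighted energy estimate: multiply the $w_0$ equation by $\bar w_0/r^2$ and integrate in $r$. The time derivative produces $\f12\partial_\tau\|w_0/r\|_{L^2}^2$, while the diffusive piece produces $\Re\langle L_0 w_0, w_0/r^2\rangle$, where $L_0=-\partial_r^2-\f1{4r^2}+\f{r^2}{16}-\f12$. The central task is to establish coercivity of the form
\[
\Re\langle L_0 w_0,\, w_0/r^2\rangle \;\gtrsim\; \|\partial_r w_0/r\|_{L^2}^2+\|w_0/r^2\|_{L^2}^2+\|w_0\|_{L^2}^2.
\]

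To extract this, I would adapt the multiplier trick used for $|k|=1$ in Lemma \ref{trivial w'/r}. Writing, for a real function $g$,
\[
-w_0''=-g^{-1}\partial_r\bigl(r^2g^2\partial_r(r^{-2}g^{-1}w_0)\bigr)-\f{2}{r}\partial_r w_0-g^{-1}\bigl[g''+\bigl(\tfrac{2g}{r}\bigr)'\bigr]w_0,
\]
and choosing $g$ to satisfy an ODE of the form $g''/g+2g'/(rg)=-\alpha/r^2$ (analogous to \eqref{dif eqn for g}, but with $\alpha$ tuned for $k=0$; existence of such $g$ is guaranteed as in Lemma \ref{Appendix real function g}), integration by parts, using \eqref{basic equality-1}, turns $\Re\langle -\partial_r^2 w_0,w_0/r^2\rangle$ into $\|rg\partial_r(r^{-2}g^{-1}w_0)\|_{L^2}^2$ plus pointwise quadratic terms. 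Picking $\alpha$ sufficiently negative (e.g.\ $\alpha=-3$) and combining with the contribution of $V_0=-\f1{4r^2}+\f{r^2}{16}-\f12$, the quadratic form becomes strictly coercive in $\|w_0/r^2\|^2$ and $\|w_0\|^2$, and expanding the square $\|rg\partial_r(r^{-2}g^{-1}w_0)\|^2$ recovers $\|\partial_r w_0/r\|^2$ modulo terms of the above type. The only bad term left is $-\f12\|w_0/r\|_{L^2}^2$ from $V_0$, which I would tame through the Cauchy--Schwarz pairing $\|w_0/r\|_{L^2}^2\leq\|w_0\|_{L^2}\|w_0/r^2\|_{L^2}$ followed by Young's inequality, retaining a positive share of both the $\|w_0/r^2\|^2$ and $\|w_0\|^2$ buffers.

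With coercivity in hand, I would treat the nonlinear term $\Re\langle N,w_0/r^2\rangle$ in two pieces. For the zeroth-order part $-\sum_{l\neq 0}il(\f14-\f{1}{2r^2})w_l\breve\varphi_{-l}$, direct Cauchy--Schwarz produces $\bigl(\sum|l|\|w_l\breve\varphi_{-l}\|_{L^2}+\sum|l|\|w_l\breve\varphi_{-l}/r^2\|_{L^2}\bigr)(\|w_0\|_{L^2}+\|w_0/r^2\|_{L^2})$, matching the RHS structure. For the divergence piece $\sum_{l\neq 0}il\partial_r(w_l\breve\varphi_{-l}/r)$, I would integrate by parts to move $\partial_r$ off $f_2$ and onto $w_0/r^2$, producing a pairing against $\partial_r w_0/r^2-2w_0/r^3$ and, after Cauchy--Schwarz, a bound of the form $\sum|l|\|w_l\breve\varphi_{-l}/r\|_{L^2}\cdot(\|\partial_r w_0/r\|_{L^2}+\|w_0/r^2\|_{L^2})$; interpolation $\|w_l\breve\varphi_{-l}/r\|_{L^2}^2\leq\|w_l\breve\varphi_{-l}\|_{L^2}\|w_l\breve\varphi_{-l}/r^2\|_{L^2}$ then controls this by the two NL norms already appearing on the RHS.

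Combining the coercivity estimate with the nonlinear bounds and using Young's inequality to absorb the $\|\partial_r w_0/r\|$, $\|w_0/r^2\|$, $\|w_0\|$ factors arising on the right into the good LHS quantities, I obtain a differential inequality of the form
\[
\partial_\tau\|w_0/r\|_{L^2}^2+c_0\bigl(\|\partial_r w_0/r\|^2+\|w_0/r^2\|^2+\|w_0\|^2\bigr)\leq C\Bigl(\sum_{l\neq 0}|l|\|w_l\breve\varphi_{-l}\|_{L^2}^2+\sum_{l\neq 0}|l|\|w_l\breve\varphi_{-l}/r^2\|_{L^2}^2\Bigr),
\]
and integration in $\tau\in(0,\infty)$ yields the desired estimate with initial data $\|w_0(0)/r\|_{L^2}$. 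The main obstacle is the coercivity step: unlike the $|k|\geq 1$ modes in Section 3.2, the $k=0$ mode lacks the enhanced-dissipation $ikB/r^2$ term, so the negative $-\f{1}{4r^2}$ potential must be tamed purely through the $g$-multiplier and the interpolation $\|w_0/r\|^2\leq\|w_0\|\|w_0/r^2\|$; verifying that all boundary contributions at $r=0$ and $r=\infty$ vanish under the hypothesis $w_0(0)/r\in L^2$ (which propagates in time along the energy estimate) is also a point requiring care.
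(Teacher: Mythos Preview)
Your approach is essentially the paper's: the same $g$-multiplier with the ODE $g''/g+2g'/(rg)=-3/r^2$, the same coercivity mechanism, and the same treatment of the nonlinear terms via Cauchy--Schwarz after integrating the divergence piece by parts. Two minor remarks: the paper recovers $\|\partial_r w_0/r\|_{L^2}^2$ not by expanding the square $\|rg\partial_r(r^{-2}g^{-1}w_0)\|^2$ but by writing a second, direct integration-by-parts identity (the $k=0$ analogue of \eqref{lemma-3-111}) and taking a large-constant linear combination with the $g$-identity; and in your $f_2$ estimate the Cauchy--Schwarz split on $\langle il\,w_l\breve\varphi_{-l}/r,\,\partial_r w_0/r^2-2w_0/r^3\rangle$ already yields $\|w_l\breve\varphi_{-l}/r^2\|_{L^2}\cdot(\|\partial_r w_0/r\|_{L^2}+\|w_0/r^2\|_{L^2})$ directly, so your interpolation step is unnecessary.
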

\begin{proof}
For any $C^2$ function $g$, one can check
\begin{align*}
-w_0''=-g^{-1}\partial_r[r^2g^2\partial_r(r^{-2}g^{-1}w_0)]-\f{2}{r}w_0'-g^{-1}[g''+(\f{2}{r}g)']w_0.
\end{align*}
We will need a real-valued function $g$ satisfying
\begin{align}\label{dif eqn for g-2}
\f{g''}{g}+\f{2}{r}\f{g'}{g}=-\f{3}{r^2}.
\end{align} The existence of $g$ is guaranteed by Lemma \ref{Appendix real function g} in the Appendix (with $A=2, B=-3$). Employing \eqref{basic equality-1} and \eqref{dif eqn for g-2}, we deduce
\begin{align*}
0=&\langle \partial_\tau w_0- [\partial_r^2+\f{1}{4r^2}-\f{r^2}{16}+\f12)]w_0+N,\f{w_0}{r^2}\rangle\\
=&\f12\partial_\tau\|\f{w_0}{r}\|_{L^2}^2+\|rg\partial_r(r^{-2}g^{-1}w_0)\|_{L^2}^2-3\|\f{w_0}{r^2}\|_{L^2}^2-\langle g^{-1}[g''+(\f{2}{r}g)']w_0,\f{w_0}{r^2}\rangle\\
&+\langle (-\f{1}{4r^2}+\f{r^2}{16}-\f12)w_0,\f{w_0}{r^2}\rangle+\Re\langle N,\f{w_0}{r^2}\rangle\\
=&\f12\partial_\tau\|\f{w_0}{r}\|_{L^2}^2+\|rg\partial_r(r^{-2}g^{-1}w_0)\|_{L^2}^2-\langle g^{-1}(g''+\f{2}{r}g')w_0,\f{w_0}{r^2}\rangle\\
&+\langle (-\f{5}{4r^2}+\f{r^2}{16}-\f12)w_0,\f{w_0}{r^2}\rangle+\Re\langle N,\f{w_0}{r^2}\rangle\\
=&\f12\partial_\tau\|\f{w_0}{r}\|_{L^2}^2+\|rg\partial_r(r^{-2}g^{-1}w_0)\|_{L^2}^2+\langle (\f{7}{4r^2}+\f{r^2}{16}-\f12)w_0,\f{w_0}{r^2}\rangle\\
&-\Re\langle \sum_{l\in\mathbb{Z}/\{0\}}il(\f14-\f{1}{2r^2})w_l\breve{\varphi}_{-l},\f{w_0}{r^2}\rangle-\Re\langle \sum_{l\in\mathbb{Z}/\{0\}}il\f{w_l\breve{\varphi}_{-l}}{r},\f{\partial_{r}w_0}{r^2}-\f{2}{r^3}w_0\rangle.
\end{align*}
Note that there exists $c_0>0$ such that $\f{7}{4r^2}+\f{r^2}{16}-\f12\geq c_0 (\f{1}{r^2}+r^2)$, we hence obtain
\begin{align}
\label{zero mode-1}0\geq&\f12\partial_\tau\|\f{w_0}{r}\|_{L^2}^2+\|rg\partial_r(r^{-2}g^{-1}w_0)\|_{L^2}^2+c_0\langle (\f{1}{r^2}+r^2)w_0,\f{w_0}{r^2}\rangle\\
\nonumber&-\Re\langle \sum_{l\in\mathbb{Z}/\{0\}}il(\f14-\f{1}{2r^2})w_l\breve{\varphi}_{-l},\f{w_0}{r^2}\rangle-\Re\langle \sum_{l\in\mathbb{Z}/\{0\}}il\f{w_l\breve{\varphi}_{-l}}{r},\f{\partial_{r}w_0}{r^2}-\f{2}{r^3}w_0\rangle.
\end{align}
In addition, we also find
\begin{align}\label{zero mode-1.1}
0=&\langle \partial_\tau w_0- [\partial_r^2+\f{1}{4r^2}-\f{r^2}{16}+\f12)]w_0,\f{w_0}{r^2}\rangle\\
\nonumber&-\Re\langle \sum_{l\in\mathbb{Z}/\{0\}}il(\f14-\f{1}{2r^2})w_l\breve{\varphi}_{-l},\f{w_0}{r^2}\rangle-\Re\langle \sum_{l\in\mathbb{Z}/\{0\}}il\f{w_l\breve{\varphi}_{-l}}{r},\f{\partial_{r}w_0}{r^2}-\f{2}{r^3}w_0\rangle\\
\nonumber=&\f12\partial_\tau\|\f{w_0}{r}\|_{L^2}^2+\|\f{\partial_rw_0}{r}\|_{L^2}^2-\f{13}{4}\|\f{w_0}{r^2}\|_{L^2}^2+\f{1}{16}\|w_0\|_{L^2}^2-\f12\|\f{w_0}{r}\|_{L^2}^2\\
\nonumber&-\Re\langle \sum_{l\in\mathbb{Z}/\{0\}}il(\f14-\f{1}{2r^2})w_l\breve{\varphi}_{-l},\f{w_0}{r^2}\rangle-\Re\langle \sum_{l\in\mathbb{Z}/\{0\}}il\f{w_l\breve{\varphi}_{-l}}{r},\f{\partial_{r}w_0}{r^2}-\f{2}{r^3}w_0\rangle.
\end{align}
For any $C_0>0$, adding $C_0\times\eqref{zero mode-1}$  and \eqref{zero mode-1.1}, one obtains
\begin{align*}
&\f12C_0\partial_\tau\|\f{w_0}{r}\|_{L^2}^2+C_0\|rg\partial_r(r^{-2}g^{-1}w_0)\|_{L^2}^2+C_0c_0\langle (\f{1}{r^2}+r^2)w_0,\f{w_0}{r^2}\rangle\\
&+\f12\partial_\tau\|\f{w_0}{r}\|_{L^2}^2+\|\f{\partial_rw_0}{r}\|_{L^2}^2-\f{13}{4}\|\f{w_0}{r^2}\|_{L^2}^2+\f{1}{16}\|w_0\|_{L^2}^2-\f12\|\f{w_0}{r}\|_{L^2}^2\\
\leq&(C_0+1)\Big(\Re\langle \sum_{l\in\mathbb{Z}/\{0\}}il(\f14-\f{1}{2r^2})w_l\breve{\varphi}_{-l},\f{w_0}{r^2}\rangle+\Re\langle \sum_{l\in\mathbb{Z}/\{0\}}il\f{w_l\breve{\varphi}_{-l}}{r},\f{\partial_{r}w_0}{r^2}-\f{2}{r^3}w_0\rangle\Big).
\end{align*}
Choose $C_0$ being large enough such that 
\begin{align*}
    \f{C_0c_0}{2}\langle (\f{1}{r^2}+r^2)w_0,\f{w_0}{r^2}\rangle \geq \f{13}{4}\|\f{w_0}{r^2}\|_{L^2}^2+\f12\|\f{w_0}{r}\|_{L^2}^2.
\end{align*}
Then we deduce
\begin{align*}
&\partial_\tau\|\f{w_0}{r}\|_{L^2}^2+\|\f{\partial_rw_0}{r}\|_{L^2}^2+\langle (\f{1}{r^2}+r^2)w_0,\f{w_0}{r^2}\rangle\\
\lesssim&|\langle \sum_{l\in\mathbb{Z}/\{0\}}il(\f14-\f{1}{2r^2})w_l\breve{\varphi}_{-l},\f{w_0}{r^2}\rangle|+|\langle \sum_{l\in\mathbb{Z}/\{0\}}il\f{w_l\breve{\varphi}_{-l}}{r},\f{\partial_{r}w_0}{r^2}-\f{2}{r^3}w_0\rangle|.
\end{align*}
This yields
\begin{align*}
&\partial_\tau\|\f{w_0}{r}\|_{L^2}^2+\|\f{\partial_rw_0}{r}\|_{L^2}^2+\langle (\f{1}{r^2}+r^2)w_0,\f{w_0}{r^2}\rangle\\
\lesssim&\| \sum_{l\in\mathbb{Z}/\{0\}}il(\f14-\f{1}{2r^2})w_l\breve{\varphi}_{-l}\|_{L^2}^2+\| \sum_{l\in\mathbb{Z}/\{0\}}il\f{w_l\breve{\varphi}_{-l}}{r^2}\|_{L^2}^2.
\end{align*}
Hence we obtain
\begin{align*}
&\|\f{w_0}{r}\|_{L^{\infty}L^2}^2+\|\f{\partial_rw_0}{r}\|_{L^2L^2}^2+\|\f{w_0}{r^2}\|_{L^2L^2}^2+\|w_0\|_{L^2L^2}^2\\
\lesssim& \sum_{l\in\mathbb{Z}/\{0\}}|l|\|w_l\breve{\varphi}_{-l}\|_{L^2L^2}^2+ \sum_{l\in\mathbb{Z}/\{0\}}|l|\|\f{w_l\breve{\varphi}_{-l}}{r^2}\|_{L^2L^2}^2+\|\f{w_0(0)}{r}\|_{L^2}^2.
\end{align*}
And this completes the proof of Proposition \ref{zero nonlinear}.
\end{proof}

\section{\textbf{Nonlinear stability}}\label{4-Nonlinear stability}
This section is devoted to the proof of the main Theorem \ref{Nonlinear stability}. For the 2D Navier-Stokes equation, it is well-known that smooth initial data could lead to smooth global solutions. Here we are interested in solutions' asymptotically behaviors, especially for initial data close to the Taylor-Couette flow. We first recall the energy functional $E_k$ \eqref{basic energy}:
\begin{align*}
E_k=&\|e^{c|kB|^{\f13}\tau}w_k\|_{L^{\infty}L^2}+|kB|^{\f16}\Big(\|e^{c|kB|^{\f13}\tau}w_k\|_{L^2L^2}+\|e^{c|kB|^{\f13}\tau}\f{w_k}{r}\|_{L^{\infty}L^2}\\
\nonumber&+|k|\|e^{c|kB|^{\f13}\tau}\f{w_k}{r^2}\|_{L^2L^2}+|k|^{\f12}\|e^{c|kB|^{\f13}\tau}\f{w_k}{r^{\f32}}\|_{L^2L^{\infty}}\Big)
+|kB|^{\f13}\|e^{c|kB|^{\f13}\tau}\f{w_k}{r}\|_{L^2L^2},\quad|k|\geq1,\\
E_0=&|B|^{\f16}(\|\f{w_0}{r}\|_{L^{\infty}L^2}+\|\f{w_0}{r^{\f32}}\|_{L^2L^{\infty}}+\|\f{w_0}{r^2}\|_{L^2L^2}+\|w_0\|_{L^2L^2}),\quad k=0,
\end{align*}
where $w_k$ is a solution of the nonlinear vorticity formulation \eqref{main equation}.

Before we provide the proof, let us recall the  nonlinear vorticity formulation \eqref{main equation} from Section \ref{1.4-Derivation of equations}:
\begin{align*}
\left\{
\begin{aligned}
&\partial_{\tau}w_k+L_kw_k+f_1-\partial_{r}f_2=0,\\
&w_k(0)=w_k|_{\tau=0},\quad w_k|_{r=0,\infty}=0,
\end{aligned}
\right.
\end{align*}
where $L_k=- [\partial_r^2-(\f{k^2-\f14}{r^2}+\f{r^2}{16}-\f12)]+i\f{\beta_k}{r^2}$ with $\beta_k=kB$ and
\begin{align*}
&f_1=ik\sum_{l\in\mathbb{Z}}w_{l}\f{\partial_r\breve{\varphi}_{k-l}}{r}+\sum_{l\in\mathbb{Z}}i(k-l)(\f14-\f{1}{2r^2})w_l\breve{\varphi}_{k-l},\quad f_2=\sum_{l\in\mathbb{Z}}i(k-l)\f{w_l\breve{\varphi}_{k-l}}{r},
\end{align*}
with $\breve{\varphi}_k$ and $\varphi_k$ satisfy $(\partial_{r}^2+\f{1}{r}\partial_{r}-\f{k^2}{r^2})\breve{\varphi}_k=\f{e^{-\f{r^2}{8}}}{r^{\f12}}w_k$ and $\varphi_k:=r^{\f12}\breve{\varphi}_k$. For $\varphi_k$, we further deduce
\begin{align*}
r^{-\f12}(\partial_{r}^2-\f{k^2-\f14}{r^2})\varphi_k=r^{-\f12}e^{-\f{r^2}{8}}w_k,
\end{align*}
which yields
\begin{align}
\label{5-11111}(\partial_{r}^2-\f{k^2-\f14}{r^2})\varphi_k=e^{-\f{r^2}{8}}w_k.
\end{align}
Employing \eqref{5-11111}, the nonlinear terms $f_1,f_2$ can be further expressed as
\begin{align*}
&f_1=ik\sum_{l\in\mathbb{Z}}w_{l}\f{\partial_r(r^{-\f12}\varphi_{k-l})}{r}+\sum_{l\in\mathbb{Z}}i(k-l)(\f14-\f{1}{2r^2})\f{w_l\varphi_{k-l}}{r^{\f12}},\quad f_2=\sum_{l\in\mathbb{Z}}i(k-l)\f{w_l\varphi_{k-l}}{r^{\f32}}.
\end{align*}

The following is the main result of this paper: 
\begin{theorem}\label{Nonlinear stability}For any $|B|\geq1$, there exist constants $c_0, C>0$ independent of $B$ so that if the initial data satisfies \begin{align*}
 \sum_{k\in\mathbb{Z}/\{0\}}\|w_k(0)\|_{L^2}+\sum_{k\in\mathbb{Z}/\{0\}}|kB|^{\f16}\|\f{w_k(0)}{r}\|_{L^2}+|B|^{\f16}\|\f{w_0(0)}{r}\|_{L^2}\leq c_0|B|^\f13,
  \end{align*}
  then the solution to the nonlinear vorticity formulation \eqref{main equation} exists globally in time, and there exists a constant $C>0$ independent of $B$ such that
\begin{align*}
&\sum_{k\in\mathbb{Z}}E_k\leq C (\sum_{k\in\mathbb{Z}/\{0\}}\|w_k(0)\|_{L^2}+\sum_{k\in\mathbb{Z}/\{0\}}|kB|^{\f16}\|\f{w_k(0)}{r}\|_{L^2}+|B|^{\f16}\|\f{w_0(0)}{r}\|_{L^2})\leq Cc_0|B|^\f13.
\end{align*}
\end{theorem}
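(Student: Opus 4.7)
The strategy is a continuity/bootstrap argument in the combined energy
$E := \sum_{k\in\mathbb{Z}} E_k$,
in which the linear space-time estimates of Section~\ref{3-space time} are closed against a quadratic bound on the nonlinearities $f_1,f_2$. Concretely, I would make the bootstrap assumption
\[
E(\tau_\ast) \le 2C_\star\Big(\sum_{k\neq 0}\|w_k(0)\|_{L^2}+\sum_{k\neq 0}|kB|^{1/6}\|\tfrac{w_k(0)}{r}\|_{L^2}+|B|^{1/6}\|\tfrac{w_0(0)}{r}\|_{L^2}\Big)=:2C_\star\mathcal{I}
\]
on a maximal existence interval $[0,\tau_\ast)$ and improve the constant $2C_\star$ to $C_\star$ when $\mathcal{I}\le c_0 |B|^{1/3}$. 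Local existence for smooth data is standard for 2D Navier-Stokes, so the proof reduces to the a priori estimate.

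\textbf{Step 1: Apply the linear machinery frequency-by-frequency.} For $k\neq 0$, combine Proposition~\ref{nonzero nonlinear-1} (for the $L^2$ part of $E_k$) with Proposition~\ref{nonzero nonlinear-2} (for the $X$ part and the crucial $|kB|^{1/3}\|e^{c|kB|^{1/3}\tau}\tfrac{w_k}{r}\|_{L^2L^2}$ piece, which is $|kB|^{1/6}\cdot|kB|^{1/6}\|\cdot\|_{L^2X}$). The $L^2 L^\infty$ terms $|k|^{1/2}|kB|^{1/6}\|e^{c|kB|^{1/3}\tau}\tfrac{w_k}{r^{3/2}}\|_{L^2L^\infty}$ are controlled by interpolating $L^2$ and $X$ bounds using Lemma~\ref{Appendix A1-1}-type embeddings (noting $\|\tfrac{w}{r^{3/2}}\|_{L^\infty}\lesssim \|\tfrac{w}{r}\|_{H^1}^{1/2}\|\tfrac{w}{r}\|_{L^2}^{1/2}$, which are exactly the quantities already in $M(X)$). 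This yields
\[
E_k \le C\Big(\|w_k(0)\|_{L^2}+|kB|^{1/6}\|\tfrac{w_k(0)}{r}\|_{L^2}+|kB|^{-1/6}\mathcal{N}_{k,1}+\mathcal{N}_{k,2}+|kB|^{-1/6}\mathcal{N}_{k,3}\Big),
\]
where $\mathcal{N}_{k,j}$ are exponentially-weighted space-time norms of $f_1$, $f_2$, $f_2/r$. For $k=0$, use Proposition~\ref{zero nonlinear} together with the analogous $L^2L^\infty$ interpolation to control $E_0$ by the $L^2L^2$ norms of the zero-mode nonlinearity.

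\textbf{Step 2: Nonlinear estimates via stream-function bounds.} Recall $\breve\varphi_{k-l}=r^{-1/2}\varphi_{k-l}$ with $(\partial_r^2-\frac{(k-l)^2-1/4}{r^2})\varphi_{k-l}=e^{-r^2/8}w_{k-l}$. Standard elliptic estimates on this 1D Bessel-type operator (analogous to those used in \cite{LWZ-O,Ga}) give pointwise and weighted bounds of the form
\[
\|\tfrac{\varphi_{k-l}}{r^{3/2}}\|_{L^\infty}\lesssim \|\tfrac{w_{k-l}}{r}\|_{L^2},\qquad
\|\tfrac{\partial_r\varphi_{k-l}}{r}\|_{L^\infty}\lesssim \|w_{k-l}\|_{L^2}^{1/2}\|\tfrac{w_{k-l}}{r}\|_{L^2}^{1/2},
\]
and similar identities controlling $\varphi_{k-l}/r^{1/2}$ in $L^\infty$ and $L^2$ by the $L^2$ and $X$ norms of $w_{k-l}$ (with $(k-l)$-weights improving the bounds). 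Plugging these into $f_1,f_2$ and using the frequency convolution structure, each bilinear term $w_l \breve\varphi_{k-l}$ at a chosen weight is bounded in $L^2_\tau L^2_r$ (or $L^2_\tau X$) by
\[
\text{(one factor in $L^2_\tau L^\infty_r$ from the low-frequency mode)} \times \text{(one factor in $L^\infty_\tau L^2_r$ or $L^\infty_\tau X$)}.
\]
Crucially, one factor is weighted by $e^{c|lB|^{1/3}\tau}$ and the other by $e^{c|(k-l)B|^{1/3}\tau}$, and since $|l|^{1/3}+|k-l|^{1/3}\ge |k|^{1/3}$ the product recovers the $e^{c|kB|^{1/3}\tau}$ weight required on the left.

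\textbf{Step 3: Assemble, sum in $k$, and close the bootstrap.} After a careful but routine case split on whether $|l|\le |k|/2$ or $|k-l|\le |k|/2$ (which dictates which factor is taken in $L^\infty_r$ versus $L^2_r/X$), every bilinear term in $f_1$, $f_2$, $f_2/r$ is majorised, after multiplying by the outer weight $|kB|^{-1/6}$ (respectively $1$), by a product of the form $|kB|^{-1/3}\cdot |lB|^{1/6}\text{-term}\cdot |(k-l)B|^{1/6}\text{-term}$ present in $E_l E_{k-l}$; the $|kB|^{1/3}$ denominator is exactly what the enhanced-dissipation factor $|kB|^{1/3}\|\cdot\tfrac{w_k}{r}\|_{L^2L^2}$ inside $E_k$ provides. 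Summing over $k$ and using $\sum_{l}E_l E_{k-l}\lesssim (\sum_l E_l)^2$ gives
\[
E \le C_\star\,\mathcal{I} + \tfrac{C'}{|B|^{1/3}} E^2.
\]
Under the bootstrap hypothesis $E\le 2C_\star\mathcal{I}$ and the smallness $\mathcal{I}\le c_0|B|^{1/3}$ with $c_0$ so small that $4C'C_\star c_0\le 1$, this yields $E\le \tfrac{3}{2}C_\star\mathcal{I}$, improving the bootstrap and giving global existence.

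\textbf{Main obstacle.} The delicate point is not the linear step but the bookkeeping in Step~2-3: one must match, term-by-term, the weights in $f_1$ and $f_2$ (which involve both $1/r^{1/2}$, $1/r^{3/2}$ and derivatives $\partial_r\breve\varphi_{k-l}/r$) against the norms available in $E_l\,E_{k-l}$, and crucially extract the factor $|kB|^{-1/3}$. This is exactly why the energy $E_k$ must contain \emph{both} $|kB|^{1/6}\|w_k\|_{L^2L^2}$ and $|kB|^{1/3}\|w_k/r\|_{L^2L^2}$: the former closes the $L^2$ interactions and the latter closes the $X$ interactions, and without either the quadratic term would require $\mathcal{I}\lesssim |B|^{1/6}$ instead of $|B|^{1/3}$. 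A subordinate obstacle is handling the $k=0$ mode, which lacks enhanced dissipation; however, Proposition~\ref{zero nonlinear} already controls $\|w_0/r\|_{L^\infty L^2}$ and $\|w_0\|_{L^2L^2}$ directly, and the $|B|^{1/6}$ prefactor in $E_0$ is precisely tuned so that the $k=0$ contributions to the bilinear estimates of the non-zero modes balance out without loss.
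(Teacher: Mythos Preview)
Your proposal is correct and follows essentially the same route as the paper: apply Propositions~\ref{nonzero nonlinear-1}, \ref{nonzero nonlinear-2}, \ref{zero nonlinear} mode-by-mode, control the $L^2L^\infty$ piece via the interpolation Lemma~\ref{Appendix A1}, bound the bilinear terms using elliptic estimates for $\varphi_{k-l}$ (the paper's Lemma~\ref{Appendix A3}) together with the weight-splitting $|kB|^{1/3}\le|lB|^{1/3}+|(k-l)B|^{1/3}$, sum in $k$ to reach $E\le C_\star\mathcal I + C'|B|^{-1/3}E^2$, and close the bootstrap.

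Two small points of bookkeeping where your sketch differs from the paper's execution: (i) the paper does not use a high/low split $|l|\le|k|/2$ versus $|k-l|\le|k|/2$; instead it uses $|k|\lesssim|l|\,|k-l|$ for $l\neq 0,k$ and organises the interactions into separate lemmas (Lemmas~\ref{zero-Nonlinear estimate1}--\ref{Nonlinear estimate3}) according to whether $l=0$, $k-l=0$, or both are nonzero; (ii) your attribution of the crucial $|B|^{-1/3}$ gain to ``the enhanced-dissipation factor $|kB|^{1/3}\|w_k/r\|_{L^2L^2}$ inside $E_k$'' is slightly off---that factor sits on the \emph{left} of the inequality for $E_k$; the $|B|^{-1/3}$ on the right actually comes from pulling $\|w_l/r\|_{L^2L^2}\le|lB|^{-1/3}E_l$ and $\|w_{k-l}/r\|_{L^\infty L^2}\le|(k-l)B|^{-1/6}E_{k-l}$ out of the product (cf.\ the proofs of Lemmas~\ref{Nonlinear estimate1}--\ref{Nonlinear estimate2}). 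Neither point changes the argument or the threshold.
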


\subsection{Interactions between zero frequency and non-zero frequency}
Now we establish the estimate involving the interaction between zero frequency and other frequency. Let's start with the terms including $w_0\varphi_{k}$.
\begin{lemma}\label{zero-Nonlinear estimate1}For any $k\in \mathbb{Z}\backslash \{0\}$, it holds that
\begin{align*}
|k|\Big(&\|e^{c|kB|^{\f13}\tau}\f{w_0\varphi_{k}}{r^{\f12}}\|_{L^2L^2}+\|e^{c|kB|^{\f13}\tau}\f{w_0\varphi_{k}}{r^{\f32}}\|_{L^2L^2}\\&+\|e^{c|kB|^{\f13}\tau}\f{w_0\varphi_{k}}{r^{\f52}}\|_{L^2L^2}
+\|e^{c|kB|^{\f13}\tau}\f{w_0\varphi_{k}}{r^{\f72}}\|_{L^2L^2}\Big)
\lesssim|B|^{-\f12}|k|^{-\f56}E_0E_k.
\end{align*}
\end{lemma}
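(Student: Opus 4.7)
The strategy is to bound each of the four space-time norms by splitting the product $w_0\varphi_k$ through H\"older's inequality, then controlling the $w_0$-factor by one of the four pieces of $E_0$ and the $\varphi_k$-factor via an elliptic estimate that ultimately feeds into a component of $E_k$.

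First I would record the relevant elliptic estimates for $\varphi_k$. Starting from $(\partial_r^2 - \f{k^2-1/4}{r^2})\varphi_k = e^{-r^2/8} w_k$, I would test against $\bar\varphi_k/r^{2j}$ ($j=0,1$), integrate by parts, and apply the Hardy inequality on the half-line to derive weighted bounds of the schematic form
\[
|k|^2\|\tfrac{\varphi_k}{r}\|_{L^2} + |k|^3\|\tfrac{\varphi_k}{r^2}\|_{L^2}\lesssim \|rw_k\|_{L^2}+\|w_k\|_{L^2},
\]
together with the analogous $L^\infty$-versions obtained through the one-dimensional interpolation $\|f\|_{L^\infty}\lesssim \|f\|_{L^2}^{1/2}\|\partial_r f\|_{L^2}^{1/2}$. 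The point is that each extra weight $r^{-1}$ on the $\varphi_k$-side costs a power of $|k|^{-1}$.

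Then, for each $p\in\{\f12,\f32,\f52,\f72\}$, I would split the target through a H\"older pairing of the form
\[
\|e^{c|kB|^{1/3}\tau}\tfrac{w_0\varphi_k}{r^p}\|_{L^2L^2}\le \|\tfrac{w_0}{r^{\alpha}}\|_{X_0}\,\|e^{c|kB|^{1/3}\tau}\tfrac{\varphi_k}{r^{p-\alpha}}\|_{X_k},
\]
where $X_0\in\{L^\infty L^2, L^2L^\infty, L^2L^2\}$ is chosen so that $\|w_0/r^\alpha\|_{X_0}$ is exactly one of the four pieces of $E_0$ (each of which absorbs one factor of $|B|^{-1/6}$), and $X_k$ is the complementary space-time integration. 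I would then replace the $\varphi_k$-factor by its elliptic bound in terms of $w_k$, and match the result against a component of $E_k$. The required $|B|^{-1/2}=|B|^{-1/6}\cdot |B|^{-1/3}$ scaling is obtained by using, for at least one of the two H\"older factors, the enhanced-dissipation piece $|kB|^{1/3}\|e^{c|kB|^{1/3}\tau}w_k/r\|_{L^2L^2}$ of $E_k$; the remaining $|B|^{-1/6}$ is supplied by the $|B|^{1/6}$ prefactor in $E_0$.

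The principal obstacle is the bookkeeping: for each of the four $p$-values one has to choose the split parameter $\alpha$, the space-time spaces $X_0$ and $X_k$, and the particular elliptic estimate so that (i) the H\"older pairing is valid, (ii) the $w_0$-factor is exactly an element of $E_0$, (iii) the elliptic bound supplies enough negative powers of $|k|$ to cover the $|k|^{-11/6}$ needed after extracting the prefactor $|k|$, and (iv) an enhanced-dissipation term from $E_k$ is employed to reach $|B|^{-1/2}$. The tightest case is likely $p=\f72$, where $\varphi_k$ carries few negative powers of $r$ to spare and one must exploit both the full $|k|^{-2}$ gain of elliptic regularity and the exponential weight $e^{-r^2/8}$ appearing on the right-hand side of the elliptic equation.
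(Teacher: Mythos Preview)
Your overall strategy---H\"older splitting, elliptic estimates for $\varphi_k$, then matching to pieces of $E_0$ and $E_k$---is exactly what the paper does. But two points deserve correction.

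First, your schematic elliptic estimate and the accompanying claim that ``each extra weight $r^{-1}$ on the $\varphi_k$-side costs a power of $|k|^{-1}$'' are not right. The actual estimate (Lemma~\ref{Appendix A3}) reads
\[
|k|^{3/2}\|r^{(\beta-1)/2}\varphi_k\|_{L^\infty}\lesssim \|r^{\beta/2+1}e^{-r^2/8}w_k\|_{L^2},
\]
valid for every real $\beta$: the $|k|^{3/2}$ gain is fixed, and changing the $r$-weight on the left simply shifts the $r$-weight on the right by the same amount. There is no extra factor of $|k|^{-1}$ per $r^{-1}$. The crucial point you nearly identify but underuse is that the Gaussian $e^{-r^2/8}$ on the right absorbs \emph{any} polynomial weight, so all four resulting right-hand sides are dominated by $\|w_k/r\|_{L^2}$.

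Second, because of this, there is no need for case-by-case bookkeeping with different $\alpha$ and different $X_0$. The paper uses a single uniform split for all four values of $p$:
\[
\|e^{c|kB|^{1/3}\tau}\tfrac{w_0\varphi_k}{r^p}\|_{L^2L^2}\le \|\tfrac{w_0}{r}\|_{L^\infty L^2}\,\|e^{c|kB|^{1/3}\tau}\tfrac{\varphi_k}{r^{p-1}}\|_{L^2 L^\infty}.
\]
Applying the $L^\infty$ elliptic estimate above to the second factor gives, uniformly in $p$,
\[
|k|\cdot\|e^{c|kB|^{1/3}\tau}\tfrac{\varphi_k}{r^{p-1}}\|_{L^2 L^\infty}\lesssim |k|^{-1/2}\|e^{c|kB|^{1/3}\tau}\tfrac{w_k}{r}\|_{L^2L^2}.
\]
Then $\|w_0/r\|_{L^\infty L^2}\le |B|^{-1/6}E_0$ and $\|e^{c|kB|^{1/3}\tau}w_k/r\|_{L^2L^2}\le |kB|^{-1/3}E_k$ combine to give exactly $|k|^{-1/2}\cdot|B|^{-1/6}\cdot|kB|^{-1/3}=|B|^{-1/2}|k|^{-5/6}$. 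The case $p=7/2$ is not tighter than the others; the Gaussian handles it just as easily.
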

\begin{proof}Applying Hölder's inequalities for the nonlinear terms in $L^2L^2$ norm, we obtain
\begin{align*}
&|k|\Big(\|e^{c|kB|^{\f13}\tau}\f{w_0\varphi_{k}}{r^{\f12}}\|_{L^2L^2}+\|e^{c|kB|^{\f13}\tau}\f{w_0\varphi_{k}}{r^{\f32}}\|_{L^2L^2}\\&+\|e^{c|kB|^{\f13}\tau}\f{w_0\varphi_{k}}{r^{\f52}}\|_{L^2L^2}
+\|e^{c|kB|^{\f13}\tau}\f{w_0\varphi_{k}}{r^{\f72}}\|_{L^2L^2}\Big)\\
\leq&|k|\|\f{w_0}{r}\|_{L^\infty L^2}\Big(\|e^{c|kB|^{\f13}\tau}r^{\f12}\varphi_{k}\|_{L^2 L^\infty}+\|e^{c|kB|^{\f13}\tau}\f{\varphi_{k}}{r^{\f12}}\|_{L^2 L^\infty}\\\qquad&+\|e^{c|kB|^{\f13}\tau}\f{\varphi_{k}}{r^{\f32}}\|_{L^2 L^\infty}
+\|e^{c|kB|^{\f13}\tau}\f{\varphi_{k}}{r^{\f52}}\|_{L^2 L^\infty}\Big).
\end{align*}
Combining above inequalities with \eqref{5-11111} and the calculation Lemma \ref{Appendix A3} proved in appendix (with $\beta=-2, -1, 0, 1$), we derive
\begin{align*}
&|k|\Big(\|e^{c|kB|^{\f13}\tau}r^{\f12}\varphi_{k}\|_{L^2 L^\infty}+\|e^{c|kB|^{\f13}\tau}\f{\varphi_{k}}{r^{\f12}}\|_{L^2 L^\infty}\\&+\|e^{c|kB|^{\f13}\tau}\f{\varphi_{k}}{r^{\f32}}\|_{L^2 L^\infty}
+\|e^{c|kB|^{\f13}\tau}\f{\varphi_{k}}{r^{\f52}}\|_{L^2 L^\infty}\Big)\\
\lesssim&|k|^{-\f12}\Big(\|e^{c|kB|^{\f13}\tau}r^2e^{-\f{r^2}{8}}w_{k}\|_{L^2 L^2}+\|e^{c|kB|^{\f13}\tau}re^{-\f{r^2}{8}}w_{k}\|_{L^2 L^2}\\&+\|e^{c|kB|^{\f13}\tau}e^{-\f{r^2}{8}}w_{k}\|_{L^2 L^2}
+\|e^{c|kB|^{\f13}\tau}e^{-\f{r^2}{8}}\f{w_{k}}{r}\|_{L^2 L^2}\Big)\\
\lesssim&|k|^{-\f12}\|e^{c|kB|^{\f13}\tau}\f{w_{k}}{r}\|_{L^2L^2}.
\end{align*}
Therefore, we deduce
\begin{align*}
&|k|\Big(\|e^{c|kB|^{\f13}\tau}\f{w_0\varphi_{k}}{r^{\f12}}\|_{L^2L^2}+\|e^{c|kB|^{\f13}\tau}\f{w_0\varphi_{k}}{r^{\f32}}\|_{L^2L^2}\\&+\|e^{c|kB|^{\f13}\tau}\f{w_0\varphi_{k}}{r^{\f52}}\|_{L^2L^2}+\|e^{c|kB|^{\f13}\tau}\f{w_0\varphi_{k}}{r^{\f72}}\|_{L^2L^2}\Big)\\
\lesssim&|k|^{-\f12}\|\f{w_0}{r}\|_{L^\infty L^2}\|e^{c|kB|^{\f13}\tau}\f{w_{k}}{r}\|_{L^2L^2}\leq|k|^{-\f12}|B|^{-\f16}E_0|kB|^{-\f13}E_k\\=&|B|^{-\f12}|k|^{-\f56}E_0E_k.
\end{align*}
\end{proof}
Then we treat $w_0\varphi_{k}'$ terms.
\begin{lemma}\label{zero-Nonlinear estimate2}For any $k\in \mathbb{Z}\backslash \{0\}$, we have
\begin{align*}
&|k|\Big(\|e^{c|kB|^{\f13}\tau}\f{w_0\varphi_{k}'}{r^{\f52}}\|_{L^2L^2}+\|e^{c|kB|^{\f13}\tau}\f{w_0\varphi_{k}'}{r^{\f32}}\|_{L^2L^2}\Big)\lesssim|k|^{-\f16}|B|^{-\f13}E_0E_k.
\end{align*}
\end{lemma}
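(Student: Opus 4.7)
The plan is to mirror the strategy of Lemma~\ref{zero-Nonlinear estimate1}: apply H\"older's inequality in $L^2L^2$ to separate the factor $w_0$ (to be controlled by $E_0$) from the factor $\varphi_k'$ (to be controlled by $E_k$ through the elliptic equation \eqref{5-11111}), and then match the resulting weighted norms to the blocks appearing in the definitions of $E_0$ and $E_k$.

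Concretely, for the first term I would factor
\begin{align*}
\tfrac{w_0\varphi_k'}{r^{5/2}}=\tfrac{w_0}{r^{3/2}}\cdot\tfrac{\varphi_k'}{r},
\end{align*}
and apply H\"older with $L^2L^\infty$ on the $w_0$-side and $L^\infty L^2$ on the $\varphi_k'$-side:
\begin{align*}
|k|\bigl\|e^{c|kB|^{1/3}\tau}\tfrac{w_0\varphi_k'}{r^{5/2}}\bigr\|_{L^2L^2}\le|k|\bigl\|\tfrac{w_0}{r^{3/2}}\bigr\|_{L^2L^\infty}\bigl\|e^{c|kB|^{1/3}\tau}\tfrac{\varphi_k'}{r}\bigr\|_{L^\infty L^2}.
\end{align*}
For the second term I would factor analogously as $\frac{w_0}{r}\cdot\frac{\varphi_k'}{r^{1/2}}$ and apply H\"older with $L^\infty L^2$ on the $w_0$-side and $L^2L^\infty$ on the $\varphi_k'$-side. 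In both cases the $w_0$-factor is $\lesssim|B|^{-1/6}E_0$ straight from the definition of $E_0$.

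The main obstacle is the stream-function estimate, which is slightly more delicate than in Lemma~\ref{zero-Nonlinear estimate1} because we now face $\varphi_k'$ rather than $\varphi_k$. The plan is to test the elliptic equation \eqref{5-11111} against $\varphi_k/r^{2\gamma}$ for the appropriate weight $\gamma$, integrate by parts, and exploit the Hardy-type coercivity produced by the $(k^2-\tfrac14)/r^2$ potential to obtain
\begin{align*}
\bigl\|\tfrac{\varphi_k'}{r^{\gamma}}\bigr\|_{L^2}^2+k^2\bigl\|\tfrac{\varphi_k}{r^{\gamma+1}}\bigr\|_{L^2}^2\lesssim\bigl|\langle e^{-r^2/8}w_k,\varphi_k/r^{2\gamma}\rangle\bigr|,
\end{align*}
and then close via Cauchy--Schwarz and the $\gamma$-weighted version of Lemma~\ref{Appendix A3} (this is the derivative analogue of the calculation invoked in Lemma~\ref{zero-Nonlinear estimate1}). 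The Gaussian weight $e^{-r^2/8}$ on the right-hand side plays a crucial role in handling both the $r\to 0$ and $r\to\infty$ regimes.

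The final step is bookkeeping: translate $\|e^{c|kB|^{1/3}\tau}\varphi_k'/r\|_{L^\infty L^2}$ and $\|e^{c|kB|^{1/3}\tau}\varphi_k'/r^{1/2}\|_{L^2L^\infty}$ into weighted norms of $w_k$, and identify them with combinations of the blocks of $E_k$. The elliptic estimate costs a factor $|k|^{-1}$, which together with the Gaussian weight lets one bound the $\varphi_k'$ factors by expressions such as $|k|^{-1}\|w_k/r\|_{L^\infty L^2}$ or $|k|^{-1/2}\|w_k/r^{3/2}\|_{L^2L^\infty}$. Pairing these with the appropriate blocks of $E_k$ (the $|kB|^{1/6}\|w_k/r\|_{L^\infty L^2}$ block or the $|kB|^{1/6}|k|^{1/2}\|w_k/r^{3/2}\|_{L^2L^\infty}$ block) yields an overall $\varphi_k'$ contribution of size $|k|^{-1}|B|^{-1/6}|k|^{-1/6}E_k$, which multiplied by the $w_0$-factor $|B|^{-1/6}E_0$ and by the overall $|k|$ reproduces the claimed bound $|k|^{-1/6}|B|^{-1/3}E_0E_k$.
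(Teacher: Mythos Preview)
Your plan is correct and essentially matches the paper's proof for the first term: the factorization $\frac{w_0}{r^{3/2}}\cdot\frac{\varphi_k'}{r}$ with the $L^2L^\infty\times L^\infty L^2$ H\"older split is exactly what the paper does, and the elliptic bound $|k|\|\varphi_k'/r\|_{L^2}\lesssim\|e^{-r^2/8}w_k\|_{L^2}\lesssim\|w_k/r\|_{L^2}$ follows from Lemma~\ref{Appendix A3} with $\beta=-2$.

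For the second term, however, you diverge from the paper. You propose the split $\frac{w_0}{r}\cdot\frac{\varphi_k'}{r^{1/2}}$ with $L^\infty L^2\times L^2L^\infty$, which does work (via Lemma~\ref{Appendix A3} one gets $|k|^{1/2}\|\varphi_k'/r^{1/2}\|_{L^\infty}\lesssim\|e^{-r^2/8}w_k\|_{L^2}\lesssim\|w_k/r^{3/2}\|_{L^\infty}$, and then the $|kB|^{1/6}|k|^{1/2}\|w_k/r^{3/2}\|_{L^2L^\infty}$ block of $E_k$ closes the estimate). The paper instead uses the \emph{same} split as for the first term, writing $\frac{w_0}{r^{3/2}}\cdot\varphi_k'$ with $L^2L^\infty\times L^\infty L^2$, and invokes Lemma~\ref{Appendix A3} with $\beta=0$ to get $|k|\|\varphi_k'\|_{L^2}\lesssim\|re^{-r^2/8}w_k\|_{L^2}\lesssim\|w_k/r\|_{L^2}$. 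This is cleaner because both terms are then controlled by the single block $\|w_k/r\|_{L^\infty L^2}$ of $E_k$ rather than two different blocks, and the two $\varphi_k'$-estimates can be stated together as
\[
|k|\Big(\|e^{c|kB|^{1/3}\tau}\tfrac{\varphi_k'}{r}\|_{L^\infty L^2}+\|e^{c|kB|^{1/3}\tau}\varphi_k'\|_{L^\infty L^2}\Big)\lesssim\|e^{c|kB|^{1/3}\tau}\tfrac{w_k}{r}\|_{L^\infty L^2}.
\]
Your route is equally valid but slightly less economical.
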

\begin{proof}Employing Hölder's inequalities, we can prove
\begin{align*}
&|k|\Big(\|e^{c|kB|^{\f13}\tau}\f{w_0\varphi_{k}'}{r^{\f52}}\|_{L^2L^2}+\|e^{c|kB|^{\f13}\tau}\f{w_0\varphi_{k}'}{r^{\f32}}\|_{L^2L^2}\Big)\\
\leq&|k|\|\f{w_0}{r^{\f32}}\|_{L^2L^{\infty}}\Big(\|e^{c|kB|^{\f13}\tau}\f{\varphi_{k}'}{r}\|_{L^{\infty}L^2}+\|e^{c|kB|^{\f13}\tau}\varphi_{k}'\|_{L^{\infty}L^2}\Big).
\end{align*}
This together with \eqref{5-11111} and Lemma \ref{Appendix A3} yields
\begin{align*}
|k|\Big(\|e^{c|kB|^{\f13}\tau}\f{\varphi_{k}'}{r}\|_{L^{\infty}L^2}+\|e^{c|kB|^{\f13}\tau}\varphi_{k}'\|_{L^{\infty}L^2}\Big)\lesssim\|e^{c|kB|^{\f13}\tau}\f{w_k}{r}\|_{L^{\infty}L^2}.
\end{align*}
Hence we deduce
\begin{align*}
&|k|\Big(\|e^{c|kB|^{\f13}\tau}\f{w_0\varphi_{k}'}{r^{\f52}}\|_{L^2L^2}+\|e^{c|kB|^{\f13}\tau}\f{w_0\varphi_{k}'}{r^{\f32}}\|_{L^2L^2}\Big)\\
\lesssim&\|\f{w_0}{r^{\f32}}\|_{L^2L^{\infty}}\|e^{c|kB|^{\f13}\tau}\f{w_k}{r}\|_{L^{\infty}L^2}\leq |B|^{-\f16}E_0|kB|^{-\f16}E_k=|k|^{-\f16}|B|^{-\f13}E_0E_k.
\end{align*}
\end{proof}

For quadratic terms containing $w_k\partial_r(r^{-\f12}\varphi_0)$, we have
\begin{lemma}\label{zero-Nonlinear estimate3}For any $k\in \mathbb{Z}\backslash \{0\}$, the following inequality holds
\begin{align*}
&|k|\Big(\|e^{c|kB|^{\f13}\tau}\f{w_k\partial_r(r^{-\f12}\varphi_0)}{r}\|_{L^2L^2}+\|e^{c|kB|^{\f13}\tau}\f{w_k\partial_r(r^{-\f12}\varphi_0)}{r^2}\|_{L^2L^2}\Big)\lesssim|k|^{-\f16}|B|^{-\f13}E_0E_k.
\end{align*}
\end{lemma}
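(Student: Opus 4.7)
\smallskip

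\noindent \textbf{Proof proposal for Lemma \ref{zero-Nonlinear estimate3}.} The plan is to mimic the two preceding lemmas, but with the roles of zero and non-zero modes swapped: now $w_k$ carries the enhanced-dissipation gain while $\varphi_0$ must be controlled purely by elliptic estimates from the $k=0$ Poisson-type equation. The first step is to expand the derivative,
\begin{align*}
\partial_r(r^{-\f12}\varphi_0) = -\tfrac{1}{2}\,r^{-\f32}\varphi_0 + r^{-\f12}\varphi_0',
\end{align*}
so that by the triangle inequality the target norm is bounded by a sum of four pieces of the form $|k|\|e^{c|kB|^{\f13}\tau}w_k\varphi_0/r^{a}\|_{L^2L^2}$ (with $a=\tfrac52,\tfrac72$) and $|k|\|e^{c|kB|^{\f13}\tau}w_k\varphi_0'/r^{b}\|_{L^2L^2}$ (with $b=\tfrac32,\tfrac52$).

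Next, I would apply H\"older pointwise in $r$, choosing the split that matches the weights provided by the two energy norms. For the $w_k \varphi_0$ pieces, I put $w_k$ into the $L^2L^\infty$ slot and use the term $|kB|^{\f16}|k|^{\f12}\|e^{c|kB|^{\f13}\tau}w_k/r^{\f32}\|_{L^2L^\infty}$ appearing in $E_k$; the remaining factor $\varphi_0/r$ or $\varphi_0/r^2$ goes into $L^\infty L^2$. For the $w_k \varphi_0'$ pieces, I put $w_k$ into $L^\infty L^2$ (using $\|e^{c|kB|^{\f13}\tau}w_k\|_{L^\infty L^2}\le E_k$ weighted by an appropriate power of $r^{-1}$ absorbed from $E_k$'s $|kB|^{\f16}\|w_k/r\|_{L^\infty L^2}$ piece), and place $\varphi_0'/r^{1/2}$ or $\varphi_0'/r^{3/2}$ into $L^2 L^\infty$. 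Schematically, each piece becomes
\begin{align*}
(\text{some weighted norm of }w_k\text{ controlled by }E_k) \times (\text{some weighted norm of }\varphi_0\text{ or }\varphi_0'\text{ to be bounded by }E_0).
\end{align*}

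For the $\varphi_0$ factors, I would use the zero-mode elliptic equation $(\partial_r^2 + \tfrac{1}{4r^2})\varphi_0 = e^{-r^2/8}w_0$ coming from \eqref{5-11111} with $k=0$, together with the same kind of pointwise and weighted inequalities already encapsulated in Lemma \ref{Appendix A3}. Testing against $\varphi_0/r^{2}$, integrating by parts, and using the Gaussian decay of the source should give weighted $L^2$ bounds on $\varphi_0'$ and $\varphi_0/r^{\alpha}$ in terms of $\|w_0/r\|_{L^2}$ or $\|w_0/r^{3/2}\|_{L^\infty}$, which are exactly the quantities sitting inside $E_0/|B|^{\f16}$. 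Sobolev embedding in $r$ then upgrades $L^2$ bounds to the $L^\infty$ ones needed for the $L^2L^\infty$ slots.

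The main obstacle, and the reason this lemma is nontrivial rather than a direct transcription of Lemma \ref{zero-Nonlinear estimate2}, is twofold. First, for $k=0$ the potential $-(k^2-\tfrac14)/r^2$ flips sign and becomes $+\tfrac{1}{4r^2}$, so the coercivity used to invert the operator has to come entirely from the Gaussian weight and the boundary behaviour at $r=0,\infty$; this is the same phenomenon handled via the auxiliary function $g$ in Lemma \ref{trivial w'/r} and Proposition \ref{zero nonlinear}, and I would import that computation here. Second, the arithmetic of the weights has to land exactly on $|k|^{-\f16}|B|^{-\f13}$: combining $E_k$'s factor $|kB|^{-\f16}$ on $\|w_k/r^{\f32}\|_{L^2L^\infty}/|k|^{\f12}$ (or $|kB|^{-\f16}$ on $\|w_k/r\|_{L^\infty L^2}$) with $E_0$'s factor $|B|^{-\f16}$ on the $\varphi_0$ piece produces $|kB|^{-\f16}|B|^{-\f16}=|k|^{-\f16}|B|^{-\f13}$, which is precisely the claim, so the exponent-bookkeeping should close once the right H\"older splits are chosen.
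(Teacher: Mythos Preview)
Your very first move---expanding $\partial_r(r^{-1/2}\varphi_0)=-\tfrac12 r^{-3/2}\varphi_0+r^{-1/2}\varphi_0'$ and bounding the two pieces separately---destroys a cancellation that is essential here. For $k=0$ the stream function equation reads $(\partial_r^2+\tfrac{1}{r}\partial_r)\breve\varphi_0=r^{-1/2}e^{-r^2/8}w_0$, with homogeneous solutions $1$ and $\log r$; generically $\int_0^\infty s^{1/2}e^{-s^2/8}w_0\,ds\neq 0$, so $\breve\varphi_0\sim C\log r$ and hence $\varphi_0=r^{1/2}\breve\varphi_0\sim C r^{1/2}\log r$ as $r\to\infty$. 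Then $\varphi_0/r\sim r^{-1/2}\log r$, and $\|\varphi_0/r\|_{L^2}=\infty$. So the H\"older split you propose for the $a=\tfrac52$ piece (putting $\varphi_0/r$ into $L^\infty L^2$) fails outright. The two halves of your decomposition each carry a $r^{-1}\log r$ tail; only their sum $G:=\partial_r(r^{-1/2}\varphi_0)$ is $O(r^{-1})$.

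Your fallback plan does not rescue this. Lemma~\ref{Appendix A3} is stated for $|k|\ge 1$ and genuinely needs the $k^2$ in the potential; for $k=0$ the operator $\partial_r^2+\tfrac{1}{4r^2}$ has the wrong sign and no $r^2/16$ term, so testing against $\varphi_0/r^2$ gives $\|\varphi_0'/r\|_{L^2}^2-\tfrac{13}{4}\|\varphi_0/r^2\|_{L^2}^2$ on the left, which is not coercive. The Gaussian weight sits only on the \emph{source} $w_0$, not on $\varphi_0$, so there is nothing to borrow there; and the auxiliary-$g$ trick from Lemma~\ref{trivial w'/r} and Proposition~\ref{zero nonlinear} works for the vorticity operator (which has the confining $r^2/16$), not for the stream function operator. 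What the paper does instead is keep $G$ intact and observe that the zero-mode equation is exactly the radial Laplacian: $\tfrac{1}{r}\partial_r(rG)=r^{-1/2}e^{-r^2/8}w_0$, so $G(r)=\tfrac{1}{r}\int_0^r s^{1/2}e^{-s^2/8}w_0(s)\,ds$. One Cauchy--Schwarz then gives $\|G\|_{L^\infty}+\|G/r\|_{L^\infty}\lesssim\|w_0/r\|_{L^2}$ directly, and the lemma follows from the simple H\"older split $|k|\|e^{c|kB|^{1/3}\tau}w_k/r\|_{L^2L^2}\cdot(\|G\|_{L^\infty L^\infty}+\|G/r\|_{L^\infty L^\infty})$, using the $|kB|^{1/3}\|w_k/r\|_{L^2L^2}$ piece of $E_k$ rather than the $L^2L^\infty$ piece you had in mind.
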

\begin{proof}Recall that $\varphi_0$ and $w_0$ satisfy 
\begin{align*}
(\partial_{r}^2+\f{1}{r}\partial_{r})(r^{-\f12}\varphi_0)=\f{e^{-\f{r^2}{8}}}{r^{\f12}}w_0.
\end{align*}
Construct $G:=\partial_{r}(r^{-\f12}\varphi_0)$, one can check $\partial_r(r^{-\f12}\varphi_0)=G$.
Observing the inequality as below,
\begin{align*}
&\|G\|_{L^{\infty}}+\|\f{G}{r}\|_{L^{\infty}}=\|\f{\int_0^rs^{\f12}e^{-\f{s^2}{8}}w_0ds}{r}\|_{L^{\infty}}+\|\f{\int_0^rs^{\f12}e^{-\f{s^2}{8}}w_0ds}{r^2}\|_{L^{\infty}}\\
\leq&\Big\|\f{(\int_0^rs^3e^{-\f{s^2}{4}}ds)^{\f12}}{r}\|\f{w_0}{r}\|_{L^2}\Big\|_{L^{\infty}}+\Big\|\f{(\int_0^rs^3e^{-\f{s^2}{4}}ds)^{\f12}}{r^2}\|\f{w_0}{r}\|_{L^2}\Big\|_{L^{\infty}}\\
\leq&\Big(\|\f{(\int_0^rs^3e^{-\f{s^2}{4}}ds)^{\f12}}{r}\|_{L^{\infty}}+\|\f{(\int_0^rs^3e^{-\f{s^2}{4}}ds)^{\f12}}{r^2}\|_{L^{\infty}}\Big)\|\f{w_0}{r}\|_{L^2}\lesssim\|\f{w_0}{r}\|_{L^2}.
\end{align*}
Together with Hölder's inequality, we conclude
\begin{align*}
&|k|\Big(\|e^{c|kB|^{\f13}\tau}\f{w_k\partial_r(r^{-\f12}\varphi_0)}{r}\|_{L^2L^2}+\|e^{c|kB|^{\f13}\tau}\f{w_k\partial_r(r^{-\f12}\varphi_0)}{r^2}\|_{L^2L^2}\Big)\\
\lesssim&|k|\|e^{c|kB|^{\f13}\tau}\f{w_k}{r}\|_{L^2L^2}(\|G\|_{L^{\infty}L^{\infty}}+\|\f{G}{r}\|_{L^{\infty}L^{\infty}})\lesssim|k|\|e^{c|kB|^{\f13}\tau}\f{w_k}{r}\|_{L^2L^2}\|\f{w_0}{r}\|_{L^{\infty}L^2}\\
\leq&|kB|^{-\f16}|B|^{-\f16}E_kE_0=|k|^{-\f16}|B|^{-\f13}E_kE_0.
\end{align*}
\end{proof}

\subsection{Interactions between non-zero frequency spaces}
We now move to control nonlinear terms containing only non-zero frequency.
Firstly, observing the following inequality
\begin{align}
\label{basic 1}|kB|^{\f13}\leq|lB|^{\f13}+|(k-l)B|^{\f13} \quad \textrm{for any}\  k, l\in\mathbb{Z},
\end{align}
we obtain

\begin{lemma}\label{Nonlinear estimate1}For any $k\in \mathbb{Z}\backslash \{0\}$, we have
\begin{align*}
&\|e^{c|kB|^{\f13}\tau}\sum_{l\in\mathbb{Z}\backslash\{0,k\}}i(k-l)\f{w_l\varphi_{k-l}}{r^{\f12}}\|_{L^2L^2}+\|e^{c|kB|^{\f13}\tau}\sum_{l\in\mathbb{Z}\backslash\{0,k\}}i(k-l)\f{w_l\varphi_{k-l}}{r^{\f32}}\|_{L^2L^2}\\
&+\|e^{c|kB|^{\f13}\tau}\sum_{l\in\mathbb{Z}\backslash\{0,k\}}i(k-l)\f{w_l\varphi_{k-l}}{r^{\f52}}\|_{L^2L^2}+\|e^{c|kB|^{\f13}\tau}\sum_{l\in\mathbb{Z}\backslash\{0,k\}}i(k-l)\f{w_l\varphi_{k-l}}{r^{\f72}}\|_{L^2L^2}\\
\lesssim &|B|^{-\f12}\sum_{l\in\mathbb{Z}\backslash\{0,k\}}|l|^{-\f13}|k-l|^{-\f23}E_lE_{k-l}.
\end{align*}
\end{lemma}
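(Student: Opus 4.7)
\smallskip

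\noindent\textbf{Proof plan for Lemma \ref{Nonlinear estimate1}.}
The overarching strategy is to exploit the subadditivity inequality $|kB|^{\f13}\leq|lB|^{\f13}+|(k-l)B|^{\f13}$ from \eqref{basic 1} to factor the exponential weight as $e^{c|kB|^{\f13}\tau}\leq e^{c|lB|^{\f13}\tau}\cdot e^{c|(k-l)B|^{\f13}\tau}$, so that the weight on the $w_l$ piece is absorbed into the quantities appearing in $E_l$, while the weight on $\varphi_{k-l}$ (and hence, via the elliptic equation \eqref{5-11111}, on $w_{k-l}$) is absorbed into quantities appearing in $E_{k-l}$. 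After triangle inequality on the sum over $l$, the whole task reduces to producing, for each of the four spatial weights $r^{-\f12},r^{-\f32},r^{-\f52},r^{-\f72}$, a bilinear bound $\|(k-l)w_l\varphi_{k-l}/r^{\alpha}\|_{L^2L^2}\lesssim |B|^{-\f12}|l|^{-\f13}|k-l|^{-\f23}E_lE_{k-l}$.

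For each term I would apply H\"older's inequality in both space (pairing $L^2_r$ with $L^{\infty}_r$) and time (pairing $L^{\infty}_t$ or $L^2_t$ with the complementary exponent), splitting the $r^{-\alpha}$ factor between $w_l$ and $\varphi_{k-l}$ so that one factor matches a norm already controlled by $E_l$ and the other matches a norm of $\varphi_{k-l}$ controllable through Lemma \ref{Appendix A3}. A typical splitting is
\[
\Big\|(k-l)\frac{w_l\varphi_{k-l}}{r^{\alpha}}\Big\|_{L^2L^2}\leq |k-l|\Big\|\frac{w_l}{r^{\beta_1}}\Big\|_{L^{p_1}L^{q_1}}\Big\|\frac{\varphi_{k-l}}{r^{\beta_2}}\Big\|_{L^{p_2}L^{q_2}},
\]
with $\beta_1+\beta_2=\alpha$ and $(1/p_1,1/q_1)+(1/p_2,1/q_2)=(1/2,1/2)$. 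The canonical pairings are the $(L^{\infty}_tL^2_r)$-factor of $w_l$ with the $(L^2_tL^{\infty}_r)$-factor of $\varphi_{k-l}$, and conversely the $(L^2_tL^{\infty}_r)$-factor $\|w_l/r^{3/2}\|_{L^2L^{\infty}}$ (which sits inside $E_l$ with prefactor $|lB|^{1/6}|l|^{1/2}$) paired with $\|\varphi_{k-l}/r^{\gamma}\|_{L^{\infty}L^2}$; additional cases pair $\|w_l/r\|_{L^2L^2}$ (weighted by $|lB|^{1/3}$ in $E_l$) with $L^{\infty}L^{\infty}$-control on $\varphi_{k-l}$ at the appropriate power.

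The heart of the argument is then turning weighted norms of $\varphi_{k-l}$ into weighted norms of $w_{k-l}$ via the elliptic identity \eqref{5-11111}. Using Lemma \ref{Appendix A3} in the same way as in the proof of Lemma \ref{zero-Nonlinear estimate1} (choosing $\beta\in\{-2,-1,0,1\}$ as dictated by $\alpha$), one obtains estimates of the schematic form $|k-l|^{s}\|\varphi_{k-l}/r^{\gamma}\|_{\cdot}\lesssim |k-l|^{s-2}\|w_{k-l}/r^{\gamma'}\|_{\cdot}$, which is where the decisive powers of $|k-l|^{-1}$ are produced and the explicit $(k-l)$-factor in front of the nonlinearity is absorbed. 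Matching the extracted prefactors $|lB|^{-1/3}$ from the $E_l$-slot and $|(k-l)B|^{-1/6}|k-l|^{-1/2}$ (or $|(k-l)B|^{-1/6}$ together with a further $|k-l|^{-1/2}$ coming from elliptic regularity) from the $E_{k-l}$-slot gives the required $|B|^{-1/2}|l|^{-1/3}|k-l|^{-2/3}$.

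The main obstacle is bookkeeping: among the several admissible Hölder splittings for each of the four weights, only a specific one produces simultaneously the sharp $|B|^{-\f12}$ balance and the $|l|^{-\f13}|k-l|^{-\f23}$ summability in $l$. In particular, for the weakest weight $r^{-1/2}$ the $\varphi_{k-l}$ factor has to be kept in the form $r^{1/2}\varphi_{k-l}$ in $L^2_tL^{\infty}_r$, which requires the full strength of Lemma \ref{Appendix A3} for $\beta=1$ (the borderline case), and one must verify that the arising prefactor $|k-l|^{-1/2}$ from elliptic regularity combines with the existing $(k-l)$ and with a remaining $|k-l|^{-1/2}$ to yield precisely the claimed $|k-l|^{-2/3}$ after using $|(k-l)B|^{-1/6}$; similar care is required for the heavier weight $r^{-7/2}$, where $\varphi_{k-l}/r^{5/2}$ must be controlled via the Hardy-type endpoint of Lemma \ref{Appendix A3}.
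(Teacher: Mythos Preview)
Your plan is correct and matches the paper's approach: subadditivity \eqref{basic 1} to split the exponential weight, H\"older in space-time, and Lemma \ref{Appendix A3} to pass from $\varphi_{k-l}$ to $w_{k-l}$. One simplification worth noting: the paper does not juggle several pairings case by case as you anticipate, but uses the \emph{single} uniform splitting
\[
|k-l|\Big\|e^{c|lB|^{1/3}\tau}\frac{w_l}{r}\Big\|_{L^2L^2}\cdot\Big\|e^{c|(k-l)B|^{1/3}\tau}r^{\gamma}\varphi_{k-l}\Big\|_{L^\infty L^\infty}
\]
for all four weights ($\gamma\in\{\tfrac12,-\tfrac12,-\tfrac32,-\tfrac52\}$), invoking the $|k|^{3/2}\|r^{(\beta-1)/2}\varphi\|_{L^\infty}$ line of Lemma \ref{Appendix A3} with $\beta\in\{2,0,-2,-4\}$ (not $\{-2,-1,0,1\}$; your values do not match the required $L^\infty_r$ exponents). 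This produces $|k-l|\cdot|k-l|^{-3/2}\cdot\|e^{-r^2/8}w_{k-l}/r\|_{L^\infty L^2}$ in one stroke, and the arithmetic $|lB|^{-1/3}\cdot|k-l|^{-1/2}\cdot|(k-l)B|^{-1/6}=|B|^{-1/2}|l|^{-1/3}|k-l|^{-2/3}$ closes immediately, so the ``bookkeeping obstacle'' and the ``borderline'' worries for $r^{-1/2}$ and $r^{-7/2}$ you flag do not arise.
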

\begin{proof}Utilizing \eqref{basic 1} we obtain
\begin{align*}
&|k-l|\Big(\|e^{c|kB|^{\f13}\tau}\f{w_l\varphi_{k-l}}{r^{\f12}}\|_{L^2L^2}+\|e^{c|kB|^{\f13}\tau}\f{w_l\varphi_{k-l}}{r^{\f32}}\|_{L^2L^2}\\
&+\|e^{c|kB|^{\f13}\tau}\f{w_l\varphi_{k-l}}{r^{\f52}}\|_{L^2L^2}+\|e^{c|kB|^{\f13}\tau}\f{w_l\varphi_{k-l}}{r^{\f72}}\|_{L^2L^2}\Big)\\
\leq&|k-l|\|e^{c|lB|^{\f13}\tau}\f{w_l}{r}\|_{L^2L^2}\Big(\|e^{c|(k-l)B|^{\f13}\tau}r^{\f12}\varphi_{k-l}\|_{L^\infty L^\infty}+\|e^{c|(k-l)B|^{\f13}\tau}\f{\varphi_{k-l}}{r^{\f12}}\|_{L^\infty L^\infty}\\
&+\|e^{c|(k-l)B|^{\f13}\tau}\f{\varphi_{k-l}}{r^{\f32}}\|_{L^\infty L^\infty}+\|e^{c|(k-l)B|^{\f13}\tau}\f{\varphi_{k-l}}{r^{\f52}}\|_{L^\infty L^\infty}\Big).
\end{align*}
Combining \eqref{5-11111} and the calculation Lemma \ref{Appendix A3} proved in appendix (with $\beta=-4, -2, 0, 2$), it follows
\begin{align*}
&|k-l|\Big(\|e^{c|(k-l)B|^{\f13}\tau}r^{\f12}\varphi_{k-l}\|_{L^\infty L^\infty}+\|e^{c|(k-l)B|^{\f13}\tau}\f{\varphi_{k-l}}{r^{\f12}}\|_{L^\infty L^\infty}\\
&+\|e^{c|(k-l)B|^{\f13}\tau}\f{\varphi_{k-l}}{r^{\f32}}\|_{L^\infty L^\infty}+\|e^{c|(k-l)B|^{\f13}\tau}\f{\varphi_{k-l}}{r^{\f52}}\|_{L^\infty L^\infty}\Big)\\
\lesssim&|k-l|^{-\f12}\Big(\|e^{c|(k-l)B|^{\f13}\tau}r^2e^{-\f{r^2}{8}}w_{k-l}\|_{L^\infty L^2}+\|e^{c|(k-l)B|^{\f13}\tau}re^{-\f{r^2}{8}}w_{k-l}\|_{L^\infty L^2}\\
&+\|e^{c|(k-l)B|^{\f13}\tau}e^{-\f{r^2}{8}}w_{k-l}\|_{L^\infty L^2}+\|e^{c|(k-l)B|^{\f13}\tau}e^{-\f{r^2}{8}}\f{w_{k-l}}{r}\|_{L^\infty L^2}\Big)\\
\lesssim&|k-l|^{-\f12}\|e^{c|(k-l)B|^{\f13}\tau}\f{w_{k-l}}{r}\|_{L^\infty L^2}.
\end{align*}
Therefore, we deduce
\begin{align*}
&\|e^{c|kB|^{\f13}\tau}\sum_{l\in\mathbb{Z}\backslash\{0,k\}}i(k-l)\f{w_l\varphi_{k-l}}{r^{\f12}}\|_{L^2L^2}+\|e^{c|kB|^{\f13}\tau}\sum_{l\in\mathbb{Z}\backslash\{0,k\}}i(k-l)\f{w_l\varphi_{k-l}}{r^{\f32}}\|_{L^2L^2}\\
&+\|e^{c|kB|^{\f13}\tau}\sum_{l\in\mathbb{Z}\backslash\{0,k\}}i(k-l)\f{w_l\varphi_{k-l}}{r^{\f52}}\|_{L^2L^2}+\|e^{c|kB|^{\f13}\tau}\sum_{l\in\mathbb{Z}\backslash\{0,k\}}i(k-l)\f{w_l\varphi_{k-l}}{r^{\f52}}\|_{L^2L^2}\\
\lesssim&\sum_{l\in\mathbb{Z}\backslash\{0,k\}}\|e^{c|lB|^{\f13}\tau}\f{w_l}{r}\|_{L^2L^2}|k-l|^{-\f12}\|e^{c|(k-l)B|^{\f13}\tau}\f{w_{k-l}}{r}\|_{L^\infty L^2}\\
\leq&\sum_{l\in\mathbb{Z}\backslash\{0,k\}}|lB|^{-\f13}E_l|k-l|^{-\f12}|(k-l)B|^{-\f16}E_{k-l}\\=&|B|^{-\f12}\sum_{l\in\mathbb{Z}\backslash\{0,k\}}|l|^{-\f13}|k-l|^{-\f23}E_lE_{k-l}.
\end{align*}
This completes the proof of the lemma.
\end{proof}

We also have

\begin{lemma}\label{Nonlinear estimate2}For any $k\in \mathbb{Z}\backslash \{0\}$, it holds
\begin{align*}
&|k|\Big(\|e^{c|kB|^{\f13}\tau}\sum_{l\in\mathbb{Z}\backslash\{0,k\}}\f{w_l\varphi_{k-l}'}{r^{\f52}}\|_{L^2L^2}+\|e^{c|kB|^{\f13}\tau}\sum_{l\in\mathbb{Z}\backslash\{0,k\}}\f{w_l\varphi_{k-l}'}{r^{\f32}}\|_{L^2L^2}\\
&+\|e^{c|kB|^{\f13}\tau}\sum_{l\in\mathbb{Z}\backslash\{0,k\}}\f{w_l\varphi_{k-l}}{r^{\f52}}\|_{L^2L^2}+\|e^{c|kB|^{\f13}\tau}\sum_{l\in\mathbb{Z}\backslash\{0,k\}}\f{w_l\varphi_{k-l}}{r^{\f72}}\|_{L^2L^2}\Big)\\
\lesssim&|B|^{-\f13}\sum_{l\in\mathbb{Z}\backslash\{0,k\}}\Big(|l|^{-\f23}|k-l|^{-\f16}E_lE_{k-l}+|l|^{-\f16}|k-l|^{-\f23}E_lE_{k-l}\Big).
\end{align*}
\end{lemma}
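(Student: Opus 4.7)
The plan is to follow the two-step template of Lemmas \ref{zero-Nonlinear estimate2} and \ref{Nonlinear estimate1}: for each summand, apply H\"older's inequality, split the exponential weight using the elementary bound \eqref{basic 1}, distribute the prefactor $|k|$ through $|k|\leq |l|+|k-l|$, and then close the estimate with the energy norms built into $E_l$ together with the elliptic estimate of Lemma \ref{Appendix A3} applied to the identity \eqref{5-11111}.

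For the two $\varphi_{k-l}'$ terms, I would use the H\"older pairing $\|e^{c|lB|^{\f13}\tau}w_l/r^{\f32}\|_{L^2L^{\infty}}\cdot\|e^{c|(k-l)B|^{\f13}\tau}\varphi_{k-l}'/r\|_{L^{\infty}L^2}$, together with its analog featuring $\|\varphi_{k-l}'\|_{L^{\infty}L^2}$, exactly as in Lemma \ref{zero-Nonlinear estimate2}. The $w_l$-factor is immediately controlled by $|lB|^{-\f16}|l|^{-\f12}E_l$ from the definition of $E_l$, and Lemma \ref{Appendix A3} applied to \eqref{5-11111} yields the $\varphi_{k-l}'$-factor in terms of $\|e^{-r^2/8}w_{k-l}/r\|_{L^{\infty}L^2}$ with a gain of $|k-l|^{-1}$, which is itself at most $|(k-l)B|^{-\f16}E_{k-l}$. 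The prefactor $|k|$ is then split by $|k|\lesssim|l|+|k-l|$, and distributing the two pieces between the $w_l$ side and the $\varphi_{k-l}$ side yields the symmetric pair $|l|^{-\f23}|k-l|^{-\f16}+|l|^{-\f16}|k-l|^{-\f23}$ in the conclusion.

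For the two $\varphi_{k-l}$ terms, I would mirror the pairing of Lemma \ref{Nonlinear estimate1}, placing $w_l$ in $\|w_l/r\|_{L^2L^2}$ (which is bounded by $|lB|^{-\f13}E_l$) and $\varphi_{k-l}$ in $\|\varphi_{k-l}/r^{\alpha}\|_{L^{\infty}L^{\infty}}$ for the appropriate $\alpha\in\{\f32,\f52\}$. Lemma \ref{Appendix A3} converts the $\varphi$-norm into $|k-l|^{-\f12}\|e^{-r^2/8}w_{k-l}/r^{\beta}\|_{L^{\infty}L^2}$, and the latter is in turn at most $|(k-l)B|^{-\f16}E_{k-l}$. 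Splitting $|k|$ as before absorbs either a power of $|l|$ (trading one weighted $w_l$ norm in $E_l$ for another with an additional $1/r$ and a loss of $|l|^{-1}$) or a power of $|k-l|$ (which Lemma \ref{Appendix A3} is built to handle), again producing the same two rational rates.

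The main obstacle is pure bookkeeping: each of the four norms on the left admits several H\"older pairings, and one has to pick the pairing whose $r$-weights are exactly among those catalogued in $E_l$, whose $\varphi$-norm matches a line of Lemma \ref{Appendix A3} delivering the sharp $|k-l|^{-\f12}$ (or $|k-l|^{-1}$ for derivatives), and whose combined factors in $|lB|$ and $|(k-l)B|$ multiply to $|B|^{-\f13}$. The last constraint forces exactly one factor of $|lB|^{-\f16}$ and one of $|(k-l)B|^{-\f16}$ per term, which effectively pins down the Hölder pairing at each step and, once assembled, yields the stated bound upon summing over $l\in\mathbb{Z}\backslash\{0,k\}$.
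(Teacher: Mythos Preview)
Your overall strategy matches the paper's, but there is a concrete gap in the treatment of the $\varphi_{k-l}'$ terms. The single H\"older pairing you write down---$\|e^{c|lB|^{1/3}\tau}w_l/r^{3/2}\|_{L^2L^{\infty}}\cdot\|e^{c|(k-l)B|^{1/3}\tau}\varphi_{k-l}'/r\|_{L^{\infty}L^2}$---only closes the $|k-l|$-piece of the split $|k|\le|l|+|k-l|$. If you multiply that same pairing by $|l|$, the best available bound is $|l|\,\|w_l/r^{3/2}\|_{L^2L^\infty}\le |l|^{1/2}|lB|^{-1/6}E_l$, so the product becomes $|B|^{-1/3}|l|^{+1/3}|k-l|^{-7/6}E_lE_{k-l}$; the positive power of $|l|$ is not dominated by the target symmetric sum, and the estimate does not close.

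The paper handles the $|l|$-piece with a \emph{second}, genuinely different H\"older pairing: it puts $w_l$ in $\|w_l/r^2\|_{L^2L^2}$ (so that $|l|\,\|w_l/r^2\|_{L^2L^2}\le |lB|^{-1/6}E_l$, using the $|kB|^{1/6}|k|\,\|w_k/r^2\|_{L^2L^2}$ slot in $E_k$) and $\varphi_{k-l}'$ in $\|\varphi_{k-l}'/r^{1/2}\|_{L^{\infty}L^{\infty}}$, which Lemma~\ref{Appendix A3} bounds with a gain of $|k-l|^{-1/2}$. This produces exactly $|B|^{-1/3}|l|^{-1/6}|k-l|^{-2/3}E_lE_{k-l}$. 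The same two-pairing device is needed for the $\varphi_{k-l}$ terms as well; your parenthetical remark about ``trading one weighted $w_l$ norm for another with an additional $1/r$'' is the right idea, but it applies to all four terms, not just the non-derivative ones. Once both pairings are in place, summing via $|k|\le|l|+|k-l|$ gives the claimed bound.
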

\begin{proof}Applying \eqref{basic 1} we have
\begin{align*}
&|k-l|\Big(\|e^{c|kB|^{\f13}\tau}\f{w_l\varphi'_{k-l}}{r^{\f52}}\|_{L^2L^2}+\|e^{c|kB|^{\f13}\tau}\f{w_l\varphi'_{k-l}}{r^{\f32}}\|_{L^2L^2}+\|e^{c|kB|^{\f13}\tau}\f{w_l\varphi_{k-l}}{r^{\f52}}\|_{L^2L^2}\\
&+\|e^{c|kB|^{\f13}\tau}\f{w_l\varphi_{k-l}}{r^{\f72}}\|_{L^2L^2}\Big)\\
\leq&|k-l|\|e^{c|lB|^{\f13}\tau}\f{w_l}{r^{\f32}}\|_{L^2L^{\infty}}\Big(\|e^{c|(k-l)B|^{\f13}\tau}\f{\varphi'_{k-l}}{r}\|_{L^\infty L^2}+\|e^{c|(k-l)B|^{\f13}\tau}\varphi'_{k-l}\|_{L^\infty L^2}\\
&+\|e^{c|(k-l)B|^{\f13}\tau}\f{\varphi_{k-l}}{r}\|_{L^\infty L^2}+\|e^{c|(k-l)B|^{\f13}\tau}\f{\varphi_{k-l}}{r^2}\|_{L^\infty L^2}\Big).
\end{align*}
Together with \eqref{5-11111}, the calculation Lemma \ref{Appendix A3} proved in appendix (with $\beta$ chosen to be $-2$ and $0$) this gives
\begin{align*}
&|k-l|\Big(\|e^{c|kB|^{\f13}\tau}\f{w_l\varphi'_{k-l}}{r^{\f52}}\|_{L^2L^2}+\|e^{c|kB|^{\f13}\tau}\f{w_l\varphi'_{k-l}}{r^{\f32}}\|_{L^2L^2}+\|e^{c|kB|^{\f13}\tau}\f{w_l\varphi_{k-l}}{r^{\f52}}\|_{L^2L^2}\\
&+\|e^{c|kB|^{\f13}\tau}\f{w_l\varphi_{k-l}}{r^{\f72}}\|_{L^2L^2}\Big)\\
\lesssim&\|e^{c|lB|^{\f13}\tau}\f{w_l}{r^{\f32}}\|_{L^2L^{\infty}}\Big(\|e^{c|(k-l)B|^{\f13}\tau}e^{-\f{r^2}{8}}w_{k-l}\|_{L^\infty L^2}+\|e^{c|(k-l)B|^{\f13}\tau}re^{-\f{r^2}{8}}w_{k-l}\|_{L^\infty L^2}\Big)\\
\lesssim&\|e^{c|lB|^{\f13}\tau}\f{w_l}{r^{\f32}}\|_{L^2L^{\infty}}\|e^{c|(k-l)B|^{\f13}\tau}\f{w_{k-l}}{r}\|_{L^\infty L^2}\\
\leq&|lB|^{-\f16}|l|^{-\f12}E_l|(k-l)B|^{-\f16}E_{k-l}=|B|^{-\f13}|l|^{-\f23}|(k-l)|^{-\f16}E_lE_{k-l}.
\end{align*}

Also we find
\begin{align*}
&|l|\Big(\|e^{c|kB|^{\f13}\tau}\f{w_l\varphi'_{k-l}}{r^{\f52}}\|_{L^2L^2}+\|e^{c|kB|^{\f13}\tau}\f{w_l\varphi'_{k-l}}{r^{\f32}}\|_{L^2L^2}+\|e^{c|kB|^{\f13}\tau}\f{w_l\varphi_{k-l}}{r^{\f52}}\|_{L^2L^2}\\
&+\|e^{c|kB|^{\f13}\tau}\f{w_l\varphi_{k-l}}{r^{\f72}}\|_{L^2L^2}\Big)\\
\leq&|l|\|e^{c|lB|^{\f13}\tau}\f{w_l}{r^2}\|_{L^2L^2}\Big(\|e^{c|(k-l)B|^{\f13}\tau}\f{\varphi'_{k-l}}{r^{\f12}}\|_{L^\infty L^\infty}+\|e^{c|(k-l)B|^{\f13}\tau}r^{\f12}\varphi'_{k-l}\|_{L^\infty L^\infty}\\
&+\|e^{c|(k-l)B|^{\f13}\tau}\f{\varphi_{k-l}}{r^{\f12}}\|_{L^\infty L^\infty}+\|e^{c|(k-l)B|^{\f13}\tau}\f{\varphi_{k-l}}{r^{\f32}}\|_{L^\infty L^\infty}\Big).
\end{align*}
Employing \eqref{5-11111} and calculation Lemma \ref{Appendix A3} proved in appendix (with $\beta=-2, 0$), we obtain
\begin{align*}
&|l|\Big(\|e^{c|kB|^{\f13}\tau}\f{w_l\varphi'_{k-l}}{r^{\f52}}\|_{L^2L^2}+\|e^{c|kB|^{\f13}\tau}\f{w_l\varphi'_{k-l}}{r^{\f32}}\|_{L^2L^2}+\|e^{c|kB|^{\f13}\tau}\f{w_l\varphi_{k-l}}{r^{\f52}}\|_{L^2L^2}\\
&+\|e^{c|kB|^{\f13}\tau}\f{w_l\varphi_{k-l}}{r^{\f72}}\|_{L^2L^2}\Big)\\
\lesssim&|l|\|e^{c|lB|^{\f13}\tau}\f{w_l}{r^2}\|_{L^2L^2}|k-l|^{-\f12}\Big(\|e^{c|(k-l)B|^{\f13}\tau}e^{-\f{r^2}{8}}w_{k-l}\|_{L^\infty L^2}+\|e^{c|(k-l)B|^{\f13}\tau}re^{-\f{r^2}{8}}w_{k-l}\|_{L^\infty L^2}\Big)\\
\lesssim&|l|\|e^{c|lB|^{\f13}\tau}\f{w_l}{r^2}\|_{L^2L^2}|k-l|^{-\f12}\|e^{c|(k-l)B|^{\f13}\tau}\f{w_{k-l}}{r}\|_{L^\infty L^2}\\
\leq&|lB|^{-\f16}E_l|k-l|^{-\f12}|(k-l)B|^{-\f16}E_{k-l}\\=&|B|^{-\f13}|l|^{-\f16}|k-l|^{-\f23}E_lE_{k-l}.
\end{align*}
Therefore, with applying the basic inequality $|k|\lesssim|k-l||l|$ for $l\neq0,k$, we conclude
\begin{align*}
&|k|\Big(\|e^{c|kB|^{\f13}\tau}\sum_{l\in\mathbb{Z}\backslash\{0,k\}}\f{w_l\varphi_{k-l}'}{r^{\f52}}\|_{L^2L^2}+\|e^{c|kB|^{\f13}\tau}\sum_{l\in\mathbb{Z}\backslash\{0,k\}}\f{w_l\varphi_{k-l}'}{r^{\f32}}\|_{L^2L^2}\\
&+\|e^{c|kB|^{\f13}\tau}\sum_{l\in\mathbb{Z}\backslash\{0,k\}}\f{w_l\varphi_{k-l}}{r^{\f52}}\|_{L^2L^2}+\|e^{c|kB|^{\f13}\tau}\sum_{l\in\mathbb{Z}\backslash\{0,k\}}\f{w_l\varphi_{k-l}}{r^{\f72}}\|_{L^2L^2}\Big)\\
\lesssim&|B|^{-\f13}\sum_{l\in\mathbb{Z}\backslash\{0,k\}}\Big(|l|^{-\f23}|k-l|^{-\f16}E_lE_{k-l}+|l|^{-\f16}|k-l|^{-\f23}E_lE_{k-l}\Big).
\end{align*}
This completes the proof of this lemma.
\end{proof}

Finally, we prove the following bound for terms involving $w_l \breve{\varphi}_{-l}$.
\begin{lemma}\label{Nonlinear estimate3}For nonlinear terms involving $w_l \breve{\varphi}_{-l}$ ($l\neq 0$), we have the bounds as below
\begin{align*}
 \sum_{l\in\mathbb{Z}/\{0\}}|l|\|w_l\breve{\varphi}_{-l}\|_{L^2L^2}+ \sum_{l\in\mathbb{Z}/\{0\}}|l|\|\f{w_l\breve{\varphi}_{-l}}{r^2}\|_{L^2L^2}\lesssim|B|^{-\f12}\sum_{l\in\mathbb{Z}/\{0\}}|l|^{-1}E_lE_{-l}.
\end{align*}

\end{lemma}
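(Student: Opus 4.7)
The plan is to apply Hölder's inequality to factorize each nonlinear product $w_l\breve{\varphi}_{-l}$, then use the elliptic bound (Lemma A.3 in the appendix) together with the stream function equation \eqref{5-11111} to convert weighted estimates on $\varphi_{-l}$ into weighted estimates on $w_{-l}$. The key observation is that even though there is no enhanced dissipation for the zero mode (so the left side carries no exponential weight), we may freely insert the factor $e^{c|lB|^{\f13}\tau}\geq 1$ for $\tau\geq 0$, which allows us to use the exponentially weighted pieces of $E_l$ and $E_{-l}$ on the right.

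Concretely, since $\breve{\varphi}_{-l}=r^{-\f12}\varphi_{-l}$, I would write
\begin{align*}
w_l\breve{\varphi}_{-l}=\Bigl(\tfrac{w_l}{r}\Bigr)\bigl(r^{\f12}\varphi_{-l}\bigr),\qquad \tfrac{w_l\breve{\varphi}_{-l}}{r^2}=\Bigl(\tfrac{w_l}{r}\Bigr)\Bigl(\tfrac{\varphi_{-l}}{r^{\f32}}\Bigr),
\end{align*}
and apply Hölder in $L^2L^2\cdot L^\infty L^\infty$, using $1\leq e^{c|lB|^{\f13}\tau}$ to insert weights on both factors. This reduces matters to bounding
\begin{align*}
|l|\,\|e^{c|lB|^{\f13}\tau}r^{\f12}\varphi_{-l}\|_{L^\infty L^\infty}\quad\text{and}\quad |l|\,\|e^{c|lB|^{\f13}\tau}\tfrac{\varphi_{-l}}{r^{\f32}}\|_{L^\infty L^\infty}.
\end{align*}

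These follow from the elliptic identity $(\partial_r^2-\tfrac{l^2-1/4}{r^2})\varphi_{-l}=e^{-r^2/8}w_{-l}$ and Lemma A.3 (applied with $\beta=-4$ and $\beta=0$, respectively, exactly as in the proof of Lemma \ref{Nonlinear estimate1}), which give
\begin{align*}
|l|\,\|e^{c|lB|^{\f13}\tau}r^{\f12}\varphi_{-l}\|_{L^\infty L^\infty}+|l|\,\|e^{c|lB|^{\f13}\tau}\tfrac{\varphi_{-l}}{r^{\f32}}\|_{L^\infty L^\infty}\lesssim |l|^{-\f12}\|e^{c|lB|^{\f13}\tau}\tfrac{w_{-l}}{r}\|_{L^\infty L^2},
\end{align*}
where the Gaussian factor $e^{-r^2/8}$ is absorbed into $1/r$ via the elementary inequality $r^2 e^{-r^2/4}\lesssim 1$ valid for all $r>0$.

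Reading off the definition of $E_k$, the last term gives $\|e^{c|lB|^{\f13}\tau}w_l/r\|_{L^2L^2}\leq |lB|^{-\f13}E_l$, while the term with $L^\infty L^2$ in $r/w$ gives $\|e^{c|lB|^{\f13}\tau}w_{-l}/r\|_{L^\infty L^2}\leq |lB|^{-\f16}E_{-l}$ (using $|{-l}B|^{\f13}=|lB|^{\f13}$). Combining,
\begin{align*}
|l|\bigl(\|w_l\breve{\varphi}_{-l}\|_{L^2L^2}+\|\tfrac{w_l\breve{\varphi}_{-l}}{r^2}\|_{L^2L^2}\bigr)\lesssim |lB|^{-\f13}E_l\cdot|l|^{-\f12}\cdot|lB|^{-\f16}E_{-l}=|B|^{-\f12}|l|^{-1}E_lE_{-l},
\end{align*}
and summing over $l\in\mathbb{Z}\setminus\{0\}$ yields the claim. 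The only subtlety worth verifying carefully is the insertion of the exponential weight (which is free because $\tau\geq 0$) and the elliptic Lemma A.3 with the chosen parameters; the remaining steps are power-counting bookkeeping that matches exactly the computations already done in Lemmas \ref{Nonlinear estimate1}--\ref{Nonlinear estimate2}.
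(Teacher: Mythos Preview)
Your proof is correct and follows essentially the same approach as the paper: H\"older splitting $L^2L^2\cdot L^\infty L^\infty$ on the factors $w_l/r$ and the weighted $\varphi_{-l}$, followed by Lemma~\ref{Appendix A3} and the definition of $E_l,E_{-l}$. One minor bookkeeping slip: for $\|r^{1/2}\varphi_{-l}\|_{L^\infty}$ and $\|\varphi_{-l}/r^{3/2}\|_{L^\infty}$ the correct parameters in Lemma~\ref{Appendix A3} are $\beta=2$ and $\beta=-2$ (since $(\beta-1)/2=1/2$ and $(\beta-1)/2=-3/2$), not $\beta=-4,0$; with the right $\beta$ the rest of your power-counting goes through unchanged.
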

\begin{proof}Recall \eqref{5-11111} and for $\varphi_k, \breve{\varphi}_k$ we have $breve{\varphi}_k=\f{\varphi_k}{r^{\f12}}$, and
\begin{align*}
(\partial_{r}^2-\f{k^2-\f14}{r^2})\varphi_k=e^{-\f{r^2}{8}}w_k.
\end{align*}
Employing Hölder's inequalities, one has
\begin{align*}
&\sum_{l\in\mathbb{Z}/\{0\}}|l|\| w_l\breve{\varphi}_{-l}\|_{L^2L^2}+ \sum_{l\in\mathbb{Z}/\{0\}}|l|\| \f{w_l\breve{\varphi}_{-l}}{r^2}\|_{L^2L^2}\\
\lesssim&\sum_{l\in\mathbb{Z}/\{0\}}|l|\| \f{w_l}{r}\|_{L^2L^2}(\|\f{\varphi_{-l}}{r^{\f12}}\|_{L^{\infty}L^{\infty}}+\|\f{\varphi_{-l}}{r^{\f32}}\|_{L^{\infty}L^{\infty}}),
\end{align*}
Combining with \eqref{5-11111} and Lemma \ref{Appendix A3}, we derive
\begin{align*}
&|l|\Big(\|\f{\varphi_{-l}}{r^{\f12}}\|_{L^\infty L^\infty}+\|\f{\varphi_{-l}}{r^{\f32}}\|_{L^\infty L^\infty}\Big)\\
\lesssim&|l|^{-\f12}\Big(\|re^{-\f{r^2}{8}}w_{-l}\|_{L^\infty L^2}+\|e^{-\f{r^2}{8}}w_{-l}\|_{L^\infty L^2}\Big)\\\lesssim&|l|^{-\f12}\|\f{w_{-l}}{r}\|_{L^\infty L^2}.
\end{align*}
Hence we conclude
\begin{align*}
&\| \sum_{l\in\mathbb{Z}/\{0\}}ilw_l\breve{\varphi}_{-l}\|_{L^2L^2}+\| \sum_{l\in\mathbb{Z}/\{0\}}il\f{w_l\breve{\varphi}_{-l}}{r^2}\|_{L^2L^2}\\\lesssim&\sum_{l\in\mathbb{Z}/\{0\}}\| \f{w_l}{r}\|_{L^2L^2}|l|^{-\f12}\|\f{w_{-l}}{r}\|_{L^\infty L^2}\\
\lesssim&\sum_{l\in\mathbb{Z}/\{0\}}|l|^{-\f12}|lB|^{-\f13}E_l|lB|^{-\f16}E_{-l}\\=&|B|^{-\f12}\sum_{l\in\mathbb{Z}/\{0\}}|l|^{-1}E_lE_{-l}.
\end{align*}
\end{proof}

\subsection{Proof of Theorem \ref{Nonlinear stability}} With above estimates at hand, we are now ready to prove the main theorem of this paper.
\begin{proof}[Proof of Theorem \ref{Nonlinear stability}]
Recall that in Proposition \ref{nonzero nonlinear-1} and  Proposition \ref{nonzero nonlinear-2} we have derived the following space-time estimates
\begin{align*}
&\|e^{c|kB|^{\f13}\tau}w_k\|_{L^{\infty}L^2}+|kB|^{\f16}\|e^{c|kB|^{\f13}\tau}w_k\|_{L^2L^2}\\
&+\|e^{c|kB|^{\f13}\tau}\partial_{r}w_k\|_{L^2L^2}+\|e^{c|kB|^{\f13}\tau}(\f{|k|}{r}+r)w_k\|_{L^2L^2} \\
\lesssim&|kB|^{-\f16}\|e^{c|kB|^{\f13}\tau}f_1\|_{L^2L^2}+\|e^{c|kB|^{\f13}\tau}f_2\|_{L^2L^2}+\|w_k(0)\|_{L^2},
\end{align*}
and 
\begin{align*}
&|kB|^{\f16}\|e^{c|kB|^{\f13}\tau}w_k\|_{L^{\infty}X}+|kB|^{\f13}\|e^{c|kB|^{\f13}\tau}w_k\|_{L^2X}\\
&+|kB|^{\f16}\|e^{c|kB|^{\f13}\tau}\partial_{r}w_k\|_{L^2X}+|kB|^{\f16}\|e^{c|kB|^{\f13}\tau}(\f{|k|}{r}+r)w_k\|_{L^2X}\\
\lesssim&(\|e^{c|kB|^{\f13}\tau}f_1\|_{L^2X}+\|e^{c|kB|^{\f13}\tau}\f{f_2}{r}\|_{L^2X})+|kB|^{\f16}\|e^{c|kB|^{\f13}\tau}f_2\|_{L^2X}+|kB|^{\f16}\|w_k(0)\|_{X},
\end{align*}
where $X=L^2(\f{1}{r^2})$ is the weighted $L^2$ space we construct, and $f_1, f_2$ are nonlinear terms given by 
\begin{align*}
&f_1=ik\sum_{l\in\mathbb{Z}}w_{l}\f{\partial_r(r^{-\f12}\varphi_{k-l})}{r}+\sum_{l\in\mathbb{Z}}i(k-l)(\f14-\f{1}{2r^2})\f{w_l\varphi_{k-l}}{r^{\f12}},\quad f_2=\sum_{l\in\mathbb{Z}}i(k-l)\f{w_l\varphi_{k-l}}{r^{\f32}}.
\end{align*}
Using the calculation Lemma \ref{Appendix A1} proved in appendix (with $\alpha=\f{3}{2}$), we obtain
\begin{align*}
&|k|\|\f{w_k}{r^{\f32}}\|_{L^{\infty}}^2\lesssim|k|\|\f{w_k'}{r}\|_{L^2}\|\f{w_k}{r^2}\|_{L^2}+|k|\|\f{w_k}{r^2}\|_{L^2}^2\lesssim\|\f{w_k'}{r}\|_{L^2}^2+k^2\|\f{w_k}{r^2}\|_{L^2}^2,\\
&\|\f{w_0}{r^{\f32}}\|_{L^{\infty}}^2\lesssim\|\f{w_0'}{r}\|_{L^2}\|\f{w_0}{r^2}\|_{L^2}+\|\f{w_0}{r^2}\|_{L^2}^2\lesssim\|\f{w_0'}{r}\|_{L^2}^2+\|\f{w_0}{r^2}\|_{L^2}^2.
\end{align*}
This implies
\begin{align*}
&|k|\|e^{c|kB|^{\f13}\tau}\f{w_k}{r^{\f32}}\|_{L^2L^{\infty}}^2\lesssim\|e^{c|kB|^{\f13}\tau}\f{w_k'}{r}\|_{L^2L^2}^2+k^2\|e^{c|kB|^{\f13}\tau}\f{w_k}{r^2}\|_{L^2L^2}^2,\\
&\|\f{w_0}{r^{\f32}}\|_{L^2L^{\infty}}^2\lesssim\|\f{w_0'}{r}\|_{L^2L^2}^2+\|\f{w_0}{r^2}\|_{L^2L^2}^2.
\end{align*}
Therefore, following Proposition \ref{nonzero nonlinear-1}, Proposition \ref{nonzero nonlinear-2}, and Proposition \ref{zero nonlinear}, we deduce
\begin{align*}
E_k=&\|e^{c|kB|^{\f13}\tau}w_k\|_{L^{\infty}L^2}+|kB|^{\f16}\Big(\|e^{c|kB|^{\f13}\tau}w_k\|_{L^2L^2}+\|e^{c|kB|^{\f13}\tau}\f{w_k}{r}\|_{L^{\infty}L^2}\\
&+|k|\|e^{c|kB|^{\f13}\tau}\f{w_k}{r^2}\|_{L^2L^2}+|k|^{\f12}\|e^{c|kB|^{\f13}\tau}\f{w_k}{r^{\f32}}\|_{L^2L^{\infty}}\Big)
+|kB|^{\f13}\|e^{c|kB|^{\f13}\tau}\f{w_k}{r}\|_{L^2L^2}\\
\lesssim&|kB|^{-\f16}\|e^{c|kB|^{\f13}\tau}f_1\|_{L^2L^2}+\|e^{c|kB|^{\f13}\tau}f_2\|_{L^2L^2}+\|e^{c|kB|^{\f13}\tau}f_1\|_{L^2X}+\|e^{c|kB|^{\f13}\tau}\f{f_2}{r}\|_{L^2X}\\
&+|kB|^{\f16}\|e^{c|kB|^{\f13}\tau}f_2\|_{L^2X}+\|w_k(0)\|_{L^2}+|kB|^{\f16}\|w_k(0)\|_{X},\quad |k|\geq1,\\
E_0=&|B|^{\f16}(\|\f{w_0}{r}\|_{L^{\infty}L^2}+\|\f{w_0}{r^{\f32}}\|_{L^2L^{\infty}}+\|\f{w_0}{r^2}\|_{L^2L^2}+\|w_0\|_{L^2L^2})\\
\lesssim&|B|^{\f16}(\| \sum_{l\in\mathbb{Z}/\{0\}}ilw_l\breve{\varphi}_{-l}\|_{L^2L^2}+\| \sum_{l\in\mathbb{Z}/\{0\}}il\f{w_l\breve{\varphi}_{-l}}{r^2}\|_{L^2L^2}+\|\f{w_0(0)}{r}\|_{L^2}),\quad k=0.
\end{align*}
To bound the terms on the right hand side, we utilize the following inequality
\begin{align}\label{basic-4.1-1}
|k|\lesssim|k-l||l| \quad \mbox{if} \ k\neq0,l.
\end{align}
With \eqref{basic-4.1-1}, Lemma \ref{zero-Nonlinear estimate1}, Lemma \ref{zero-Nonlinear estimate2}, Lemma \ref{zero-Nonlinear estimate3}, Lemma \ref{Nonlinear estimate1} and Lemma \ref{Nonlinear estimate2}, we have that the first term $|kB|^{-\f16}\|e^{c|kB|^{\f13}\tau}f_1\|_{L^2L^2}$ can be controlled via
\begin{align*}
&|kB|^{-\f16}\|e^{c|kB|^{\f13}\tau}f_1\|_{L^2L^2}\\
\lesssim&|kB|^{-\f16}\Big(|k|\|e^{c|kB|^{\f13}\tau}\sum_{l\in\mathbb{Z}\backslash\{0,k\}}\f{w_{l}\partial_r\varphi_{k-l}}{r^{\f32}}\|_{L^2L^2}+|k|\|e^{c|kB|^{\f13}\tau}\sum_{l\in\mathbb{Z}\backslash\{0,k\}}\f{w_{l}\varphi_{k-l}}{r^{\f52}}\|_{L^2L^2}\\
&+|k|\|e^{c|kB|^{\f13}\tau}\f{w_{0}\partial_r\varphi_{k}}{r^{\f32}}\|_{L^2L^2}+|k|\|e^{c|kB|^{\f13}\tau}\f{w_{0}\varphi_{k}}{r^{\f52}}\|_{L^2L^2}+|k|\|e^{c|kB|^{\f13}\tau}\f{w_{k}\partial_r(r^{-\f12}\varphi_{0})}{r}\|_{L^2L^2}\\
&+\|e^{c|kB|^{\f13}\tau}\sum_{l\in\mathbb{Z}\backslash\{0,k\}}i(k-l)\f{w_l\varphi_{k-l}}{r^{\f12}}\|_{L^2L^2}+\|e^{c|kB|^{\f13}\tau}\sum_{l\in\mathbb{Z}\backslash\{0,k\}}i(k-l)\f{w_l\varphi_{k-l}}{r^{\f52}}\|_{L^2L^2}\\
&+\|e^{c|kB|^{\f13}\tau}ik\f{w_0\varphi_{k}}{r^{\f12}}\|_{L^2L^2}+\|e^{c|kB|^{\f13}\tau}ik\f{w_0\varphi_{k}}{r^{\f52}}\|_{L^2L^2}\Big)\\
\lesssim&|kB|^{-\f16}(|B|^{-\f13}|k|^{-\f16}\sum_{l\in\mathbb{Z}\backslash\{0,k\}}E_lE_{k-l}+|B|^{-\f13}|k|^{-\f16}E_0E_{k})\\
\lesssim&|B|^{-\f12}|k|^{-\f13}\sum_{l\in\mathbb{Z}}E_lE_{k-l}.
\end{align*}
For the estimate of $f_2$, we appeal to \eqref{basic-4.1-1}, Lemma \ref{zero-Nonlinear estimate1} and Lemma \ref{Nonlinear estimate1} again. And we obtain
\begin{align*}
&\|e^{c|kB|^{\f13}\tau}f_2\|_{L^2L^2}+\|e^{c|kB|^{\f13}\tau}\f{f_2}{r}\|_{L^2X}+|kB|^{\f16}\|e^{c|kB|^{\f13}\tau}f_2\|_{L^2X}\\
\lesssim&|kB|^{\f16}(|B|^{-\f12}\sum_{l\in\mathbb{Z}\backslash\{0,k\}}|l|^{-\f13}|k-l|^{-\f23}E_lE_{k-l}+|B|^{-\f12}|k|^{-\f56}E_0E_{k})\\
\lesssim&|B|^{-\f13}|k|^{-\f16}\sum_{l\in\mathbb{Z}}E_lE_{k-l}.
\end{align*}
It remains to handle the item $\|e^{c|kB|^{\f13}\tau}f_1\|_{L^2X}$. Applying \eqref{basic-4.1-1}, Lemma \ref{zero-Nonlinear estimate1}, Lemma \ref{zero-Nonlinear estimate2}, Lemma \ref{zero-Nonlinear estimate3}, Lemma \ref{Nonlinear estimate1} and Lemma \ref{Nonlinear estimate2}, we obtain
\begin{align*}
&\|e^{c|kB|^{\f13}\tau}f_1\|_{L^2X}\\
\lesssim&\Big(\|e^{c|kB|^{\f13}\tau}\sum_{l\in\mathbb{Z}\backslash\{0,k\}}ik\f{w_l\varphi_{k-l}'}{r^{\f52}}\|_{L^2L^2}+\|e^{c|kB|^{\f13}\tau}\sum_{l\in\mathbb{Z}\backslash\{0,k\}}ik\f{w_l\varphi_{k-l}}{r^{\f72}}\|_{L^2L^2}\\
&+|k|\|e^{c|kB|^{\f13}\tau}\f{w_{0}\partial_r\varphi_{k}}{r^{\f52}}\|_{L^2L^2}+|k|\|e^{c|kB|^{\f13}\tau}\f{w_{0}\varphi_{k}}{r^{\f72}}\|_{L^2L^2}+|k|\|e^{c|kB|^{\f13}\tau}\f{w_{k}\partial_r(r^{-\f12}\varphi_{0})}{r^2}\|_{L^2L^2}\\
&+\|e^{c|kB|^{\f13}\tau}\sum_{l\in\mathbb{Z}\backslash\{0,k\}}i(k-l)\f{w_l\varphi_{k-l}}{r^{\f32}}\|_{L^2L^2}+\|e^{c|kB|^{\f13}\tau}\sum_{l\in\mathbb{Z}\backslash\{0,k\}}i(k-l)\f{w_l\varphi_{k-l}}{r^{\f72}}\|_{L^2L^2}\\
&+\|e^{c|kB|^{\f13}\tau}ik\f{w_0\varphi_{k}}{r^{\f12}}\|_{L^2L^2}+\|e^{c|kB|^{\f13}\tau}ik\f{w_0\varphi_{k}}{r^{\f52}}\|_{L^2L^2}\Big)\\
\lesssim&|B|^{-\f13}|k|^{-\f16}(\sum_{l\in\mathbb{Z}\backslash\{0,k\}}E_lE_{k-l}+E_0E_{k})\lesssim|B|^{-\f13}|k|^{-\f16}\sum_{l\in\mathbb{Z}}E_lE_{k-l}.
\end{align*}
Finally, we arrive at
\begin{align*}
E_k\lesssim&|kB|^{-\f16}\|e^{c|kB|^{\f13}\tau}f_1\|_{L^2L^2}+\|e^{c|kB|^{\f13}\tau}f_2\|_{L^2L^2}+\|e^{c|kB|^{\f13}\tau}f_1\|_{L^2X}+\|e^{c|kB|^{\f13}\tau}\f{f_2}{r}\|_{L^2X}\\
&+|kB|^{\f16}\|e^{c|kB|^{\f13}\tau}f_2\|_{L^2X}+\|w_k(0)\|_{L^2}+|kB|^{\f16}\|w_k(0)\|_{X}\\
\lesssim &|k|^{-\f16}|B|^{-\f13}\sum_{l\in\mathbb{Z}}E_lE_{k-l}+\|w_k(0)\|_{L^2}+|kB|^{\f16}\|w_k(0)\|_{X},
\end{align*}
In addition, using Lemma \ref{Nonlinear estimate3}, we also find
\begin{align*}
E_0\lesssim&|B|^{-\f13}\sum_{l\in\mathbb{Z}/\{0\}}E_lE_{-l}+|B|^{\f16}\|\f{w_0(0)}{r}\|_{L^2}.
\end{align*}
With the bootstrap assumption
\begin{align*}
\sum_{k\in\mathbb{Z}}E_k\leq c_0 B^{\f13},
\end{align*}
we obtain
\begin{align*}
&\sum_{k\in\mathbb{Z}}E_k\\
\leq & C\Big[|B|^{-\f13}(\sum_{k\in\mathbb{Z}}E_k)^2+\sum_{k\in\mathbb{Z}/\{0\}}\|w_k(0)\|_{L^2}+\sum_{k\in\mathbb{Z}/\{0\}}|kB|^{\f16}\|w_k(0)\|_{X}+|B|^{\f16}\|\f{w_0(0)}{r}\|_{L^2}\Big]\\
\leq & C\Big[c_0\sum_{k\in\mathbb{Z}}E_k+\sum_{k\in\mathbb{Z}/\{0\}}\|w_k(0)\|_{L^2}+\sum_{k\in\mathbb{Z}/\{0\}}|kB|^{\f16}\|\f{w_k(0)}{r}\|_{L^2}+|B|^{\f16}\|\f{w_0(0)}{r}\|_{L^2}\Big].
\end{align*}
Choose $c_0\leq\f12 C^{-1}$, we hence prove
\begin{align*}
&\sum_{k\in\mathbb{Z}}E_k\lesssim \sum_{k\in\mathbb{Z}/\{0\}}\|w_k(0)\|_{L^2}+\sum_{k\in\mathbb{Z}/\{0\}}|kB|^{\f16}\|\f{w_k(0)}{r}\|_{L^2}+|B|^{\f16}\|\f{w_0(0)}{r}\|_{L^2}.
\end{align*}
The above completes the proof of Theorem \ref{Nonlinear stability}.
\end{proof}

\subsection{Proof of Theorem \ref{improved transition threshold}}

As a corollary of the above main result, we also obtain
an improved transition threshold for the original equations \eqref{full nonlinear equation ns vor}. Let $\tilde{\omega}:=\f{w-2A_1t}{t}$. We have
\begin{align*}
\int_{\mathbb{R}^2}|\tilde{\omega}(t,x)|dx=&\nu t\int_{\mathbb{R}^2}|\f{w-2A_1t}{t}|d\xi=\nu\int_{\mathbb{R}^2}|w(\tau,\xi)-2A_1e^{\tau}|d\xi\\
=&\nu\int_0^{2\pi}\int_0^{\infty}r|w(\tau,r,\theta)-2A_1e^{\tau}|drd\theta.
\end{align*}
Note that $w(\tau,\xi)=w(\tau,r,\theta)=\sum\limits_{k\in\mathbb{Z}} \omega_k(\tau,r)e^{ik\theta}$. And recall the following estimate for $\tau\geq0$ in Theorem \ref{Main result-physical space}
\begin{align*}
 \mathcal{E}(\tau)=|\f{A_2}{\nu}|^{\f16}\|\f{\omega_0(\tau)-2A_1e^{\tau}}{r}\|_{\mathcal{M}}+\sum_{k\in\mathbb{Z}/\{0\}}(\|\omega_k(\tau)\|_{\mathcal{M}}+|k|^{\f16}|\f{A_2}{\nu}|^{\f16}\|\f{\omega_k(\tau)}{r}\|_{\mathcal{M}})\leq Cc_0|\f{A_2}{\nu}|^\f13.
  \end{align*}
  We thus obtain 
\begin{align*}
\int_{\mathbb{R}^2}|\tilde{\omega}(t,x)|dx=&\nu\int_0^{2\pi}\int_0^{\infty}r|w(\tau,r,\theta)-2A_1e^{\tau}|drd\theta\\
\leq&2\pi\nu\int_0^{\infty}r(|\omega_0(\tau)-2A_1e^{\tau}|+\sum_{k\in\mathbb{Z}/\{0\}}|\omega_k|)dr\\
\leq&2\pi\nu(\int_0^{\infty}r^3e^{-\f{r^2}{4}}dr)^{\f12}(\int_0^{\infty}r^{-1}e^{\f{r^2}{4}}|\omega_0(\tau)-2A_1e^{\tau}|^2dr)^{\f12}\\
&+2\pi\nu\sum_{k\in\mathbb{Z}/\{0\}}(\int_0^{\infty}re^{-\f{r^2}{4}}dr)^{\f12}(\int_0^{\infty}re^{\f{r^2}{4}}|\omega_k(\tau)|^2dr)^{\f12}\\
=&2\pi\nu(\int_0^{\infty}r^3e^{-\f{r^2}{4}}dr)^{\f12}\|\f{\omega_0(\tau)-2A_1e^{\tau}}{r}\|_{\mathcal{M}}\\
&+2\pi\nu(\int_0^{\infty}re^{-\f{r^2}{4}}dr)^{\f12}\sum_{k\in\mathbb{Z}/\{0\}}\|\omega_k(\tau)\|_{\mathcal{M}}\\
\leq&C\nu \mathcal{E}(\tau)\leq Cc_0|\f{A_2}{\nu}|^{\f13}\nu=Cc_0|A_2|^{\f13}\nu^{\f23}.
\end{align*}
Hence, for initial data at $t=1$, if the initial perturbation satisfies
\begin{align*}
&\int_{\mathbb{R}^2}|\tilde{\omega}(1,x)|dx\lesssim \nu \mathcal{E}(0)\leq Cc_0 |A_2|^{\f13}\nu^{\f23},
\end{align*}
then we proof
\begin{align*}
&\int_{\mathbb{R}^2}|\tilde{\omega}(t,x)|dx\lesssim \nu \mathcal{E}(\tau)\lesssim \nu \mathcal{E}(0)\leq Cc_0 |A_2|^{\f13}\nu^{\f23},\quad \ for\ all\ t\geq1.
\end{align*}
This is corresponding to the improved transition threshold stated in the Theorem \ref{improved transition threshold}.

\appendix
\section{\textbf{Calculation lemmas}}
\begin{lemma}\label{Appendix real function g}For any $A, B\in \mathbb{R}$, the below ODE admits a global solution (with variable $r\in (0, \infty)$)
		\begin{align}\label{A.1}
		\f{g''}{g}+\f{A}{r}\f{g'}{g}=\f{B}{r^2}.
		\end{align}
	\end{lemma}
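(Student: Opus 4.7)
The plan is to recognize \eqref{A.1} as an Euler--type equation and produce a solution by the power ansatz $g(r)=r^\alpha$. Multiplying \eqref{A.1} through by $r^2 g$, the equation becomes the linear ODE
\begin{equation*}
r^2 g''(r) + A r g'(r) - B g(r) = 0, \qquad r\in(0,\infty),
\end{equation*}
which is invariant under the scaling $r\mapsto \lambda r$. This invariance motivates the standard Euler ansatz $g(r)=r^\alpha$, which upon substitution reduces \eqref{A.1} to the algebraic characteristic equation
\begin{equation*}
\alpha^2 + (A-1)\alpha - B = 0.
\end{equation*}

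The next step is to solve this quadratic and exhibit a real, nowhere--vanishing solution on $(0,\infty)$ in each case, so that the ratio $g''/g$ appearing in \eqref{A.1} is well defined. Let $\Delta:=(A-1)^2 + 4B$. If $\Delta\geq 0$ the roots $\alpha_{\pm}=\tfrac{1-A\pm\sqrt{\Delta}}{2}$ are real, and $g(r)=r^{\alpha_+}$ is a global smooth positive solution on $(0,\infty)$. If $\Delta=0$ one may equally take $g(r)=r^{(1-A)/2}$. If $\Delta<0$ the roots are complex, $\alpha=a\pm i b$ with $a=(1-A)/2$ and $b=\tfrac12\sqrt{-\Delta}>0$, and the standard Euler--equation reduction (change of variable $s=\log r$) yields the two real independent solutions $r^a\cos(b\log r)$ and $r^a\sin(b\log r)$; then $g(r)=r^a\bigl(\cos(b\log r)+C\sin(b\log r)\bigr)$ with $|C|$ large enough, or equivalently a shifted phase $r^a\cos(b\log r-\theta_0)$ on any given bounded interval, gives a real solution. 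To get a solution that does not vanish on all of $(0,\infty)$ one instead takes a complex solution $g(r)=r^a e^{ib\log r}$, whose modulus $|g(r)|=r^a$ never vanishes; since \eqref{A.1} only depends on the ratios $g'/g$ and $g''/g$, and these are invariant under multiplication of $g$ by a constant, it suffices for our later use that $g$ never vanish---and in the concrete applications in Lemma~\ref{trivial w'/r} and Proposition~\ref{zero nonlinear} only specific $(A,B)$ occur for which $\Delta>0$, giving a clean real solution $g(r)=r^{\alpha_+}$.

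The bulk of the argument is therefore a short case analysis of the characteristic quadratic; there is no real analytic obstacle because the coefficients are smooth on $(0,\infty)$ and the equation is exactly solvable. The only subtlety worth flagging is the non--vanishing requirement on $g$ needed to legitimize writing $g''/g$ rather than the equivalent linear form $r^2 g''+Arg'-Bg=0$; this is handled case by case as indicated above, and in all applications made in the paper the relevant $\Delta$ is strictly positive so one simply sets $g(r)=r^{\alpha_+}$.
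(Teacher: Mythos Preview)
Your reduction to the Cauchy--Euler equation $r^2 g'' + A r g' - B g = 0$ and the power ansatz $g = r^\alpha$ is cleaner than the paper's route, which passes through the Riccati substitution $G = g'/g$ and then several further changes of variable ($F$, $H$, $K$) to reach a separable ODE. When $\Delta = (A-1)^2 + 4B \ge 0$ your approach produces an explicit real nowhere--vanishing solution $g(r)=r^{\alpha_+}$ in one line, which is a genuine simplification.

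However, your closing claim is false, and this is precisely where the argument is needed. In Lemma~\ref{trivial w'/r} one has $A=2$, $B=-2$, so $\Delta=(2-1)^2+4(-2)=-7<0$; in Proposition~\ref{zero nonlinear} one has $A=2$, $B=-3$, so $\Delta=-11<0$. Both applications you cite lie in the oscillatory regime you dismiss. Worse, in that regime \emph{no} nontrivial real solution of the Euler equation is nonvanishing on all of $(0,\infty)$: every real solution has the form $r^a\bigl(C_1\cos(b\log r)+C_2\sin(b\log r)\bigr)$, and since $b\log r$ sweeps through all of $\mathbb{R}$ as $r$ ranges over $(0,\infty)$, it vanishes infinitely often. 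Your complex fix $g(r)=r^{a+ib}$ is nonvanishing, but the applications explicitly need $g$ real so that $\overline{g^{-1}w}=g^{-1}\bar w$ and the integration by parts genuinely produces the nonnegative term $\|rg\,\partial_r(r^{-2}g^{-1}w)\|_{L^2}^2$; with complex $g$ that identity breaks down. So as written, your argument does not cover the two cases where the lemma is actually invoked. (The paper's own proof is likewise opaque on exactly this point; the issue is intrinsic to the statement when $(A-1)^2+4B<0$.)
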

	\begin{proof}
		Let $G:=\f{g'}{g}$. By noting that
		\begin{align*}
		G'=\f{g''g-g'^2}{g^2}=\f{g''}{g}-G^2,
		\end{align*}
		 we can transfer the above equation \eqref{A.1} to
		\begin{align*}
		G'+G^2+\f{A}{r}G=\f{B}{r^2}.
		\end{align*}
		Denote $F=G+\frac{A}{2r}$, we deduce
		$F'+F^2=\f{C}{r^2}$
		with $C=\f{A^2}{4}+\f{A}{2}+B$.
		Dividing both sides by $F^2$ and via introducing $H=\frac{1}{F}$, one obtains
		\begin{equation*}
		-H'+1=C(\f{H}{r})^2,
		\end{equation*}
		Let $K=\frac{H}{r}$, we then have
		\begin{equation*}
		-(rK'+K)+1=CK^2,
		\end{equation*}
		or equivalently
		\begin{equation*}
		K'=\f{-CK^2-K+1}{r}.
		\end{equation*}
		The above separable equation can be solved via
		\begin{equation*}\label{eqn for K}
		\int \f{dK}{-CK^2-K+1}=\log r.
		\end{equation*}
		Therefore, we deduce
		\begin{equation*}
		g(r, A, B)=C_0 \exp(\int_{0}^{r}G(s)ds)=C_0 \exp(\int_{0}^{r}\f{1}{sK(s)}-\f{A}{2s}ds),
		\end{equation*}
\end{proof}
\begin{lemma}\label{Appendix A1-1}For any $w\in L^2(\Gamma), w|_{r\in\partial\Gamma}=0$, it holds
\begin{align*}
&\|w\|_{L^{\infty}}^2\leq 2\|w\|_{L^2}\|w'\|_{L^2}.
\end{align*}
\end{lemma}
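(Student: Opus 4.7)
The plan is to use the fundamental theorem of calculus together with the vanishing boundary condition. For any fixed point $r_0 \in \Gamma$, I will write $|w(r_0)|^2$ as an integral of its derivative starting from a boundary point where $w$ vanishes, then apply Cauchy--Schwarz.

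More precisely, assume without loss of generality that $w$ is smooth (the general case follows by density, since $w \in H^1$ with vanishing trace). Fix $r_0 \in \Gamma$ and choose a boundary point $a \in \partial\Gamma$ with $w(a) = 0$. Then the identity
\begin{equation*}
|w(r_0)|^2 = \int_a^{r_0} \frac{d}{ds}|w(s)|^2 \, ds = \int_a^{r_0} 2\,\mathrm{Re}\bigl(\overline{w(s)}\, w'(s)\bigr)\, ds
\end{equation*}
holds. Taking absolute values and applying the Cauchy--Schwarz inequality to the right-hand side gives
\begin{equation*}
|w(r_0)|^2 \le 2 \int_\Gamma |w(s)|\,|w'(s)|\, ds \le 2 \|w\|_{L^2}\|w'\|_{L^2}.
\end{equation*}
Since $r_0$ was arbitrary, taking the supremum over $r_0$ yields the desired bound.

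There is essentially no obstacle here: the only subtle point is choosing the boundary endpoint $a$ appropriately. On a half line like $(0,\infty)$ one takes $a = 0$; on a bounded interval $(r_1, r_2)$ one picks the boundary point closer to $r_0$ (so that the integral $\int_a^{r_0}|w||w'|\,ds$ is over at most half the interval, though even the crude bound by $\|w\|_{L^2}\|w'\|_{L^2}$ suffices with constant $2$). The regularity assumption $w \in L^2$ with $w' \in L^2$ (which is implicit in the statement, since $\|w'\|_{L^2}$ appears on the right) together with the vanishing boundary trace makes all manipulations legitimate.
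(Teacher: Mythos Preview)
Your proof is correct and follows essentially the same approach as the paper: write $|w(r)|^2$ as the integral of its derivative from a boundary point where $w$ vanishes, expand the derivative as $2\,\mathrm{Re}(\overline{w}\,w')$, and apply Cauchy--Schwarz. The paper's version is a bit terser (it simply integrates from $0$ on the half-line), but the argument is identical.
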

\begin{proof}
Since $w|_{r\in\partial\Gamma}=0$, we have
\begin{align*} |w(r)|^2=&\int_0^r\partial_s(|w(s)|^2)ds=\int_0^r(w'(s)\overline{w}(s)+w(s)\overline{w}'(s))ds\leq 2\|w\|_{L^2}\|w'\|_{L^2},
\end{align*}
\end{proof}

\begin{lemma}\label{Appendix A1}For any $w\in L^2(\Gamma)$ and $\alpha\in \mathbb{R}$, it holds
\begin{align*}
\|\f{w}{r^{\alpha}}\|_{L^{\infty}}^2\lesssim\|\f{w'}{r^{\alpha-\f12}}\|_{L^2}\|\f{w}{r^{\alpha+\f12}}\|_{L^2}+\|\f{w}{r^{\alpha+\f12}}\|_{L^2}^2.
\end{align*}
\end{lemma}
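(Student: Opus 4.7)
The plan is to mimic the proof of Lemma \ref{Appendix A1-1}, now carrying a weight $r^{-2\alpha}$ inside the fundamental theorem of calculus and picking up an extra boundary correction from differentiating the weight. Concretely, I would start from the identity
\begin{align*}
\partial_s\Bigl(\Bigl|\frac{w(s)}{s^{\alpha}}\Bigr|^2\Bigr)
= s^{-2\alpha}\bigl(w'(s)\overline{w(s)}+w(s)\overline{w'(s)}\bigr)-2\alpha\, s^{-2\alpha-1}|w(s)|^2,
\end{align*}
and integrate from a suitable base point $s_0$ up to $r$ to obtain
\begin{align*}
\Bigl|\frac{w(r)}{r^{\alpha}}\Bigr|^2
=\Bigl|\frac{w(s_0)}{s_0^{\alpha}}\Bigr|^2
+\int_{s_0}^{r}\Bigl[s^{-2\alpha}\bigl(w'\overline{w}+w\overline{w'}\bigr)-2\alpha s^{-2\alpha-1}|w|^2\Bigr]ds.
\end{align*}

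Next I would bound each term on the right by Cauchy--Schwarz. For the cross term, the key algebraic step is the factorization $s^{-2\alpha}=s^{-(\alpha-1/2)}\cdot s^{-(\alpha+1/2)}$, which gives
\begin{align*}
\int |w'||w|\,s^{-2\alpha}\,ds
\le \Bigl\|\frac{w'}{r^{\alpha-1/2}}\Bigr\|_{L^2}\Bigl\|\frac{w}{r^{\alpha+1/2}}\Bigr\|_{L^2},
\end{align*}
while the boundary-correction term is exactly $2|\alpha|\,\|w/r^{\alpha+1/2}\|_{L^2}^2$. Summing these two contributions and taking supremum in $r$ produces the claimed estimate.

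The only nontrivial point is justifying the choice of base point $s_0$. Because $\|w/r^{\alpha+1/2}\|_{L^2(\Gamma)}$ is assumed finite, the integrand $|w(s)/s^{\alpha+1/2}|^2$ is integrable on $\Gamma$, so one can extract a sequence $s_n$ tending to an endpoint of $\Gamma$ along which $s_n^{-1}|w(s_n)/s_n^\alpha|^2\to 0$, and hence a subsequence along which $|w(s_n)/s_n^\alpha|^2$ itself tends to $0$ (otherwise the integral would diverge near that endpoint). Taking $s_0=s_n$ and letting $n\to\infty$ eliminates the boundary term $|w(s_0)/s_0^\alpha|^2$. I expect this endpoint accounting to be the only real obstacle; once it is handled the remaining computation is just Cauchy--Schwarz, essentially the same move used in Lemma \ref{Appendix A1-1}.
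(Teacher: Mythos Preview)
Your plan is correct and follows essentially the same route as the paper: differentiate $|w(s)/s^{\alpha}|^2$, integrate from a base point, and apply Cauchy--Schwarz with the factorization $s^{-2\alpha}=s^{-(\alpha-1/2)}s^{-(\alpha+1/2)}$. The only difference is how the base point is handled. The paper picks $r_1\in(0,1)$ by the mean-value inequality so that $|w(r_1)/r_1^{\alpha}|^2\le \|w/r^{\alpha}\|_{L^2(0,1)}^2\le \|w/r^{\alpha+1/2}\|_{L^2(0,1)}^2$ (using $r^{-1}\ge 1$ on $(0,1)$), which bounds the boundary term directly by the right-hand side without any limit. Your endpoint argument also works: the key observation is that $\int s^{-1}\,ds$ diverges at both endpoints of $\Gamma=(0,\infty)$, so integrability of $s^{-1}|w(s)/s^\alpha|^2$ forces $\liminf|w(s)/s^\alpha|^2=0$ at each endpoint --- this is exactly what your parenthetical says, and the intermediate step about $s_n^{-1}|w(s_n)/s_n^\alpha|^2\to 0$ is not actually needed.
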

\begin{proof}
Choose $r_1\in(0,1)$ such that $|\f{|w(r_1)|^2}{r_1^{2\alpha}}|\leq\|\f{w}{r^{\alpha}}\|_{L^2(0,1)}^2\leq\|\f{w}{r^{\alpha+\f12}}\|_{L^2(0,1)}^2$. Then we have
\begin{align*} \f{|w(r)|^2}{r^{2\alpha}}\leq&\int_{r_1}^r\partial_s(\f{|w(s)|^2}{s^{\alpha}})ds+\|\f{w}{r^{\alpha+\f12}}\|_{L^2(0,1)}^2\\
=&\int_{r_1}^r\f{w'(s)\overline{w}(s)+w(s)\overline{w}'(s)}{s^{2\alpha}}-2\alpha\f{|w(s)|^2}{s^{2\alpha+1}}ds+\|\f{w}{r^{\alpha+\f12}}\|_{L^2(0,1)}^2\\
\lesssim& \|\f{w'}{r^{\alpha-\f12}}\|_{L^2}\|\f{w}{r^{\alpha+\f12}}\|_{L^2}+\|\f{w}{r^{\alpha+\f12}}\|_{L^2}^2.
\end{align*}
This completes the proof of this lemma.
\end{proof}

\begin{lemma}\label{Appendix A2}Let $w=\varphi''-\f{k^2-\f14}{r^2}\varphi\in L^2(\Gamma)$ with $w|_{r\in\partial\Gamma}=0, \varphi|_{r\in\partial\Gamma}=0$. Then for any $|k|\geq 1$ and $\beta\in\mathbb{R}$ it holds
		\begin{align*}
			\Re \langle -w, r^\beta \varphi\rangle \gtrsim  \|r^{\f{\beta}{2}}\varphi'\|_{L^2}^2+k^2\|r^{\f{\beta}{2}-1}\varphi\|_{L^2}^2.
		\end{align*}
	\end{lemma}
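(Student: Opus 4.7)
The plan is to prove the identity by integration by parts and then control the resulting coefficient of $\|r^{\beta/2-1}\varphi\|_{L^2}^2$. Starting from
\[
\Re\langle -w,r^\beta\varphi\rangle = \Re\int_0^\infty\Bigl(-\varphi'' + \tfrac{k^2-1/4}{r^2}\varphi\Bigr)r^\beta\bar\varphi\,dr,
\]
I would first integrate by parts on the $-\varphi''$ term using the boundary conditions to get $\int_0^\infty r^\beta|\varphi'|^2\,dr + \beta\int_0^\infty r^{\beta-1}\varphi'\bar\varphi\,dr$. Taking the real part of the second integral via $\Re(\varphi'\bar\varphi) = \tfrac{1}{2}(|\varphi|^2)'$ and integrating by parts once more produces $-\tfrac{\beta(\beta-1)}{2}\int_0^\infty r^{\beta-2}|\varphi|^2\,dr$ (boundary terms vanish because $\varphi|_{\partial\Gamma}=0$ and the polynomial weights combined with $w\in L^2$ force the appropriate decay). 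Combined with the potential contribution, this yields the clean identity
\[
\Re\langle -w,r^\beta\varphi\rangle = \|r^{\beta/2}\varphi'\|_{L^2}^2 + C_{k,\beta}\,\|r^{\beta/2-1}\varphi\|_{L^2}^2,\qquad C_{k,\beta}:=k^2-\tfrac{1}{4}-\tfrac{\beta(\beta-1)}{2}.
\]

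Second, I would analyze the sign of $C_{k,\beta}$. When $k^2$ is sufficiently large relative to $|\beta|$ (concretely $k^2\geq \tfrac{1}{2}+\beta(\beta-1)$), we have $C_{k,\beta}\geq \tfrac{k^2}{2}$, and the desired lower bound with both gradient and $k^2$ mass terms follows directly with a universal constant. Since the implicit constant in $\gtrsim$ is allowed to depend on $\beta$, this already resolves the regime $|k|\geq k_0(\beta)$.

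For the remaining case of small $|k|$ relative to $|\beta|$, where $C_{k,\beta}$ can be nonpositive, I plan to use the operator factorization analogous to those in Lemma 2.1 and Lemma 2.3. Writing $h=r^{|k|+1/2}$ so that $a(a-1)=k^2-\tfrac{1}{4}$ with $a=|k|+\tfrac{1}{2}$, we have $-\varphi''+\tfrac{k^2-1/4}{r^2}\varphi = -h^{-1}\partial_r\bigl(h^{2}\partial_r(h^{-1}\varphi)\bigr)$. Setting $\psi=h^{-1}\varphi$ and integrating by parts against $r^\beta h\bar\psi$ shows
\[
\Re\langle -w,r^\beta\varphi\rangle = \int_0^\infty r^{\beta}\bigl|\varphi'-(|k|+\tfrac{1}{2})\varphi/r\bigr|^2\,dr \;-\;\tfrac{\beta(2|k|+\beta)}{2}\|r^{\beta/2-1}\varphi\|_{L^2}^2,
\]
and this is the form from which I will extract positivity: the pointwise square controls a fraction of $\|r^{\beta/2}\varphi'\|_{L^2}^2$ (after a Cauchy--Schwarz split), while the sharp Hardy inequality $\|\psi'\|_{L^2}^2\geq \tfrac{1}{4}\|\psi/r\|_{L^2}^2$ lets me upgrade the remaining $\|r^{\beta/2-1}\varphi\|^2$ mass to a $k^2$-weighted statement for the finitely many values of $|k|$ not covered by the first step.

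The main obstacle is precisely this borderline regime where $C_{k,\beta}<0$: the direct IBP identity becomes useless, and one has to interpolate between the factored identity (which has the right sign on the derivative square) and Hardy-type inequalities in a way that preserves the $k^2$ weight on the lower-order term. The choice of parameter in the Cauchy--Schwarz splitting will depend on $\beta$, which is acceptable since the constant in $\gtrsim$ is permitted to depend on $\beta$, but one must verify carefully that the combined bound remains uniform in $k$ for $|k|\geq 1$.
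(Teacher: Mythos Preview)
Your first identity
\[
\Re\langle -w,r^\beta\varphi\rangle = \|r^{\beta/2}\varphi'\|_{L^2}^2 + \Bigl(k^2-\tfrac14-\tfrac{\beta(\beta-1)}{2}\Bigr)\|r^{\beta/2-1}\varphi\|_{L^2}^2
\]
is exactly what the paper derives, and your large-$|k|$ case is fine. The gap is in the remaining regime. Carry out your own computation: with $h=r^{a}$, $a=|k|+\tfrac12$, $\psi=h^{-1}\varphi$, the factored identity reads
\[
\Re\langle -w,r^\beta\varphi\rangle=\int_0^\infty r^{2a+\beta}|\psi'|^2\,dr-\tfrac{\beta(2|k|+\beta)}{2}\|r^{\beta/2-1}\varphi\|_{L^2}^2,
\]
and the sharp weighted Hardy inequality on the first term gives at best $\tfrac{(2a+\beta-1)^2}{4}\|r^{\beta/2-1}\varphi\|_{L^2}^2=\tfrac{(2|k|+\beta)^2}{4}\|r^{\beta/2-1}\varphi\|_{L^2}^2$. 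After subtracting the cross term you obtain only
\[
\Re\langle -w,r^\beta\varphi\rangle\ \geq\ \Bigl(k^2-\tfrac{\beta^2}{4}\Bigr)\|r^{\beta/2-1}\varphi\|_{L^2}^2,
\]
which is \emph{negative} whenever $|k|<|\beta|/2$; the other root $a=\tfrac12-|k|$ yields the same bound. So no amount of Cauchy--Schwarz splitting between the square and Hardy recovers a positive multiple of $\|r^{\beta/2-1}\varphi\|^2$ in this range, and since expanding the square just returns the original identity, the pure-power factorization carries no new information.

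The missing idea is that one must use a \emph{non-power} conjugating weight. The paper takes $g$ solving the Euler-type ODE $\dfrac{g''}{g}-\dfrac{\beta}{r}\dfrac{g'}{g}=\dfrac{1}{r^2}\bigl(\tfrac14+\tfrac{\beta(\beta+1)}{2}\bigr)$ (supplied by the appendix ODE lemma) and factors $-\varphi''+\tfrac{k^2-1/4}{r^2}\varphi$ through $g$ and $r^{-\beta}$; with this choice the coefficient on $\|r^{\beta/2-1}\varphi\|^2$ becomes exactly $k^2$ with no $\beta$-deficit, giving $\Re\langle -w,r^\beta\varphi\rangle\geq k^2\|r^{\beta/2-1}\varphi\|_{L^2}^2$ for all $|k|\geq 1$. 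Your direct identity then recovers the gradient term. The crucial point is that this $g$ is genuinely not a power of $r$, which is why your Hardy computation (which is sharp for power weights) saturates below what is needed.
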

\begin{proof}
	For any $C^2$ real function $g$ with variable $r$, we have the following identity:
	\begin{align*}
		-\varphi''=-g^{-1}\partial_r(r^{\beta}g^2\partial_r(r^{-\beta}g^{-1}\varphi))+\f{\beta}{r}\varphi'-g^{-1}[g''-(\f{\beta}{r}g)']\varphi.
	\end{align*}
 Via integration by parts we obtain
	\begin{align*}
		\langle-w,r^\beta \varphi \rangle=&\langle-g^{-1}\partial_r(r^\beta g^2\partial_r(r^{-\beta}g^{-1}\varphi))+\f{\beta}{r}\varphi'-g^{-1}[g''-(\f{\beta}{r}g)']\varphi+\f{k^2-\f14}{r^2}\varphi,\f{\varphi}{r^\beta}\rangle\\
		=&\|r^{-\f{\beta}{2}}g\partial_r(r^{\beta}g^{-1}\varphi)\|_{L^2}^2+\beta\langle \varphi', r^{\beta-1}\varphi\rangle-\langle g^{-1}[g''-(\f{\beta}{r}g)']\varphi, r^{\beta}\varphi\rangle\\&+(k^2-\f14)\|r^{\f{\beta}{2}-1}\varphi\|_{L^2}^2.
	\end{align*}
	This together with
	\begin{align}
		\label{lemma-A4-1} \Re\langle \varphi', r^{\beta-1}\varphi\rangle=-\f12\int_{\Gamma}r^{\beta-1}d|\varphi|^2=-\f{\beta-1}{2}\int_{\Gamma}r^{\beta-2}|\varphi|^2 dr,
	\end{align}
	gives
	\begin{align*}
		\Re\langle-w, r^\beta \varphi \rangle=&\|r^{-\f{\beta}{2}}g\partial_r(r^{\beta}g^{-1}\varphi)\|_{L^2}^2-\langle g^{-1}(g''-\f{\beta}{r}g')\varphi, r^\beta \varphi\rangle\\&+(k^2-\f14-\f{\beta(\beta+1)}{2})\|r^{\f{\beta}{2}-1}\varphi \|_{L^2}^2.
	\end{align*}
	Choosing $g=g(r, A, B)$ as in Lemma \ref{Appendix real function g} with $A=-\beta, B=\f14+\f{\beta(\beta+1)}{2}$, then $\f{g''}{g}-\f{\beta}{r}\f{g'}{g}=\f{\f14+\f{\beta(\beta+1)}{2}}{r^2}$. Hence we deduce
	\begin{align}
		\label{lemma-A4-2}\Re\langle-w,r^\beta \varphi\rangle=\|r^{-\f{\beta}{2}}g\partial_r(r^{\beta}g^{-1}\varphi)\|_{L^2}^2+k^2 \|r^{\f{\beta}{2}-1}\varphi \|_{L^2}^2\geq k^2 \|r^{\f{\beta}{2}-1}\varphi \|_{L^2}^2.
	\end{align}
	Employing integration by parts and (\ref{lemma-A4-1}), we also have
	\begin{align}
	\label{lemma-A4-7}	\Re\langle-w, r^\beta \varphi\rangle=&\langle-\varphi''+\f{k^2-\f14}{r^2}\varphi,r^\beta \varphi\rangle\ \nonumber\\=&\|r^{\f{\beta}{2}}\varphi'\|_{L^2}^2+\beta\Re\langle \varphi',r^\beta \varphi \rangle+(k^2-\f14)\|r^{\f{\beta}{2}-1}\varphi\|_{L^2}^2 \\
\nonumber	=&\|r^{\f{\beta}{2}}\varphi'\|_{L^2}^2+(k^2-\f14-\f{\beta(\beta-1)}{2})\|r^{\f{\beta}{2}-1}\varphi\|_{L^2}^2 .
	\end{align}
	Combining  \eqref{lemma-A4-7} and \eqref{lemma-A4-2}, we now conclude
	\begin{align}
		\label{lemma-A4-01}\Re\langle-w,r^\beta \varphi \rangle\gtrsim\|r^{\f{\beta}{2}}\varphi'\|_{L^2}^2+k^2\|r^{\f{\beta}{2}-1}\varphi\|_{L^2}^2.
	\end{align}
\end{proof}

\begin{lemma}\label{Appendix A3}
		Under the same conditions of Lemma \ref{Appendix A2}, we have
		\begin{align*}
		&\|r^{\f{\beta}{2}+1}\varphi''\|_{L^2}+|k|^{\f12}\|r^{\f{\beta+1}{2}}\varphi'\|_{L^{\infty}}+|k|\|r^{\f{\beta}{2}}\varphi'\|_{L^2}+|k|^{\f32}\|r^{\f{\beta-1}{2}}\varphi\|_{L^{\infty}}+k^2\|r^{\f{\beta}{2}-1}\varphi\|_{L^2}\lesssim\|r^{\f{\beta}{2}+1}w\|_{L^2}.
		\end{align*}
	\end{lemma}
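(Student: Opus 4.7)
The plan is to bootstrap the estimate from Lemma \ref{Appendix A2} (the weighted coercive estimate for the operator $\varphi \mapsto -\varphi''+\frac{k^2-1/4}{r^2}\varphi$) and then promote the two resulting $L^2$ bounds to the $L^\infty$ bounds via the one-dimensional Sobolev-type inequality Lemma \ref{Appendix A1}. All five quantities on the left-hand side can be organized into a short chain where each step feeds the next, so no fundamentally new tool is needed.

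First, I will pair the identity $w=\varphi''-\frac{k^2-1/4}{r^2}\varphi$ against the weight $r^\beta \varphi$. By Lemma \ref{Appendix A2},
\[
\Re\langle -w,\, r^\beta \varphi\rangle \gtrsim \|r^{\beta/2}\varphi'\|_{L^2}^2+k^2\|r^{\beta/2-1}\varphi\|_{L^2}^2.
\]
By Cauchy--Schwarz, the left-hand side is bounded above by $\|r^{\beta/2+1}w\|_{L^2}\|r^{\beta/2-1}\varphi\|_{L^2}$. Dividing through yields simultaneously
\[
k^2\|r^{\beta/2-1}\varphi\|_{L^2}\lesssim \|r^{\beta/2+1}w\|_{L^2}\qquad\text{and}\qquad |k|\|r^{\beta/2}\varphi'\|_{L^2}\lesssim \|r^{\beta/2+1}w\|_{L^2},
\]
which disposes of the last two terms on the left-hand side of the lemma.

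Next I will recover the top-order piece $\|r^{\beta/2+1}\varphi''\|_{L^2}$. Using the defining equation $\varphi''=w+\frac{k^2-1/4}{r^2}\varphi$ and the triangle inequality,
\[
\|r^{\beta/2+1}\varphi''\|_{L^2}\leq \|r^{\beta/2+1}w\|_{L^2}+|k^2-\tfrac14|\,\|r^{\beta/2-1}\varphi\|_{L^2}\lesssim \|r^{\beta/2+1}w\|_{L^2},
\]
thanks to the $L^2$ bound on $r^{\beta/2-1}\varphi$ obtained in the previous step.

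Finally, for the two $L^\infty$ terms I will invoke Lemma \ref{Appendix A1}. Applied to $\varphi$ with $\alpha=(1-\beta)/2$,
\[
\|r^{(\beta-1)/2}\varphi\|_{L^\infty}^2\lesssim \|r^{\beta/2}\varphi'\|_{L^2}\|r^{\beta/2-1}\varphi\|_{L^2}+\|r^{\beta/2-1}\varphi\|_{L^2}^2;
\]
multiplying by $|k|^3$ and feeding in the two $L^2$ bounds from step one (noting $|k|\geq 1$ lets the last square term be absorbed) gives $|k|^{3/2}\|r^{(\beta-1)/2}\varphi\|_{L^\infty}\lesssim \|r^{\beta/2+1}w\|_{L^2}$. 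Applying Lemma \ref{Appendix A1} instead to $\varphi'$ with $\alpha=-(\beta+1)/2$,
\[
\|r^{(\beta+1)/2}\varphi'\|_{L^\infty}^2\lesssim \|r^{\beta/2+1}\varphi''\|_{L^2}\|r^{\beta/2}\varphi'\|_{L^2}+\|r^{\beta/2}\varphi'\|_{L^2}^2,
\]
and inserting the $L^2$ bounds on $\varphi''$ and $\varphi'$ produces $|k|^{1/2}\|r^{(\beta+1)/2}\varphi'\|_{L^\infty}\lesssim \|r^{\beta/2+1}w\|_{L^2}$. Summing the five inequalities finishes the proof. The only real bookkeeping issue --- and the one requiring most care --- is to track the powers of $|k|$ consistently across the Lemma \ref{Appendix A1} applications, making sure that the $|k|$-dependence on the right distributes correctly so that the $L^\infty$ bounds scale with $|k|^{1/2}$ and $|k|^{3/2}$ respectively without leaving behind a residual $|k|$-loss from the lower-order square terms.
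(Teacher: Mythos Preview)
Your proof is correct and follows essentially the same route as the paper: use Lemma \ref{Appendix A2} together with Cauchy--Schwarz to get the two weighted $L^2$ bounds on $\varphi$ and $\varphi'$, then promote to $L^\infty$ via Lemma \ref{Appendix A1} with exactly the same choices of $\alpha$. The one minor difference is the bound on $\|r^{\beta/2+1}\varphi''\|_{L^2}$: the paper pairs $w$ against $r^{\beta+2}\varphi''$ and applies H\"older, whereas you use the equation $\varphi''=w+\frac{k^2-1/4}{r^2}\varphi$ directly with the triangle inequality --- your variant is slightly more direct and gives the identical bound.
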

\begin{proof}
	Recall in Lemma \ref{Appendix A2}, we have the following inequality
		\begin{align*}
		\Re\langle-w,r^\beta \varphi \rangle\gtrsim\|r^{\f{\beta}{2}}\varphi'\|_{L^2}^2+k^2\|r^{\f{\beta}{2}-1}\varphi\|_{L^2}^2.
	\end{align*}
	Combining with
	\begin{align*}
	\Re\langle-w,r^\beta \varphi \rangle \leq \|r^{\f{\beta}{2}+1}w\|_{L^2}\|r^{\f{\beta}{2}-1}\varphi\|_{L^2} \leq |k|^{-1}\|r^{\f{\beta}{2}+1}w\|_{L^2}( \|r^{\f{\beta}{2}}\varphi'\|_{L^2}+|k| \|r^{\f{\beta}{2}-1}\varphi\|_{L^2})
	\end{align*}
	and
	\begin{align*}
	\|r^{\f{\beta}{2}}\varphi'\|_{L^2}^2+k^2\|r^{\f{\beta}{2}-1}\varphi\|_{L^2}^2\gtrsim( \|r^{\f{\beta}{2}}\varphi'\|_{L^2}+|k| \|r^{\f{\beta}{2}-1}\varphi\|_{L^2})^2,
	\end{align*}
	we obtain
	\begin{align}\label{lemma A2-01}
	|k| \|r^{\f{\beta}{2}}\varphi'\|_{L^2}+k^2 \|r^{\f{\beta}{2}-1}\varphi\|_{L^2}\lesssim \|r^{\f{\beta}{2}+1}w\|_{L^2}.
	\end{align}
	Employing Lemma \ref{Appendix A1} (with $\alpha=-\f{\beta-1}{2}$) and \eqref{lemma A2-01} we get
	\begin{align*}
	\|r^{\f{\beta-1}{2}}\varphi\|_{L^{\infty}}^2\lesssim \|r^{\f{\beta}{2}}\varphi'\|_{L^2}\|r^{\f{\beta}{2}-1}\varphi\|_{L^2}+\|r^{\f{\beta}{2}-1}\varphi\|_{L^2}^2\lesssim |k|^{-3} \|r^{\f{\beta}{2}+1}w\|_{L^2}^2.
	\end{align*}
	To derive the bound for $\|r^{\f{\beta}{2}+1}\varphi''\|_{L^2}$, we apply the following equality
	\begin{align*}
	\Re\langle w, r^{\beta+2} \varphi'' \rangle=\|r^{\f{\beta}{2}+1}\varphi''\|^2_{L^2}-(k^2-\f14)\Re\langle \varphi, r^{\beta+2} \varphi''\rangle.
	\end{align*}
	By  Hölder's inequality we have
	\begin{align*}
	\|r^{\f{\beta}{2}+1}\varphi''\|_{L^2}^2\leq \|r^{\f{\beta}{2}+1}\varphi''\|_{L^2}\|r^{\f{\beta}{2}+1}w\|_{L^2}+k^2 \|r^{\f{\beta}{2}+1}\varphi''\|_{L^2}\|r^{\f{\beta}{2}-1}\varphi\|_{L^2},
	\end{align*}
	which yields
	\begin{align}\label{lemma A2-02}
		\|r^{\f{\beta}{2}+1}\varphi''\|_{L^2}\leq \|r^{\f{\beta}{2}+1}w\|_{L^2}+k^2 \|r^{\f{\beta}{2}-1}\varphi\|_{L^2} \lesssim \|r^{\f{\beta}{2}+1}w\|_{L^2}.
	\end{align}
	Here in the second inequality we appeal to \eqref{lemma A2-01}.
	
\noindent	Finally, we utilize Lemma \ref{Appendix A1} again with $\alpha=-\f{\beta+1}{2}$. Together with \eqref{lemma A2-01} and \eqref{lemma A2-02}, we deduce
	\begin{align*}
	\|r^{\f{\beta+1}{2}}\varphi'\|_{L^{\infty}}^2\lesssim \|r^{\f{\beta}{2}+1}\varphi''\|_{L^2}\|r^{\f{\beta}{2}}\varphi'\|_{L^2}+\|r^{\f{\beta}{2}}\varphi'\|_{L^2}^2\lesssim |k|^{-2} \|r^{\f{\beta}{2}+1}w\|_{L^2}^2.
	\end{align*}
\end{proof}

\bigskip

\end{CJK*}

\begin{thebibliography}{99}


\bibitem{BW}M. Beck and C. E.Wayne, {\it Metastability and rapid convergence to quasi-stationary bar states for the two-dimensional Navier-Stokes equations}, Proc. Roy. Soc. Edinburgh Sect. A 143 (2013), no. 5, 905-927.

\bibitem{BGM3} J. Bedrossian, P. Germain and N. Masmoudi, {\it On the stability threshold for the 3D Couette flow in Sobolev regularity}, Annals of Math., 185(2017), 541-608.


\bibitem{BGM-bams}J. Bedrossian, P. Germain and N. Masmoudi, {\it Stability of the Couette flow at high Reynolds number in
2D and 3D}, Bull. Amer. Math. Soc. (N.S.), 56 (2019), 373-414.

\bibitem{BM}J. Bedrossian and N. Masmoudi, {\it Inviscid damping and the asymptotic stability of planar shear flows in
the 2D Euler equations}, Publ. Math. Inst. Hautes  $\acute{E}$tudes Sci., 122(2015), 195-300.


\bibitem{BMV} J. Bedrossian, N. Masmoudi and V. Vicol,  {\it Enhanced dissipation and inviscid damping in the inviscid limit of the Navier-Stokes equations near the two dimensional Couette flow},  Arch. Ration. Mech. Anal., 219(2016), 1087-1159.

\bibitem{BWV}J. Bedrossian, F. Wang and V. Vicol, {\it The Sobolev stability threshold for 2D shear flows near Couette}, J. Nonlinear Sci.,  28 (2018),  2051-2075.

\bibitem{BZ}J. Bedrossian and M. Zelati, {\it Enhanced dissipation, hypoellipticity, and anomalous small noise inviscid limits in shear flows,} Arch. Ration. Mech. Anal. 224 (2017), no. 3, 1161-1204.

\bibitem{Ch}S. J. Chapman, {\it  Subcritical transition in channel flows}, J. Fluid Mech., 451(2002), 35-97.

\bibitem{CLWZ-2D-C}Q. Chen, T. Li, D. Wei and Z. Zhang, {\it  Transition threshold for the 2-D Couette flow in a finite channel}, Arch. Ration. Mech. Anal., 238 (2020), 125-183.

\bibitem{CWZ-2D-ided} Q. Chen, D. Wei and Z. Zhang, {\it Linear inviscid damping and enhanced dissipation for monotone shear flows}, preprint.

\bibitem{CWZ-2D-pp} Q. Chen, D. Wei and Z. Zhang, {\it Linear stability of pipe Poiseuille flow at high Reynolds number}, arXiv:1910.14245.

\bibitem{CWZ-3D-C} Q. Chen, D. Wei and Z. Zhang, {\it Transition threshold for the 3-D Couette flow in a finite channel}, arXiv:2006.00721.


\bibitem{DHB}F. Daviaud, J. Hagseth and P. Berg$\acute{e}$, {\it  Subcritical transition to turbulence in plane Couette flow}, Phys. Rev. Lett., 69(1992), 2511-2514.

\bibitem{Deng}W. Deng, {\it Resolvent estimates for a two-dimensional non-self-adjoint operator}, Commun. Pure Appl. Anal. 12 (2013), no. 1, 547-596.

\bibitem{DWZ}W. Deng, J. Wu and P. Zhang, {\it Stability of Couette flow for 2D Boussinesq system with vertical dissipation}, Journal of Functional Analysis, Volume 281, Issue 12 (2021).


\bibitem{DW}P. Drazin and W. Reid, {\it  Hydrodynamic Stability}, Cambridge Monographs Mech. Appl. Math., Cambridge Univ. Press, New York, 1981.

\bibitem{FGLT}M. Fei, C. Gao, Z. Lin, and T. Tao, {\it  Prandtl-batchelor flows on an annulus}, arXiv:2111.07114v1.

\bibitem{Ga0}T. Gallay, {\it  Stability and interaction of vortices in two-dimensional viscous flows}, Discr. Cont. Dyn. Systems Ser. S 5 (2012), 1091-1131.

\bibitem{Ga}T. Gallay, {\it  Enhanced dissipation and axisymmetrization of two-dimensional viscous vortices}, Archive for Rational Mechanics and Analysis, volume 230 (2018), 939-975.

\bibitem{GR}T. Gallay and L. M. Rodrigues, {\it  Sur le temps de vie de la turbulence bidimensionnelle (in French)}, Ann. Fac. Sci. Toulouse 17 (2008), 719-733.

\bibitem{GV}T. Gallay and V. $\check{S}$ver$\acute{a}$k, {\it  Arnold's variational principle and its application to the stability of planar vortices}, arXiv:2110.13739.

\bibitem{GW0}T. Gallay and C. E. Wayne, {\it  Invariant manifolds and the long-time asymptotics of the Navier-Stokes and
vorticity equations on $\mathbb{R}^2$}, Arch. Ration. Mech. Anal., 163 (2002), 209-258.

\bibitem{GW}T. Gallay and C. E. Wayne, {\it  Global stability of vortex solutions of the two-dimensional Navier-Stokes equation}, Comm. Math. Phys., 255 (2005), 97-129.

\bibitem{IMM}S. Ibrahim, Y. Maekawa, and N. Masmoudi, {\it On pseudospectral bound for non-selfadjoint operators and its application to stability of Kolmogorov flows},  Annals of PDE, 5(2019), 5:2. 

\bibitem{IJ-NonE}A. Ionescu and H. Jia, {\it Nonlinear inviscid damping near monotonic shear flows}, arXiv:2001.03087, to appear in Acta Mathematica.


\bibitem{IJ-E}A. Ionescu and H. Jia, {\it Axi-symmetrization near point vortex solutions for the 2D Euler equation}, arXiv 1904.09170.


\bibitem{Kato}T. Kato, {\it Perturbation Theory for Linear Operators}. Grundlehren der Mathematischen Wissenschaften 132. Berlin: Springer, 1966.

\bibitem{Kel}L. Kelvin, {\it Stability of fluid motion-rectilinear motion of viscous fluid between two parallel plates}, Phil. Mag., 24(1887), 188-196.

\bibitem{LWZ-O} T. Li, D. Wei and Z. Zhang, {\it Pseudospectral and spectral bounds for the Oseen vortices operator}, Annales Scientifiques of Ecole Normale Sup\'erieure, 53 (2020),  993-1035.

\bibitem{LX}Z. Lin and M. Xu, {\it Metastability of Kolmogorov flows and inviscid damping of shear flows}, Arch. Rat. Mech. Anal.,  231(2019), 1811-1852. 

\bibitem{LZ1}Z. Lin and C. Zeng, {\it Inviscid dynamic structures near Couette flow}, Arch. Rat. Mech. Anal., 200(2011), 1075-1097.

\bibitem{LZ2}Z. Lin and C. Zeng, {\it Unstable manifolds of Euler equations}, Comm. Pure Appl. Math., 66(2013), no. 11, 1803-1836.

\bibitem{LZ3}Z. Lin and C. Zeng, {\it Instability, index theorem, and exponential trichotomy for Linear Hamiltonian PDEs,}  Memoirs of the American Mathematical Society,  275(2022), no. 1347.


\bibitem{Pa}A. Pazy, {\it Semigroups of linear operators and applications to partial differential equations}, Applied Mathematical Sciences 44, Springer-Verlag, New York, 1983.

\bibitem{Rey}O. Reynolds, {\it An experimental investigation of the circumstances which determine whether the motion of water shall be direct or sinuous, and of the law of resistance in parallel channels}, Proc. R. Soc. Lond., 35(1883), 84.


\bibitem{SH}P. Schmid and D. Henningson, {\it Stability and Transition in Shear Flows}, Applied Mathematical Sciences 142, Springer-Verlag, New York, 2001.

\bibitem{TA}N. Tillmark and P. H. Alfredsson, {\it Experiments on transition in plane Couette flow}, J. Fluid Mech., 235(1992), 89-102.

\bibitem{TTRD}L. Trefethen, A. Trefethen, S. Reddy and T. Driscoll, {\it Hydrodynamic stability without eigenvalues}, Science, 261(1993), 578-584.

\bibitem{Wei}D. Wei, {\it Diffusion and mixing in fluid flow via the resolvent estimate}, Science China Mathematics, 64 (2021), 507-518.

\bibitem{WZ-3C}D. Wei and Z. Zhang, {\it Transition threshold for the 3D Couette flow in Sobolev space}, Comm. Pure Appl. Math., 74(2021), 2398-2479.


\bibitem{WZZ-E}D. Wei, Z. Zhang, and W. Zhao, {\it Linear inviscid damping for a class of monotone shear flow in Sobolev spaces}, Comm. Pure Appl. Math., 71(2018), 617-687.

\bibitem{WZZ-E2}D. Wei, Z. Zhang and W. Zhao, {\it Linear inviscid damping and vorticity depletion for shear flows}, Annals of PDE, 5(2019), 5:3.

\bibitem{WZZ-NS}D. Wei, Z. Zhang and W. Zhao, {\it Linear inviscid damping and enhanced dissipation for the Kolmogorov
flow}, Adv. Math., 362 (2020), 106963.


\bibitem{Ya}A. Yaglom, {\it Hydrodynamic instability and transition to turbulence}, Fluid Mech. Appl. 100, Springer-Verlag, New York, 2012.


\bibitem{ZDE}M. Zelati, M. Delgadino and T. Elgindi, {\it On the Relation between Enhanced Dissipation Timescales and Mixing Rates}, Comm. Pure Appl. Math., 73(2020), 1205-1244.

\bibitem{ZZ}M. Zelati, C. Zillinger, {\it On degenerate circular and shear flows: the point vortex and power law circular flows}, Communications in Partial Differential Equations, 44 (2019), 110-155.

\bibitem{Z1}C. Zillinger, {\it On circular flows: linear stability and damping}, J. Differential Equations, 263 (2017), no. 11, 7856-7899.




\end{thebibliography}
\end{document}